\newcommand\Pp{\mathbb P}
\newcommand\E{\mathbb E}
\newcommand\Z{\mathbb Z} 
\newcommand\R{\mathbb R} 
\newcommand\N{\mathbb N} 
\newcommand{\abs}[1]{\ensuremath{\left\vert#1\right\vert}}
\newcommand\Cov{\mathbb C\textnormal{ov}}
\DeclareMathOperator{\id}{id}
\DeclareMathOperator{\rv}{RV_{\hspace*{-.05cm}n}}
\newcommand*\diff{\mathop{}\!\mathrm{d}}
\newcommand\too{\rightarrow}
\newcommand\tooi{\rightarrow\infty}
\newcommand{\nor}[1]{\lVert#1\rVert}
\newcommand\Oo{\mathcal O} 
\newcommand\RV{\textnormal{RV}_{n}}
\newcommand\Var{\mathbb V\textnormal{ar}}
\newcommand{\euc}[1]{\left\lVert#1\right\rVert_2}
\newcommand\idn{i\Delta_n} 
\newcommand\iidn{(i-1)\Delta_n} 
\newcommand\ito{\text{It\^{o}}}
\newcommand\im{\mathrm{i}}
\newcommand\RVt{\text{RV}_{2n}}
\newcommand*\bigcdot{\mathpalette\bigcdot@{.5}}
\newcommand*\bigcdot@[2]{\mathbin{\vcenter{\hbox{\scalebox{#2}{$\m@th#1\bullet$}}}}}
\newcommand\smallO{
  \mathchoice
    {{\scriptstyle\mathcal{O}}}
    {{\scriptstyle\mathcal{O}}}
    {{\scriptscriptstyle\mathcal{O}}}
    {\scalebox{.7}{$\scriptscriptstyle\mathcal{O}$}}
}
\newcommand\EatDot[1]{.}
\newcommand\KillDot[1]{}
\providecommand{\msc}[1]{\textbf{MSC Classification:} #1}
\newtheorem{theorem}{Proposition}
\newtheorem{lemma}{Lemma}
\newtheorem{assump}{Assumption}
\newtheorem{cor}{Corollary}
\newtheorem{example}{Example}
\begin{document}

\title{Parameter estimation for second-order SPDEs in multiple space dimensions}
\author{Patrick Bossert \\
	Institute of Mathematics \\
	Julius-Maximilians-Universit\"at W\"urzburg\\
	Würzburg, 97074, Germany \\
	\texttt{patrick.bossert@uni-wuerzburg.de}
}

\newpage
\maketitle
\begin{abstract}
We analyse a second-order SPDE model in multiple space dimensions and develop estimators for the parameters of this model based on discrete observations of a solution in time and space on a bounded domain. 
While parameter estimation for one and two spatial dimensions was established in recent literature, this is the first work which generalizes the theory to a general, multi-dimensional framework.
Our approach builds upon realized volatilities, enabling the construction of an oracle estimator for volatility within the underlying model. Furthermore, we show that the realized volatilities have an asymptotic illustration as response of a log-linear model with spatial explanatory variable. This yields novel and efficient estimators based on realized volatilities with optimal rates of convergence and minimal variances. For proving central limit theorems, we use a high-frequency observation scheme. To showcase our results, we conduct a Monte Carlo simulation.
\end{abstract}
\keywords{Central limit theorem under dependence, High-frequency data, Least squares estimation, Multi-dimensional SPDEs}\ \\
\msc{62F12, 62M10, 60H15}




\section{Introduction}\label{sec1}
Multidimensional stochastic partial differential equations (SPDEs) expand upon the principles of their one-dimensional counterparts to address scenarios involving multiple spatial dimensions. These equations find application across diverse scientific domains, enabling the exploration of the interplay between deterministic dynamics and stochastic fluctuations in systems spanning physics, geophysics, biology, finance, and environmental science.

Recent interest in applications of one-dimensional SPDEs and statistical methods to calibrate them is evident in the works of \cite{hambly}, \cite{fuglstad}, \cite{randolf2021}, and \cite{randolf2022} Notably, researchers have leveraged power variations, a concept well-established in financial high-frequency settings, to develop statistical inference methodologies, as evidenced by works like \cite{trabs}, \cite{cialenco}, and \cite{chong2020high}.

Multi-dimensional SPDE models, on the other hand, offer a much larger variability for modelling natural phenomena. Therefore, it is intuitive that applications of these SPDEs is also of great relevance, especially for two- and three-dimensional spaces. See, for instance, \cite{mena2019efficient} for an application in connection with the climate phenomenon El Ni\~{n}o and references therein for applications to sea temperature, \cite{pereira2020matrix} for an application in Geostatistics and dealing with seismic data and \cite{fioravanti2023interpolating} for an application in climate science. For an overview with many references to specific applications in various fields we refer to \cite{lindgren2022spde}.

While power variations have received considerable attention in the context of one-dimensional SPDEs, their utilization for SPDEs in multiple spatial dimensions remains in its nascent stages. In a pivotal contribution, \cite{tonaki2023parameter} analysed a two-dimensional SPDE model, laying the groundwork for further research in the realm of second-order, linear multi-dimensional SPDEs. 
In this endeavor, we follow a theoretical framework related to \cite{tonaki2023parameter}, adapting it to accommodate multiple spatial dimensions.

Within this multi-dimensional model, we establish the foundation for parameter estimation by utilizing quadratic increments. Building upon a high-frequency assumption over a fixed time horizon, inspired by \cite{trabs}, along with a regularity assumption, we construct a volatility estimator tailored to the multi-dimensional SPDE model. Subsequently, we link these realized volatilities to a log-linear model to enhance our understanding of the system.

To facilitate empirical investigations, we develop a simulation methodology that extends the one-dimen\-sional counterpart known as the replacement method, as introduced by \cite{hildebrandt2020}, to multiple spatial dimensions. A brief overview of the notational conventions employed in this paper can be located at the beginning of Section \ref{sec_6}.

One challenging task when working within a multi-dimensional framework is dealing with the technical difficulties that arise during the transition from one to multiple space dimensions. For instance, when opting for the spectral approach, it becomes necessary to determine a Riemann sum approximation for sums with a multi-dimensional index set. In addition to these technical challenges, the multidimensional random field exhibits significant structural changes, affecting the dependencies within the model and the behaviour of the error terms.
As a result, the generalization from a single space dimension, as researched by \cite{trabs}, to multiple spatial dimensions is not straightforward and requires careful treatment. Consequently, we anticipate that our research will provide valuable insights into statistics for SPDEs in multiple spatial dimensions and contribute efficient estimators that leverage realized volatilities.

We consider the following linear second-order SPDE in $d\in\mathbb{N}$, $d\geq 2$, spatial dimensions with additive noise:
\begin{align}
\left[
\begin{array}{ll}
\diff X_t(\textbf{y}) = A_\vartheta X_t(\textbf{y})\diff t+\sigma\diff B_t(\textbf{y}), &(t,\textbf{y})\in[0,1]\times [0,1]^d\\
X_0(\textbf{y})=\xi(\textbf{y}),& \textbf{y}\in[0,1]^d\\
X_t(\textbf{y})=0,& (t,\textbf{y})\in[0,1]\times\partial\,[0,1]^d
\end{array}
\right],\label{generalSPDEequationMulti}
\end{align}
where $\textbf{y}=(y_1,\ldots,y_d)\in[0,1]^d$. The operator $A_\vartheta$ in equation \eqref{generalSPDEequationMulti} is given by:
\begin{align}
A_\vartheta = \eta \sum_{l=1}^d \frac{\partial}{\partial y_l^2}+\sum_{l=1}^d \nu_l\frac{\partial}{\partial y_l}+\vartheta_0,\label{eqn_AvarthetaMulti}
\end{align}
with fixed parameters $\vartheta=(\vartheta_0,\nu_1,\ldots,\nu_d,\eta)$, where $\vartheta_0,\nu_1,\ldots,\nu_d\in\mathbb{R}$ and $\eta,\sigma>0$.
In the temporal domain, we consider the interval $t\in[0,1]$, which can be extended to $t\in[0,T]$ for $T>0$, while the spatial domain encompasses the $d$-dimensions unit hypercube. Additionally, we introduce $B$ as a cylindrical $Q$-Brownian motion defined over $[0,1]^d$, where the initial condition $\xi:[0,1]^d\rightarrow\mathbb{R}$ is independent to $(B_t)$.
We adopt Dirichlet boundary condition such that $X_t(\textbf{y})=0$ holds for all $(t,\textbf{y})\in[0,1]\times\partial\,[0,1]^d$.
Furthermore, we introduce the \emph{curvature} parameter, expressed as $\kappa=(\kappa_1,\ldots,\kappa_d)$, where $\kappa_l:=\nu_l/\eta\in\R$ and $l=1,\ldots,d$. Additionally, we define the \emph{normalized volatility}, denoted as $\sigma_0^2:=\sigma^2/\eta^{d/2}>0$, where the parameter $\sigma$ is called \emph{volatility}.
\ \\ 
One well-established example of a linear, second-order SPDE is the heat equation. The stochastic version of this equation in $d$-dimensions can be represented as follows:
\begin{align*}
\left[
\begin{array}{ll}
\diff X_t(\textbf{y}) =\eta \sum_{l=1}^d \frac{\partial}{\partial y_l^2}  X_t(\textbf{y})\diff t+\sigma\diff B_t(\textbf{y}), &(t,\textbf{y})\in[0,1]\times [0,1]^d\\
X_0(\textbf{y})=\xi(\textbf{y}),& \textbf{y}\in[0,1]^d\\
X_t(\textbf{y})=0,& (t,\textbf{y})\in[0,1]\times\partial\,[0,1]^d
\end{array}
\right],
\end{align*}
where $\sigma$ governs the degree of randomness in the cooling process, while $\eta$ serves as a parameter denoting thermal conductivity. In the context of the one-dimensional heat equation, this equation models the cooling process of objects like rods or thin entities. However, when extended to two or three dimensions, it characterizes the cooling process of broader surfaces or spatial volumes, with potential applications ranging from modelling the cooling of plate-like structures to the temperature dynamics of large bodies of water such as the sea surface or seawater. In all applications, the initial condition reflects the starting temperature of the object or system.

Given the prevalence of multi-dimensional models in various natural phenomena, especially when influenced by random factors, the analysis of these multi-dimensional equations becomes particularly intriguing and relevant.

\section{Probabilistic structure and statistical setup}
To investigate the multi-dimensional SPDE model introduced in \eqref{generalSPDEequationMulti}, we opt for the spectral approach. 
Different to the situation with unbounded spatial support, the differential operator $A_{\vartheta}$ in \eqref{eqn_AvarthetaMulti} admits a discrete spectrum.
Hence, the spectral approach involves representing the random field as a sum of orthogonal eigenfunctions weighted by stochastic coefficients. This methodology is widely adopted by researchers in the field, see, for instance \cite{lototsky2017stochastic}, \cite{uchida}, or \cite{hildebrandt}. 
Moreover, we extend the two-dimensional approach presented by \cite{tonaki2023parameter}.

In the context of the spectral approach, the corresponding Hilbert space is defined as follows:
\begin{align}
H_\vartheta:=\{f:[0,1]^d\too\R, \nor{f}_\vartheta<\infty \text{ and }f(\textbf{y})=0,\text{ for }\textbf{y}\in\partial\,[0,1]^d\}.\label{eqn_HilbertsapceHvarthetaMulti}
\end{align}
The norm $\nor{\cdot}_\vartheta$ is defined via the corresponding inner product $\nor{f}_\vartheta:=\langle f,f\rangle_\vartheta$ for $f\in H_\vartheta$, given by
\begin{align*}
\langle f,g\rangle_\vartheta:=\int_0^1\cdots\int_0^1f(y_1,\ldots,y_d)g(y_1,\ldots,y_d)\exp\bigg[\sum_{l=1}^d\kappa_ly_l\bigg]\diff y_1\cdots\diff y_d,
\end{align*}
where $f,g\in H_\vartheta$. The domain of the operator $A_\vartheta$ is defined as follows:
\[\mathcal{D}(A_\vartheta)=\{f\in H_\vartheta:\nor{f}_\vartheta,\nor{\partial/(\partial y_l)f}_\vartheta,\nor{\partial^2/(\partial y^2_l)f}_\vartheta<\infty, \text{ for all }l=1,\ldots,d\}.\]
The decomposition of the operator $A_\vartheta$ from \eqref{eqn_AvarthetaMulti} results in the
eigenfunctions $(e_\textbf{k})_{\textbf{k}\in\N^d}$ and eigenvalues $(-\lambda_\textbf{k})_{\textbf{k}\in\N^d}$, given by
\begin{align}
e_\textbf{k}(\textbf{y})&:=e_\textbf{k}(y_1,\ldots,y_d):=2^{d/2}\prod_{l=1}^d\sin(\pi k_ly_l)e^{-\kappa_ly_l/2},\label{eqn_ekMulti}\\
\lambda_\textbf{k}&:=-\vartheta_0+\sum_{l=1}^d\bigg(\frac{\nu_l^2}{4\eta}+\pi^2k_l^2\eta\bigg),\label{eqn_lambda_kMulti}
\end{align}
where $\textbf{k}=(k_1,\ldots,k_d)^\top\in\N^d$. 
When comparing the representation of the eigenfunctions and eigenvalues in $d$-space dimensions to those in one space dimension, as presented in \cite{trabs}, we observe that we have extended the eigenfunctions and eigenvalues in one dimension to each spatial dimension. 
The orthonormal property of the eigenfunctions in the one-dimensional case seamlessly extends to the multidimensional setting, effectively defining an orthonormal system denoted as $(e_\textbf{k})_{\textbf{k}\in\N^d}$. This observation allows us to independently decompose each spatial axis using the one-dimensional eigenfunctions, which incorporate rescaling, sine functions, and exponential terms with dependencies on the respective parameters $\kappa_l$, $l=1,\ldots,d$.
As a consequence, we derive a spectral decomposition by considering a product model over each dimension.

Moreover, the operator $A_\vartheta$ is self-adjoint within the Hilbert space $H_\vartheta$. This self-adjoint property carries significant importance, as it guarantees that the eigenfunctions collectively form a complete and orthogonal basis within $H_\vartheta$. This, in turn, empowers us to effectively represent solutions to the SPDE model outlined in equation \eqref{generalSPDEequationMulti} using a spectral decomposition.
Given that these properties can be readily deduced through standard calculations, we will forego presenting their proofs.

%

We address the $Q$-Wiener process, denoted as $W_t(y)$, within a Sobolev space defined over the bounded domain $[0,1]^d$. For a comprehensive understanding of $Q$-Wiener processes, readers are encouraged to consult references such as \cite{daPrato} or \cite{lototsky2017stochastic}.

A crucial distinction that arises when transitioning from one spatial dimension to higher dimensions is that the random filed, denoted as $X_t(\textbf{y})$, is not square integrable when considering a white noise, i.e., $\E[||X_t^{\id}||_\vartheta^2]= \infty$, where $Q=\text{id}$ denotes the identity operator. Remarkably, this phenomenon persists even in two spatial dimensions, as demonstrated by the authors in \cite{tonaki2023parameter}. To rectify this issue and ensure finite variance of the paths, it becomes necessary to employ a coloured cylindrical Wiener process instead of a white noise. This implies the introduction of an additional parameter into the model, effectively damping the Wiener process and resulting in the random field being square integrable.

In our model, we incorporate a $Q$-Wiener process to account for the stochastic noise. By implementing damping mechanisms, \cite{tonaki2023parameter} have successfully devised statistical inference techniques based on high-frequency observations, leveraging a spectral approach within the context of two spatial dimensions.

To specify the damping mechanism in our model, we adopt one natural approach by defining $(B_t)_{t\geq 0}$ as follows:
\begin{align}
\langle B_t,f\rangle_\vartheta := \sum_{\textbf{k}\in\N^d} \lambda_{\textbf{k}}^{-\alpha/2}\langle f,e_\textbf{k}\rangle_\vartheta W_t^\textbf{k},\label{eqn_QWienerProcessMulti}
\end{align}
for $f\in H_\vartheta, t\geq 0$ and independent real-valued Brownian motions $(W_t^\textbf{k})_{t\geq 0}$, $\textbf{k}\in\N^d$. 
In the preceding definition, the cylindrical Brownian motion $B$ experiences a structural transformation due to the inclusion of the term $\lambda_\textbf{k}^{-\alpha/2}$ in its spectral decomposition. This modification inherently results in a fundamental change in the probabilistic characteristics of the random field. The parameter $\alpha$ assumes special importance, as it essentially governs the Hölder regularity of the marginal processes of the random field. In the context of one spatial dimension, a related parameter was examined by \cite{chong2020high}, where it was referred to as ``spatial correlation parameter''.
Moreover, we assume that $\alpha\in (d/2-1,d/2)$, then it can be seen that
\begin{align}
\sum_{\textbf{k}\in\N^d}\nor{A_\vartheta^{-(1+\alpha)/2}e_\textbf{k}}_\vartheta^2=\sum_{\textbf{k}\in\N^d}\frac{1}{\lambda_\textbf{k}^{1+\alpha}}<\infty,\label{eqn_Bwelldefined}
\end{align}
which implies that the $Q$-Wiener process is well-defined. The operator $Q$ is then defined by
\begin{align*}
Qe_\textbf{k}=\lambda^{-\alpha}_\textbf{k}\nor{A_\vartheta^{-1/2}e_\textbf{k}}_\vartheta^2 e_\textbf{k},
\end{align*}
where the corresponding eigenvalues of $Q$ are given by $\lambda^{-\alpha}_\textbf{k}\nor{A_\vartheta^{-1/2}e_\textbf{k}}_\vartheta^2$, $\textbf{k}\in\N^d$.
Note, that the lower bound $\alpha> d/2-1$ is essential for the $Q$-Wiener process being well-defined, where the upper bound $\alpha< d/2$ serves the purpose of developing statistical inference.
For a comprehensive overview of Wiener processes on Hilbert spaces, we refer to \cite[Chapter 4]{daPrato}.
For readings on a different approach for the choice of $Q$ in two space dimensions we refer to \cite{tonaki2023parameter}.

Consider a mild solution of the SPDE model from equation \eqref{generalSPDEequationMulti}, which satisfies the integral representation:
\begin{align*}
X_t=e^{tA_\vartheta}\xi+\sigma\int_0^t e^{(t-s)A_\vartheta}\diff B_s
\end{align*}
a.s., for every $t\in[0,1]$. Then, the spectral decomposition of the random field $X_t$ is given by
\begin{align}
X_t(\textbf{y})=\sum_{\textbf{k}\in\N^d} x_\textbf{k}(t)e_\textbf{k}(\textbf{y}),~~~~\text{where}~~~~x_\textbf{k}(t):=\langle X_t,e_\textbf{k}\rangle_\vartheta.\label{eqn_fourierseriesMultiOfX}
\end{align}
The coordinate processes $(x_\textbf{k})$ follow the Ornstein-Uhlenbeck dynamics, governed by the equation:
\begin{align*}
\diff x_\textbf{k}(t)= -\lambda_\textbf{k}x_\textbf{k}(t)\diff t + \sigma\lambda_\textbf{k}^{-\alpha/2}\diff W_t^\textbf{k},
\end{align*}
for every $\textbf{k}\in\N^d$.
Utilizing that $A_\vartheta$ is self-adjoint yields the following representation for the coordinate processes: 
\begin{align}
x_\textbf{k}(t)=\langle X_t,e_\textbf{k}\rangle_\vartheta
&=e^{-\lambda_\textbf{k}t}\langle\xi,e_\textbf{k}\rangle_\vartheta+\sigma\lambda_\textbf{k}^{-\alpha/2}\int_0^t e^{-\lambda_\textbf{k}(t-s)}\diff W_s^\textbf{k},\label{eqn_coordProcessMulti}
\end{align}
where we used that $\langle e^{tA_\vartheta}f,e_\textbf{k}\rangle_\vartheta=e^{-\lambda_\textbf{k}t}\langle f,e_\textbf{k}\rangle_\vartheta$ for $f\in H_\vartheta$. In fact, by using the latter representation for $x_\textbf{k}(t)$, we can observe that the random field is square integrable, i.e.:
\begin{align*}
\E[\nor{X_t}_\vartheta^2]
&\leq C \sum_{\textbf{k}\in\N^d}\frac{1}{\lambda_\textbf{k}^{1+\alpha}}<\infty.
\end{align*}

For statistical inference, we establish a high-frequency observation scheme on a discrete grid in time and space. Similar to the one-dimensional case, it is essential to restrict the observations in order to bound correlations, which naturally arise in SPDE models. Therefore, we introduce the following mapping: 
\begin{align*}
\nor{\textbf{x}}_0:=\min_{\substack{i=1,\ldots,d \\ x_i\neq 0}} \{\abs{x_1},\ldots,\abs{x_d}\},
\end{align*}
where we set $\min\emptyset=0$ and consider the following Assumption to our model. 

\begin{assump}[Observation scheme]\label{assumption_observations_multi}
Suppose we observe a mild solution $X$ of the SPDE model from equation \eqref{generalSPDEequationMulti} on a discrete grid $(t_i,\textbf{y}_j)\in [0,1]\times[0,1]^d$, with equidistant observations in time $t_i=i\Delta_n$ for $i=1,\ldots,n$ and $\textbf{y}_1,\ldots,\textbf{y}_m\in[\delta, 1-\delta]^d$, where $n,m\in\N$ and $\delta\in(0,1/2)$. We consider the following asymptotic regime:
\begin{enumerate}
\item[(I)] $\Delta_n\too 0$ and $m=m_n\tooi$, as $n\tooi$, while $n\Delta_n=1$ and $m=\mathcal{O}(n^\rho)$, for some $\rho\in\big(0,(1-\alpha')/(d+2)\wedge 1/2\big)$,
\end{enumerate}
where $\alpha=d/2-1+\alpha'$ and $\alpha'\in(0,1)$.
Furthermore, we consider that
\[m_n\cdot\min_{\substack{j_1,j_2=1,\ldots,m_n \\ j_1\neq j_2}}\nor{\textbf{y}_{j_1}-\textbf{y}_{j_2}}_0\] 
is bounded from below, uniformly in $n$ for the asymptotic regime (I).
\end{assump}
Since our statistical inference relies on power variation based on temporal increments, it necessitates fewer spatial observations compared to temporal ones. 
Since this relation already applies in one space dimension, it is intuitive to extend this Assumption to higher space dimensions, cf. \cite{trabs}.
The damping parameter, which also influences the randomness in our model, also impacts the relationship between the resolutions of observations in temporal and spatial dimensions. As the dimensionality increases, the number of available spatial observations decreases. In particular, in one dimension, researchers such as \cite{hildebrandt} and \cite{trabs} demonstrated that in this regime, realized volatilities, expressed as:
\[\rv(\textbf{y})=\sum_{i=1}^n \big(X_{i\Delta_n}(\textbf{y})-X_{(i-1)\Delta_n}(\textbf{y})\big)^2,~j=1,\ldots,m,\]
are sufficient for estimating parameters with an optimal rate of convergence $(m_n n)^{-1/2}$. However, \cite{hildebrandt} established different optimal convergence rates when the condition $m_n/\sqrt{n}\to 0$ is violated, proposing rate-optimal estimators in this context based on double increments in both space and time. The norm $\nor{\cdot}_0$ quantifies the smallest change between two spatial observations in each dimension, effectively extending the condition used for one-dimensional SPDEs to multiple dimensions. This condition becomes especially necessary to bound covariances of the realized volatilities across different spatial coordinates.
It's worth noting that alternative choices besides $\nor{\cdot}_0$ can be considered for controlling the covariance structure in this multi-dimensional model. For instance, one might intuitively opt for the Euclidean norm instead of examining changes along each axis. Nevertheless, we use the presented mapping to directly connect the assumptions from the one-dimensional model, as outlined in \cite{trabs}, with Assumptions \ref{assumption_observations_multi}, facilitating a straightforward comparison between them.

Furthermore, we impose the following regularity condition, as introduced by \cite{tonaki2023parameter}.
\begin{assump}[Regularity]\label{assumption_regMulti}
For the SPDE model from equation \eqref{generalSPDEequationMulti} we assume that
\begin{itemize}
\item[(i)] either $\E[\langle \xi, e_\textbf{k}\rangle_\vartheta]=0$ for all $\textbf{k}\in\N^d$ and $\sup_{\textbf{k}\in\N^d} \lambda_\textbf{k}^{1+\alpha}\E[\langle\xi,e_\textbf{k}\rangle_\vartheta^2]<\infty$ or $\E[\nor{A_\vartheta^{(1+\alpha)/2}\xi}^2_\vartheta]<\infty$ holds true, for $\alpha\in(d/2-1,d/2)$,
\item[(ii)] $(\langle\xi,e_\textbf{k}\rangle_\vartheta)_{\textbf{k}\in \N^d}$ are independent.
\end{itemize}
\end{assump}

\section{Volatility estimation}\label{sec_3}
In this section, our aim is to develop an estimator for the volatility parameter $\sigma^2>0$. For this aim, we will utilize quadratic increments in time, as this statistic usually contains information about the volatility of the underlying process. For the estimation of $\sigma^2$, we assume the remaining parameters in $A_\vartheta$ to be known, as well as the damping parameter $\alpha$. Hence the orthonormal system $e_\textbf{k}$ and the eigenfunctions $\lambda_\textbf{k}$ are known, which enables us to estimate the volatility based on discrete recordings of $X_t$ in multiple time and space coordinates. More precisely, we will conduct volatility estimation using the method of moments. 

It is well established, that in one space dimension, the increments of a solution process $X_t$ behaves different than the standard setup for semi-martingales, see, for instance \cite{trabs} or for semi-martingales \cite{jacod2011discretization}. As we transfer the SPDE model to a multi-dimensional setup, it is expected, due to the coloured noise structure of $B$, that the behaviour of the increments again changes. 

Therefore, analysing the expected value of temporal squared increments and realized volatilities, gains a deeper insight into the multi-dimensional SPDE model and the capabilities of statistical inference.

\begin{theorem}\label{prop_quadIncAndrescaling}
On Assumptions \ref{assumption_observations_multi} and \ref{assumption_regMulti}, we have uniformly in $y\in[\delta,1-\delta]^d$ that
\begin{align*}
\E[(\Delta_i X)^2(\textbf{y})]=\Delta_n^{\alpha'}\sigma^2e^{-\nor{\kappa\bigcdot \textbf{y}}_1}\frac{\Gamma(1-\alpha')}{2^d(\pi\eta)^{d/2}\alpha'\Gamma(d/2)}+r_{n,i}+\Oo(\Delta_n),
\end{align*}
where $\alpha'\in(0,1)$ and a sequence $r_{n,i}$ satisfying $\sup_{1\leq i\leq n}\abs{r_{n,i}}=\Oo(\Delta_n^{\alpha'})$ and $\sum_{i=1}^nr_{n,i}=\Oo(\Delta_n^{\alpha'})$. Furthermore, rescaling yields the following:
\begin{align*}
\E\bigg[\frac{1}{n\Delta_n^{\alpha'}}\sum_{i=1}^n(\Delta_i X)^2(\textbf{y})\bigg]=\sigma^2e^{-\nor{\kappa\bigcdot \textbf{y}}	_1}\frac{\Gamma(1-\alpha')}{2^d(\pi\eta)^{d/2}\alpha'\Gamma(d/2)}+\Oo(\Delta_n^{1-\alpha'}).
\end{align*}
\end{theorem}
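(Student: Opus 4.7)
The plan is to use the spectral decomposition \eqref{eqn_fourierseriesMultiOfX} to reduce the expectation of the squared temporal increment to a sum of variances of Ornstein--Uhlenbeck increments weighted by $e_{\mathbf{k}}(\mathbf{y})^2$, then approximate the resulting spectral sum by an integral whose value is explicit in terms of a Gaussian moment.

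First I would exploit the independence of the driving Brownian motions $(W^{\mathbf{k}})$ together with Assumption \ref{assumption_regMulti}(ii) to obtain
\[
\mathbb{E}[(\Delta_i X)^2(\mathbf{y})]
=\sum_{\mathbf{k}\in\N^d}\mathbb{E}[(\Delta_i x_{\mathbf{k}})^2]\,e_{\mathbf{k}}(\mathbf{y})^2.
\]
A direct computation on the explicit OU solution \eqref{eqn_coordProcessMulti}, using that the initial value and the It\^o integral on $[(i-1)\Delta_n,i\Delta_n]$ are independent, yields the identity
\[
\mathbb{E}[(\Delta_i x_{\mathbf{k}})^2]
=\sigma^2\lambda_{\mathbf{k}}^{-(1+\alpha)}\bigl(1-e^{-\lambda_{\mathbf{k}}\Delta_n}\bigr)
+(e^{-\lambda_{\mathbf{k}}\Delta_n}-1)^2 R_{i,\mathbf{k}},
\]
where $R_{i,\mathbf{k}}=\mathbb{E}[x_{\mathbf{k}}((i-1)\Delta_n)^2]-\sigma^2\lambda_{\mathbf{k}}^{-(1+\alpha)}/2$ encodes the deviation from stationarity. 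Assumption \ref{assumption_regMulti}(i) ensures that $R_{i,\mathbf{k}}$ is uniformly bounded by $\sigma^2\lambda_{\mathbf{k}}^{-(1+\alpha)}$ up to a constant, and the prefactor $(1-e^{-\lambda_{\mathbf{k}}\Delta_n})^2$ absorbs enough decay so that the resulting contribution fits into $r_{n,i}$ with the required sup- and sum-bounds (the factor $1-e^{-\lambda_{\mathbf{k}}\Delta_n}$ appears twice, one of which sums telescopically in $i$ to give the $\Oo(\Delta_n^{\alpha'})$ total).

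For the main term I would rewrite the time-factor as
\[
\frac{1-e^{-\lambda_{\mathbf{k}}\Delta_n}}{\lambda_{\mathbf{k}}^{1+\alpha}}
=\int_0^{\Delta_n}\frac{e^{-\lambda_{\mathbf{k}} s}}{\lambda_{\mathbf{k}}^{\alpha}}\diff s
\]
and treat the spatial factor via the trigonometric identity
\[
e_{\mathbf{k}}(\mathbf{y})^2=e^{-\nor{\kappa\bigcdot\mathbf{y}}_1}\prod_{l=1}^{d}\bigl(1-\cos(2\pi k_l y_l)\bigr)
=e^{-\nor{\kappa\bigcdot\mathbf{y}}_1}\sum_{S\subseteq\{1,\dots,d\}}(-1)^{|S|}\prod_{l\in S}\cos(2\pi k_l y_l).
\]
The term $S=\emptyset$ gives the leading contribution $e^{-\nor{\kappa\bigcdot\mathbf{y}}_1}\sum_{\mathbf{k}}\lambda_{\mathbf{k}}^{-\alpha}e^{-\lambda_{\mathbf{k}}s}$, and the remaining oscillatory sums vanish at $\Oo(\Delta_n)$ uniformly in $\mathbf{y}\in[\delta,1-\delta]^d$ by Abel summation / partial summation, since the cosines average out once one integrates against the smooth symbol $\lambda_{\mathbf{k}}^{-\alpha}e^{-\lambda_{\mathbf{k}} s}$ (this is the analogue in $d$ dimensions of the one-dimensional estimate used by \cite{trabs} and \cite{hildebrandt}). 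For the main term, I substitute $\lambda_{\mathbf{k}}=\pi^2\eta|\mathbf{k}|^2+C_0$ with $C_0=-\vartheta_0+\sum_l\nu_l^2/(4\eta)$ and perform a Riemann-sum comparison against
\[
\int_{[0,\infty)^d}e^{-\pi^2\eta s|\mathbf{u}|^2}(\pi^2\eta|\mathbf{u}|^2)^{-\alpha}\diff\mathbf{u}.
\]
After the substitution $\mathbf{u}=\mathbf{v}/\sqrt{\pi^2\eta s}$ and passing to spherical coordinates on the positive orthant, this integral equals $s^{\alpha-d/2}(\pi\eta)^{-d/2}2^{-d}\Gamma(d/2-\alpha)/\Gamma(d/2)$; using $d/2-\alpha=1-\alpha'$ and then integrating $s^{-1+\alpha'}$ over $(0,\Delta_n)$ produces exactly the prefactor $\Delta_n^{\alpha'}\Gamma(1-\alpha')/\bigl(2^d(\pi\eta)^{d/2}\alpha'\Gamma(d/2)\bigr)$.

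The rescaled second statement then follows by summing the pointwise identity over $i=1,\dots,n$ and dividing by $n\Delta_n^{\alpha'}=\Delta_n^{\alpha'-1}$: the sum of $r_{n,i}$ is $\Oo(\Delta_n^{\alpha'})$ by assumption, while $n\cdot\Oo(\Delta_n)=\Oo(1)$, so after division the total error is $\Oo(\Delta_n^{1-\alpha'})$, which dominates $\Oo(\Delta_n^{2\alpha'})$ for $\alpha'\in(0,1)$. The main obstacle I anticipate is the Riemann-sum step: the integrand $|\mathbf{u}|^{-2\alpha}$ is singular at the origin and the upper bound $\alpha<d/2$ is precisely what makes the integral finite, so the error between the discrete sum over $\N^d$ and the continuous integral has to be controlled uniformly in $s\in(0,\Delta_n]$ with an error of order $\Oo(1)$ (as opposed to $\Oo(s^{\alpha-d/2})$), which is what ultimately yields the $\Oo(\Delta_n)$ remainder after integration in $s$ -- a careful boundary analysis near $|\mathbf{k}|\asymp 1/\sqrt{s}$ is required here, and the oscillatory error from the $S\neq\emptyset$ terms must also be shown to be genuinely $\Oo(\Delta_n)$ rather than merely $o(\Delta_n^{\alpha'})$.
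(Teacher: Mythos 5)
Your overall route coincides with the paper's: spectral decomposition, reduction to the Ornstein--Uhlenbeck increment variance, the $\sin^2$-to-cosine expansion over subsets $S\subseteq\{1,\dots,d\}$, and a Riemann-sum comparison; your evaluation of the leading constant (interchanging the $s$-integral with the $\mathbf{k}$-sum and passing to spherical coordinates on the positive orthant) is correct and reproduces $\Gamma(1-\alpha')/(2^d(\pi\eta)^{d/2}\alpha'\Gamma(d/2))$. The genuine gap is in the error analysis, and you only half-sense it in your closing caveat. Neither of your two error claims holds in isolation when $d\geq2$: the Riemann-sum defect of the non-oscillatory term $S=\emptyset$ is \emph{not} $\Oo(\Delta_n)$ --- the comparison of $\Delta_n^{d/2}\sum_{\mathbf{k}}f_\alpha(\lambda_{\mathbf{k}}\Delta_n)$ with the integral over $[0,\infty)^d$ leaves boundary contributions $\int_{B_\gamma}f_\alpha(\pi^2\eta\euc{\textbf{z}}^2)\diff\textbf{z}$ over the strips where some coordinates lie in $[0,\sqrt{\Delta}_n/2)$, and these are of order $\Delta_n^{1/2}\vee\Delta_n^{1-\alpha'}$ (Lemma \ref{lemma_riemannApprox_multi} and Corollary \ref{corollary_toLemmaRiemannApproxMulti}), which for $\alpha'<1/2$ strictly exceeds every error the proposition tolerates. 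Likewise the oscillatory sums with $S\neq\emptyset$ do \emph{not} ``average out'' to $\Oo(\Delta_n)$ by Abel summation: part (ii) of Lemma \ref{lemma_riemannApprox_multi} shows each such sum equals $(-1)^{|S|}\int_{B_{\gamma_{j,|S|}}}f(\pi^2\eta\euc{\textbf{z}}^2)\diff\textbf{z}$ plus genuinely smaller remainders, i.e.\ it is itself of order $\Delta_n^{1/2}\vee\Delta_n^{1-\alpha'}$. The proof only closes because these two families of terms cancel \emph{exactly}: the leading part of each oscillatory sum is the negative of the corresponding boundary defect of the main sum. This cancellation is the genuinely multidimensional phenomenon with no analogue in $d=1$, and a plan that estimates the two error sources separately cannot reach the stated $\Oo(\Delta_n)$ remainder.

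A second, smaller gap: the diagonal reduction $\E[(\Delta_iX)^2(\mathbf{y})]=\sum_{\mathbf{k}}\E[(\Delta_ix_{\mathbf{k}})^2]e_{\mathbf{k}}^2(\mathbf{y})$ is not exact in general, because under the second alternative of Assumption \ref{assumption_regMulti}(i) the coefficients $\langle\xi,e_{\mathbf{k}}\rangle_\vartheta$ need not be centred, so independence does not kill the off-diagonal initial-condition terms $\E[A_{i,\mathbf{k}_1}A_{i,\mathbf{k}_2}]$ for $\mathbf{k}_1\neq\mathbf{k}_2$. The paper keeps these inside $r_{n,i}$ and controls $\sum_{i=1}^n r_{n,i}$ via Cauchy--Schwarz and Parseval against $\E[\nor{A_\vartheta^{(1+\alpha)/2}\xi}_\vartheta^2]$; your $R_{i,\mathbf{k}}$ bookkeeping only covers the diagonal and would need this additional argument to deliver the claimed bounds on $r_{n,i}$ under both regularity alternatives.
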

Comparing this result to the SPDEs in one space dimension presented in \cite{trabs} reveals some crucial differences in the structure of the random fields.
In higher space dimensions, we observe the appearance of the normalized volatility $\sigma_0^2$ and the curvature term $e^{-y\kappa}$, which are transposed from one space dimension. 
Furthermore, in higher dimensions, we obtain extra constants, among others, depending on $\alpha'$.
However, the most significant distinction when working in higher dimensions is that the parameter $\alpha'$, resulting from the coloured noise in this model, influences the leading term on one side and reduces the convergence speed of the error term on the other side.
By referring to \cite[Thm. 5.22]{daPrato}, we can see that $\alpha'$ governs the regularity in time, which is reflected in the presence of $\Delta_n^{\alpha'}$. Additionally, employing the Kolmogorov-Chentsov theorem (Kolmogorov continuity theorem) and Proposition \ref{prop_quadIncAndrescaling}, we find that the paths of $X_t$ are Hölder-continuous of almost order $\alpha'/2$. 
Note, that the space dimension $d$ of the model only affects the leading term of the expected value, while the order of the error term is solely dependent on $\alpha'$.
Additionally, the latter proposition reveals that the remainder $r_{n,i}$ becomes negligible when summing over the squared increments. As this remainder includes the initial condition, we observe that the impact of the initial condition becomes irrelevant when using the realized volatility statistic. Consequently, constructing an estimator based on the method of moments will yield better results for small $\alpha'$.
Assuming the parameters $\kappa\in\R^d$, $\eta>0$, and $\alpha'\in(0,1)$ to be known, an estimator based on the first moment method of the rescaled realized volatility for the volatility parameter $\sigma^2$ is therefore given by
\begin{align}
\hat{\sigma}^2_\textbf{y}:=\hat{\sigma}^2_n(\textbf{y}):=\frac{1}{n\Delta_n^{\alpha'}K}\sum_{i=1}^n(\Delta_iX)^2(\textbf{y})e^{\nor{\kappa\bigcdot \textbf{y}}_1},\label{sigmaY_estimator}
\end{align}
where the constant $K$ is defined by
\begin{align}
K:=\frac{\Gamma(1-\alpha')}{2^d(\pi\eta)^{d/2}\alpha'\Gamma(d/2)}.\label{eqn_KDefinition}
\end{align} 
Since the estimator $\hat{\sigma}^2_\textbf{y}$ estimates the volatility parameter $\sigma^2$ based on a single spatial point, we also introduce the following estimator:
\begin{align}
\hat{\sigma}^2:=\hat{\sigma}^2_{n,m}:=\frac{1}{nm\Delta_n^{\alpha'}K}\sum_{j=1}^m\sum_{i=1}^n(\Delta_iX)^2(\textbf{y}_j)e^{\nor{\kappa\bigcdot \textbf{y}_j}_1},\label{sigma_estimator}
\end{align}
for spatial points $\textbf{y}_1,\ldots,\textbf{y}_m\in [\delta,1-\delta]^d$.

An important distinction between coloured and white noise is that coloured noise often leads to correlated discrete increments, whereas we often find uncorrelated increments in white noise models. As demonstrated in \cite{trabs}, discrete temporal increments of a SPDE model in one spatial dimension are already negatively correlated, despite the use of white noise. This circumstance implies that we do not need to develop a fundamentally different theory, for instance, for the proofs of central limit theorems.

Nevertheless, by varying the structure of the cylindrical Brownian motion, we can expect a change in the autocovariance structure, which now depends on $\alpha'$. 
\begin{theorem}\label{prop_autocovOfIncrementsMulti}
On Assumptions \ref{assumption_observations_multi} and \ref{assumption_regMulti}, it holds for the covariance of the increments $(\Delta_iX)(y), 1\leq i\leq n$ uniformly in $\textbf{y}\in[\delta,1-\delta]$, for all $\delta\in(0,1/2)$ that
\begin{align*}
\Cov(\Delta_iX(\textbf{y}),\Delta_jX(\textbf{y}))&=-\sigma^2 e^{-\nor{\kappa\bigcdot \textbf{y}}_1}\Delta_n^{\alpha'}\frac{\Gamma(1-\alpha')}{2^{d+1}(\pi\eta)^{d/2}\alpha'\Gamma(d/2)}\\
&~~~~~\times\Big(2\abs{i-j}^{\alpha'}-\big(\abs{i-j}-1\big)^{\alpha'}-\big(\abs{i-j}+1\big)^{\alpha'}\Big)+ r_{i,j}+\Oo(\Delta_n),
\end{align*}
where $\abs{i-j}\geq 1$ and the remainders $(r_{i,j})_{i,j=1,\ldots,n}$ are negligible, i.e. $\sum_{i,j=1}^n r_{i,j}=\mathcal{O}(1)$.
\end{theorem}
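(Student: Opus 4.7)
The plan is to start from the spectral decomposition \eqref{eqn_fourierseriesMultiOfX} and exploit the independence of the driving Brownian motions $(W^{\mathbf{k}})_{\mathbf{k}\in\N^d}$ together with part (ii) of Assumption \ref{assumption_regMulti}, which makes the coordinate processes uncorrelated. This reduces the computation to
\begin{align*}
\Cov\bigl(\Delta_i X(\mathbf{y}),\Delta_j X(\mathbf{y})\bigr)
= \sum_{\mathbf{k}\in\N^d} e_{\mathbf{k}}(\mathbf{y})^2\,\Cov\bigl(\Delta_i x_{\mathbf{k}},\Delta_j x_{\mathbf{k}}\bigr).
\end{align*}
The Ornstein--Uhlenbeck representation \eqref{eqn_coordProcessMulti} then gives, for $h=|i-j|\ge 1$,
\begin{align*}
\Cov\bigl(\Delta_i x_{\mathbf{k}},\Delta_j x_{\mathbf{k}}\bigr)
= \frac{\sigma^2\lambda_{\mathbf{k}}^{-(1+\alpha)}}{2}
\Bigl(2e^{-\lambda_{\mathbf{k}}h\Delta_n}-e^{-\lambda_{\mathbf{k}}(h+1)\Delta_n}-e^{-\lambda_{\mathbf{k}}(h-1)\Delta_n}\Bigr) + \tilde r_{\mathbf{k},i,j},
\end{align*}
where $\tilde r_{\mathbf{k},i,j}$ collects the exponentially decaying contributions $e^{-\lambda_{\mathbf{k}}(i+j-2)\Delta_n}$ coming from the initial condition; by Assumption \ref{assumption_regMulti}(i) the aggregate contribution to $r_{i,j}$ is summable over $i,j$ and yields $\sum_{i,j}r_{i,j}=\mathcal{O}(1)$, analogously to the treatment of the remainder $r_{n,i}$ in Proposition \ref{prop_quadIncAndrescaling}.

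Next I plug in $e_{\mathbf{k}}(\mathbf{y})^2=2^d\,e^{-\nor{\kappa\bigcdot\mathbf{y}}_1}\prod_{l=1}^d\sin^2(\pi k_l y_l)$ and replace each factor $\sin^2(\pi k_l y_l)$ by its average $1/2$. The oscillatory error terms are handled exactly as in the proof of Proposition \ref{prop_quadIncAndrescaling} via a Riemann-sum approximation, uniformly in $\mathbf{y}\in[\delta,1-\delta]^d$, and will be absorbed into the $\mathcal{O}(\Delta_n)$ and $r_{i,j}$ remainders. After this step the leading sum becomes
\begin{align*}
\frac{\sigma^2}{2}\,e^{-\nor{\kappa\bigcdot\mathbf{y}}_1}\sum_{\mathbf{k}\in\N^d}\lambda_{\mathbf{k}}^{-(1+\alpha)}\Bigl(2e^{-\lambda_{\mathbf{k}}h\Delta_n}-e^{-\lambda_{\mathbf{k}}(h+1)\Delta_n}-e^{-\lambda_{\mathbf{k}}(h-1)\Delta_n}\Bigr).
\end{align*}

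The central analytical step is to evaluate this series asymptotically. Approximating $\lambda_{\mathbf{k}}\approx\pi^2\eta|\mathbf{k}|^2$, switching the Riemann sum over $\N^d$ to an integral over $\R^d_+$, and passing to spherical coordinates with $\omega_{d-1}=2\pi^{d/2}/\Gamma(d/2)$, the substitution $u=\pi^2\eta\Delta_n r^2$ produces the factor $\Delta_n^{\alpha'}$ through $1+\alpha-d/2=\alpha'$ and isolates the one-dimensional integral
\begin{align*}
I(h):=\int_0^\infty u^{-1-\alpha'}\Bigl(2e^{-uh}-e^{-u(h+1)}-e^{-u(h-1)}\Bigr)\diff u,
\end{align*}
which converges since the integrand is $\mathcal{O}(u^{1-\alpha'})$ near $0$. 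Using the identity $\int_0^\infty u^{-1-\alpha'}(e^{-au}-e^{-bu})\diff u=\Gamma(-\alpha')(a^{\alpha'}-b^{\alpha'})$ and $\Gamma(-\alpha')=-\Gamma(1-\alpha')/\alpha'$, one obtains $I(h)=-\frac{\Gamma(1-\alpha')}{\alpha'}\bigl(2h^{\alpha'}-(h+1)^{\alpha'}-(h-1)^{\alpha'}\bigr)$. Gathering the prefactors reproduces the constant $\Gamma(1-\alpha')/(2^{d+1}(\pi\eta)^{d/2}\alpha'\Gamma(d/2))$ and the claimed sign.

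The main obstacle will be making the Riemann-sum-to-integral approximation quantitative and uniform: one has to replace the lower-order terms in $\lambda_{\mathbf{k}}$ (the curvature contribution $\nu_l^2/(4\eta)-\vartheta_0$), control the approximation error uniformly in the lag $h\in\{1,\ldots,n-1\}$, and verify that the remainder terms arising from (a) the $\sin^2$ oscillations, (b) the lower-order part of $\lambda_{\mathbf{k}}$, and (c) the initial-condition contribution $\tilde r_{\mathbf{k},i,j}$ can all be packed into $r_{i,j}$ and $\mathcal{O}(\Delta_n)$ with the summability property $\sum_{i,j}r_{i,j}=\mathcal{O}(1)$. The bookkeeping here mirrors the corresponding step in Proposition \ref{prop_quadIncAndrescaling} but requires particular care because the factor $2h^{\alpha'}-(h+1)^{\alpha'}-(h-1)^{\alpha'}$ is only of order $h^{\alpha'-2}$, so higher-order corrections must be shown to be genuinely smaller than this leading behaviour uniformly in $h$.
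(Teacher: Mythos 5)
Your proposal is correct and follows essentially the same route as the paper: reduce to the diagonal sum of Ornstein--Uhlenbeck increment covariances, identify the leading kernel as the second difference $2e^{-\lambda_{\mathbf{k}}h\Delta_n}-e^{-\lambda_{\mathbf{k}}(h+1)\Delta_n}-e^{-\lambda_{\mathbf{k}}(h-1)\Delta_n}$ (algebraically identical to the paper's $-e^{-\lambda_{\mathbf{k}}(h-1)\Delta_n}(1-e^{-\lambda_{\mathbf{k}}\Delta_n})^2$, i.e.\ the function $g_{\alpha,h-1}$), and evaluate the Riemann sum via the Gamma-integral identity, which is precisely what the paper does through Lemma \ref{lemma_calcfAlphaDelta} and Corollary \ref{corollary_toLemmaRiemannApproxMulti}. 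The technical obstacles you flag (uniformity in the lag, the $\sin^2$ oscillations, and packing the $e^{-\lambda_{\mathbf{k}}(i+j-2)\Delta_n}$ terms into a summable remainder) are exactly the points the paper resolves with the same machinery.
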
 
The autocovariance of the coloured noise process appears to depend solely on the spatial coordinate $\textbf{y}$ through the exponential term, which implies that the autocorrelation is independent of the spatial coordinate. Consequently, the autocorrelation structure is determined by the temporal distance or lag between increments rather than the specific temporal locations themselves. 
If we assume that $n$ is sufficiently large, the autocorrelation of temporal increments can be approximated as follows:
\begin{align*}
\rho_{(\Delta_iX),\alpha'}(\abs{i-j})&\approx -\abs{i-j}^{\alpha'}+\frac{1}{2}\big(\abs{i-j}-1\big)^{\alpha'}+\frac{1}{2}\big(\abs{i-j}+1\big)^{\alpha'}=\Oo(\abs{i-j}^{\alpha'-2}),
\end{align*}
for $i \neq j$. As $\alpha'\in(0,1)$, the autocorrelation diminishes as the lag $\abs{i-j}$ between observations increases. Furthermore, from the first derivative, we observe that the autocorrelation is monotonically decreasing. Thus, the most substantial negative correlation is found at $\abs{i-j}=1$, where the autocorrelation takes the value $\rho_{(\Delta_iX),\alpha'}(1)=2^{\alpha'-1}-1$. In the one-dimensional case with a white noise structure, corresponding to $\alpha'=1/2$, the authors \cite{trabs} demonstrated a similar behaviour. They found the most significant (negative) autocorrelation occurred at consecutive increments, with a value of $(\sqrt{2}-2)/2$. Hence, this behaviour extends to multiple spatial dimensions. 

Assuming that the initial condition is a stationary normal distribution with $\xi\sim\mathcal{N}(0,\sigma^2/(2\lambda_\textbf{k}^{1+\alpha}))$, the random field $X_t$ becomes a Gaussian random field. Proposition \ref{prop_autocovOfIncrementsMulti} provides valuable information regarding the identifiability of parameters using temporal increments statistics such as realized volatility. In a manner similar to one space dimension, it appears feasible to consistently estimate the natural parameters, given as the \emph{normalized volatility} $\sigma_0^2=\sigma^2/\eta^{d/2}$ and the \emph{curvature} parameter $\kappa$.

Although the SPDE model in multiple-space dimensions possess an alternating structural behaviour compared to its one-dimensional counterpart, we can employ the decay of the autocovariances and derive a central limit theorem (CLT) for the estimator $\hat{\sigma}^2$ based on a CLT for $\rho$-mixing triangular arrays by \cite{utev}.
\begin{theorem}\label{prop_cltVolaEstMulti} 
On Assumptions \ref{assumption_observations_multi} and \ref{assumption_regMulti}, we have 
\begin{align*}
\sqrt{nm_n}(\hat{\sigma}_{n,m_n}^2-\sigma^2)\overset{d}{\longrightarrow}\mathcal{N}(0,\Upsilon_{\alpha'}\sigma^4),
\end{align*}
for $n\tooi$, $\Upsilon_{\alpha'}$ is a numerical constant defined in equation \eqref{eqn_definingUpsilon} and $m_n=\Oo(n^\rho)$, with $\rho\in\big(0,(1-\alpha')/(d+2)\big)$.
\end{theorem}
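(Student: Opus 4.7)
The plan is to decompose
\[
\sqrt{nm_n}(\hat\sigma_{n,m_n}^2-\sigma^2)=\sqrt{nm_n}\bigl(\hat\sigma_{n,m_n}^2-\E[\hat\sigma_{n,m_n}^2]\bigr)+\sqrt{nm_n}\bigl(\E[\hat\sigma_{n,m_n}^2]-\sigma^2\bigr)
\]
and to show that the first term converges in law to $\mathcal{N}(0,\Upsilon_{\alpha'}\sigma^4)$ while the bias vanishes. By Proposition \ref{prop_quadIncAndrescaling}, the bias is $\Oo(\Delta_n^{1-\alpha'})=\Oo(n^{\alpha'-1})$ uniformly in $\textbf{y}\in[\delta,1-\delta]^d$, and the constraint $\rho<(1-\alpha')/(d+2)$ in Assumption \ref{assumption_observations_multi} drives $\sqrt{nm_n}\cdot\Oo(n^{\alpha'-1})$ to $0$; this is the main motivation for that specific bound on $\rho$.

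For the centred part, introduce the temporal summands
\[
\zeta_{n,i}:=\frac{1}{\sqrt{m_n}\,K\Delta_n^{\alpha'}}\sum_{j=1}^{m_n}e^{\nor{\kappa\bigcdot\textbf{y}_j}_1}\bigl((\Delta_iX)^2(\textbf{y}_j)-\E[(\Delta_iX)^2(\textbf{y}_j)]\bigr),
\]
so that $\sqrt{nm_n}(\hat\sigma_{n,m_n}^2-\E[\hat\sigma_{n,m_n}^2])=n^{-1/2}\sum_{i=1}^n\zeta_{n,i}$. The next step is to compute $n^{-1}\sum_{i,i'=1}^n\Cov(\zeta_{n,i},\zeta_{n,i'})$ and verify convergence to $\Upsilon_{\alpha'}\sigma^4$. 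I expand each covariance into a diagonal and an off-diagonal-in-space contribution. The diagonal part is handled by the Gaussian identity $\Cov((\Delta_iX)^2(\textbf{y}),(\Delta_{i'}X)^2(\textbf{y}))=2\Cov(\Delta_iX(\textbf{y}),\Delta_{i'}X(\textbf{y}))^2$, combined with Proposition \ref{prop_autocovOfIncrementsMulti}, so that the factor $\Delta_n^{2\alpha'}$ cancels the normalisation $\Delta_n^{-2\alpha'}$ and leaves a lag-dependent autocorrelation. The off-diagonal-in-space part is controlled via the spectral decomposition \eqref{eqn_fourierseriesMultiOfX} of $X_t(\textbf{y})$ together with the lower bound on $m_n\min_{j_1\neq j_2}\nor{\textbf{y}_{j_1}-\textbf{y}_{j_2}}_0$ from Assumption \ref{assumption_observations_multi}, which forces the aggregated off-diagonal contribution to be $o(1)$. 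Summing the diagonal contribution over lags and identifying the resulting Riemann sum produces the constant $\Upsilon_{\alpha'}$ of \eqref{eqn_definingUpsilon}.

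For the distributional limit, I apply the $\rho$-mixing triangular-array CLT of \cite{utev} to $(\zeta_{n,i})_{i=1}^n$. Under Assumption \ref{assumption_regMulti} (in particular with a stationary Gaussian initial condition), the increments $\Delta_iX(\textbf{y}_j)$ are jointly Gaussian, so $\zeta_{n,i}$ lives in the second Wiener chaos and its $\rho$-mixing coefficients reduce to covariance decay. The autocorrelation asymptotics $\rho_{(\Delta_iX),\alpha'}(h)=\Oo(h^{\alpha'-2})$ derived after Proposition \ref{prop_autocovOfIncrementsMulti} are summable since $\alpha'<1$, so the mixing condition of Utev holds, while the Lindeberg condition follows from Gaussianity together with uniform boundedness of $\E[\zeta_{n,i}^2]$.

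The main obstacle is the passage from a single-site rate to the aggregated-over-space sequence $\zeta_{n,i}$: spatial correlations between the $m_n$ observation points could a priori inflate the cross-time correlations beyond the $\Oo(h^{\alpha'-2})$ rate available at a single site. To handle this I will derive a uniform bound on $\Cov(\Delta_iX(\textbf{y}_{j_1}),\Delta_{i'}X(\textbf{y}_{j_2}))$ from the damped spectral representation \eqref{eqn_QWienerProcessMulti}, and then use the minimum-coordinate separation $\nor{\textbf{y}_{j_1}-\textbf{y}_{j_2}}_0$ enforced by Assumption \ref{assumption_observations_multi} to show that the spatial summation in $\zeta_{n,i}$ does not worsen the single-site mixing rate. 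With this in hand, Utev's CLT yields the claimed convergence to $\mathcal{N}(0,\Upsilon_{\alpha'}\sigma^4)$.
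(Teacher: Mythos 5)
Your overall skeleton matches the paper's: a temporal triangular array obtained by aggregating over space, Utev's $\rho$-mixing CLT (Proposition \ref{prop_clt_utev}), Gaussian fourth-moment computations for the variance, and the spatial separation condition to control cross-site covariances. But the steps you treat as routine are precisely where the paper's proof does its real work, and as written your argument has genuine gaps there.

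First, Assumption \ref{assumption_regMulti} does not make $\xi$ Gaussian, so the increments $\Delta_iX(\textbf{y})$ are not jointly Gaussian and neither Isserlis' identity nor a second-Wiener-chaos reduction applies to your $\zeta_{n,i}$. The paper first proves Lemma \ref{lemma74}, showing that replacing $X$ by the stationary Gaussian modification $\tilde X$ changes $\sqrt{m_n}\sum_i(\Delta_iX)^2(\textbf{y})$ only by an $o_\Pp(1)$ term, and only then works in the Gaussian world; your proposal needs this reduction and does not contain it. Second, your treatment of Utev's condition (IV) -- ``the $\rho$-mixing coefficients reduce to covariance decay'' plus summability of $h^{\alpha'-2}$ -- is an assertion, not a proof: the condition concerns dependence between entire blocks of squared increments, and passing from pointwise autocovariance decay to a bound on $\Cov\big(e^{\im tQ_1^r},e^{\im tQ_{r+u}^v}\big)$ of the required form is the hardest part of the whole argument. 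The paper does it by writing $Q_{r+u}^v=A_1+A_2$ with $A_2$ independent of $Q_1^r$ and explicitly bounding $\E[\bar A_1^2]\leq C\sigma^4\bar p\,\Delta_n^{2\alpha'}(u-1)^{-(2-\alpha')}$ (Proposition \ref{porp_timeDepenMulit}, Corollary \ref{corollary_4thConditionMultiDimAndMultiSpace}); the Kolmogorov--Rozanov route you implicitly invoke would require controlling maximal (canonical) correlations of the blocks, not just entrywise covariances, which is not supplied. Third, computing the asymptotic variance from the statement of Proposition \ref{prop_autocovOfIncrementsMulti} is too coarse: the additive remainders $r_{i,j}+\Oo(\Delta_n)$ there, once squared, summed over $i,j$ and divided by $n\Delta_n^{2\alpha'}$, contribute $\Oo(\Delta_n^{1-2\alpha'})$, which diverges for $\alpha'>1/2$; the paper therefore recomputes the covariance of the realized volatilities from scratch with multiplicative error control (Proposition \ref{prop_RRVMulti}). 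Finally, your bias bookkeeping is incorrect: $\sqrt{nm_n}\,\Delta_n^{1-\alpha'}=\Oo(n^{\rho/2+\alpha'-1/2})$, which tends to zero only if $\rho<1-2\alpha'$, and the condition $\rho<(1-\alpha')/(d+2)$ does not imply this once $\alpha'>3/7$; so the claim that the rate condition ``drives the bias to zero'' fails exactly in the regime $\alpha'\geq 1/2$ (a point on which the paper itself is silent, centring its simulations empirically instead).
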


The previous proposition establishes that a central limit theorem holds for both volatility estimators, $\hat{\sigma}^2_{n}(\textbf{y})$ from equation \eqref{sigmaY_estimator} and $\hat{\sigma}^2_{n,m}$ from equation \eqref{sigma_estimator}, with an asymptotic variance of $\Upsilon_{\alpha'}\sigma^4$. Comparing this result to a SPDE model in one space dimension, as presented in \cite{trabs}, where $\alpha'=1/2$, reveals that the same asymptotic behaviour is achieved. Hence, this asymptotic behaviour extends to multiple space dimensions.
Nevertheless, Assumption \ref{assumption_observations_multi} states a stronger restriction than in the one-dimensional case, which is necessary for the covariance $\Cov(\hat{\sigma}^2_{\textbf{y}_1},\hat{\sigma}^2_{\textbf{y}_2})$ to asymptotically vanish for two distinct space points $\textbf{y}_1,\textbf{y}_2\in[\delta,1-\delta]^d$. 

As the asymptotic variance in the latter proposition hinges on the unknown volatility parameter, we can not observe confidence intervals for the volatility parameter directly. Nevertheless, confidence intervals can be observed by utilizing the quarticity estimator:
\begin{align*}
\hat{\sigma}^4:=\hat{\sigma}_{n,m}^4:=\bigg(\frac{2^d(\pi\eta)^{d/2}\alpha'\Gamma(d/2)}{\Gamma(1-\alpha')}\bigg)^2\frac{1}{3mn\Delta_n^{2\alpha'}} \sum_{j=1}^m\sum_{i=1}^n(\Delta_iX)^4(\textbf{y}_j)e^{2\nor{\kappa\bigcdot \textbf{y}_j}_1}.
\end{align*}
Under stronger regularity assumptions, such as $\sup_{\textbf{k}\in\N^d} \lambda_\textbf{k}^{1+\alpha}\E[\langle\xi,e_\textbf{k}\rangle_\vartheta^l]<\infty$, for $l=4,8$, one can show by using the bias-variance decomposition, that the quarticity estimator consistently estimates the quarticity parameter $\sigma^4$. Applying Slutskys theorem yields asymptotic confidence intervals.

\section{Asymptotic log-linear model for realized volatilities and least squares estimation}\label{sec_4}
In one space dimension, the authors \cite{bibinger2023efficient} showed, that realized volatilities can asymptotically be linked to a log-linear model, yielding efficient parameter estimation based on ordinary least squares for the natural parameters of the respective one-dimensional SPDE model. The aim of this section is to investigate if this link can be applied in the multivariate case too and therefore, considering parameter estimation for the natural parameters $\sigma_0^2>0$ and $\kappa\in\R$. Throughout this section, we assume the damping parameter $\alpha$ to be known. 
We propose an estimator for the pure damping parameter $\alpha'$ at the end of this section.

Building upon the foundation laid by Proposition \ref{prop_cltVolaEstMulti}, it becomes apparent that rescaled realized volatilities exhibit qualitative resemblance to normal random variables when the count of temporal observations is sufficiently large. Consequently, we are enabled to assert, for adequately large values of $n$, that
\begin{align*}
\sqrt{n}\big(\hat{\sigma}^2_{\textbf{y}}-\sigma^2\big)\approx \mathcal{N}(0,\Upsilon_{\alpha'}\sigma^4),
\end{align*}
where we obtain by rearranging the latter display that
\begin{align}
\frac{\RV(\textbf{y})}{n\Delta_n^{\alpha'}}&\approx e^{-\nor{\kappa\bigcdot \textbf{y}}_1} \frac{\Gamma(1-\alpha')\sigma^2}{\eta^{d/2}\alpha'}\cdot\frac{1}{\pi^{d/2}\Gamma(d/2)2^d\sqrt{n}}\big(\sqrt{n}+\sqrt{\Upsilon}_{\alpha'}Z\big)\notag\\
&=e^{-\nor{\kappa\bigcdot \textbf{y}}_1} \frac{\Gamma(1-\alpha')\sigma^2_0}{\alpha'}\cdot\frac{1}{\pi^{d/2}\Gamma(d/2)2^d}\bigg(1+\sqrt{\frac{\Upsilon_{\alpha'}}{n}}Z\bigg),
\end{align}
with $Z\sim\mathcal{N}(0,1)$. We adopt the strategy of converting this approximation into a log-linear model, namely:
\begin{align}
\log\bigg(\frac{\RV(\textbf{y})}{n\Delta_n^{\alpha'}}\bigg)&\approx -\nor{\kappa\bigcdot \textbf{y}}_1+\log\big(\sigma_0^2K\big)+\sqrt{\frac{\Upsilon_{\alpha'}}{n}}Z\label{eqn_transitionLogLinearModelMultiApprox},
\end{align}
where $K$ is defined in \eqref{eqn_KDefinition}. To be more precise, we arrive at an approximation that resembles a multiple linear regression model. Considering the asymptotic decorrelation of the realized volatilities across different spatial locations, we can establish this linear model by examining $\log(\RV(\textbf{y}_j)/(n\Delta_n^{\alpha'}))$ for $j=1,\ldots,m$. This representation also implies a homoscedastic normal distribution for the errors within the linear model. As the log-realized volatilities are only asymptotically linkable to a log-linear model, we have to carefully analyse the error terms.

To illustrate, let us revisit the concept of the multiple linear regression model with the help of the following example.
\begin{example}\label{example_OMLR}
An ordinary multiple linear regression model is given by
\begin{align*}
Y=X\beta+\varepsilon,
\end{align*}
where
\begin{align*}
Y=\begin{pmatrix}
Y_1\\ \vdots \\ Y_m
\end{pmatrix},~~~~~ 
X=\begin{pmatrix}
1 & y_1^{(1)} & \hdots & y_d^{(1)} \\ 
\vdots &\vdots & \ddots & \vdots \\
1 & y_1^{(m)} &\hdots & y_d^{(m)}
\end{pmatrix}, ~~~~~~
\beta=\begin{pmatrix}
\beta_0 \\ \beta_1 \\ \vdots \\ \beta_d
\end{pmatrix},
\end{align*}
and homoscedastic errors $\varepsilon=(\varepsilon_1,\ldots,\varepsilon_m)^\top$, with $\E[\varepsilon_i]=0$, $\Var(\varepsilon_i)=\sigma^2>0$, for $i=1,\ldots,m$ and $\Cov(\varepsilon_i,\varepsilon_j)=0$, for all $i,j=1,\ldots,m$, with $i\neq j$. In addition, the variance-covariance matrix of $\varepsilon$ is given by $\Sigma:=\Cov(\varepsilon)=\sigma^2E_{m}$. We call the parameter $\beta_0$ intercept and the parameters $\beta_i$ as slope, where $i=1,\ldots,d$.
Suppose that $m\geq (d+1)$, and the matrix $X$ possesses a full rank of $(d+1)$. Under these assumptions, the least squares estimator for the unobservable parameter $\beta$ within this model can be expressed as follows:
\begin{align*}
\hat{\beta}=(X^\top X)^{-1}X^\top Y.
\end{align*}
Substituting the representation of $Y$ into the latter expression results in the following identity:\begin{align}
\hat{\beta}=(X^\top X)^{-1}X^\top Y=\beta+(X^\top X)^{-1}X^\top\varepsilon,\label{eqn_identityGMEForTriangularArray}
\end{align}
which shows that the estimators $\hat{\beta}$ is unbiased.
In particular, the inverse of $(X^\top X)^{-1}$ exists due to the full rank condition on the design matrix $X$.
\end{example}

The component-wise estimators highlighted in Example \ref{example_OMLR} are commonly referred to as Gauss-Markov estimators. It is well-known, that the Gauss-Markov estimators qualify as BLUE (Best Linear Unbiased Estimators), implying that they possess the minimum variance among all linear and unbiased estimators. However, it is important to acknowledge that the number of observations $(Y_j)_{1\leq j\leq m}$ is intrinsically linked to the dimensionality, specifically requiring $m\geq (d+1)$, as stated in the preceding example. Consequently, we introduce the the following full-rank assumption.
\begin{assump}\label{assumption_fullRank}
Let $\textbf{y}_1,\ldots,\textbf{y}_m\in[\delta,1-\delta]^d$, where $m\geq d$ such that the linear span
\begin{align*}
\text{span}(\textbf{Y}_m)=\R^{d+1},~~~~~\text{where}~~~~~\textbf{Y}_m:=\big\{(1,\textbf{y}_1),\ldots,(1,\textbf{y}_m)\big\},
\end{align*}
is a spanning set of $\R^{d+1}$. 
\end{assump}
In accordance with Assumption \ref{assumption_observations_multi}, it is established that the discretization of the random field is more refined in time than in space, denoted by $m=\Oo(n^\rho)$, where $\rho\in\big(0,(1-\alpha')/(d+2)\big)$. Additionally, Assumption \ref{assumption_fullRank} imposes the requirement that a minimum of $d+1$ spatial observations is necessary to construct an estimator for the natural parameters. Collectively, these assumptions enforce a minimal number of temporal points, indicated by
\begin{align*}
n>(d+1)^{\frac{d+2}{1-\alpha'}},
\end{align*}
Asymptotically, this restriction is evidently satisfied since $d$ is fixed. However, the restrictive nature becomes significant in a simulation scenario. The latter display implies that $n$ grows exponentially with the dimensions $d\geq 2$. Furthermore, if $\alpha'$ is close to one, the growth of $n$ becomes particularly pronounced. Therefore, estimating the natural parameters using this least squares approach based on realized volatilities might only be accurate for lower dimensions, such as $d=2,3$, or when a large number of temporal observations is available.

We can now establish the estimators for the natural parameters $\sigma_0^2, \kappa_1, \ldots, \kappa_d$ within the context of the SPDE model from equation \eqref{generalSPDEequationMulti}. Leveraging the approximation \eqref{eqn_transitionLogLinearModelMultiApprox} and referencing Example \ref{example_OMLR}, we proceed to define the multi-dimensional parameter and its corresponding estimator as follows:
\begin{align}
\Psi:=\begin{pmatrix}
\log(\sigma_0^2K)\\ -\kappa_1 \\ \vdots \\ -\kappa_d
\end{pmatrix}\in\R^{d+1}~~~~~\text{and}~~~~~\hat{\Psi}:=\hat{\Psi}_{n,m}:=(X^\top X)^{-1}X^\top Y\in\R^{m},\label{eqn_psiEstiamtorAndPara}
\end{align}
where 
\begin{align*}
X:=\begin{pmatrix}
1 & y_1^{(1)} & \hdots & y_d^{(1)} \\ 
\vdots &\vdots & \ddots & \vdots \\
1 & y_1^{(m)} &\hdots & y_d^{(m)}
\end{pmatrix}\in\R^{m\times (d+1)}~~~~~\text{and}~~~~~Y:=\begin{pmatrix}
\log\Big(\frac{\RV(\textbf{y}_1)}{n\Delta_n^{\alpha'}}\Big) \\ \vdots \\
\log\Big(\frac{\RV(\textbf{y}_m)}{n\Delta_n^{\alpha'}}\Big)
\end{pmatrix}\in\R^{m}.
\end{align*}
To effectively estimate the natural parameters $\sigma_0^2, \kappa_1, \ldots, \kappa_d$, we introduce the parameter $\upsilon \in (0,\infty)\times \mathbb{R}^d$ along with its associated estimator $\hat{\upsilon}$, defined as follows:
\begin{align}
\upsilon:=\begin{pmatrix}
\sigma_0^2\\ \kappa_1\\ \vdots \\ \kappa_d\end{pmatrix}
~~~~~\text{and}~~~~~
\hat{\upsilon}:=\hat{\upsilon}_{n,m}:=h^{-1}(\hat{\Psi}):=\begin{pmatrix}
h_1^{-1}(\hat{\Psi}_1)\\ \vdots \\ h_{d+1}^{-1}(\hat{\Psi}_{d+1})
\end{pmatrix},\label{eqn_beta_estimator_61}
\end{align}
where $\hat{\Psi}=(\hat{\Psi}_1,\ldots,\hat{\Psi}_{d+1})^\top$ and $h:(0,\infty)\times\R^d\rightarrow \R^{d+1}$, $h^{-1}:\R^{d+1}\rightarrow (0,\infty)\times\R^d$, with
\begin{align}
h(\textbf{x})=\begin{pmatrix}
\log(x_1K)\\ -x_2 \\ \vdots \\ -x_{d+1}
\end{pmatrix}~~~~~\text{and}~~~~~h^{-1}(\textbf{x})=\begin{pmatrix}
e^{x_1}/K\\ -x_2 \\ \vdots \\ -x_{d+1}
\end{pmatrix}.\label{eqn_functionHMultiForDeltaMethod}
\end{align}
Note, that $h(\upsilon)=\Psi$.

When considering a central limit theorem, one concern lies in determining the asymptotic variance. In the context of Example \ref{example_OMLR}, we obtain:
\begin{align*}
\Var\big(\sqrt{m}(\hat{\beta}-\beta)\big)&=\Var\big(\sqrt{m}\hat{\beta}\big)=\sigma^2c\bigg(\frac{c}{m}X^\top X\bigg)^{-1}\overset{n\tooi}{\longrightarrow}\sigma^2c\Sigma^{-1},
\end{align*}
where we make the assumption that $c/m(X^\top X)$ converges to a symmetric positive-definite variance-covariance matrix $\Sigma \in \mathbb{R}^{(d+1)\times (d+1)}$, where $c>0$ is a suitable constant. This assumption consequently entails that $\Sigma^{-1}$ is also symmetric and positive-definite.
In our model, we observe spatial coordinates $\textbf{y}_1,\ldots,\textbf{y}_m$ within the range $[\delta,1-\delta]^d$, signifying that these spatial observations are situated at least $\delta>0$ distance away from the boundaries of the unit hypercube. We can examine the structure of the matrix $X^\top X$ by utilizing the explicitly provided expression of $X$ from Example \ref{example_OMLR} and have 
\begin{align*}
\frac{1-2\delta}{m}X^\top X =\frac{1-2\delta}{m} \begin{pmatrix}
m & \sum_{j=1}^m y_1^{(j)}&\sum_{j=1}^m y_2^{(j)} & \hdots &\sum_{j=1}^m y_d^{(j)} \\
\sum_{j=1}^m y_1^{(j)} & \sum_{j=1}^m (y_1^{(j)})^2 & \sum_{j=1}^m y_1^{(j)}y_2^{(j)} & \hdots & \sum_{j=1}^m y_1^{(j)}y_d^{(j)}\\
\sum_{j=1}^m y_2^{(j)} & \sum_{j=1}^m y_2^{(j)}y_1^{(j)}& \sum_{j=1}^m(y_2^{(j)})^2 & \hdots &\sum_{j=1}^m y_2^{(j)}y_d^{(j)}\\
\vdots &\vdots & \vdots & \ddots & \vdots \\
\sum_{j=1}^m y_d^{(j)} & \sum_{j=1}^m y_d^{(j)}y_1^{(j)}&  \sum_{j=1}^m y_d^{(j)}y_2^{(j)} & \hdots & \sum_{j=1}^m(y_d^{(j)})^2
\end{pmatrix}\overset{m\tooi}{\longrightarrow}\Sigma,
\end{align*}
where $\Sigma=(\Sigma_{i,l})_{1\leq i,l\leq d+1}$, with 
\begin{align}
\Sigma_{i,l}:=\begin{cases}
1-2\delta   &,\text{ if } i=l=1,\\
\lim_{m\tooi}\frac{1-2\delta}{m}\sum_{j=1}^m y_{l-1}^{(j)}   &,\text{ if } i=1,2\leq l\leq d+1,\\
\lim_{m\tooi}\frac{1-2\delta}{m}\sum_{j=1}^m y_{i-1}^{(j)}   &,\text{ if } 2\leq i\leq d+1, l=1,\\
\lim_{m\tooi}\frac{1-2\delta}{m}\sum_{j=1}^m \big(y_{i-1}^{(j)}\big)^2   &,\text{ if } 2\leq i=l\leq d+1,\\
\lim_{m\tooi}\frac{1-2\delta}{m}\sum_{j=1}^m y_{i-1}^{(j)}y_{l-1}^{(j)}&,\text{ if }2\leq i,l\leq d+1 \text{, with }i\neq l.
\end{cases}\label{eqn_asympVarCovMatrixMLRM_general} 
\end{align}
The convergence of the Riemann sums is guaranteed by the straightforward bounds:
\begin{align*}
0\leq \frac{1-2\delta}{m}\sum_{j=1}^m a_j\leq 1,
\end{align*}
for all $m\in\mathbb{N}$, where the sequence $(a_j)$ corresponds to the relevant sequence within the Riemann sums in equation \eqref{eqn_asympVarCovMatrixMLRM_general}.

We give this elementary example, since our estimator $\hat{\Psi}$ and the asymptotic variance-covariance matrix of our estimator will be in line with the translation of the example \ref{example_OMLR} to our model, as stated in the following proposition.
\begin{theorem}\label{clt_PsiMulti}
On Assumptions \ref{assumption_observations_multi}, \ref{assumption_regMulti} and \ref{assumption_fullRank}, we have 
\begin{align*}
\sqrt{nm_n}(\hat{\Psi}_{n,m_n}-\Psi)\overset{d}{\longrightarrow}\mathcal{N}\big(\textbf{0},\Upsilon_{\alpha'}(1-2\delta)\Sigma^{-1}\big),
\end{align*}
for $n\tooi$, $\delta\in(0,1/2)$, $\textbf{0}=(0,\ldots,0)^\top\in\R^{d+1}$, $\Upsilon_{\alpha'}$ defined in equation \eqref{eqn_definingUpsilon} and $\Sigma$ defined in equation \eqref{eqn_asympVarCovMatrixMLRM_general}.
\end{theorem}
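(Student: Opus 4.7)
The plan is to exploit the linear regression structure of $\hat\Psi$. By \eqref{eqn_psiEstiamtorAndPara}, we have the identity
\begin{equation*}
\hat\Psi_{n,m_n} - \Psi = (X^\top X)^{-1} X^\top (Y - X\Psi).
\end{equation*}
Using \eqref{sigmaY_estimator} one checks $\RV(\textbf{y}_j)/(n\Delta_n^{\alpha'}) = \hat\sigma^2_{\textbf{y}_j} K e^{-\nor{\kappa\bigcdot\textbf{y}_j}_1}$, so that the $j$-th entry of $Y - X\Psi$ equals $\log(\hat\sigma^2_{\textbf{y}_j}/\sigma^2)$ up to the deterministic bias $\Oo(\Delta_n^{1-\alpha'})$ from Proposition \ref{prop_quadIncAndrescaling}. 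A first-order Taylor expansion then gives
\begin{equation*}
\log(\hat\sigma^2_{\textbf{y}_j}/\sigma^2) = \frac{\hat\sigma^2_{\textbf{y}_j} - \sigma^2}{\sigma^2} + R_{n,j},
\end{equation*}
with a quadratic remainder $R_{n,j}$. Since $\hat\sigma^2_{\textbf{y}_j} - \sigma^2 = \Oo_{\Pp}(n^{-1/2})$ by the pointwise version of Proposition \ref{prop_cltVolaEstMulti}, we obtain $R_{n,j} = \Oo_{\Pp}(n^{-1})$; a uniform bound across $j=1,\ldots,m_n$ follows from a fourth-moment estimate combined with Markov's inequality, so that $\sqrt{nm_n}(X^\top X)^{-1}X^\top (R_{n,\cdot})$ is asymptotically negligible under the restriction $m_n = \Oo(n^\rho)$ with $\rho<1$ from Assumption \ref{assumption_observations_multi}.

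Next, the convergence $\tfrac{1-2\delta}{m_n}X^\top X \to \Sigma$ is the Riemann-sum statement displayed before \eqref{eqn_asympVarCovMatrixMLRM_general}, and is guaranteed by Assumption \ref{assumption_fullRank} together with $\textbf{y}_j \in [\delta,1-\delta]^d$. In particular $m_n(X^\top X)^{-1} \to (1-2\delta)\Sigma^{-1}$. By Slutsky's theorem the conclusion therefore reduces to the joint central limit theorem
\begin{equation*}
\frac{\sqrt{nm_n}}{\sigma^2}(X^\top X)^{-1} X^\top \big(\hat\sigma^2_{\textbf{y}_1}-\sigma^2,\ldots,\hat\sigma^2_{\textbf{y}_{m_n}}-\sigma^2\big)^\top \overset{d}{\longrightarrow} \mathcal{N}\big(\textbf{0},\Upsilon_{\alpha'}(1-2\delta)\Sigma^{-1}\big).
\end{equation*}

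The main obstacle lies in establishing this joint CLT for the triangular array of spatially indexed volatility residuals. By the Cramér--Wold device it suffices to prove a scalar CLT for arbitrary linear combinations $\sum_{j=1}^{m_n} c_j(\hat\sigma^2_{\textbf{y}_j} - \sigma^2)$, which are weighted double sums of squared temporal increments. I would mimic the proof of Proposition \ref{prop_cltVolaEstMulti} and invoke the $\rho$-mixing triangular-array CLT of \cite{utev}, with the mixing property furnished by the covariance decay of Proposition \ref{prop_autocovOfIncrementsMulti}. The decisive additional ingredient is spatial decorrelation, namely $\Cov(\hat\sigma^2_{\textbf{y}_{j_1}},\hat\sigma^2_{\textbf{y}_{j_2}}) = \smallO(n^{-1})$ uniformly for $j_1 \neq j_2$; this is precisely where the minimum-separation condition on $\nor{\textbf{y}_{j_1}-\textbf{y}_{j_2}}_0$ and the strengthened exponent $\rho<(1-\alpha')/(d+2)$ in Assumption \ref{assumption_observations_multi} are used. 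Granting this, the limiting covariance of $\sqrt{n/m_n}X^\top(\hat\sigma^2_{\textbf{y}_\cdot}-\sigma^2)/\sigma^2$ equals $\Upsilon_{\alpha'}\Sigma/(1-2\delta)$, and sandwiching with the two outer factors $m_n(X^\top X)^{-1}\to (1-2\delta)\Sigma^{-1}$ delivers the announced covariance $\Upsilon_{\alpha'}(1-2\delta)\Sigma^{-1}$.
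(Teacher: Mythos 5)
Your proposal follows essentially the same route as the paper: the Gauss--Markov identity $\hat\Psi-\Psi=(X^\top X)^{-1}X^\top(Y-X\Psi)$, the first-order Taylor expansion of the log-realized volatilities with an $\Oo(\Delta_n^{1-\alpha'})+\Oo_\Pp(\Delta_n)$ remainder (the paper's display \eqref{eqn_DecompRVMulti}), the convergence $\tfrac{1-2\delta}{m_n}X^\top X\to\Sigma$, and a reduction via Cram\'er--Wold to the $\rho$-mixing triangular-array CLT of \cite{utev} applied to the weighted squared increments, with spatial decorrelation supplied by the covariance bound of Proposition \ref{prop_RRVMulti} together with the minimum-separation condition and $\rho<(1-\alpha')/(d+2)$. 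This matches the paper's proof, which implements exactly this plan through the array \eqref{eqn_ximultiMLRM}, Corollary \ref{prop_CLTUtev_multivariate}, and Lemmas \ref{lemma_assympVarMultiMLRM}--\ref{lemma_3CondMLRM} with Corollary \ref{corollary_4CondMLRM}.
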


As in the one-dimensional case, our central limit theorem is readily feasible and provides asymptotic confidence intervals, as it only depends on the known parameter $\alpha'$. The latter proposition also states, that the connection between realized volatilities and a log-linear model transfers to multiple-space dimensions.

Utilizing the multivariate delta method yields a CLT for the estimator $\hat{\upsilon}$, given in the following corollary.

\begin{cor}\label{corollary_hatBetaCLT_61}
On Assumptions \ref{assumption_observations_multi}, \ref{assumption_regMulti} and \ref{assumption_fullRank}, we have 
\begin{align*}
\sqrt{nm_n}(\hat{\upsilon}_{n,m_n}-\upsilon)\overset{d}{\longrightarrow}\mathcal{N}\big(\textbf{0},\Upsilon_{\alpha'}(1-2\delta)J_{\sigma_0^2}\Sigma^{-1}J_{\sigma_0^2}\big),
\end{align*}
for $n\tooi$, $\delta\in(0,1/2)$, $\textbf{0}=(0,\ldots,0)^\top\in\R^{d+1}$, $\Upsilon_{\alpha'}$ defined in equation \eqref{eqn_definingUpsilon}, $\Sigma^{-1}$ defined in equation \eqref{eqn_asympVarCovMatrixMLRM_general} and $J_{\sigma_0^2}$ defined in equation \eqref{eqn_JSigma0SqDef}.
\end{cor}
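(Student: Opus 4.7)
The plan is a straightforward application of the multivariate delta method to the CLT for $\hat{\Psi}_{n,m_n}$ established in Proposition \ref{clt_PsiMulti}. By definition \eqref{eqn_beta_estimator_61}, $\hat{\upsilon}_{n,m_n} = h^{-1}(\hat{\Psi}_{n,m_n})$ and $\upsilon = h^{-1}(\Psi)$, where $h^{-1}$ is given in \eqref{eqn_functionHMultiForDeltaMethod}. Since $h^{-1}$ is a composition of the exponential function (in the first component) and the identity up to sign (in the remaining components), it is continuously differentiable on $\R^{d+1}$, so all hypotheses of the delta method are met.

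First, I would compute the Jacobian of $h^{-1}$. From \eqref{eqn_functionHMultiForDeltaMethod},
\begin{align*}
J_{h^{-1}}(\textbf{x}) = \mathrm{diag}\bigl(e^{x_1}/K,\, -1,\, \ldots,\, -1\bigr) \in \R^{(d+1)\times(d+1)}.
\end{align*}
Evaluating at $\textbf{x} = \Psi$, and using that $\Psi_1 = \log(\sigma_0^2 K)$, the top-left entry becomes $e^{\log(\sigma_0^2 K)}/K = \sigma_0^2$. Thus
\begin{align*}
J_{\sigma_0^2} := J_{h^{-1}}(\Psi) = \mathrm{diag}\bigl(\sigma_0^2,\, -1,\, \ldots,\, -1\bigr),
\end{align*}
which matches the matrix referenced in the statement via \eqref{eqn_JSigma0SqDef}. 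In particular, $J_{\sigma_0^2}$ is symmetric, so $J_{\sigma_0^2}^\top = J_{\sigma_0^2}$.

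Second, I would invoke the multivariate delta method: if $\sqrt{nm_n}(\hat{\Psi}_{n,m_n} - \Psi) \overset{d}{\longrightarrow} \mathcal{N}(\textbf{0}, \Upsilon_{\alpha'}(1-2\delta)\Sigma^{-1})$ by Proposition \ref{clt_PsiMulti}, then
\begin{align*}
\sqrt{nm_n}\bigl(h^{-1}(\hat{\Psi}_{n,m_n}) - h^{-1}(\Psi)\bigr) \overset{d}{\longrightarrow} \mathcal{N}\bigl(\textbf{0},\, J_{\sigma_0^2}\,\Upsilon_{\alpha'}(1-2\delta)\Sigma^{-1}\,J_{\sigma_0^2}^\top\bigr),
\end{align*}
which, using symmetry of $J_{\sigma_0^2}$ and pulling out the scalar factor $\Upsilon_{\alpha'}(1-2\delta)$, reduces exactly to the claimed limiting variance $\Upsilon_{\alpha'}(1-2\delta) J_{\sigma_0^2}\Sigma^{-1} J_{\sigma_0^2}$.

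There is no substantive obstacle here: the technical content is already absorbed into Proposition \ref{clt_PsiMulti}. The only sanity check worth performing is that the matrix $\Sigma$ is invertible, which follows from Assumption \ref{assumption_fullRank} guaranteeing that the spanning set $\textbf{Y}_m$ yields full column rank of $X$, and hence that the limiting normalized Gram matrix $\Sigma$ from \eqref{eqn_asympVarCovMatrixMLRM_general} is symmetric and positive-definite. Writing out these two or three lines with explicit reference to Proposition \ref{clt_PsiMulti} and \eqref{eqn_functionHMultiForDeltaMethod} completes the proof.
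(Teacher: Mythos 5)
Your proposal is correct and follows exactly the paper's own argument: apply the multivariate delta method to Proposition \ref{clt_PsiMulti} with the map $h^{-1}$ from \eqref{eqn_functionHMultiForDeltaMethod}, compute the diagonal Jacobian $J_{h^{-1}}(\Psi)=J_{\sigma_0^2}$, and use its symmetry to write the limiting covariance as $\Upsilon_{\alpha'}(1-2\delta)J_{\sigma_0^2}\Sigma^{-1}J_{\sigma_0^2}$. No differences worth noting.
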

\ \\ \\

We turn our attention to the estimation of the pure damping parameter $\alpha'\in(0,1)$, and therefore estimating the parameter $\alpha$. As this parameter necessarily arises when considering a multi-dimensional SPDE, an estimation of this parameter becomes even more pronounced then in one-space dimensions. In addition, the presented estimators in this paper were constructed under the premises, that $\alpha'$ is known. When dealing with real-world data, this assumption may not be fulfilled. 

As already mentioned, the damping parameter controls the Hölder regularity of the temporal marginal processes of a solution $X_t$, which is also effecting the correlations in our model. Therefore, we follow an approach for estimating $\alpha'$ by using a well-established concept from estimating the Hurst parameter for fractional Brownian motions. The main idea is to use two different temporal grids, one containing all the available data and the other, containing a thinned version of the original grid. Having both grids, we aim to use realized volatilities in order to gain information about the pure damping parameter. 

In detail, let us consider a mild solution $X_t(\textbf{y})$ of the SPDE model from equation \eqref{generalSPDEequationMulti}. Assume we obtain $X$ on a grid with $2n$ temporal and $m$ spatial points according to Assumption \ref{assumption_observations_multi}. 
First, we want the new grid to be equidistant in time with $\tilde{n}=\Oo(2n)$, $\tilde{n}<2n$ temporal points, such that it satisfies the observation scheme in Assumption \ref{assumption_observations_multi}. Furthermore, Proposition \ref{prop_RRVMulti}, as presented in section \ref{sec_6}, suggests to filter the original grid such that the new grid contains the maximum amount of temporal points. Intuitively, having the most possible temporal points, while respecting an equidistant order of these, should shrink the variance of the estimator. Hence, we set $\tilde{n}=n$. As we need to distinguish between both temporal resolutions we introduce the following notations. The temporal increments for both grids are denoted by
\begin{align*}
(\Delta_{2n,i_1}X)(\textbf{y}):=X_{i_1\Delta_{2n}}-X_{(i_1-1)\Delta_{2n}}~~~~~\text{and}~~~~~(\Delta_{n,i_2}X)(\textbf{y}):=X_{i_2\Delta_{n}}-X_{(i_2-1)\Delta_{n}},
\end{align*}
where $1\leq i_1\leq 2n$ and $1\leq i_2\leq n$. The increments of the filtered temporal grid can be rewritten by
\begin{align*}
(\Delta_{n,i}X)(\textbf{y})&=X_{2i\Delta_{2n}}-X_{2(i-1)\Delta_{2n}}=(\Delta_{2n,2i}X)(\textbf{y})+(\Delta_{2n,2i-1}X)(\textbf{y}),
\end{align*}
where $i=1,\ldots,n$. Furthermore, by using a index transformation, we can write:
\begin{align}
(\Delta_{n,i}X)(\textbf{y})&=\mathbbm{1}_{2\N}(i)\big((\Delta_{2n,i}X)(\textbf{y})+(\Delta_{2n,i-1}X)(\textbf{y})\big),\label{eqn_incFiltGridafterIndexShift}
\end{align}
for $i=1,\ldots,2n$, where $2\N$ denotes the set of all even and non-negative integers, i.e.: $2\N=\{0,2,4,\ldots\}$.
Thus, the realized volatilities can be defined as:
\begin{align*}
\text{RV}_{2n}(\textbf{y}):=\sum_{i=1}^{2n}(\Delta_{2n,i}X)^2(\textbf{y})~~~~~\text{and}~~~~~\text{RV}_{n}(\textbf{y}):=\sum_{i=1}^{n}(\Delta_{n,i}X)^2(\textbf{y}).
\end{align*}
By using equation \eqref{eqn_incFiltGridafterIndexShift}, we can link the filtered realized volatilities with the original grid and obtain:
\begin{align}
\text{RV}_{n}(\textbf{y})
&=\text{RV}_{2n}(\textbf{y})+2\sum_{i=2}^{2n}\mathbbm{1}_{2\N}(i)(\Delta_{2n,i}X)(\textbf{y})(\Delta_{2n,i-1}X)(\textbf{y}).\label{eqn_1212}
\end{align}
By using equation \ref{eqn_transitionLogLinearModelMultiApprox}, we can construct an estimator for the pure damping parameter using the following approach: 
\begin{align*}
\log\bigg(\frac{\RV(\textbf{y})}{n}\bigg)-\log\bigg(\frac{\RVt(\textbf{y})}{2n}\bigg)&\approx \alpha'\big(\log(\Delta_{n})-\log(\Delta_{2n})\big)+\sqrt{\frac{\Upsilon_{\alpha'}}{n}}Z_1-\sqrt{\frac{\Upsilon_{\alpha'}}{2n}}Z_2\notag\\
&=  \alpha'\log(2)+\sqrt{\frac{\Upsilon_{\alpha'}}{n}}Z_1-\sqrt{\frac{\Upsilon_{\alpha'}}{2n}}Z_2,
\end{align*} 
where $Z_1,Z_2\sim\mathcal{N}(0,1)$ and $\textbf{y}\in[\delta,1-\delta]^d$. Among others, the linear model proposes an estimator for the unknown parameter $\alpha'$ given by
\begin{align}
\hat{\alpha}':=\hat{\alpha}_{2n,m}':=\frac{1}{\log(2)m}\sum_{j=1}^m\log\bigg(\frac{2\RV(\textbf{y}_j)}{\RVt(\textbf{y}_j)}\bigg).\label{eqn_defalphaEst}
\end{align}
For analysing asymptotic properties of this estimator, it is crucial to investigate the correlation structure of quadratic temporal increments and the product of consecutive temporal increments, as evident by \eqref{eqn_1212}. Having this knowledge on the covariances, we can prove the following CLT.

\begin{theorem}\label{prop_cltAlpha}
On Assumptions \ref{assumption_observations_multi} and \ref{assumption_regMulti} we have 
\begin{align*}
\sqrt{2nm_n}(\hat{\alpha}'_{2n,m_{2n}}-\alpha')\overset{d}{\longrightarrow}\mathcal{N}\Big(0,\log(2)^{-2}\big(3\Upsilon_{\alpha'}-2^{2-\alpha'}(\Upsilon_{\alpha'}+\Lambda_{\alpha'})\big)\Big),
\end{align*}
for $n\tooi$, where $m_{2n}=\Oo\big((2n)^\rho\big)$ with $\rho\in\big(0,(1-\alpha')/(d+2)\big)$, $\Upsilon_{\alpha'}$ defined in \eqref{eqn_definingUpsilon} and $\Lambda_{\alpha'}$ defined in equation \eqref{eqn_definingLambda}.
\end{theorem}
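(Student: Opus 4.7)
The plan is to linearize $\hat\alpha'-\alpha'$ via a Taylor expansion of the logarithm and then apply the $\rho$-mixing CLT of \cite{utev} used in the proof of Proposition \ref{prop_cltVolaEstMulti}. First, observe that the scaling factors $K$ and $e^{-\nor{\kappa\bigcdot\textbf{y}}_1}$ in \eqref{sigmaY_estimator} cancel inside the ratio: letting $\hat\sigma_{n,\textbf{y}}^2$ and $\hat\sigma_{2n,\textbf{y}}^2$ denote the volatility estimators from \eqref{sigmaY_estimator} built on the grids of size $n$ and $2n$ respectively, we have
\begin{align*}
\log\big(2\RV(\textbf{y})/\RVt(\textbf{y})\big) - \alpha'\log(2) = \log\big(\hat\sigma_{n,\textbf{y}}^2\big) - \log\big(\hat\sigma_{2n,\textbf{y}}^2\big).
\end{align*}
A Taylor expansion of $\log$ around the common target $\sigma^2$ yields the linearization
\begin{align*}
\hat\alpha' - \alpha' = \frac{1}{\log(2)\sigma^2 m}\sum_{j=1}^m\big(\hat\sigma_{n,\textbf{y}_j}^2 - \hat\sigma_{2n,\textbf{y}_j}^2\big) + R_{n,m},
\end{align*}
where $R_{n,m}$ aggregates the quadratic Taylor remainders and the deterministic bias coming from Proposition \ref{prop_quadIncAndrescaling}. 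The quadratic remainders are $\Oo_P(n^{-1})$ per spatial point by Proposition \ref{prop_cltVolaEstMulti}, and the deterministic bias is $\Oo(\Delta_n^{1-\alpha'})$; both are asymptotically negligible at rate $\sqrt{2nm_n}$ under the restricted growth $m_n=\Oo(n^\rho)$ with $\rho<(1-\alpha')/(d+2)$.

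It remains to derive the CLT for the linearized sum. Using the decomposition \eqref{eqn_1212} we have the identity
\begin{align*}
\hat\sigma_{n,\textbf{y}}^2 - \hat\sigma_{2n,\textbf{y}}^2 = (2^{1-\alpha'}-1)\hat\sigma_{2n,\textbf{y}}^2 + \frac{2e^{\nor{\kappa\bigcdot\textbf{y}}_1}}{n\Delta_n^{\alpha'}K}\sum_{i=1}^{n}(\Delta_{2n,2i}X)(\textbf{y})(\Delta_{2n,2i-1}X)(\textbf{y}),
\end{align*}
which reduces the variance of the difference to $\Var(\sqrt{2n}\hat\sigma_{n,\textbf{y}}^2)/\sigma^4 = 2\Upsilon_{\alpha'}$, $\Var(\sqrt{2n}\hat\sigma_{2n,\textbf{y}}^2)/\sigma^4=\Upsilon_{\alpha'}$ (both coming from Proposition \ref{prop_cltVolaEstMulti}) and the cross-covariance $\Cov(\sqrt{2n}\hat\sigma_{n,\textbf{y}}^2,\sqrt{2n}\hat\sigma_{2n,\textbf{y}}^2)/\sigma^4$. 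By Gaussianity of the coordinate processes (Assumption \ref{assumption_regMulti}) and Isserlis' theorem, this cross-covariance reduces to sums of products of two pairwise increment covariances given by Proposition \ref{prop_autocovOfIncrementsMulti}. Resumming these Abel-type series in the limit $n\tooi$, the diagonal contributions of the form $\Cov((\Delta_{2n,i})^2,(\Delta_{2n,k})^2)$ reproduce the series defining $\Upsilon_{\alpha'}$ scaled by $2^{1-\alpha'}$, while the new off-diagonal contributions $\Cov((\Delta_{2n,i})^2,(\Delta_{2n,k}\Delta_{2n,k-1}))$ sum to the constant $\Lambda_{\alpha'}$ from \eqref{eqn_definingLambda}. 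Combining the three terms produces the single-point asymptotic variance $3\Upsilon_{\alpha'}-2^{2-\alpha'}(\Upsilon_{\alpha'}+\Lambda_{\alpha'})$.

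For the spatial averaging, the separation controlled by $\nor{\cdot}_0$ in Assumption \ref{assumption_observations_multi} ensures that the cross-spatial covariances between the summands at distinct $\textbf{y}_{j_1}\neq\textbf{y}_{j_2}$ vanish asymptotically, exactly as in the proof of Proposition \ref{prop_cltVolaEstMulti}, so that averaging over $j=1,\ldots,m_n$ just introduces the factor $1/m_n$ in the variance. Finally, the CLT itself follows by applying the $\rho$-mixing CLT of \cite{utev} to the triangular array built from $(\hat\sigma_{n,\textbf{y}_j}^2-\hat\sigma_{2n,\textbf{y}_j}^2)_{j=1,\ldots,m_n}$, using that the autocovariance of the underlying temporal increments decays at polynomial rate $\Oo(|i-k|^{\alpha'-2})$ (cf. the discussion following Proposition \ref{prop_autocovOfIncrementsMulti}), which is summable since $\alpha'<1$. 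The main obstacle in the whole argument is the explicit identification of the limiting cross-covariance constant $\Lambda_{\alpha'}$ through the careful Gaussian moment reduction and the resummation of the resulting Abel-type series, where the two different temporal resolutions need to be coupled consistently via \eqref{eqn_incFiltGridafterIndexShift}.
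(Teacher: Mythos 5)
Your proposal is correct and follows essentially the same route as the paper: linearize the log-ratio, couple the two temporal grids via the identity \eqref{eqn_1212} (equivalently $V_{p,\Delta_n}=2^{1-\alpha'}V_{2p,\Delta_{2n}}+2^{2-\alpha'}W_{2p,\Delta_{2n}}$), reduce the asymptotic variance to $\Var(V_{n,\Delta_n})$, $\Var(V_{2n,\Delta_{2n}})$ and the cross-covariance $\Cov(V_{2n,\Delta_{2n}},W_{2n,\Delta_{2n}})$, whose Isserlis-type evaluation yields $\Lambda_{\alpha'}$, and then invoke the $\rho$-mixing CLT of Utev as in Proposition \ref{prop_cltVolaEstMulti}. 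This is precisely the structure the paper adopts, with the only fully written-out ingredient being the covariance computation of Proposition \ref{prop_RRV_2Grids}, which your sketch correctly identifies as the main technical obstacle.
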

As the proof of this central limit theorem uses analogous techniques as used for Proposition \ref{prop_cltVolaEstMulti}, we omit the proof and only provide the proof leading to the asymptotic variance in Section \ref{sec_6}. The term $-2^{2-\alpha'}(\Upsilon_{\alpha'}+\Lambda_{\alpha'})\big)$ in the asymptotic variance, as outlined in the letter CLT, represent the non-negligible covariance structures that appear when using realized volatilities on two temporal grids with different resolutions. 
Since Proposition \ref{prop_cltAlpha} also establishes the consistency of the estimator $\hat{\alpha}'$, we can conclude that the estimators $\hat{\sigma}^2_\textbf{y}$ and $\hat{\sigma}^2$ from Section \ref{sec_3}, along with $\hat{\Psi}$ and $\hat{\upsilon}$ from Section \ref{sec_4}, remain consistent when $\alpha'$ is unknown and therefore replaced via plug-in by the estimator $\hat{\alpha}'$. We can also preserve the original CLTs from the estimators $\hat{\sigma}^2,\hat{\Psi}$ and $\hat{\upsilon}$ from the Propositions \ref{prop_cltVolaEstMulti}, \ref{clt_PsiMulti} and Corollary \ref{corollary_hatBetaCLT_61}, by accepting a slightly slower rate than $n^{1/2}$.

\section{Simulation methods and Monte Carlo simulation study}
\subsection{Simulation methods}\label{sec_51}
To simulate linear one-dimensional SPDE models, two techniques have been established: the truncation method and the replacement method, as referenced in \cite{trabs} and \cite{hildebrandt2020}, respectively. In the subsequent discussion, we will explore both simulation methods in the context of multi-dimensional cases, beginning with the truncation method.

The truncation method relies on the Fourier decomposition of a mild solution $X_t(\textbf{y})$ of a SPDE model from \eqref{generalSPDEequationMulti} and allows to simulate these SPDE models with deterministic or normally distributed initial conditions $\xi$. The concept involves truncating the Fourier series from \eqref{eqn_fourierseriesMultiOfX} at a sufficiently large cut-off frequency $\textbf{K}=(K_1,\ldots,K_d)\in\N^d$, simulating only the first $K_l$ Fourier modes respectively, where $l=1,\ldots,d$. Assuming a deterministic or normally distributed initial condition, combined with Assumption \ref{assumption_observations_multi}, we find the coordinate processes normally distributed, where 
\begin{align*}
x_\textbf{k}(t)\sim \mathcal{N}\bigg(e^{-\lambda_\textbf{k} t}\mu_\xi ,~\frac{\sigma^2}{2\lambda_\textbf{k}^{1+\alpha}}\Big(1-e^{-2\lambda_\textbf{k} t}\Big) +e^{-2\lambda_\textbf{k} t}\sigma_\xi^2\bigg),
\end{align*}
for $\textbf{k}\in\N^d$. Here $\mu_\xi$ and $\sigma_{\xi}^2$ denotes the expected value and variance of the initial condition $\xi$, respectively. 
Assuming that the initial condition is deterministic, we can deduce that $x_\textbf{k}$ is normally distributed with a variance of $\sigma^2(1-e^{-2\lambda_\textbf{k} t})/(2\lambda_{\textbf{k}}^{1+\alpha})$. 
Notably, the variance of the Fourier modes $x_\textbf{k}$ is influenced by the damping parameter $\alpha$. A larger value of $\alpha$ implies a stronger damping and quicker convergence of the variance towards zero, when $\textbf{k}\rightarrow\infty$. On the other hand, a smaller value of $\alpha$ indicates a weaker damping and slower convergence of the variance. \ \\

To simulate the Fourier modes $x_\textbf{k}$, we have for $t=0,\dots,(N-1)\Delta_n$ that
\begin{align*}
x_\textbf{k}(t+\Delta_n)-x_\textbf{k}(t)e^{-\lambda_\textbf{k}\Delta_n}
&=\sigma\lambda_\textbf{k}^{-\alpha/2}\int_{t}^{t+\Delta_n}e^{-\lambda_\textbf{k}(t+\Delta_n-s)}\diff W_s^\textbf{k}.
\end{align*}
Hence, we infer the recursive representation:
\begin{align*}
x_\textbf{k}(t+\Delta_n)=x_k(t)e^{-\lambda_\textbf{k}\Delta_n}+\sigma\sqrt{\frac{1-\exp[-2\lambda_\textbf{k}\Delta_n]}{2\lambda_\textbf{k}^{1+\alpha}}}\mathcal{N}_t,
\end{align*}
with i.i.d.\ standard normals $\mathcal{N}_t$ and $x_{\textbf{k}}(0)=\langle\xi,e_{\textbf{k}}\rangle_\vartheta$, where $\xi$ is either deterministic or normal distributed. 
We therefore introduce the truncation method by approximating the Fourier series of $X_t(\textbf{y})$ using a cut-off frequency $\mathcal{K}:=\{1,\ldots,K\}^d$, where $K\in\N$. 

In one space dimension, the effectiveness of this method is strongly influenced by the chosen cut-off rate $K \in \N$. The authors \cite{uchida} observed through empirical study that insufficiently large values of $K$ lead to considerable biases in the simulations. Selecting an appropriate cut-off rate also appears to be dependent on the number of spatial and temporal observations.
Even for moderate sample sizes, a cut-off rate of $K=10^5$ is recommended, but it comes with a significant computational cost. For instance, simulating a single realization of $X$ on a grid with $M=100$ spatial points and $N=10^4$ temporal points, using a cut-off rate $K=10^5$, takes approximately 6 hours when utilizing 64 cores.
These issues becomes even more pronounced when dealing with multiple space dimensions. 
When simulating multi-dimensional SPDEs, it is reasonable to choose a cut-off frequency of at least $K=10^5$ as well, leading to $(K+1)^d$ loop iterations.
For example, in a two-dimensional case, \cite{tonaki2023parameter} performed simulations at $200\times 200$ equispaced coordinates with a temporal resolution of $N=10^3$. Using a cut-off rate of $K=10^5$, the simulation of one sample path took approximately 100 hours while using three personal computers.
This highlights the computational challenge of simulating multi-dimensional SPDEs with a large cut-off frequency, as it requires a substantial amount of computing power and time. However, the use of a sufficiently high cut-off frequency is crucial to ensure accurate and unbiased simulations of the SPDEs. 
These issues motivated the second approach, known as the replacement method.

The author \cite{hildebrandt2020} build on the work of \cite{davie2001convergence}, by replacing the higher Fourier modes instead of cutting them off. For introducing this approach, we assume $\xi\equiv 0$. The main idea of the replacement approach is to change the Hilbert space, leading to the infinite Fourier representation in \eqref{eqn_fourierseriesMultiOfX}. Therefore, we assume the spatial coordinates to be equidistant $\textbf{y}\in\{0,1/M,\ldots,(M-1)/M,1\}^d$ along each space dimension, i.e., $\textbf{y}_\textbf{j}=\textbf{j}/M=(j_1/M,\ldots,j_d/M)$ and $\textbf{j}\in\{0,\ldots,M\}^d=:\mathcal{J}$. We define the inner product by
\begin{align*}
\langle f,g,\rangle_{\vartheta,M}:=\frac{1}{M^d}\sum_{\textbf{j}\in\mathcal{J}}f(\textbf{y}_\textbf{j})g(\textbf{y}_\textbf{j})e^{\nor{\kappa\bigcdot \textbf{y}_\textbf{j}}_1},
\end{align*}
where $f,g:[0,1]^d\too \R$. It holds that $(e_k)_{1\leq k<M}$ from equation \eqref{eqn_ekMulti} form an orthonormal system with respect to the inner product $\langle\cdot,\cdot\rangle_{\vartheta,M}$. 
Hence, we can express a solution $X_t$ as:
\begin{align*}
X_t(\textbf{y}_\textbf{j})=\sum_{\textbf{m}\in\mathcal{M}}U_\textbf{m}(t)e_\textbf{m}(\textbf{y}_\textbf{j}),~~~~~\text{with}~~~~~U_\textbf{m}(t)=\langle X_t,e_\textbf{m}\rangle_{\vartheta,M},
\end{align*}
where $\mathcal{M}=\{1,\ldots,M-1\}^d$. Note, that $e_\textbf{m}(\textbf{y}_\textbf{j})=0$, if $\textbf{m}=(m_1,\ldots,m_d)$ contains at least one entry $m_l$, which is either zero or $M$, i.e. $m_l\in\{0,M\}$, for a $l\in\{1,\ldots,d\}$.
Using the Fourier representation, as given in equation \eqref{eqn_fourierseriesMultiOfX}, we have 
\begin{align*}
U_\textbf{m}(t)=\sum_{\textbf{k}\in\N^d}x_\textbf{k}(t)\langle e_\textbf{k},e_\textbf{m}\rangle_{\vartheta,M}.
\end{align*}
Let $\textbf{k}\in\N^d$, then we can decompose the inner product by
\begin{align*}
\langle e_\textbf{k},e_\textbf{m}\rangle_{\vartheta,M}=\frac{2^d}{M^d}\sum_{\textbf{j}\in\mathcal{J}}\prod_{l=1}^d\sin(\pi k_l \textbf{y}_{\textbf{j}_l})\sin(\pi m_l\textbf{y}_{\textbf{j}_l})=\prod_{l=1}^d\langle \tilde{e}_{k_l},\tilde{e}_{m_l}\rangle_{\vartheta,M,1},
\end{align*}
where $\tilde{e}_k$ and $\langle\cdot,\cdot\rangle_{\vartheta,M,1}$ denote the respective one-dimensional orthonormal basis and inner product as defined in \cite{hildebrandt2020}. Thereby, we also know that
\begin{align*}
\abs{\langle \tilde{e}_{k},\tilde{e}_{m}\rangle_{\vartheta,M,1}}=1,
\end{align*}
if $k=m+2lM$ or $k=2M-m+2lm$ for $l\in\N_0$, $k\in\N$ and $m\in\{1,\ldots,M-1\}$. Therefore, the index set $\mathcal{I}_\textbf{m}$ is given by the following $d$-fold Cartesian product:
\begin{align*}
\mathcal{I}_\textbf{m}:=\bigtimes_{l=1}^d\mathcal{I}_{m_l,1},
\end{align*}
where $\mathcal{I}_{k,1}$ denotes the one-dimensional index set introduced by \cite{hildebrandt2020}, given by
\begin{align*}
\mathcal{I}_{k,1}^+=\{k+2lM,l\in\N_0\},~~\mathcal{I}_{k,1}^-=\{2M-k+2lM,l\in\N_0\},~~\mathcal{I}_{k,1}:=\mathcal{I}_{k,1}^+\cup\mathcal{I}_{k,1}^-,
\end{align*}
where $k\in\N$.
Since $x_\textbf{l}\overset{d}{=}-x_\textbf{l}$, for all $\textbf{l}\in\N^d$, we have 
\begin{align*}
U_\textbf{m}(t)=\sum_{\textbf{k}\in\N^d}x_\textbf{k}(t)\langle e_\textbf{k},e_\textbf{m}\rangle_{\vartheta,M}=\sum_{\textbf{l}\in\mathcal{I}_\textbf{m}}x_\textbf{l}(t),
\end{align*} 
where $x_\textbf{l}$ denotes the coordinate process from equation \eqref{eqn_coordProcessMulti}. 
The covariances of the coordinate processes, given by
\begin{align*}
\Cov\big(x_\textbf{k}(t_i),x_\textbf{k}(t_j)\big)&=\frac{\sigma^2}{2\lambda_\textbf{k}^{1+\alpha}}e^{-\lambda_\textbf{k} \abs{i-j}\Delta_n}\Big(1-e^{-2\lambda_\textbf{k}\min(i,j)\Delta_n}\Big),
\end{align*}
are vanishing if $\lambda_\textbf{k}\propto \euc{\textbf{k}}^2$ is significantly larger than $1/\Delta_N$ due to the presence of the exponential term. Therefore the coordinate processes $(x_\textbf{k}(t_i))_{1\leq i\leq N}$ effectively behave like i.i.d.\ centred normal random variables, with a variance:
\begin{align*}
\Var\big(x_\textbf{k}(t_i)\big)\approx \frac{\sigma^2}{2\lambda_\textbf{k}^{1+\alpha}},
\end{align*}
for a sufficiently large $\textbf{k}\in\N^d$.
Analogously to \cite{hildebrandt2020}, we choose a bound $L\in\N$ and replace all coordinate processes $(x_\textbf{k})$ with $\textbf{k}\notin(0, LM)^d$ by a vector of independent normal random variables $(z_\textbf{l})_{\textbf{l}\in\N^d}$ with variance $\sigma^2/(2\lambda_\textbf{l}^{1+\alpha})$, i.e.:
\begin{align*}
U_\textbf{m}(t)=\sum_{\substack{\textbf{l}\in\mathcal{I}_m \\ \textbf{l}\in (0,LM)^d}}x_\textbf{l}(t)+\sum_{\substack{\textbf{l}\in\mathcal{I}_m \\ \textbf{l}\notin (0, LM)^d}}z_\textbf{l}(t).
\end{align*} 
Since the normal distribution is stable with respect to summation, we can replace the sum of the normal random variables with centred normal random variables $R_\textbf{m}\sim\mathcal{N}(0,s_\textbf{m}^2)$, where
\begin{align*}
s_\textbf{m}^2=\sum_{\substack{\textbf{l}\in\mathcal{I}_\textbf{m}\\\textbf{l}\notin (0, LM)^d}}\frac{\sigma^2}{2\lambda_\textbf{l}^{1+\alpha}}.
\end{align*}
By equation \eqref{eqn_Bwelldefined}, it is evident that the series in $s_\textbf{m}^2$ converges.

In the one-dimensional case, \cite{hildebrandt2020} developed a formula to precisely compute the one-dimensional replacement variance. One key advantage of this formula is its closed form, which enables rapid computation with minimal computational time.
\begin{figure}[t]
\centering
\includegraphics[width=0.95\textwidth]{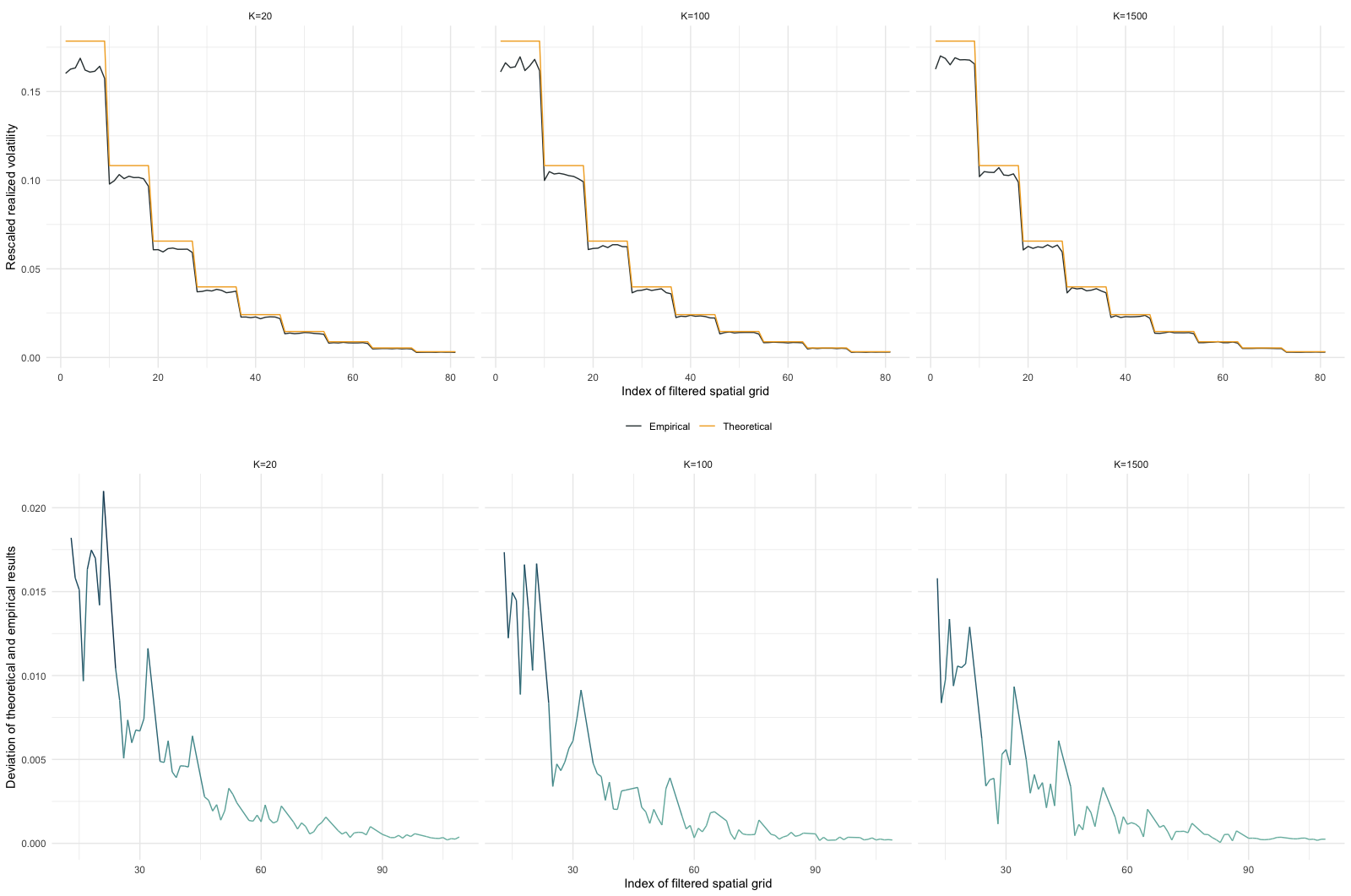}
\caption{The figure shows a comparison between the theoretical expected values of the rescaled realized volatilities, as described in Proposition \ref{prop_quadIncAndrescaling}, and their empirical counterparts (top row). The underlying two-dimensional SPDE model was simulated on an equidistant grid in both space and time, with $N=10^4$ and $M=10$. The SPDE model parameters are $\vartheta_0=0$, $\nu = (5,0)$, $\eta=1$, $\sigma=1$, and $\alpha'=4/10$, while the replacement bound was fixed at $L=10$. The results for different values of $K$ are displayed in three columns: $K=20$ (left), $K=100$ (middle), and $K=1500$ (right). The bottom row illustrates the deviation between the theoretical and empirical results.}
\label{fig_KvaluesComp}
\end{figure}
However, in the multivariate case, the series becomes more intricate due to the additional exponent $1+\alpha$ and the squaring of the summation indices. This complexity renders direct application of related series, such as the multiple zeta function or its extension, the multiple Lerch zeta function, impractical, cf. \cite{arakawa1999multiple} or \cite{gun2018multiple}.
Consequently, we currently resort to numerical approximation methods to estimate the variance $s_\textbf{m}^2$, given by
\begin{align*}
s_\textbf{m}^2\approx \sum_{\substack{\textbf{l}\in\mathcal{I}_\textbf{m} \\ \textbf{l}\in (0,KM)^d\backslash (0,LM)^d}}\frac{\sigma^2}{2\lambda_\textbf{l}^{1+\alpha}}=:\tilde{s}_\textbf{m},
\end{align*}
where $K>L$, $K\in\N$ denotes the cut-off of the approximation. The multi-dimensional replacement method is then given by
\begin{align}
X_{t_i}(\textbf{y}_\textbf{j})=\sum_{\textbf{m}\in\mathcal{M}}U_\textbf{m}(t_i)e_\textbf{m}(\textbf{y}_\textbf{j}),~~\text{where}~~U_\textbf{m}(t_i)=\sum_{\substack{\textbf{l}\in\mathcal{I}_m \\ \textbf{l}\in (0,LM)^d}}x_\textbf{l}(t_i)+\tilde{R}_\textbf{m}(i),
\end{align}
where $\tilde{R}_\textbf{m}(i)\sim\mathcal{N}(0,\tilde{s}_\textbf{m})$ denote the respective replacement random variables with the cut-off variance $\tilde{s}_\textbf{m}$ and $t_{i+1}-t_i=1/N$, where $i=1,\ldots,N$.
In this numerical approach, the quality of the simulation is highly dependent on the chosen variance cut-off $K$, as this cut-off effects the quality of the replacements $\tilde{R}_\textbf{m}$. If $K$ is selected to be too small, it will result in a negative bias in the simulations. Therefore, it is essential to carefully select an appropriate value for $K$ to ensure accurate and reliable simulations without introducing any significant bias. 

In Figure \ref{fig_KvaluesComp}, we conducted a simulation of a two-dimensional SPDE model on a grid with $N=10^4$ temporal points and $M=10$ spatial points on each axis. The top row displays a comparison between the theoretically realized values, as per Proposition \ref{prop_quadIncAndrescaling}, and the sample mean of the rescaled realized volatility for three different cut-off values: $K=20, 100, 1500$.

The bottom row illustrates the corresponding deviations between the theoretical predictions and the empirical outcomes. Notably, for the case of $K=20$, a significant negative bias is observed, while the bias diminishes as the cut-off frequency increases. 
An implementation of this method on R-programming language can be found in the R-package \texttt{SecondOrderSPDEMulti}\footnote{Link to web-page: \url{https://github.com/pabolang/SecondOrderSPDEMulti}}, available on the web-page \texttt{github.com}.
When performing a Monte Carlo study, the variance $s_\textbf{m}$, needs to be calculated only once. Since the runtime for larger $K$ values, can be enormous, we have implemented an option within the function \texttt{simulateSPDEmodelMulti} in the named R-package. This option allows for the utilization of the precomputed variance $s_\textbf{m}$ using the function \texttt{variance\_approx}, which dramatically reduces runtime when performing a Monte Carlo study.

\begin{figure}[t]
\centering
\includegraphics[width=0.95\textwidth]{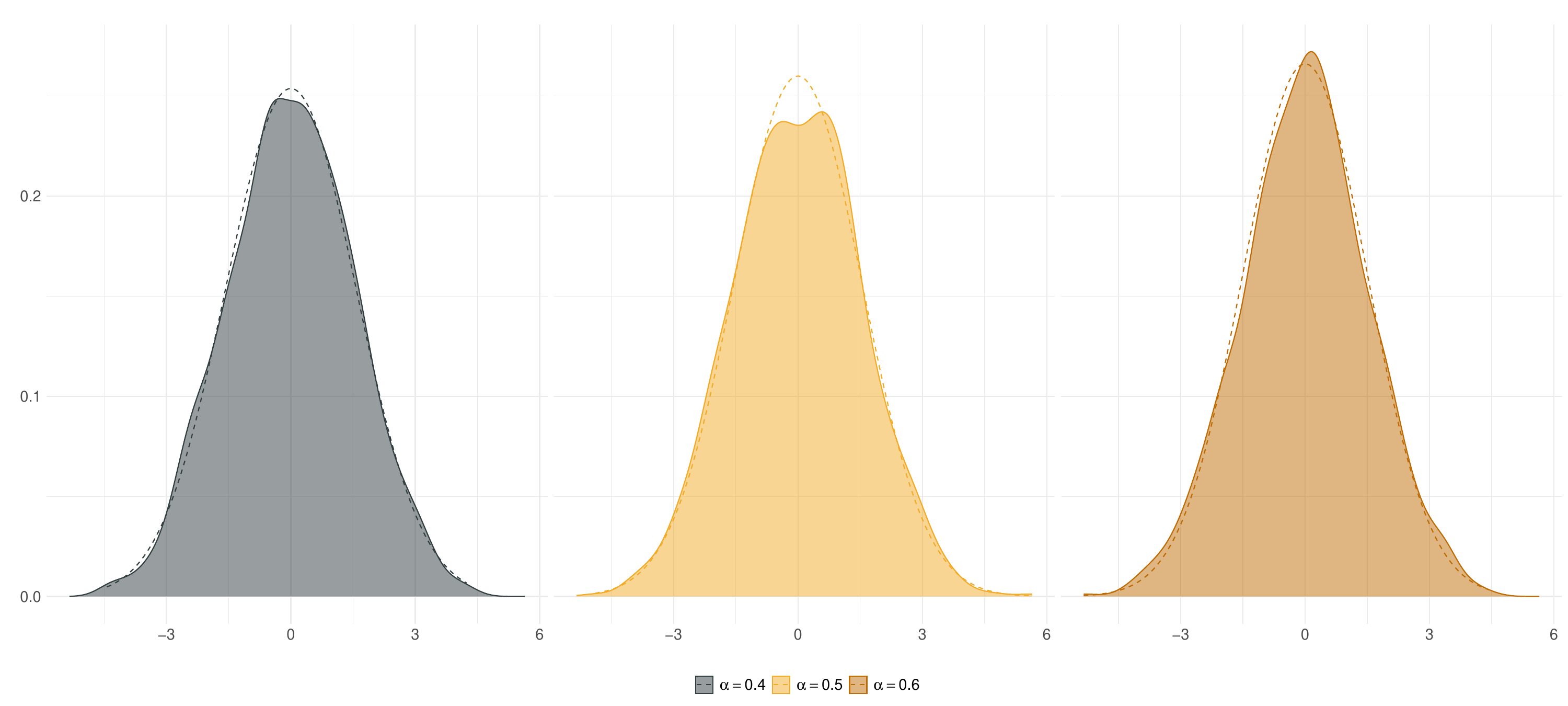}
\caption{
Comparison of the empirical distributions of normalized estimation errors for $\sigma^2$ obtained through simulation with $N=10^4$, $M=10$, and $\delta=0.05$ is presented. The kernel-density estimation utilized a Gaussian kernel with Silverman's 'rule of thumb' and was performed over 1000 Monte Carlo iterations. The specific parameter values employed were as follows: $d=2$, $\vartheta_0=0$, $\nu=(6,0)$, $\eta=1$, $\sigma=1$, $L=10$. Three scenarios were considered, with different values of $\alpha'$: $\alpha'=4/10 K=1000$ (left), $\alpha'=1/2, K=1000$ (middle), and $\alpha'=6/10, K=1300$ (right). The corresponding asymptotic distributions are represented by the dotted lines.}
\label{fig_dens_asymp_sigma_multi}
\end{figure}

\subsection{Monte Carlo simulation study}
\begin{figure}[t]
\centering
\includegraphics[width=0.9\textwidth]{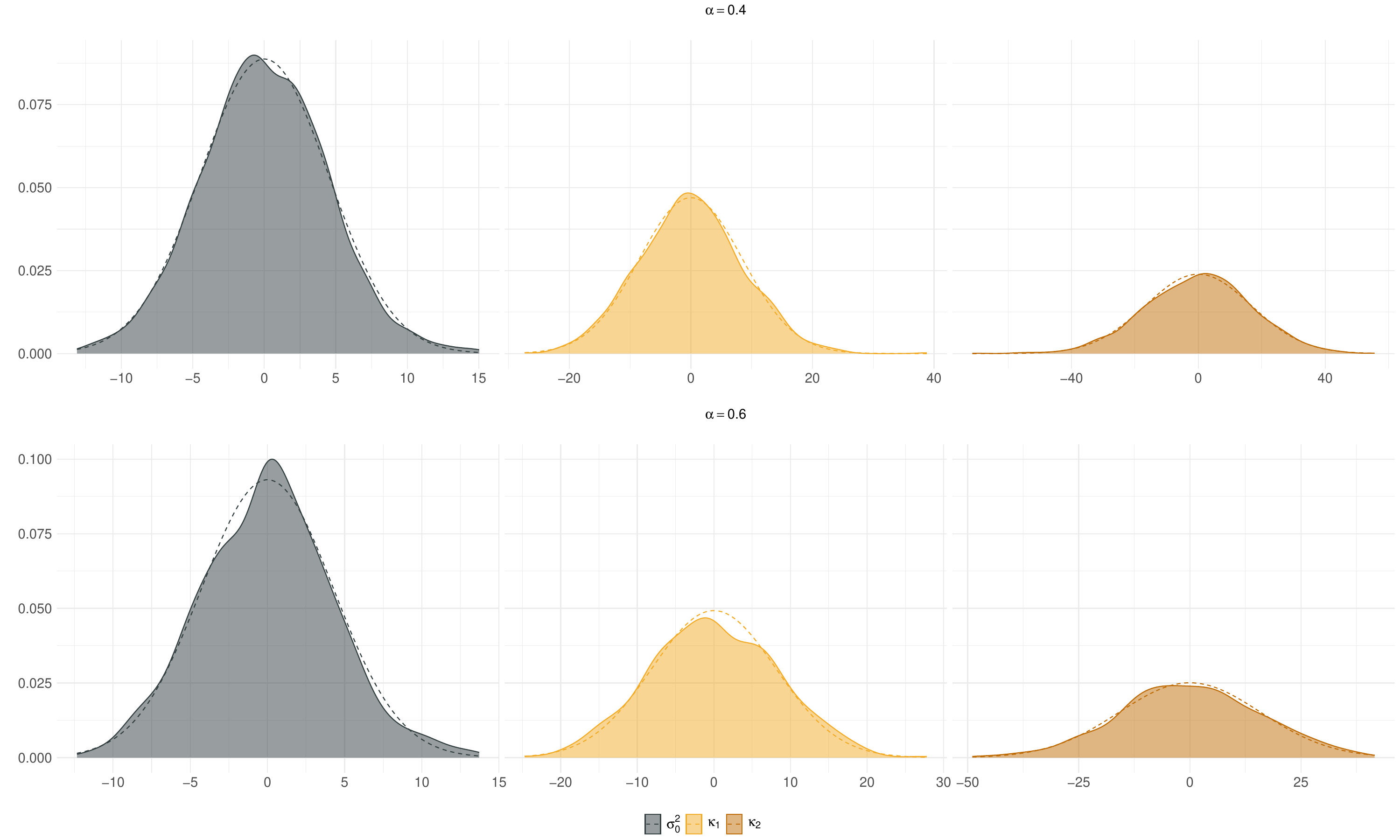}
\caption{
The figure provides a comparison of empirical distributions for centred estimation errors of the parameter vector $\upsilon=(\sigma_0^2,\kappa_1,\kappa_2)$, which are obtained through simulations on an equidistant gird in both, time and space, where $N=10^4$ and $M=10$. The kernel-density estimation employed a Gaussian kernel with Silverman's 'rule of thumb' and was conducted over 1000 Monte Carlo iterations. The specific parameter values used for the simulations are as follows: $d=2$, $\vartheta_0=0$, $\nu=(6,0)$, $\eta=1$, $\sigma=1$, and $L=10$. Two different scenarios were considered, each with a distinct value of the pure damping parameter $\alpha'$: $\alpha'=4/10, K=1000$ (top row) 
and $\alpha'=6/10, K=1300$ (bottom row). The corresponding asymptotic distributions are represented by dotted lines. The results for the normalized volatility parameter $\sigma_0^2$ are depicted in grey lines, while the estimates for $\kappa_1$ are shown in yellow lines, and those for $\kappa_2$ are represented by brown lines. For estimation of $\upsilon=(\sigma_0^2,\kappa_1,\kappa_2)$, we utilized the set of spatial points $\mathcal{S}_3$, as defined in equation \eqref{eqn_set_SP_forBetaHat}.
}
\label{fig_dens_asymp_kappaAndSigma_multi}
\end{figure}
To illustrate the central limit theorem described in Proposition \ref{prop_cltVolaEstMulti}, we conducted a Monte Carlo study. In this study, we simulated a 2-dimensional SPDE model based on equation \eqref{generalSPDEequationMulti}. Each simulation was performed on an equidistant grid in both time and space, with $N=10^4$ time steps and $M=10$ spatial steps, resulting in a total of $121$ spatial points. The simulation employed the following parameter values: $\vartheta_0=0$, $\nu=(6,0)$, $\eta=1$, $\sigma=1$, and $\alpha'$ taking on values from the set $\{4/10,1/2,6/10\}$, corresponding to three distinct damping scenarios. In each scenario, 1000 Monte Carlo iterations were executed. We utilized the replacement method detailed in Section \ref{sec_51}, with $L=10$, and for $\alpha'=4/10$ and $\alpha'=1/2$, we set a cut-off frequency of $K=10^3$, while for $\alpha'=6/10$, we used $K=1500$. 
Figure \ref{fig_dens_asymp_sigma_multi} presents a comparison between the empirical distribution of each scenario and the asymptotic normal distribution as stipulated in Proposition \ref{prop_cltVolaEstMulti}. To estimate the kernel density, we employed a Gaussian kernel with Silverman's 'rule of thumb'. As discussed in Section \ref{sec_51}, the replacement method introduced a notable negative bias due to the cut-off frequency $K$. To address this bias, we centred the data by utilizing the sample mean of the volatility estimations. This approach provided a clear basis for visually comparing the empirical and theoretical distributions.
All three scenarios exhibit a substantial fit, with the volatility estimator employing a spatial boundary of $\delta=0.05$, resulting in 81 spatial points for estimation. The sample mean of the volatility estimations were found to be $0.986$ for $\alpha'=4/10$, $0.975$ for $\alpha'=1/2$, and $0.988$ for $\alpha'=6/10$.

Figure \ref{fig_dens_asymp_kappaAndSigma_multi} depicts a comparison between the empirical distribution of each case and the asymptotic normal distribution as described in Corollary \ref{corollary_hatBetaCLT_61}. The top row shows the simulation results for $\alpha'=4/10$, 
and the bottom row presents the results for $\alpha'=6/10$. 
Each row consists of three plots, which assess the goodness of fit between the kernel density estimation and the centred normal distribution, as outlined in Corollary \ref{corollary_hatBetaCLT_61}. In these plots, grey represents the results for estimating the normalized volatility parameter $\sigma_0^2$, while the other panels in each row (yellow and brown) represent the results for the curvature parameters $\kappa_1$ and $\kappa_2$, respectively. 
To account for structural bias in the data, we centred the data by employing the sample mean of the corresponding estimates.

In this simulation study, where $N=10^4$, we must adhere to the following restriction, as outlined in Assumption \ref{assumption_observations_multi}:
\begin{align*}
M<N^{(1-\alpha')/(d+2)}\approx\begin{cases}
3.98 &,\text{ if }\alpha'=4/10 \\
3.16 &,\text{ if }\alpha'=1/2  \\
2.51 &,\text{ if }\alpha'=6/10
\end{cases}.
\end{align*}
As Assumption \ref{assumption_fullRank} necessitates a minimum of three observations for the application of the estimator $\hat{\upsilon}$, we have chosen the following observation scheme:
\begin{align}
\mathcal{S}_3:=\big\{(1/10,3/10),(4/10,2/10),(7/10,5/10)\big\}.\label{eqn_set_SP_forBetaHat}
\end{align}
Indeed, this observation scheme satisfies the Assumption \ref{assumption_fullRank}, as evident by the following calculation:
\begin{align*}
\begin{vmatrix}
1 & 1/10 & 3/10 \\
1 & 4/10 & 2/10 \\
1 & 7/10 & 5/10
\end{vmatrix}=0.12 \neq 0,
\end{align*}
where $\abs{A}$ denotes the determinant of a matrix $A\in\R^{p\times p}$, for $p\in\N$. 
For the cases $\alpha'\in\{4/10,1/2\}$, we obtain that $\abs{\mathcal{S}_3}<N^{(1-\alpha')/(d+2)}$, whereas the assumption \ref{assumption_observations_multi} is (slightly) violated for $\alpha=6/10$, since $\abs{\mathcal{S}_3}>N^{(1-\alpha')/(d+2)}$. 
Nevertheless, we present the simulation results in Figure \ref{fig_dens_asymp_kappaAndSigma_multi} for the two cases $\alpha'=4/10$ and $\alpha'=6/10$ and observe that both scenarios exhibit a substantial fit.
Since the results for the case $\alpha'=1/2$ are comparable to the two cases presented for $\alpha'$, we omit this plot.
The sample means of the respective estimations are summarized in Table \ref{table_11}.
\begin{table}[h!]
\begin{center}
\begin{tabular}{|c|c|c|c|c|c|c|c|c|c|}
\hline
$\alpha'$& mean $\hat{\sigma}_0^2$ & mean $\hat{\kappa}_1$ & mean $\hat{\kappa}_2$ \\
\hline
4/10 &0.985 &5.986 & 0.011\\
5/10 &0.972 &5.979& 0.028\\
6/10 &0.987&5.941& 0.038
\\ \hline
\end{tabular}
\caption{
The table presents the sample means of the estimations for the natural parameters $\sigma_0^2$ and $\kappa=(\kappa_1,\kappa_2)$ in a two-dimensional SPDE model. These estimations are derived from a dataset with parameters set at $\vartheta_0=0$, $\nu=(6,0)$, $\eta=1$, and $\sigma=1$, based on 1000 Monte Carlo iterations. Each row in the table corresponds to the outcomes for three selections of the pure damping parameter, where $\alpha'\in\{4/10,1/2,6/10\}$.}
\label{table_11}
\end{center}
\end{table}

We close this section by providing density plots for estimating the parameter $\alpha'$. Figure \ref{fig_dens_asymp_alpha_multi} shows a comparison between the empirical distribution of each case and the asymptotic normal distribution as described in Proposition \ref{prop_cltAlpha}. The left panel shows the simulation results for $\alpha'=4/10$, the middle panel displays the results for $\alpha'=1/2$, and the right panel presents the results for $\alpha'=6/10$. To account for structural bias in the data, we centred the data by employing the sample mean of the corresponding estimates.
\begin{figure}[t]
\centering
\includegraphics[width=0.9\textwidth]{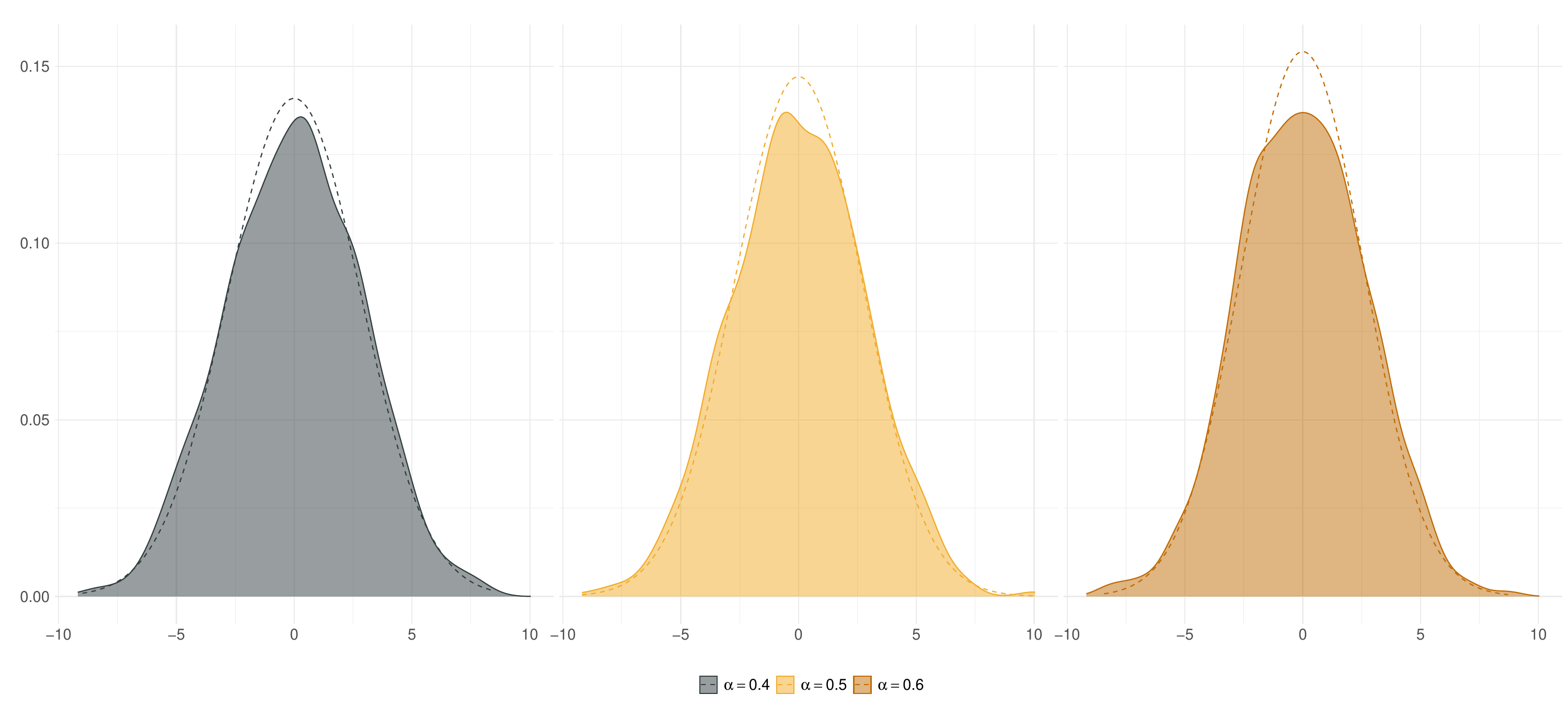}
\caption{
The figure provides a comparison of empirical distributions for centred estimation errors of $\alpha'$, which are obtained through simulations on an equidistant gird in both, time and space, where $N=10^4$ and $M=10$ and $\delta=0.05$. The kernel-density estimation employed a Gaussian kernel with Silverman's 'rule of thumb' and was conducted over 1000 Monte Carlo iterations. The specific parameter values used for the simulations were as follows: $d=2$, $\vartheta_0=0$, $\nu=(6,0)$, $\eta=1$, $\sigma=1$, and $L=10$. Three different scenarios were considered, each with a distinct value of the pure damping parameter $\alpha'$: $\alpha'=4/10 K=1000$ (left), $\alpha'=1/2, K=1000$ (middle), and $\alpha'=6/10, K=1300$ (right). The corresponding asymptotic distributions are represented by dotted lines. 
}
\label{fig_dens_asymp_alpha_multi}
\end{figure}

To estimate the damping parameter, we adopted a spatial threshold of $\delta=0.05$, which led to the utilization of 81 spatial coordinates for estimation. The parameter choices employed for the two-dimensional SPDE model are consistent with the simulation study presented earlier for the previous estimators. All three scenarios exhibit a significant fit, where we observe a qualitative difference between lower values of $\alpha'$ and higher values. This distinction can be attributed to the fact that $\alpha$ governs the Hölder regularity of the sample paths. Lower values of $\alpha$ result in rougher paths, thereby yielding a more accurate fit. The sample means of the estimates are given by $0.393$ for $\alpha'=4/10$, $0.484$ for $\alpha'=1/2$ and $0.554$ for $\alpha'=6/10$.

\section{Proofs}\label{sec_6}
We begin by clarifying some notations used in this paper:
\begin{align*}
\nor{\textbf{x}}_1:=\sum_{l=1}^d x_l,~~~~~\euc{\textbf{x}}:=\bigg(\sum_{l=1}^dx_l^2\bigg)^{1/2},~~~~~~\nor{\textbf{x}}_\infty:=\max_{l=1,\ldots,d}\abs{x_l}.
\end{align*}
Note, that the introduced notations $\nor{\cdot}_2,\nor{\cdot}_\infty$ define a norm on $\R^d$. However, the notations $\nor{\cdot}_0$ and $\nor{\cdot}_1$ do not define a norm, as they do not even map to the non-negative real numbers. Nevertheless, we use a norm notation to indicate an operation across all the spatial dimensions. 

For a measurable function $f:\R^d\too\R$ we define the $\mathcal{L}^p$-norm by
\begin{align*}
\nor{f}_{\mathcal{L}^p(D)}:=\bigg(\int_D\abs{f(\textbf{x})}^p\diff \textbf{x}\bigg)^{1/p},
\end{align*}
where $D\subseteq\R^d$. 
Finally, we define the point-wise product by 
\begin{align*}
\bigcdot: \R^d\times\R^d&\too\R^d\\ \textbf{x}\bigcdot \textbf{y} &\mapsto (x_1y_1,\ldots,x_dy_d)^\top.
\end{align*}
We say for $\textbf{k},\textbf{j}\in\N^d$ that they are not alike, i.e. $\textbf{k}\neq \textbf{j}$, if there exists at least one index $l_0\in\{1,\ldots,d\}$ with $k_{l_0}\neq j_{l_0}$. 

In the following we use the decomposition of the increments $x_\textbf{k}$ for $\textbf{k}\in\N^d$, given by
\begin{align}
\Delta_ix_\textbf{k}&=\langle\xi,e_\textbf{k}\rangle_\vartheta\big(e^{-\lambda_\textbf{k}i\Delta_n}-e^{-\lambda_\textbf{k}(i-1)\Delta_n}\big)+\sigma\lambda_\textbf{k}^{-\alpha/2}\int_0^{(i-1)\Delta_n}e^{-\lambda_\textbf{k}(i\Delta_n-s)}-e^{-\lambda_\textbf{k}((i-1)\Delta_n-s)}\diff W_s^\textbf{k}\notag\\
&~~~~~+\sigma\lambda_\textbf{k}^{-\alpha/2}\int_{(i-1)\Delta_n}^{i\Delta_n}e^{-\lambda_\textbf{k}(i\Delta_n-s)}\diff W_s^\textbf{k}\notag\\
&=A_{i,\textbf{k}}+B_{i,\textbf{k}}+C_{i,\textbf{k}},\label{equation_decompIncrement}
\end{align}
where 
\begin{align}
A_{i,\textbf{k}}&:=\langle\xi,e_\textbf{k}\rangle_\vartheta\big(e^{-\lambda_\textbf{k}i\Delta_n}-e^{-\lambda_\textbf{k}(i-1)\Delta_n}\big),\label{eqn_Aik}\\
B_{i,\textbf{k}}&:= \sigma\lambda_\textbf{k}^{-\alpha/2}\int_0^{(i-1)\Delta_n}e^{-\lambda_\textbf{k}((i-1)\Delta_n-s)}\big(e^{-\lambda_\textbf{k}\Delta_n}-1\big)\diff W_s^\textbf{k},\label{eqn_Bik}\\
C_{i,\textbf{k}}&:=\sigma\lambda_\textbf{k}^{-\alpha/2}\int_{(i-1)\Delta_n}^{i\Delta_n}e^{-\lambda_\textbf{k}(i\Delta_n-s)}\diff W_s^\textbf{k}.\label{eqn_Cik}
\end{align}

\subsection{Proofs of Section 3}
This section is structured in two parts. The first parts provides the proofs for calculating the expected value of the rescaled realized volatilities and the decay of the autocovariance, as stated in Propositions \ref{prop_quadIncAndrescaling} and \ref{prop_autocovOfIncrementsMulti}. The second part proofs the central limit theorem for the estimator $\hat{\sigma}^2$.
\subsubsection{Proofs of Propositions \ref{prop_quadIncAndrescaling} and \ref{prop_autocovOfIncrementsMulti}}
For proving Proposition \ref{prop_quadIncAndrescaling}, we need some auxiliary lemmas. 
\begin{lemma}\label{lemma_riemannApprox_multi}
Let $f:[0,\infty)\too\R$ be twice continuously differentiable with $\nor{x^{d-1}f(x^2)}_{\mathcal{L}^1([0,\infty))}$, $\nor{x^{d}f'(x^2)}_{\mathcal{L}^1([1,\infty))}$ and $\nor{ x^{d+1}f''(x^2)}_{\mathcal{L}^1([1,\infty))}\leq C$ for some $C>0$, then it holds:
\begin{itemize}
\item[(i)] 
\begin{align*}
&\Delta_n^{d/2}\sum_{\textbf{k}\in\N^d}f(\lambda_\textbf{k}\Delta_n)=\frac{1}{2^{d}(\pi\eta)^{d/2}\Gamma(d/2)}\int_0^\infty x^{d/2-1}f(x)\diff x-\sum_{\substack{\nor{\gamma}_1=1 \\ \gamma\in\{0,1\}^{d}}}^{d-1}\int_{B_\gamma}f(\pi^2\eta\euc{\textbf{z}}^2)\diff \textbf{z}\\
&~~+\Oo\bigg(\int_{0}^{\sqrt{\Delta}_n}r^{d-1}\abs{f(r^2)}\diff r\vee \Delta_n\int_{\sqrt{\Delta}_n}^1 r^{d-1}\abs{f'(r^2)}\diff r\vee\Delta_n\int_{\sqrt{\Delta}_n}^1 r^{d+1}\abs{f''(r^2)}\diff r\bigg),
\end{align*}
where $B_\gamma$ defined in equation \eqref{eqn_BGamma}.
\item[(ii)] For $\{j_1,\ldots,j_l\}\subset\{1,\ldots,d\}$, $\gamma_{j,l}\in\{0,1\}^d$, where $(\gamma_{j,l})_i=\mathbbm{1}_{i\in\{j_1,\ldots,j_l\}}$, with $i=1,\ldots,d$ and $l=1,\ldots,(d-1)$, we have 
\begin{align*}
&\Delta_n^{d/2}\sum_{k\in\N^d}f(\lambda_k\Delta_n)\cos(2\pi k_{j_1}y_{j_1})\cdot\ldots\cdot\cos(2\pi k_{j_l}y_{j_l})=(-1)^l\int_{B_{\gamma_{j,l}}}f(\pi^2\eta\euc{\textbf{z}}^2)\diff \textbf{z}\\
&~~~~~+\Oo\bigg(\max_{k=0,\ldots,l}\frac{\Delta_n^{k/2+1}}{\delta^{l+1}}\int_{\sqrt{\Delta}_n}^1r^{d-k+1}|f''(r^2)|\diff r\vee \max_{k=0,\ldots,l}\frac{\Delta_n^{k/2+1}}{\delta^{l+1}}\int_{\sqrt{\Delta}_n}^1 r^{d-k-1}|f'(r^2)|\diff r\bigg)\\
&~~~~~+ \Oo\bigg( \frac{\Delta_n^{(l+1)/2}}{\delta^{l}}\int_{\sqrt{\Delta}_n}^1 r^{d-l}|f'(r^2)|\diff r\bigg).
\end{align*}
\item[(iii)] For $\{j_1,\ldots,j_l\}=\{1,\ldots,d\}$, i.e. $l=d$, we have 
\begin{align*}
&\Delta_n^{d/2}\sum_{k\in\N^d}f(\lambda_k\Delta_n)\cos(2\pi k_{1}y_{1})\cdot\ldots\cdot\cos(2\pi k_{d}y_{d})=\Oo\big(\Delta_n^{d/2}\abs{f(\Delta_n)}\big)+\Oo\bigg(\frac{\Delta_n^{d/2}}{\delta^{d}}\int_{\sqrt{\Delta}_n}^1 r\abs{f'(r^2)}\diff r\bigg)\\
&~~+\Oo\bigg(\max_{k=0,\ldots,d-1}\frac{\Delta_n^{k/2+1}}{\delta^{d+1}}\int_{\sqrt{\Delta}_n}^1r^{d-k+1}\abs{f''(r^2)}\diff r\vee \max_{k=0,\ldots,d-1}\frac{\Delta_n^{k/2+1}}{\delta^{d+1}}\int_{\sqrt{\Delta}_n}^1 r^{d-k-1}\abs{f'(r^2)}\diff r\bigg).
\end{align*}
\end{itemize}
In particular, it holds for a $\tilde{\gamma}\in\{0,1\}^d$ with $\nor{\tilde{\gamma}}_1=l$ and $1\leq l\leq d-1$ that
\begin{align*}
\int_{B_{\tilde{\gamma}}}f(\pi^2\eta\euc{\textbf{z}}^2)\diff \textbf{z}=\Oo\bigg(\Delta_n^{l/2}\int_{\sqrt{\Delta}_n}^1r^{d-1-l}\abs{f(r^2)}\diff r\bigg),
\end{align*}
and 
\begin{align*}
&\sum_{\substack{\nor{\gamma}_1=1 \\ \gamma\in\{0,1\}^{d}}}^{d}\int_{B_\gamma}f(\pi^2\eta\euc{\textbf{z}}^2)\diff \textbf{z}=\Oo\bigg(\max_{l=1,\ldots,d-1}\Delta_n^{l/2}\int_{\sqrt{\Delta}_n}^1r^{d-1-l}\abs{f(r^2)}\diff r\vee \int_{0}^{\sqrt{\Delta}_n}r^{d-1}\abs{f(r^2)}\diff r\bigg).
\end{align*}
\end{lemma}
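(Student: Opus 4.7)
The plan is to interpret $\Delta_n^{d/2}\sum_{\textbf{k}\in\N^d}f(\lambda_\textbf{k}\Delta_n)$ as a Riemann sum with mesh $\sqrt{\Delta_n}$ on the positive octant $[0,\infty)^d$ after the rescaling $\textbf{z}=\sqrt{\Delta_n}\textbf{k}$, so that (modulo lower-order shifts in $\lambda_\textbf{k}$) the argument of $f$ becomes $\pi^2\eta\euc{\textbf{z}}^2$. The principal part of $\lambda_\textbf{k}\Delta_n$ is $\pi^2\eta\Delta_n\euc{\textbf{k}}^2$, while the subleading contributions $-\vartheta_0\Delta_n+\Delta_n\sum_l\nu_l^2/(4\eta)$ induce only an $\mathcal{O}(\Delta_n)$ shift in the argument of $f$; the mean value theorem absorbs this shift into a $\Delta_n\int_{\sqrt{\Delta_n}}^1 r^d|f'(r^2)|\diff r$ remainder of the type already present in the stated error bound. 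The main integral $\int_{[0,\infty)^d}f(\pi^2\eta\euc{\textbf{z}}^2)\diff \textbf{z}$ is then reduced to a one-dimensional radial integral by restricting spherical coordinates to the octant (yielding the factor $1/2^d$ of the unit sphere), and the substitution $x=\pi^2\eta r^2$ produces exactly the normalization $1/(2^d(\pi\eta)^{d/2}\Gamma(d/2))$ claimed in (i).

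For the Riemann sum error in part (i), I would center cubes of side $\sqrt{\Delta_n}$ at the lattice points $\sqrt{\Delta_n}\textbf{k}$ for $\textbf{k}\in\N^d$ and apply a two-term Taylor expansion of $f(\pi^2\eta\euc{\textbf{z}}^2)$ around $f(\lambda_\textbf{k}\Delta_n)$ inside each cube; by symmetry the first-order pieces integrate to zero on each cube, leaving a second-order remainder controlled by $\Delta_n\int_{\sqrt{\Delta_n}}^1 r^{d+1}|f''(r^2)|\diff r$ together with a boundary piece controlled by $\Delta_n\int_{\sqrt{\Delta_n}}^1 r^d|f'(r^2)|\diff r$. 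The part of $[0,\infty)^d$ not covered by these bulk cubes consists of boundary slabs where one or more coordinates lie in $[0,\sqrt{\Delta_n}]$; stratifying over $\gamma\in\{0,1\}^d$ according to which coordinates lie in the slab produces exactly the family $\{B_\gamma\}$, with the extremal case $\nor{\gamma}_1=d$ degenerating into the $\int_0^{\sqrt{\Delta_n}}r^{d-1}|f(r^2)|\diff r$ piece. The ``in particular'' bound on $\int_{B_{\tilde\gamma}}$ then follows by separating the $l$ slab coordinates (each contributing a length $\sqrt{\Delta_n}$, so $\Delta_n^{l/2}$ overall) from the remaining $d-l$ coordinates, which are treated in spherical form on $[\sqrt{\Delta_n},1]$.

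For parts (ii) and (iii), the weights $\prod_{j\in J}\cos(2\pi k_jy_j)$ oscillate in the directions $J=\{j_1,\ldots,j_l\}$ and destroy the naive Riemann convergence. I would apply iterated Abel summation in each direction $j\in J$: the Dirichlet-type kernel $A_{k_j}(y_j):=\sum_{m=1}^{k_j}\cos(2\pi my_j)$ satisfies $|A_{k_j}(y_j)|=\mathcal{O}(1/|\sin(\pi y_j)|)=\mathcal{O}(1/\delta)$ uniformly since $y_j\in[\delta,1-\delta]$, and summation by parts transfers one discrete difference onto $f(\lambda_\textbf{k}\Delta_n)$, which the mean value theorem converts into a $\sqrt{\Delta_n}$ factor times a value of $f'$. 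Each such step produces a boundary term at $k_j=1$; this boundary, upon returning to the integral picture on the $l$ oscillating coordinates, reproduces the thin-slab integral $\int_{B_{\gamma_{j,l}}}f(\pi^2\eta\euc{\textbf{z}}^2)\diff \textbf{z}$, with the sign $(-1)^l$ in (ii) arising from the $l$ successive boundary evaluations. Iterating over all $l$ oscillating directions while handling the remaining $d-l$ non-oscillating directions as in part (i) yields exactly the advertised error structure: the factors $1/\delta^l$ and $1/\delta^{l+1}$ count the number of Abel kernels, the powers $\Delta_n^{k/2+1}$ count Taylor steps, and the radial integrals of $f'$ and $f''$ carry the regularity. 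Part (iii) is the extremal case $l=d$ in which no bulk direction remains and the leading boundary integral degenerates, leaving only the error terms together with the $\Delta_n^{d/2}|f(\Delta_n)|$ value arising from the corner summand at $\textbf{k}=(1,\ldots,1)$.

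The principal obstacle lies in the bookkeeping of parts (ii) and (iii): each Abel summation contributes one $1/\delta$ factor, one derivative on $f$, and one boundary evaluation, and the challenge is to cleanly separate the boundary pieces (which materialize as the $\int_{B_{\gamma_{j,l}}}$ integrals) from the bulk residuals absorbed by the $\mathcal{L}^1$-type hypotheses on $x^{d-1}f(x^2),x^d f'(x^2)$, and $x^{d+1}f''(x^2)$. A secondary difficulty is that $\lambda_\textbf{k}$ is not separable in $\textbf{k}$, so differencing in a single direction $j$ must respect the joint dependence of $f(\lambda_\textbf{k}\Delta_n)$ on the full vector $\textbf{k}$; however, since $\lambda_\textbf{k}$ is strictly monotone in each coordinate with one-step difference $\lambda_{\textbf{k}+\textbf{e}_j}\Delta_n-\lambda_\textbf{k}\Delta_n=\pi^2\eta\Delta_n(2k_j+1)$ of order $\sqrt{\Delta_n}$ on the bulk scale $k_j\sim 1/\sqrt{\Delta_n}$, the mean value theorem applies coordinate-wise and the iterated Abel identities remain cleanly valid throughout.
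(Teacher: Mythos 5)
Your treatment of part (i) coincides with the paper's: the same midpoint-rule Riemann sum on cubes of side $\sqrt{\Delta_n}$ centred at $\sqrt{\Delta_n}\textbf{k}$, the same cancellation of the first-order Taylor term by symmetry, the same Hessian remainder, the same passage to spherical coordinates producing $1/(2^d(\pi\eta)^{d/2}\Gamma(d/2))$, and the same stratification of $\R_+^d\setminus[\sqrt{\Delta_n}/2,\infty)^d$ into the sets $B_\gamma$. For parts (ii) and (iii) you genuinely diverge: the paper converts the cosine product via $\prod_l\cos(x_l)=2^{-(l-1)}\sum_{\textbf{u}\in C_l}\cos(\textbf{u}^\top\textbf{x})$ and works on the continuous side, writing the sum as a Fourier transform of the step function $\sum_\textbf{k}g(\sqrt{\Delta_n}\textbf{k})\mathbbm{1}_{(a_{\textbf{k}-1},a_\textbf{k}]}$ and extracting the leading term from $\mathcal{F}[g\mathbbm{1}_{B_{\gamma_{j,l}}}]$ by iterated integration by parts, with Fourier decay bounds $|x_j^q\mathcal{F}[g](\textbf{x})|\leq\nor{\partial^q g/\partial x_j^q}_{\mathcal{L}_1}$ supplying the $f''$ errors. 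Your iterated Abel summation with Dirichlet kernels is the exact discrete dual of this, and the accounting you describe (one $1/\delta$, one derivative, one boundary evaluation per summation step) does reproduce the shape of the stated error terms; it is arguably more elementary since it avoids Fourier analysis altogether.

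The gap is in the single step you assert rather than prove: that the iterated Abel boundary terms "reproduce the thin-slab integral $(-1)^l\int_{B_{\gamma_{j,l}}}f(\pi^2\eta\euc{\textbf{z}}^2)\diff\textbf{z}$." What $l$ successive summations by parts actually leave at the boundary is $(-1/2)^l\Delta_n^{d/2}\sum_{\textbf{k}''}f(\lambda_{(1,\ldots,1,\textbf{k}'')}\Delta_n)$, a point evaluation at $k_{j_1}=\cdots=k_{j_l}=1$, i.e.\ a cell of the lattice centred at $z_{j_i}=\sqrt{\Delta_n}$, whereas $B_{\gamma_{j,l}}$ restricts those coordinates to $[0,\sqrt{\Delta_n}/2)$. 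These two quantities have the same order but a priori different constants, and since the $B_\gamma$ integrals from part (ii) must cancel \emph{exactly} against those in part (i) when the lemma is applied to $\prod_l(1-\cos)/2$ in Proposition \ref{prop_quadIncAndrescaling}, matching only the order is not enough. The identification does go through, but only because for $l\leq d-1$ the remaining $d-l$ bulk coordinates keep $\euc{\textbf{z}}\geq\sqrt{\Delta_n}/2$, so the mean-value correction is controlled by $f'$ evaluated at radii $\geq\sqrt{\Delta_n}$ and lands in the stated error $\Delta_n^{(l+1)/2}\delta^{-l}\int_{\sqrt{\Delta_n}}^1r^{d-l}|f'(r^2)|\diff r$ --- this is precisely the content of the paper's estimate for $J_1-\int_{B_{\gamma_{j,l}}}g$, and precisely why part (iii) ($l=d$, no bulk coordinate, the radius reaches the singularity of $f$ at $0$) degenerates to the point value $\Oo(\Delta_n^{d/2}|f(\Delta_n)|)$ instead of an integral. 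You need to supply this argument explicitly; without it the leading constant of your boundary term is unverified and the downstream cancellation fails. A second, smaller omission is the cross terms: after Abel summation in $j_1$, the non-boundary remainder still carries the cosines in $j_2,\ldots,j_l$ and must itself be summed by parts again in each of those directions (a single pass gives only $\Oo(\delta^{-1}\Delta_n^{1/2}\int r^d|f'(r^2)|\diff r)$, which is not small enough for $\alpha'<1/2$); your sketch gestures at this but the combinatorics generating the $\max_{k=0,\ldots,l}$ structure needs to be carried out.
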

\begin{proof}
\label{proof_lemmaRiemannMulti}
We begin this proof by making the substitution $z_l^2=k_l^2\Delta_n$, such that
\begin{align*}
\lambda_\textbf{k}\Delta_n=\pi^2\eta\sum_{l=1}^dz_l^2+\Delta_n\bigg(\sum_{l=1}^d\Big(\frac{\nu_l^2}{4\eta}\Big)-\vartheta_0\bigg).
\end{align*}
Subsequently, employing the Taylor expansion with the Lagrange remainder, we obtain that
\begin{align*}
f(\lambda_\textbf{k}\Delta_n)=f\bigg(\pi^2\eta\sum_{l=1}^dz_l^2\bigg)+f'(\xi)\bigg(\lambda_\textbf{k}\Delta_n-\pi^2\eta\sum_{l=1}^dz_l^2\bigg)=f\bigg(\pi^2\eta\sum_{l=1}^dz_l^2\bigg)+\Oo(\Delta_n).
\end{align*}
For $\textbf{k}\in\N^d$ we define:
\begin{align*}
a_\textbf{k}:=(a_{k_1},\ldots,a_{k_d})\in\R^d_+,~~~~~\text{with}~~~~~a_{k_l}:=\sqrt{\Delta}_n(k_l+1/2),
\end{align*}
where $l=1,\ldots,d$ and 
\begin{align*}
[a_{\textbf{k-1}},a_\textbf{k}]:=[a_{k_1-1},a_{k_1}]\times\ldots\times[a_{k_d-1},a_{k_d}]\subset (0,\infty)^d.
\end{align*}
Note, that $\abs{a_{k_l}-a_{k_l-1}}=\sqrt{\Delta}_n$ for $l=1,\ldots,d$ and $a_0:=\sqrt{\Delta}_n/2$.
Moreover, by defining $\tilde{f}(x):=f(\pi^2\eta x^2)$, we observe that 
\begin{align*}
&\Delta_n^{d/2}\sum_{\textbf{k}\in\N^d}f(\lambda_\textbf{k}\Delta_n)-\int_{[\sqrt{\Delta}_n/2,\infty)^d}f(\pi^2\eta\euc{\textbf{z}}^2)\diff \textbf{z}\\
&= \Delta_n^{d/2}\sum\limits_{k_1=1}^\infty\cdots\sum\limits_{k_d=1}^\infty f\bigg(\pi^2\eta\Delta_n\sum_{l=1}^dk_l^2 \bigg)-\sum\limits_{k_1=1}^\infty\cdots\sum\limits_{k_d=1}^\infty \int_{a_{k_1-1}}^{a_{k_1}}\cdots \int_{a_{k_d-1}}^{a_{k_d}} f\bigg(\pi^2\eta\sum_{l=1}^dz_l^2\bigg)\diff z_1\cdots\diff z_d +\mathcal{O}(\Delta_n) \\
&=\sum_{\textbf{k}\in\N^d}\int_{a_{\textbf{k-1}}}^{a_\textbf{k}}f(\pi^2\eta\Delta_n\euc{\textbf{k}}^2)-f(\pi^2\eta\euc{\textbf{z}}^2)  \diff \textbf{z}+\Oo(\Delta_n)\\
&=\sum_{\textbf{k}\in\N^d}\int_{a_{\textbf{k-1}}}^{a_\textbf{k}}\tilde{f}(\sqrt{\Delta}_n\euc{\textbf{k}})-\tilde{f}(\euc{\textbf{z}})  \diff \textbf{z}+\Oo(\Delta_n)=:T_1+\Oo(\Delta_n),
\end{align*}
where $\euc{\cdot}$ denotes the euclidean norm. Define the function $g:\R_+^d\too\R_+$, with $g(\textbf{x})=\tilde{f}(\euc{\textbf{x}})$. Since $\sqrt{\Delta}_n\textbf{k}$ represents the mid-point of the interval $[a_{\textbf{k-1}},a_\textbf{k}]$ for a $\textbf{k}\in\N^d$, we can apply a Taylor expansion at the point $\sqrt{\Delta}_n\textbf{k}$, leading to the following expression:
\begin{align}
g\big(\sqrt{\Delta}_n\textbf{k}\big)-g(\textbf{z})&=g\big(\sqrt{\Delta}_n\textbf{k}\big)-\Big(g\big(\sqrt{\Delta}_n\textbf{k}\big)+\nabla g\big(\sqrt{\Delta}_n\textbf{k}\big)^\top\big(\textbf{z}-\sqrt{\Delta}_n\textbf{k}\big)\notag \\
&~~~~~+\frac{1}{2}\big(\textbf{z}-\sqrt{\Delta}_n\textbf{k}\big)^\top H_g(\xi_\textbf{k})\big(\textbf{z}-\sqrt{\Delta}_n\textbf{k}\big)\Big),\label{eqn_multidim_taylor_2ndOrder}
\end{align}
where $\nabla g$ denotes the gradient of $g$, $H_g$ the Hessian-matrix of $g$ and $\xi_\textbf{k}\in[a_{\textbf{k-1}},a_\textbf{k}]$.
Let us introduce the shorthand notation $g'_l(\textbf{z}):=\partial g(\textbf{z})/(\partial z_l)$, which represents the partial derivative of $g(\textbf{z})$ with respect to $z_l$. Then, we have:
\begin{align*}
\int_{a_{\textbf{k-1}}}^{a_{\textbf{k}}}\nabla g(\sqrt{\Delta}_n\textbf{k})^\top(\textbf{z}-\sqrt{\Delta}_n\textbf{k})\diff \textbf{z}
&=\Delta_n^{(d-1)/2}\sum_{l=1}^d g_l'(\sqrt{\Delta}_n\textbf{k})\int_{a_{k_l-1}}^{a_{k_l}} \big(z_l-\sqrt{\Delta}_nk_l\big)\diff z_l=0.
\end{align*}
Since every term in the Taylor expansion from equation \eqref{eqn_multidim_taylor_2ndOrder} disappears, we proceed by redefining the term $T_1$ as follows:
\begin{align}
T_1&:=-\sum_{\textbf{k}\in\N^d}\int_{a_{\textbf{k-1}}}^{a_\textbf{k}}\frac{1}{2}(\textbf{z}-\sqrt{\Delta}_n\textbf{k})^\top H_g(\xi_\textbf{k})(\textbf{z}-\sqrt{\Delta}_n\textbf{k})\big)\diff \textbf{z}.\label{eqn_T1_2ndDerivationBound}
\end{align}
Additionally, the order of the term $T_1$ will be analysed in display \eqref{eqn_taylor2ndErrorOrder}.
For now, our primary focus is on the main term, which can expressed by:
\begin{align*}
&\Delta_n^{d/2}\sum\limits_{k_1=1}^\infty\cdots\sum\limits_{k_d=1}^\infty f(\lambda_{(k_1,\ldots,k_d)}\Delta_n)\\
&=\int_{\frac{\sqrt{\Delta}_n}{2}}^\infty\cdots\int_{\frac{\sqrt{\Delta}_n}{2}}^\infty f\bigg(\pi^2\eta \sum_{l=1}^dz_l^2\bigg)\diff z_1\cdots\diff z_d+\Oo(T_1\vee\Delta_n)\\
&=\int_{0}^\infty\cdots\int_{0}^\infty f\bigg(\pi^2\eta \sum_{l=1}^dz_l^2\bigg)\diff z_1\cdots\diff z_d-\int_{\R^d_+\backslash[\sqrt{\Delta}_n/2,\infty)^d}f(\pi^2\eta\euc{\textbf{z}}^2)\diff \textbf{z}+\Oo(T_1\vee\Delta_n).
\end{align*}
Before delving into the analysis of the compensation integral, defined by:
\begin{align*}
\mathcal{I}:=\int_{\R^d_+\backslash[\sqrt{\Delta}_n/2,\infty)^d}f(\pi^2\eta\euc{\textbf{z}}^2)\diff \textbf{z},
\end{align*}
and the error term $T_1$, let us first examine a transformation of the main integral. 
To facilitate our analysis, we employ $d$-dimensional spherical coordinates and we have:
\begin{align*}
&\int_0^\infty\cdots\int_0^\infty f\bigg(\eta\pi^2\sum_{l=1}^dz_l^2\bigg)\diff z_1\cdots\diff z_d\\
&=\int_0^\infty r^{d-1}f(\pi^2\eta r^2)\diff r\int_0^{\pi/2}\sin^{d-2}(\varphi_1)\diff \varphi_1\cdots\int_0^{\pi/2}\sin(\varphi_{d-2})\diff\varphi_{d-2}\int_0^{\pi/2}\diff\varphi_{d-1}.
\end{align*}
For $l\in\N$, it holds that
\begin{align*}
\int_0^{\pi/2}\sin^l(x)\diff x=\frac{\sqrt{\pi}\Gamma\big(\frac{1+l}{2}\big)}{2\Gamma\big(1+\frac{l}{2}\big)},
\end{align*}
where $\Gamma(x)$ denotes the Gamma function. Furthermore, we obtain that
\begin{align*}
\prod_{l=1}^{d-2}\int_0^{\pi/2}\sin^l(x)\diff x=\frac{\pi^{d/2-1}}{2^{d-2}\Gamma(d/2)}.
\end{align*}
Thus, we have:
\begin{align*}
\int_0^\infty\cdots\int_0^\infty f\bigg(\eta\pi^2\sum_{l=1}^dz_l^2\bigg)\diff z_1\cdots\diff z_d=\frac{\pi^{d/2}}{2^{d-1}\Gamma(d/2)}\int_0^\infty r^{d-1}f(\pi^2\eta r^2)\diff r
\end{align*}
and therefore obtain:
\begin{align*}
\Delta_n^{d/2}\sum_{k\in\N^d}f(\lambda_k\Delta_n)&=\frac{1}{2^{d}(\pi\eta)^{d/2}\Gamma(d/2)}\int_0^\infty x^{d/2-1}f(x)\diff x-\mathcal{I}+\Oo(T_1\vee\Delta_n).
\end{align*}
To analyse the compensation term $\mathcal{I}$, we initiate the process by decomposing the set $\R^d_+\backslash[\sqrt{\Delta}_n/2,\infty)^d$. Let $\gamma\in\{0,1\}^d\backslash\{0\}^d$, where $\gamma=(\gamma_1,\ldots,\gamma_d)$ and let $\psi(x)=\mathbbm{1}_{[0,\sqrt{\Delta}_n/2)}(x)$. With these definitions, we can introduce the following set:
\begin{align}
B_\gamma:=\{x\in[0,\infty)^d : x_1\in\psi^{-1}(\gamma_1),\ldots,x_d\in\psi^{-1}(\gamma_d)\}\subset [0,\infty)^d.\label{eqn_BGamma}
\end{align}
Hence, we can decompose the set $\R^d_+\backslash[\sqrt{\Delta}_n/2,\infty)^d$ using the following disjoint union:
\begin{align}
\R^d_+\backslash[\sqrt{\Delta}_n/2,\infty)^d=\bigcup_{\substack{\nor{\gamma}_1=1 \\ \gamma\in\{0,1\}^d}}^dB_\gamma.\label{equation_disjointUnion}
\end{align}
which enables the decomposition of the integral $\mathcal{I}$ as follows:
\begin{align*}
\mathcal{I}&=\int_{\R^d_+\backslash[\sqrt{\Delta}_n/2,\infty)^d}f(\pi^2\eta\euc{\textbf{z}}^2)\diff \textbf{z}=\sum_{\substack{\nor{\gamma}_1=1 \\ \gamma\in\{0,1\}^d}}^d\int_{B_\gamma}f(\pi^2\eta\euc{\textbf{z}}^2)\diff \textbf{z}.
\end{align*}
Let us now focus on two cases. Firstly, the scenario where $\nor{\gamma}_1<d$, and secondly, the case where $\nor{\gamma}_1=d$.
In the first case, we assume that $\nor{\gamma}_1=l$, where $l \in \{1, \ldots, d-1\}$. This implies that there exist indices $\{i_1,\ldots,i_l\}\subset\{1,\ldots,d\}$ and $\{1,\ldots,d\}\backslash\{i_1,\ldots,i_l\}=\{j_1,\ldots,j_{d-l}\}$ with $\gamma_{i_k} = 1$ for $k = 1, \ldots, l$ and $\gamma_{j_k} = 0$ for $k = 1, \ldots, d-l$. Moreover, we assume that $i_1 < \ldots < i_l$ and $j_1 < \ldots < j_{d-l}$.

Although we are integrating over an area corresponding to an infinite hyperrectangle, transforming into $d$-dimensional spherical coordinates provides a convenient representation, facilitating the analysis of the integral's order. During the transformation into $d$-dimensional spherical coordinates, we can always ensure that the angles $\varphi_1, \ldots, \varphi_{d-1}$ are bounded by $(0, \pi/2)$, and consequently, we have:
\begin{align*}
\int_{B_\gamma}f(\pi^2\eta\euc{\textbf{z}}^2)\diff \textbf{z}=\Oo\bigg(\int_{\sqrt{\Delta}_n/2}^\infty r^{d-1}f(r^2)\diff r\bigg),
\end{align*}
where we used the fact that the radius $r$ is always greater or equal than $\sqrt{\Delta}_n/2$. However, given that $l$ dimensions vanish when integrating and as $n$ tends to infinity, we can determine the order more precisely. Therefore, we can always consider the transformation:
\begin{align}
x_{i_1}&=r\cos(\varphi_1),~~~x_{i_2}=r\sin(\varphi_1)\cos(\varphi_2),~~~\cdots~~~x_{i_l}=r\sin(\varphi_1)\cdots\sin(\varphi_{l-1})\cos(\varphi_l),~~~\ldots\notag\\ 
x_{j_1}&=r\prod_{k=1}^l\sin(\varphi_k)\cos(\varphi_{l+1}),~~~\cdots~~~x_{j_{d-l-1}}=r\prod_{k=1}^{d-2}\sin(\varphi_k)\cos(\varphi_{d-1}),~~~x_{j_{d-l}}=r\prod_{k=1}^{d-1}\sin(\varphi_k),\label{eqn_spericalTransswitch}
\end{align}
which allows without loss of generality to set $i_1=1,\ldots,i_l=l$ and $j_1=l+1,\ldots j_{d-l}=d$. 
We can bound the angles $\varphi_1, \ldots, \varphi_l$ as follows:
\begin{align*}
0\leq x_{k}=r\cos(\varphi_{k})\prod_{l=1}^{k-1}\sin(\varphi_l)\leq \frac{\sqrt{\Delta}_n}{2}&\Leftrightarrow \arccos\bigg(\frac{\sqrt{\Delta}_n}{2r\prod_{l=1}^{k-1}\sin(\varphi_l)}\bigg)\leq \varphi_{k}\leq \frac{\pi}{2},
\end{align*}
where $k=1,\ldots,l$ and $1\leq l\leq (d-1)$. By rearranging the integration order, we have 
\begin{align*}
&\int_{B_\gamma}f(\pi^2\eta\euc{\textbf{z}}^2)\diff \textbf{z}\\
&= \int_{\sqrt{\Delta}_n/2}^\infty\cdots\int_{\sqrt{\Delta}_n/2}^\infty\int_0^{\sqrt{\Delta}_n/2}\cdots\int_0^{\sqrt{\Delta}_n/2} f\bigg(\pi^2\eta\sum_{l=1}^dz_l^2\bigg)\diff z_{i_l}\cdots\diff z_{i_1}\diff z_{j_1}\cdots \diff z_{j_{d-l}}\\
&\leq\int_{\sqrt{\Delta}_n/2}^\infty f(\pi^2\eta r^2)\int_0^{\pi/2}\cdots\int_0^{\pi/2}\int_{\arccos(a_1)}^{\pi/2}\cdots\times\int_{\arccos(a_l)}^{\pi/2}\abs{J_d}\diff\varphi_l\cdots\diff\varphi_1\diff\varphi_{l+1}\cdots\diff\varphi_{d-1}\diff r,
\end{align*}
where 
\begin{align*}
b_1:=\frac{\sqrt{\Delta}_n}{2r},~~~~\cdots~~~~,b_l:=\frac{\sqrt{\Delta}_n}{2r\prod_{k=1}^{l-1}\sin(\varphi_k)}.
\end{align*}
Note, that we can use the following inequality for the determinant $\abs{J_d}$:
\begin{align*}
\abs{J_d}\leq r^{d-1}\sin(\varphi_1)^{l-1}\sin(\varphi_2)^{l-2}\cdots\sin(\varphi_{l-1}).
\end{align*}
By utilizing the identity $\pi/2 - \arccos(x) = \arcsin(x)$ and the inequality $\arcsin(x) \leq x\pi/2$, for $x \in [0,1]$, we deduce that
\begin{align*}
&\int_{\arccos(b_1)}^{\pi/2}\cdots\int_{\arccos(b_l)}^{\pi/2}\abs{J_d}\diff\varphi_l\cdots\diff\varphi_1\\
&\leq r^{d-1}\int_{\arccos(b_1)}^{\pi/2}\cdots\int_{\arccos(b_{l-1})}^{\pi/2}\int_{\arccos(b_l)}^{\pi/2}\diff\varphi_l\sin(\varphi_1)^{l-1}\cdots\sin(\varphi_{l-1})\diff\varphi_{l-1}\cdots\diff\varphi_1\\
&\leq \frac{r^{d-1}\pi}{2}\int_{\arccos(b_1)}^{\pi/2}\cdots\int_{\arccos(b_{l-1})}^{\pi/2}\frac{\sin(\varphi_1)^{l-1}\cdots\sin(\varphi_{l-1})\sqrt{\Delta}_n}{2r\sin(\varphi_1)\cdots\sin(\varphi_{l-1})}\diff\varphi_{l-1}\cdots\diff\varphi_1\\
&\leq C\Delta_n^{l/2}r^{d-1-l}.
\end{align*}
Therefore, we have 
\begin{align*}
\int_{B_\gamma}f(\pi^2\eta\euc{\textbf{z}}^2)\diff \textbf{z}=\Oo\bigg(\Delta_n^{l/2}\int_{\sqrt{\Delta}_n}^1r^{d-1-l}f(r^2)\diff r\bigg).
\end{align*}
Note, that this order applies to the derivatives as well, i.e.:
\begin{align}
\int_{B_\gamma}h(\pi^2\eta\euc{\textbf{z}}^2)\diff \textbf{z}&=\Oo\bigg(\Delta_n^{l/2}\int_{\sqrt{\Delta}_n}^1r^{d-1-l}h(r^2)\diff r\bigg),\label{eqn_BGammaOrderDerivatives}
\end{align}
where $h=f,f',f''$.
Now, let us consider the last case, where $\nor{\gamma}_1=d$. Since the radius is bounded by $\nor{\{\sqrt{\Delta}_n/2\}^d}_2=\sqrt{d\Delta}_n/2$, we can perform a transformation into $d$-dimensional spherical coordinates using the following inequality:
\begin{align*}
\int_{B_\gamma}f(\pi^2\eta\euc{\textbf{z}}^2)\diff \textbf{z}
&\leq \int_{\substack{\nor{\textbf{z}}_2\leq \sqrt{d\Delta}_n/2\\ \textbf{z}\in [0,\infty)^d}}f(\pi^2\eta\nor{\textbf{z}}_2^2)\diff z=\Oo\bigg(\int_{0}^{\sqrt{\Delta}_n}r^{d-1}f(r^2)\diff r\bigg).
\end{align*}
Consequently, we obtain the following order for the compensation integral $\mathcal{I}$:
\begin{align*}
\mathcal{I}&=\sum_{\substack{\nor{\gamma}_1=1 \\ \gamma\in\{0,1\}^{d}}}^{d-1}\int_{B_\gamma}f(\pi^2\eta\euc{\textbf{z}}^2)\diff \textbf{z}+\Oo\bigg(\int_{0}^{\sqrt{\Delta}_n}r^{d-1}f(r^2)\diff r\bigg)\\
&=\Oo\bigg(\max_{l=1,\ldots,d-1}\Delta_n^{l/2}\int_{\sqrt{\Delta}_n}^1r^{d-1-l}f(r^2)\diff r \vee \int_{0}^{\sqrt{\Delta}_n}r^{d-1}f(r^2)\diff r\bigg).
\end{align*}
Regarding the error term $T_1$ from equation \eqref{eqn_T1_2ndDerivationBound}, we obtain the following expression for $\textbf{z}\in[a_\textbf{k-1},a_\textbf{k}]$ and $\textbf{k} \in \N^d$:
\begin{align*}
(\textbf{z}-\sqrt{\Delta}_n\textbf{k})^\top &H_g(\textbf{z})(\textbf{z}-\sqrt{\Delta}_n\textbf{k})\leq \frac{\Delta_n}{4}\sum_{l_1=1}^d\sum_{l_2=1}^d \frac{\partial^2}{\partial z_{l_1}\partial z_{l_2}}f(\pi^2\eta\nor{\textbf{z}}_2^2) \\
&=C\bigg(\Delta_n\sum_{l_1=1}^d\sum_{l_2=1}^d z_{l_1}z_{l_2} f''(\pi^2\eta\nor{\textbf{z}}_2^2)+\frac{d\Delta_n}{2}f'(\pi^2\eta \nor{\textbf{z}}_2^2)\bigg)\\
&\leq C'd\Delta_n \Big(\nor{\textbf{z}}_2^2 f''(\pi^2\eta\nor{\textbf{z}}_2^2)+f'(\pi^2\eta \nor{\textbf{z}}_2^2)\Big),
\end{align*}
where $C,C'>0$ are suitable constants.
Hence, we have 
\begin{align*}
T_1=\Oo\bigg(\Delta_n\int_{[\sqrt{\Delta}_n/2,\infty)^d} \nor{\textbf{z}}_2^2 f''(\nor{\textbf{z}}_2^2)\diff\textbf{z}\vee \Delta_n\int_{[\sqrt{\Delta}_n/2,\infty)^d}f'(\nor{\textbf{z}}_2^2)\diff\textbf{z} \bigg).
\end{align*}
Once more, through the transformation into $d$-dimensional spherical coordinates, we can deduce the order of the Lagrange remainder $T_1$ as follows:
\begin{align}
\abs{T_1}
&=\Oo\bigg(\Delta_n\int_{\sqrt{\Delta}_n}^1 r^{d+1}\abs{f''(r^2)}\diff r\vee \Delta_n\int_{\sqrt{\Delta}_n}^1 r^{d-1}\abs{f'(r^2)}\diff r\bigg),
\label{eqn_taylor2ndErrorOrder}
\end{align}
which completes the proof of the first assertion. \ \\ \\
We begin the proof of (ii) by establishing the following identity:
\begin{align}
\prod_{l=1}^n\cos(x_l)=\frac{1}{2^{n-1}}\sum_{\textbf{u}\in C_n}\cos(\textbf{u}^\top \textbf{x}),\label{eqn_cosProdandSum}
\end{align} 
where $\textbf{x}=(x_1,\ldots,x_n)^\top$ and $C_n:=\{1\}\times\{-1,1\}^{n-1}$, with $\abs{C_n}=2^{n-1}$ and $n\geq 1$. 
We demonstrate that this identity can be derived using induction. For $n\in\{1,2\}$, the identity is readily observed by utilizing the elementary trigonometric identity $\cos(x \pm y) = \cos(x)\cos(y) \mp \sin(x)\sin(y)$. Now, let us assume that the advanced identity holds for an arbitrary $n\in\mathbb{N}$. For $n+1$, we consider $\textbf{x}=(\textbf{y},z)\in\R^{n+1}$, where $\textbf{y} \in \R^n$ and $z \in \R$. Then we have:
\begin{align*}
\frac{1}{2^{n}}\sum_{\textbf{u}\in C_{n+1}}\cos(\textbf{u}^\top \textbf{x})&=\frac{1}{2^{n}}\sum_{\textbf{u}\in C_{n}}\big(\cos(\textbf{u}^\top \textbf{y}+z)+(\cos(\textbf{u}^\top \textbf{y}-z)\big)\\
&=\frac{1}{2^{n-1}}\sum_{\textbf{u}\in C_{n}}\cos(\textbf{u}^\top \textbf{y})\cos(z)=\prod_{l=1}^{n+1}\cos(x_l).
\end{align*}
By utilizing equation \eqref{eqn_cosProdandSum}, we arrive at the following structure:
\begin{align*}
\Delta_n^{d/2}\sum_{\textbf{k}\in\N^d}f(\lambda_\textbf{k}\Delta_n)\cos(2\pi k_{j_1}y_{j_1})\cdot\ldots\cdot\cos(2\pi k_{j_l}y_{j_l})&=\frac{\Delta_n^{d/2}}{2^{l-1}}\sum_{\textbf{k}\in\N^d}f(\lambda_\textbf{k}\Delta_n)\sum_{\textbf{u}\in C_l}\cos(2\pi \textbf{u}^\top (\textbf{y}\bigcdot \textbf{k})_{j,l})
\\
&=\Re\bigg(\frac{\Delta_n^{d/2}}{2^{l-1}}\sum_{\textbf{k}\in\N^d}g(\sqrt{\Delta}_n\textbf{k})\sum_{\textbf{u}\in C_l}e^{\im2\pi \textbf{u}^\top (\textbf{y}\bigcdot \textbf{k})_{j,l}}\bigg)+\Oo(\Delta_n),
\end{align*}
where $(\textbf{y}\bigcdot \textbf{k})_{j,l}:=(k_{j_1}y_{j_1},\ldots,k_{j_l}y_{j_l})$ and $\{j_1,\ldots,j_l\}\subset\{1,\ldots,d\}$ and $l=1,\ldots,(d-1)$. Furthermore, it holds with $u_i\in\{-1,1\}$, $i\in\N$, that
\begin{align*}
\int_{a_{\textbf{k-1}}}^{a_\textbf{k}}e^{\im 2\pi \sum_{i=1}^lu_iy_{j_i}z_{j_i}\Delta_n^{-1/2}}\diff \textbf{z}
&=\frac{\Delta_n^{d/2}}{(\im2\pi)^l\prod_{i=1}^lu_iy_{j_i}} \prod_{i=1}^l\big(e^{\im2\pi a_{k_{j_i}}u_iy_{j_i}\Delta_n^{-1/2}}-e^{\im2\pi a_{k_{j_i}-1}u_iy_{j_i}\Delta_n^{-1/2}}\big)\\
&=\frac{\Delta_n^{d/2}\prod_{i=1}^l\sin(\pi y_{j_i})}{\pi^l\prod_{i=1}^ly_{j_i}}e^{\im 2\pi\sum_{i=1}^l u_ik_{j_i}y_{j_i}}.
\end{align*}
Defining $\textbf{y}_{j,l}:=(y_{j_1},\ldots,y_{j_l})$ and $\chi:=\chi_{j,l}:\R^l\too\R^d$, where the $i$-th component $(\chi_{j,l}(\textbf{x}))_i$ of $\chi_{j,l}(\textbf{x})$ is zero if $i\in\{1,\ldots,d\}\backslash\{j_1,\ldots,j_l\}$ or else the coordinate $x_{j_i}$, lead to:
\begin{align}
&\Re\bigg(\frac{\Delta_n^{d/2}}{2^{l-1}}\sum_{\textbf{k}\in\N^d}g(\sqrt{\Delta}_n\textbf{k})\sum_{\textbf{u}\in C_l}e^{\im2\pi \textbf{u}^\top (\textbf{y}\bigcdot \textbf{k})_{j,l}}\bigg)\notag \\
&=\sum_{\textbf{u}\in C_l}\Re\bigg(\frac{\pi^l\prod_{i=1}^ly_{j_i}}{2^{l-1}\prod_{i=1}^l\sin(\pi y_{j_i})}\mathcal{F}\bigg[\sum_{\textbf{k}\in\N^d}g(\sqrt{\Delta}_n\textbf{k})\mathbbm{1}_{(a_{\textbf{k-1}},a_{\textbf{k}}]}\bigg]\big(-2\pi\chi(\textbf{u}\bigcdot \textbf{y}_{j,l})\Delta_n^{-1/2}\big)\bigg)=:T_2+T_3,\label{eqn_decompT2T3Fourrier}
\end{align}
where $\mathcal{F}$ denotes the Fourier transformation for a $f\in\mathcal{L}^1(\R^d)$. Since we analyse functions $f:[0,\infty)^d\too\R$ the Fourier transformation is given by integrating over $[0,\infty)^d$. Hence, we define $T_2:=\sum_{\textbf{u}\in C_l}T_{2,\textbf{u}}$, $T_3:=\sum_{\textbf{u}\in C_l}T_{3,\textbf{u}}$, where the components are given by:
\begin{align}
T_{2,\textbf{u}}
&:=\Re\bigg(\frac{\pi^l\prod_{i=1}^ly_{j_i}}{2^{l-1}\prod_{i=1}^l\sin(\pi  y_{j_i})}\mathcal{F}\bigg[\sum_{\textbf{k}\in\N^d}g(\sqrt{\Delta}_n\textbf{k})\mathbbm{1}_{(a_{\textbf{k-1}},a_{\textbf{k}}]}-(-1)^lg\mathbbm{1}_{B_{\gamma_{j,l}}}\bigg]\big(-2\pi\chi(\textbf{u}\bigcdot \textbf{y}_{j,l})\Delta_n^{-1/2}\big)\bigg),\label{eqn_T2u}\\
T_{3,\textbf{u}}&:=(-1)^l\Re\bigg(\frac{\pi^l\prod_{i=1}^ly_{j_i}}{2^{l-1}\prod_{i=1}^l\sin(\pi y_{j_i})}\mathcal{F}\big[g\mathbbm{1}_{B_{\gamma_{j,l}}}\big]\big(-2\pi\chi(\textbf{u}\bigcdot \textbf{y}_{j,l})\Delta_n^{-1/2}\big)\bigg),
\end{align}
with $B_{\gamma}$ is defined in equation \eqref{eqn_BGamma} and $\gamma_{j,l}\in\{0,1\}^d$, where $(\gamma_{j,l})_i=1$ if $i\in\{j_1,\ldots,j_l\}$ or zero otherwise. 
Beginning with the analysis of the term $T_3$, we have for $1\leq l\leq (d-1)$ that
\begin{align*}
(-1)^lT_3=&\sum_{\textbf{u}\in C_l}\Re\bigg(\frac{\pi^l\prod_{i=1}^ly_{j_i}}{2^{l-1}\prod_{i=1}^l\sin(\pi y_{j_i})}\mathcal{F}\big[g\mathbbm{1}_{B_{\gamma_{j,l}}}\big]\big(-2\pi\chi(\textbf{u}\bigcdot \textbf{y}_{j,l})\Delta_n^{-1/2}\big)\bigg)\\
&=\frac{\pi^l\prod_{i=1}^ly_{j_i}}{\prod_{i=1}^l\sin(\pi y_{j_i})}\int_{B_{\gamma_{j,l}}}g(\textbf{z})\sum_{u\in C_l}\frac{1}{2^{l-1}}\cos(2\pi\chi(\textbf{u}\bigcdot \textbf{y}_{j,l})^\top \textbf{z}\Delta_n^{-1/2})\diff \textbf{z}\\
&=\frac{\pi^l\prod_{i=1}^ly_{j_i}}{\prod_{i=1}^l\sin(\pi y_{j_i})}\int_{\sqrt{\Delta}_n/2}^\infty\cdots\int_{\sqrt{\Delta}_n/2}^\infty\int_0^{\sqrt{\Delta}_n/2}\cos(2\pi y_{j_l}z_{j_l}\Delta_n^{-1/2})\cdots\\
&~~~~~ \times\int_0^{\sqrt{\Delta}_n/2} g(\textbf{z})\cos(2\pi y_{j_1}z_{j_1}\Delta_n^{-1/2})\diff z_{j_1}\cdots \diff z_{j_l}\diff z_{i_1}\cdots \diff z_{i_{d-l}}.
\end{align*}
To simplify the notation, we introduce $g(z_1, \ldots, z_d) = \tilde{g}(z_{j_1}, \ldots, z_{j_l}, z_{i_1}, \ldots, z_{i_{d-l}})$. Moreover, we can apply integration by parts to obtain:
\begin{align*}
&\frac{\pi^l\prod_{i=1}^ly_{j_i}}{\prod_{i=1}^l\sin(\pi y_{j_i})}\int_{\sqrt{\Delta}_n/2}^\infty\cdots\int_{\sqrt{\Delta}_n/2}^\infty\int_0^{\sqrt{\Delta}_n/2}\cos(2\pi y_{j_l}z_{j_l}\Delta_n^{-1/2})\cdots\\
&~~~~~ \times\int_0^{\sqrt{\Delta}_n/2} g(\textbf{z})\cos(2\pi y_{j_1}z_{j_1}\Delta_n^{-1/2})\diff z_{j_1}\cdots \diff z_{j_l}\diff z_{i_1}\cdots \diff z_{i_{d-l}}\\
&=\frac{\Delta_n^{1/2}\pi^{l-1}\prod_{i=2}^ly_{j_i}}{2\prod_{i=2}^l\sin(\pi y_{j_i})}\int_{\sqrt{\Delta}_n/2}^\infty\cdots\int_{\sqrt{\Delta}_n/2}^\infty\int_0^{\sqrt{\Delta}_n/2}\cos(2\pi y_{j_l}z_{j_l}\Delta_n^{-1/2})\cdots\\
&~~~~~ \times\int_0^{\sqrt{\Delta}_n/2} \tilde{g}(\sqrt{\Delta}_n/2,z_{j_2},\ldots,z_{j_l},z_{i_1},\ldots,z_{i_{d-l}})\cos(2\pi y_{j_2}z_{j_2}\Delta_n^{-1/2})\diff z_{j_2}\cdots \diff z_{j_l}\diff z_{i_1}\cdots \diff z_{i_{d-l}}\\
&~~~~~-\frac{\pi^l\prod_{i=1}^ly_{j_i}}{\prod_{i=1}^l\sin(\pi y_{j_i})}\int_{\sqrt{\Delta}_n/2}^\infty\cdots\int_{\sqrt{\Delta}_n/2}^\infty\int_0^{\sqrt{\Delta}_n/2}\cos(2\pi y_{j_l}z_{j_l}\Delta_n^{-1/2})\cdots\\
&~~~~~\times \int_0^{\sqrt{\Delta}_n/2} g'_{z_{j_1}}(z)\frac{\sin(2\pi y_{j_1}z_{j_1}\Delta_n^{-1/2})}{2\pi y_{j_1}\Delta_n^{-1/2}}\diff z_{j_1}\cdots \diff z_{j_l}\diff z_{i_1}\cdots \diff z_{i_{d-l}}.
\end{align*}
By induction, we have 
\begin{align}
&\sum_{\textbf{u}\in C_l}\Re\bigg(\frac{\pi^l\prod_{i=1}^ly_{j_i}}{2^{l-1}\prod_{i=1}^l\sin(\pi y_{j_i})}\mathcal{F}\big[g\mathbbm{1}_{B_{\gamma_{j,l}}}\big]\big(-2\pi\chi(\textbf{u}\bigcdot \textbf{y}_{j,l})\Delta_n^{-1/2}\big)\bigg)\notag\\
&=\bigg(\frac{\Delta_n^{1/2}}{2}\bigg)^l\int_{\sqrt{\Delta}_n/2}^\infty\cdots\int_{\sqrt{\Delta}_n/2}^\infty\tilde{g}(\sqrt{\Delta}_n/2,\ldots,\sqrt{\Delta}_n/2,z_{i_1},\ldots,z_{i_{d-l}})\diff z_{i_1}\cdots \diff z_{i_{d-l}}-\sum_{k=1}^l I_{k},\label{eqn_IntegrationByPartsInductionT3ii}
\end{align}
where we infer by a simple transformation, that
\begin{align}
I_k&:=\frac{\Delta_n^{(k-1)/2}\pi^{l-k+1}\prod_{i=k}^ly_{j_i}}{2^{k-1}\prod_{i=k}^l\sin(\pi y_{j_i})}\int_{\sqrt{\Delta}_n/2}^\infty\cdots\int_{\sqrt{\Delta}_n/2}^\infty\int_0^{\sqrt{\Delta}_n/2}\cos(2\pi y_{j_l}z_{j_l}\Delta_n^{-1/2})\cdots\notag\\
&~~~~~ \times\int_0^{\sqrt{\Delta}_n/2} \bigg(\tilde{g}'_{z_{j_k}}(\sqrt{\Delta}_n/2,\ldots,\sqrt{\Delta}_n/2,z_{j_k},\ldots,z_{j_l},z_{i_1},\ldots,z_{i_{d-l}})\notag\\
&~~~~~~~~~~\times\frac{\sin(2\pi y_{j_k}z_{j_k}\Delta_n^{-1/2})}{2\pi  y_{j_k}\Delta_n^{-1/2}}\bigg)\diff z_{j_k}\cdots \diff z_{j_l}\diff z_{i_1}\cdots \diff z_{i_{d-l}}\notag\\
&=\frac{\Delta_n^{(l+1)/2}\pi^{l-k}\prod_{i=k+1}^ly_{j_i}}{2^{k}\prod_{i=k}^l\sin(\pi y_{j_i})}\int_{\sqrt{\Delta}_n/2}^\infty\cdots\int_{\sqrt{\Delta}_n/2}^\infty\int_0^{1/2}\cos(2\pi y_{j_l}z_{j_l})\cdots\notag\\
&~~~~~ \times\int_0^{1/2} \bigg(\tilde{g}'_{z_{j_k}}(\sqrt{\Delta}_n/2,\ldots,\sqrt{\Delta}_n/2,z_{j_k}\Delta_n^{1/2},\ldots,z_{j_l}\Delta_n^{1/2},z_{i_1},\ldots,z_{i_{d-l}})\notag\\
&~~~~~~~~~~\times\sin(2\pi y_{j_k}z_{j_k})\bigg)\diff z_{j_k}\cdots \diff z_{j_l}\diff z_{i_1}\cdots \diff z_{i_{d-l}}\label{eqn_Def_I_k}.
\end{align}
In order to determine the order of the terms $I_k$ we proceed by re-transforming the integral as follows:
\begin{align*}
I_k&=\Oo\bigg(\frac{\Delta_n^{(l+1)/2}}{\delta^{l-k+1}}\int_{\sqrt{\Delta}_n/2}^\infty\cdots\int_{\sqrt{\Delta}_n/2}^\infty\int_0^{1/2}\cdots\\
&~~~~~\times\int_0^{1/2}\tilde{g}'_{z_{j_k}}(\sqrt{\Delta}_n/2,\ldots,\sqrt{\Delta}_n/2,z_{j_k}\Delta_n^{1/2},\ldots,z_{j_l}\Delta_n^{1/2},z_{i_1},\ldots,z_{i_{d-l}})\diff z_{j_k}\cdots \diff z_{j_l}\diff z_{i_1}\cdots \diff z_{i_{d-l}}\bigg)\\
&=\Oo\bigg(\frac{\Delta_n^{k/2}}{\delta^{l-k+1}}\int_{\sqrt{\Delta}_n/2}^\infty\cdots\int_{\sqrt{\Delta}_n/2}^\infty\int_0^{\sqrt{\Delta}_n/2}\cdots\int_0^{\sqrt{\Delta}_n/2}z_{j_k}f'\bigg(\sum_{i=k}^l z_{j_i}^2+\sum_{j=1}^{d-l}z_{i_j}^2\bigg)\diff z_{j_k}\cdots \diff z_{j_l}\diff z_{i_1}\cdots \diff z_{i_{d-l}}\bigg).
\end{align*}
Analogously to the determination of the error term $\mathcal{I}$, we transform into $(d-k+1)$-dimensional spherical coordinates and obtain with $1\leq k\leq l\leq (d-1)$ that
\begin{align}
I_k=\Oo\bigg(\frac{\Delta_n^{(l+1)/2}}{\delta^{l-k+1}}\int_{\sqrt{\Delta}_n}^1 r^{d-l}f'(r^2)\diff r\bigg),\label{eqn_I_k_order}
\end{align}
which implies:
\begin{align*}
\sum_{k=1}^l I_{k}=\Oo\bigg(\frac{\Delta_n^{(l+1)/2}}{\delta^{l}}\int_{\sqrt{\Delta}_n}^1 r^{d-l}f'(r^2)\diff r\bigg).
\end{align*}
Next, we have 
\begin{align*}
&\bigg(\frac{\Delta_n^{1/2}}{2}\bigg)^l\int_{\sqrt{\Delta}_n/2}^\infty\cdots\int_{\sqrt{\Delta}_n/2}^\infty\tilde{g}(\sqrt{\Delta}_n/2,\ldots,\sqrt{\Delta}_n/2,z_{i_1},\ldots,z_{i_{d-l}})\diff z_{i_1}\cdots \diff z_{i_{d-l}}\\
&=\int_{\sqrt{\Delta}_n/2}^\infty\cdots\int_{\sqrt{\Delta}_n/2}^\infty\int_0^{\sqrt{\Delta}_n/2}\cdots\int_0^{\sqrt{\Delta}_n/2}\tilde{g}(\sqrt{\Delta}_n/2,\ldots,\sqrt{\Delta}_n/2,z_{i_1},\ldots,z_{i_{d-l}})\diff z_{j_1}\ldots\diff z_{j_l}\diff z_{i_1}\cdots \diff z_{i_{d-l}}\\
&=:J_1.
\end{align*}
Utilizing Taylor expansion, we can decompose $g$ as follows:
\begin{align*}
g(z_1,\ldots,z_d)&=\tilde{g}(z_{j_1},\ldots,z_{j_l},z_{i_1},\ldots,z_{i_{d-l}})\\
&=\tilde{g}(\sqrt{\Delta}_n/2,\ldots,\sqrt{\Delta}_n/2,z_{i_1},\ldots,z_{i_{d-l}})+\sum_{k=1}^l\tilde{g}'_{z_{j_k}}(\xi_1,\ldots,\xi_l,z_{i_1},\ldots,z_{i_{d-l}})\big(z_{j_k}-\sqrt{\Delta}_n/2\big),
\end{align*}
where
\begin{align*}
\nabla_l:=\begin{pmatrix}
\frac{\partial}{\partial z_{j_1}}\\\vdots \\ \frac{\partial}{\partial z_{j_l}}\\ \text{id}\\ \vdots \\ \text{id}
\end{pmatrix},~~~~~ a:=\begin{pmatrix}
\sqrt{\Delta}_n/2\\ \vdots \\ \sqrt{\Delta}_n/2 \\ z_{i_1} \\ \vdots \\ z_{i_{d-l}}
\end{pmatrix},~~~~~\tilde{z}:=\begin{pmatrix}
z_{j_1} \\  \vdots \\ z_{j_l} \\ z_{i_1} \\ \vdots \\ z_{i_{d-l}}
\end{pmatrix},
\end{align*}
and $\xi_1,\ldots,\xi_l\in[0,\sqrt{\Delta}_n/2]$.
Thus, it holds that
\begin{align*}
\abs{J_1-\int_{B_{\gamma_{j,l}}}g(\textbf{z})\diff \textbf{z}}
&\leq \int_{B_{\gamma_{j,l}}}\abs{\tilde{g}(\sqrt{\Delta}_n/2,\ldots,\sqrt{\Delta}_n/2,z_{i_1},\ldots,z_{i_{d-l}})-g(\textbf{z})}\diff \textbf{z}\\
&=\Oo\bigg(\sqrt{\Delta}_n\sum_{k=1}^l\int_{B_{\gamma_{j,l}}}z_{j_k}\abs{f'(\nor{\textbf{z}}_2^2) }\diff \textbf{z}\bigg)\\
&=\Oo\bigg(\Delta_n^{(l+1)/2}\int_{\sqrt{\Delta}_n}^1 r^{d-l}\abs{f'(r^2)} \diff r\bigg).
\end{align*}
Hence, we have:
\begin{align*}
J_1 = \int_{B_{\gamma_{j,l}}}g(\textbf{z})\diff \textbf{z} +\Oo\bigg(\Delta_n^{(l+1)/2}\int_{\sqrt{\Delta}_n}^1 r^{d-l}\abs{f'(r^2)} \diff r\bigg),
\end{align*}
and therefore, we derive the following:
\begin{align*}
T_3&=(-1)^l\int_{B_{\gamma_{j,l}}}g(\textbf{z})\diff \textbf{z} +\Oo\bigg( \frac{\Delta_n^{(l+1)/2}}{\delta^{l}}\int_{\sqrt{\Delta}_n}^1 r^{d-l}f'(r^2)\diff r\bigg).
\end{align*}
To analyse the order of the term $T_2$, we begin by distinguishing between two cases: when $l$ is an odd natural number and when $l$ is an even natural number. Considering that the term $T_{2,\textbf{u}}$ corresponds to the Fourier transform of the function
\begin{align*}
\sum_{\textbf{k}\in\N^d}g(\sqrt{\Delta}_n\textbf{k})\mathbbm{1}_{(a_{\textbf{k-1}},a_{\textbf{k}}]}-(-1)^lg\mathbbm{1}_{B_{\gamma_{j,l}}},
\end{align*}
we can analyse the order of this term by adding the following terms:
\begin{align*}
&\sum_{\textbf{k}\in\N^d}g(\sqrt{\Delta}_n\textbf{k})\mathbbm{1}_{(a_{\textbf{k-1}},a_{\textbf{k}}]}-(-1)^lg\mathbbm{1}_{B_{\gamma_{j,l}}}\\&=\sum_{\textbf{k}\in\N^d}g(\sqrt{\Delta}_n\textbf{k})\mathbbm{1}_{(a_{\textbf{k-1}},a_{\textbf{k}}]}-(-1)^lg\cdot \big(\mathbbm{1}_{B_{\gamma_{j,l}}}+\mathbbm{1}_{(\sqrt{\Delta}_n/2,\infty)^d}-\mathbbm{1}_{(\sqrt{\Delta}_n/2,\infty)^d}\big).
\end{align*} 
If $l$ is odd, we have 
\begin{align*}
\sum_{\textbf{k}\in\N^d}g(\sqrt{\Delta}_n\textbf{k})\mathbbm{1}_{(a_{\textbf{k-1}},a_{\textbf{k}}]}-(-1)^lg\mathbbm{1}_{B_{\gamma_{j,l}}}=\sum_{\textbf{k}\in\N^d}g(\sqrt{\Delta}_n\textbf{k})\mathbbm{1}_{(a_{\textbf{k-1}},a_{\textbf{k}}]}-g\mathbbm{1}_{(\sqrt{\Delta}_n/2,\infty)^d} +  g\mathbbm{1}_{(\sqrt{\Delta}_n/2,\infty)^d\cup B_{\gamma_{j,l}}},
\end{align*}
since we have disjoint sets.
For the case where $l$ is even, we find that
\begin{align*}
\sum_{\textbf{k}\in\N^d}g(\sqrt{\Delta}_n\textbf{k})\mathbbm{1}_{(a_{\textbf{k-1}},a_{\textbf{k}}]}-(-1)^lg\mathbbm{1}_{B_{\gamma_{j,l}}}&=\sum_{\textbf{k}\in\N^d}g(\sqrt{\Delta}_n\textbf{k})\mathbbm{1}_{(a_{\textbf{k-1}},a_{\textbf{k}}]}-g\mathbbm{1}_{(\sqrt{\Delta}_n/2,\infty)^d} + g\cdot \big(\mathbbm{1}_{(\sqrt{\Delta}_n/2,\infty)^d }-\mathbbm{1}_{B_{\gamma_{j,l}}}\big)\\
&\leq \sum_{\textbf{k}\in\N^d}g(\sqrt{\Delta}_n\textbf{k})\mathbbm{1}_{(a_{\textbf{k-1}},a_{\textbf{k}}]}-g\mathbbm{1}_{(\sqrt{\Delta}_n/2,\infty)^d} +  g\mathbbm{1}_{(\sqrt{\Delta}_n/2,\infty)^d\cup B_{\gamma_{j,l}}}.
\end{align*}
Therefore, we can decompose $T_2$ for general $l=1,\ldots,d-1$ into the following parts:
\begin{align*}
T_{2,\textbf{u}}&\leq \Re\bigg(\frac{\pi^l\prod_{i=1}^ly_{j_i}}{2^{l-1}\prod_{i=1}^l\sin(\pi  y_{j_i})}\mathcal{F}\bigg[\sum_{\textbf{k}\in\N^d}g(\sqrt{\Delta}_n\textbf{k})\mathbbm{1}_{(a_{\textbf{k-1}},a_{\textbf{k}}]}-g\mathbbm{1}_{(\sqrt{\Delta}_n/2,\infty)^d}\bigg]\big(-2\pi\chi(\textbf{u}\bigcdot \textbf{y}_{j,l})\Delta_n^{-1/2}\big)\bigg)\\
&~~~~~+\Re\bigg(\frac{\pi^l\prod_{i=1}^ly_{j_i}}{2^{l-1}\prod_{i=1}^l\sin(\pi  y_{j_i})}\mathcal{F}\bigg[g\mathbbm{1}_{(\sqrt{\Delta}_n/2,\infty)^d\cup B_{\gamma_{j,l}}}\bigg]\big(-2\pi\chi(\textbf{u}\bigcdot \textbf{y}_{j,l})\Delta_n^{-1/2}\big)\bigg)=:S_{1,\textbf{u}}+S_{2,\textbf{u}}.
\end{align*}
Furthermore, we define $S_i:=\sum_{\textbf{u}\in C_l}S_{i,\textbf{u}}$, for $i=1,2$.
Starting with $S_2$, it holds for $q\in\{1,2\}$ that
\begin{align}
\abs{x_j^q\mathcal{F}[g](\textbf{x})}=\abs{\mathcal{F}\bigg[\frac{\partial^q}{\partial x_j^q}g\bigg](\textbf{x})}\leq \nor{\frac{\partial^q}{\partial x_j^q} g(\textbf{x})}_{\mathcal{L}_1}~~~~~\text{and}~~~~\abs{\mathcal{F}[g](\textbf{x})}\leq \abs{x_j^{-q}}\nor{\frac{\partial^q}{\partial x_j^{q}} g(\textbf{x})}_{\mathcal{L}_1},\label{eqn_fourrierBoundsDecay}
\end{align}
where we use $x_j\neq 0$ in the last inequality, for $j=1,\ldots,d$ and $\textbf{x}\in\R^d$. Hence, we have:
\begin{align*}
S_{2,\textbf{u}}&=\Oo\bigg( \frac{\Delta_n\pi^{l-2}\prod_{i=2}^ly_{j_i}}{2^{l+1}y_{j_1}\prod_{i=1}^l\sin(\pi y_{j_i})}\nor{\frac{\partial^2}{\partial x_{j_1}^2} g\mathbbm{1}_{[\sqrt{\Delta}_n/2,\infty)^d\cup B_{\gamma_{j,l}}}}_{\mathcal{L}_1}\bigg).
\end{align*}
To compute the $\mathcal{L}_1$ norm, we first obtain the following:
\begin{align*}
\bigg|\bigg|\frac{\partial^2}{\partial z_{j_1}^2} g\mathbbm{1}_{[\sqrt{\Delta}_n/2,\infty)^d\cup B_{\gamma_{j,l}}}\bigg|\bigg|_{\mathcal{L}_1}&=\int_{[\sqrt{\Delta}_n/2,\infty)^d\cup B_{\gamma_{j,l}}}\frac{\partial^2}{\partial z_{j_1}^2}g(\textbf{z})\diff \textbf{z}\\
&=\int_{([\sqrt{\Delta}_n/2,\infty)\dot{\cup}(0,\sqrt{\Delta}_n/2) )^l\times [\sqrt{\Delta}_n/2,\infty)^{d-l}}\frac{\partial^2}{\partial z_{j_1}^2}g(\textbf{z})\diff  \tilde{\textbf{z}},
\end{align*}
where $ \tilde{\textbf{z}}=(z_{j_1},\ldots,z_{j_l},z_{i_1},\ldots,z_{i_{d-l}})$.
At this point, it is possible that none of the integration variables $z_{j_1},\ldots,z_{j_l}$ fall within the range $(0,\sqrt{\Delta}_n/2)$, or one to all of them. Assume we have $0\leq k\leq l$ of these integration variable within the range $(0,\sqrt{\Delta}_n/2)$, then there are $\binom{l}{k}$ possible combinations to choose $k$ variables from $z_{j_1},\ldots,z_{j_l}$. As each choice results in the same order of the integral, which is evident by the argumentation followed by display \eqref{eqn_spericalTransswitch}, it is sufficient to analyse the order of the integral, where we set the first $k$ integration variables $z_{j_1},\ldots,z_{j_k}\in (0,\sqrt{\Delta}_n/2)$. Hence, we get:  
\begin{align*}
\bigg|\bigg|\frac{\partial^2}{\partial z_{j_1}^2} g\mathbbm{1}_{[\sqrt{\Delta}_n/2,\infty)^d\cup B_{\gamma_{j,l}}}\bigg|\bigg|_{\mathcal{L}_1}&=\int_{([\sqrt{\Delta}_n/2,\infty)\dot{\cup}(0,\sqrt{\Delta}_n/2) )^l\times [\sqrt{\Delta}_n/2,\infty)^{d-l}}\frac{\partial^2}{\partial z_{j_1}^2}g(\textbf{z})\diff \tilde{\textbf{z}}\\
&=\Oo\bigg(\max_{k=1,\ldots,l}\Delta_n^{k/2}\int_{\sqrt{\Delta}_n}^1r^{d-k+1}f''(r^2)\diff r\vee \int_{\sqrt{\Delta}_n}^\infty r^{d+1}f''(r^2)\diff r\\
&~~~~~\vee \max_{k=1,\ldots,l}\Delta_n^{k/2}\int_{\sqrt{\Delta}_n}^1r^{d-k-1}f'(r^2)\diff r\vee \int_{\sqrt{\Delta}_n}^\infty r^{d-1}f'(r^2)\diff r\bigg).
\end{align*}
Thus, we infer the following:
\begin{align}
S_{2}&=\sum_{\textbf{u}\in C_l}S_{2,\textbf{u}}=\Oo\bigg(\max_{k=0,\ldots,l}\frac{\Delta_n^{k/2+1}}{\delta^{l+1}}\int_{\sqrt{\Delta}_n}^1r^{d-k+1}f''(r^2)\diff r\vee \max_{k=0,\ldots,l}\frac{\Delta_n^{k/2+1}}{\delta^{l+1}}\int_{\sqrt{\Delta}_n}^1 r^{d-k-1}f'(r^2)\diff r\bigg),\label{eqn_S2_orderSave}
\end{align}
where we have used that $\textbf{y}\in[\delta,1-\delta]^d$. 
We commence the analysis of the term $S_1$. Here, we find that
\begin{align*}
\abs{S_{1,\textbf{u}}}=\frac{\pi^l\prod_{i=1}^ly_{j_i}}{2^{l-1}\prod_{i=1}^l\sin(\pi  y_{j_i})}\abs{\Re\bigg(\sum_{\textbf{k}\in\N^d}\int_{a_{\textbf{k-1}}}^{a_\textbf{k}}\big(g(\sqrt{\Delta}_n\textbf{k})-g(\textbf{z})\big)\exp\big[2\pi\im \chi(\textbf{u}\bigcdot \textbf{y}_{j,l})^\top \textbf{z}\Delta_n^{-1/2}\big]\diff \textbf{z}\bigg)}.
\end{align*}
By considering display \eqref{eqn_multidim_taylor_2ndOrder}, we can deduce:
\begin{align*}
\abs{S_{1,\textbf{u}}}&\leq \frac{\pi^l\prod_{i=1}^ly_{j_i}}{2^{l-1}\prod_{i=1}^l\sin(\pi  y_{j_i})}\abs{\Re\bigg(-\sum_{\textbf{k}\in\N^d}\int_{a_{\textbf{k-1}}}^{a_\textbf{k}}\nabla g(\sqrt{\Delta}_n\textbf{k})^\top(\textbf{z}-\sqrt{\Delta}_n\textbf{k})\exp\big[2\pi\im\chi(\textbf{u}\bigcdot \textbf{y}_{j,l})^\top \textbf{z}\Delta_n^{-1/2}\big]  \diff \textbf{z}\bigg)}\\
&~~~~~+\frac{\pi^l\prod_{i=1}^ly_{j_i}}{2^{l-1}\prod_{i=1}^l\sin(\pi  y_{j_i})}\Re\bigg(\sum_{\textbf{k}\in\N^d}\int_{a_{\textbf{k-1}}}^{a_\textbf{k}}\bigg\lvert \frac{1}{2}(\textbf{z}-\sqrt{\Delta}_n\textbf{k})^\top H_g(\xi)\\
&~~~~~~~~~~\times (\textbf{z}-\sqrt{\Delta}_n\textbf{k}) \exp\big[2\pi\im\chi(\textbf{u}\bigcdot \textbf{y}_{j,l})^\top \textbf{z}\Delta_n^{-1/2}\big]   \bigg\rvert \diff \textbf{z} \bigg)\\
&\leq \frac{\pi^l\prod_{i=1}^ly_{j_i}}{2^{l-1}\prod_{i=1}^l\sin(\pi  y_{j_i})}\abs{\Re\bigg(-\sum_{\textbf{k}\in\N^d} \int_{a_{\textbf{k-1}}}^{a_\textbf{k}}\nabla g(\sqrt{\Delta}_n\textbf{k})^\top(\textbf{z}-\sqrt{\Delta}_n\textbf{k})\exp\big[2\pi\im\chi(\textbf{u}\bigcdot \textbf{y}_{j,l})^\top \textbf{z}\Delta_n^{-1/2}\big]  \diff \textbf{z}\bigg)}\\
&~~~~~+\frac{\pi^l\prod_{i=1}^ly_{j_i}}{2^{l-1}\prod_{i=1}^l\sin(\pi  y_{j_i})}\sum_{\textbf{k}\in\N^d}\int_{a_{\textbf{k-1}}}^{a_\textbf{k}}\abs{ \frac{1}{2}(\textbf{z}-\sqrt{\Delta}_n\textbf{k})^\top H_g(\xi)(\textbf{z}-\sqrt{\Delta}_n\textbf{k})  }\diff \textbf{z}.
\end{align*}
We employ a similar approach as for the term $T_1$, given in the equations \eqref{eqn_T1_2ndDerivationBound} and \eqref{eqn_taylor2ndErrorOrder}, for the second integral, leading to the term:
\begin{align*}
\abs{S_{1,\textbf{u}}}&\leq\frac{\pi^l\prod_{i=1}^ly_{j_i}}{2^{l-1}\prod_{i=1}^l\sin(\pi  y_{j_i})}\abs{\sum_{\textbf{k}\in\N^d} \int_{a_{\textbf{k-1}}}^{a_\textbf{k}}\nabla g(\sqrt{\Delta}_n\textbf{k})^\top(\textbf{z}-\sqrt{\Delta}_n\textbf{k})\cos\big[2\pi\chi(\textbf{u}\bigcdot \textbf{y}_{j,l})^\top \textbf{z}\Delta_n^{-1/2}\big]  \diff \textbf{z}}\\
&~~~~~+\Oo\bigg(\frac{\Delta_n}{\delta^l}\int_{\sqrt{\Delta}_n}^1 r^{d+1}\abs{f''(r^2)}\diff r\vee \frac{\Delta_n}{\delta^l}\int_{\sqrt{\Delta}_n}^1 r^{d-1}\abs{f'(r^2)}diff r\bigg).
\end{align*}
Employing equation \eqref{eqn_cosProdandSum}, we obtain:
\begin{align*}
\abs{S_1}&\leq\frac{\pi^l\prod_{i=1}^ly_{j_i}}{\prod_{i=1}^l\sin(\pi  y_{j_i})}\bigg\lvert\sum_{\textbf{k}\in\N^d} \int_{a_{\textbf{k-1}}}^{a_\textbf{k}}\nabla g(\sqrt{\Delta}_n\textbf{k})^\top(\textbf{z}-\sqrt{\Delta}_n\textbf{k})\cos(2\pi y_{j_1}z_{j_1}\Delta_n^{-1/2})\cdots\cos(2\pi y_{j_l}z_{j_l}\Delta_n^{-1/2})  \diff \textbf{z}\bigg\rvert\\
&~~~~~+\Oo\bigg(\frac{\Delta_n}{\delta^l}\int_{\sqrt{\Delta}_n}^1 r^{d+1}\abs{f''(r^2)}\diff r\vee \frac{\Delta_n}{\delta^l}\int_{\sqrt{\Delta}_n}^1 r^{d-1}\abs{f'(r^2)}\diff r\bigg).
\end{align*}
Let $\textbf{k}\in\N^d$, then it holds that
\begin{align*}
&\int_{a_{\textbf{k-1}}}^{a_\textbf{k}}\nabla g(\sqrt{\Delta}_n\textbf{k})^\top(\textbf{z}-\sqrt{\Delta}_n\textbf{k})\cos(2\pi y_{j_1}z_{j_1}\Delta_n^{-1/2})\cdots\cos(2\pi y_{j_l}z_{j_l}\Delta_n^{-1/2})  \diff \textbf{z}\\
&=\sum_{l=1}^d \int_{a_{\textbf{k-1}}}^{a_\textbf{k}} g'_{z_l}(\sqrt{\Delta}_n\textbf{k})(z_l-\sqrt{\Delta}_n k_l)\cos(2\pi y_{j_1}z_{j_1}\Delta_n^{-1/2})\cdots\cos(2\pi y_{j_l}z_{j_l}\Delta_n^{-1/2})  \diff \textbf{z}.
\end{align*}
Firstly, for $\tilde{l} \notin \{j_1, \ldots, j_l\}$, we have 
\begin{align*}
\int_{a_{k_{j_1}-1}} ^{a_{k_{j_1}}}\cos(2\pi y_{j_1}z_{j_1}\Delta_n^{-1/2})\diff z_{j_1}\cdots\int_{a_{k_{j_l}-1}} ^{a_{k_{j_l}}}\cos(2\pi y_{j_l}z_{j_l}\Delta_n^{-1/2})\diff z_{j_l}\int_{a_{k_{j_{\tilde{l}}}-1}} ^{a_{k_{j_{\tilde{l}}}}}(z_{j_{\tilde{l}}}-\sqrt{\Delta}_nk_{j_{\tilde{l}}})\diff z_{j_{\tilde{l}}}=0,
\end{align*}
since it holds that
\begin{align*}
\int_{\sqrt{\Delta}_n(\tilde{k}-1/2)}^{\sqrt{\Delta}_n(\tilde{k}+1/2)}(x-\sqrt{\Delta}_n\tilde{k})\diff x&=\int_{-\sqrt{\Delta}_n/2}^{\sqrt{\Delta}_n/2}x=0,
\end{align*}
for a $\tilde{k}\in\N$.
Suppose $\tilde{l} \in \{j_1, \ldots, j_l\}$, then we obtain:
\begin{align*}
&\int_{a_{k_{j_1}-1}} ^{a_{k_{j_1}}}\cos(2\pi y_{j_1}z_{j_1}\Delta_n^{-1/2})\diff z_{j_1}\cdots\int_{a_{k_{j_{\tilde{l}}}-1}} ^{a_{k_{j_{\tilde{l}}}}}(z_{j_{\tilde{l}}}-\sqrt{\Delta}_nk_{j_{\tilde{l}}})\cos(2\pi y_{j_{\tilde{l}}}z_{j_{\tilde{l}}}\Delta_n^{-1/2})\diff z_{j_{\tilde{l}}}\\
&~~~~~\cdots\int_{a_{k_{j_l}-1}} ^{a_{k_{j_l}}}\cos(2\pi y_{j_l}z_{j_l}\Delta_n^{-1/2})\diff z_{j_l}.
\end{align*}
For $\tilde{k}\in\N$ and $\tilde{y}\in[\delta,1-\delta]$ we have 
\begin{align*}
\int_{\sqrt{\Delta}_n(\tilde{k}-1/2)}^{\sqrt{\Delta}_n(\tilde{k}+1/2)}\cos(2\pi \tilde{y}x\Delta_n^{-1/2})\diff x=\frac{\sqrt{\Delta}_n\cos(2\pi \tilde{y}\tilde{k})\sin(\pi\tilde{y})}{\pi\tilde{y}}=\Oo(\Delta_n^{1/2}/\delta),
\end{align*}
and 
\begin{align}
\int_{\sqrt{\Delta}_n(\tilde{k}-1/2)}^{\sqrt{\Delta}_n(\tilde{k}+1/2)}(x-\sqrt{\Delta}_n\tilde{k})\cos(2\pi \tilde{y}x\Delta_n^{-1/2})\diff x&=\int_{-\sqrt{\Delta}_n/2}^{\sqrt{\Delta}_n/2}x\cos(2\pi \tilde{y}(x+\sqrt{\Delta}_n\tilde{k})\Delta_n^{-1/2})\diff x\notag\\
&=\frac{\Delta_n\big(\pi \tilde{y}\cos(\pi \tilde{y})-\sin(\pi \tilde{y})\big)}{2\pi^2 \tilde{y}^2}\sin(2\pi \tilde{k}\tilde{y}).
\label{eqn_T2SolvingOneIntegralSine}
\end{align}
Hence, we get for $1\leq l \leq d-1$ that
\begin{align*}
\abs{S_1}&=\Oo\bigg(\frac{\Delta_n^{(d+1)/2}}{\delta}\sum_{i=1}^l\sum_{\textbf{k}\in\N^d}\abs{g'_{z_{j_{i}}}(\sqrt{\Delta}_n\textbf{k})}\cos(2\pi y_{j_1}k_{j_1})\cdots\cos(2\pi y_{j_{i-1}}k_{j_{i-1}})\sin(2\pi k_{j_{i}}y_{j_{i}})\\
&~~~~\times\cos(2\pi y_{j_{i+1}}k_{j_{i+1}})\cdots\cos(2\pi y_{j_l}k_{j_l})\bigg)+\Oo\bigg(\frac{\Delta_n}{\delta^l}\int_{\sqrt{\Delta}_n}^1 r^{d+1}\abs{f''(r^2)}\diff r\vee \frac{\Delta_n}{\delta^l}\int_{\sqrt{\Delta}_n}^1 r^{d-1}\abs{f'(r^2)}\diff r\bigg),
\end{align*}
where we set $y_{j_0}=y_{j_{l+1}}=0$.
It remains to determine the order of the series. 
Therefore, we use the following identity:
\begin{align*}
\sin(x_1)\cos(x_2)\cdots\cos(x_n)=\frac{1}{2^{n-1}}\sum_{\textbf{u}\in C_n}\sin(\textbf{u}^\top\textbf{x}),
\end{align*}
where $\textbf{x}=(x_1,\ldots,x_n)\in\R^n$ and $C_n=\{1\}\times\{-1,1\}^{n-1}$. 
This identity can be proven similarly to identity in display \eqref{eqn_cosProdandSum}. Without loss of generality, we set the coordinates of the sine term to be $j_1$, leading to the expression:
\begin{align*}
&\sum_{\textbf{k}\in\N^d}g'_{z_{j_1}}(\sqrt{\Delta}_n\textbf{k})\sin(2\pi k_{j_1}y_{j_1})\cos(2\pi y_{j_2}k_{j_2})\cdots\cos(2\pi y_{j_l}k_{j_l})\\&~~~~~=\frac{1}{2^{l-1}}\sum_{\textbf{u}\in C_l}\sum_{\textbf{k}\in\N^d}g'_{z_{j_1}}(\sqrt{\Delta}_n\textbf{k})\sin\big(2\pi(\textbf{u}^\top(\textbf{y}\bigcdot\textbf{k})_{j,l}\big),
\end{align*}
where $(\textbf{y}\bigcdot \textbf{k})_{j,l}:=(k_{j_1}y_{j_1},\ldots,k_{j_l}y_{j_l})$.
By following similar steps as in display \eqref{eqn_decompT2T3Fourrier}, we find that
\begin{align*}
&\Delta_n^{(d+1)/2}\sum_{\textbf{k}\in\N^d}g'_{z_{j_1}}(\sqrt{\Delta}_n\textbf{k})\sin(2\pi k_{j_1}y_{j_1})\cos(2\pi y_{j_2}k_{j_2})\cdots\cos(2\pi y_{j_l}k_{j_l})\\
&=\sum_{\textbf{u}\in C_l}\Im\bigg(\frac{\Delta_n^{1/2}\pi^l\prod_{i=1}^ly_{j_i}}{2^{l-1}\prod_{i=1}^l\sin(\pi y_{j_i})}\mathcal{F}\bigg[\sum_{\textbf{k}\in\N^d}g'_{z_{j_1}}(\sqrt{\Delta}_n\textbf{k})\mathbbm{1}_{(a_{\textbf{k-1}},a_{\textbf{k}}]}\bigg]\big(-2\pi\chi(\textbf{u}\bigcdot \textbf{y}_{j,l})\Delta_n^{-1/2}\big)\bigg)=:U_1+U_2-U_3,
\end{align*}
where $U_i:=\sum_{\textbf{u}\in C_l}U_{i,\textbf{u}}$ for $i=1,2,3$ and
\begin{align*}
U_{1,\textbf{u}}&:=  \Im\bigg(\frac{\Delta_n^{1/2}\pi^l\prod_{i=1}^ly_{j_i}}{2^{l-1}\prod_{i=1}^l\sin(\pi y_{j_i})}\mathcal{F}\bigg[\sum_{\textbf{k}\in\N^d}g'_{z_{j_1}}(\sqrt{\Delta}_n\textbf{k})\mathbbm{1}_{(a_{\textbf{k-1}},a_{\textbf{k}}]}-g'_{z_{j_1}}\mathbbm{1}_{(\sqrt{\Delta}_n/2,\infty)^d}\bigg]\big(-2\pi\chi(\textbf{u}\bigcdot \textbf{y}_{j,l})\Delta_n^{-1/2}\big)\bigg),\\
U_{2,\textbf{u}}&:=\Im\bigg(\frac{\Delta_n^{1/2}\pi^l\prod_{i=1}^ly_{j_i}}{2^{l-1}\prod_{i=1}^l\sin(\pi y_{j_i})}\mathcal{F}\bigg[g'_{z_{j_1}}\mathbbm{1}_{(\sqrt{\Delta}_n/2,\infty)^d\cup B_{\gamma_{j,l}}}\bigg]\big(-2\pi\chi(\textbf{u}\bigcdot \textbf{y}_{j,l})\Delta_n^{-1/2}\big)\bigg), \\
U_{3,\textbf{u}}&:=\Im\bigg(\frac{\Delta_n^{1/2}\pi^l\prod_{i=1}^ly_{j_i}}{2^{l-1}\prod_{i=1}^l\sin(\pi y_{j_i})}\mathcal{F}\bigg[g'_{z_{j_1}}\mathbbm{1}_{ B_{\gamma_{j,l}}}\bigg]\big(-2\pi\chi(\textbf{u}\bigcdot \textbf{y}_{j,l})\Delta_n^{-1/2}\big)\bigg).
\end{align*}
By employing the inequality $\nor{\mathcal{F}[f]}_\infty\leq \nor{f}_{\mathcal{L}_1}$, we obtain, for a $\textbf{u} \in C_l$, that
\begin{align*}
\abs{U_{1,\textbf{u}}} &\leq \frac{\Delta_n^{1/2}\pi^l\prod_{i=1}^ly_{j_i}}{2^{l-1}\prod_{i=1}^l\sin(\pi y_{j_i})}\bigg\lvert\bigg\rvert \sum_{\textbf{k}\in\N^d}g'_{z_{j_1}}(\sqrt{\Delta}_n\textbf{k})\mathbbm{1}_{(a_{\textbf{k-1}},a_{\textbf{k}}]}-g'_{z_{j_1}}\mathbbm{1}_{(\sqrt{\Delta}_n/2,\infty)^d} \bigg\rvert\bigg\rvert_{\mathcal{L}_1(\R^d)}\\
&\leq \frac{\Delta_n^{1/2}\pi^l\prod_{i=1}^ly_{j_i}}{2^{l-1}\prod_{i=1}^l\sin(\pi y_{j_i})}\sum_{\textbf{k}\in\N^d} \int_{\R^d} \big\lvert g'_{z_{j_1}}(\sqrt{\Delta}_n\textbf{k})-g'_{z_{j_1}}(\textbf{z})\big\rvert\mathbbm{1}_{(a_{\textbf{k-1}},a_{\textbf{k}}]}(\textbf{z})\diff \textbf{z}.
\end{align*}
Applying Taylor's expansion, we find that
\begin{align*}
\abs{U_{1,\textbf{u}}}&\leq  \frac{\Delta_n^{1/2}\pi^l\prod_{i=1}^ly_{j_i}}{2^{l-1}\prod_{i=1}^l\sin(\pi y_{j_i})}\sum_{\textbf{k}\in\N^d} \int_{a_\textbf{k-1}}^{a_\textbf{k}} \big\lvert\nabla g'_{z_{j_1}}(\xi_\textbf{k})^\top(\textbf{z}-\sqrt{\Delta}_n\textbf{k})\big\rvert\diff \textbf{z}.
\end{align*}
Following analogous steps as for the term $T_1$, we have for $\textbf{k}\in[a_{\textbf{k-1}},a_\textbf{k}]$ that
\begin{align*}
\nabla g'_{z_{j_1}}(\textbf{z})^\top(\textbf{z}-\sqrt{\Delta}_n\textbf{k})&=\sum_{l=1}^d \frac{\partial^2}{\partial z_{j_1}\partial z_l}g(\textbf{z})(z_l-\sqrt{\Delta}_nk_l)\\
&\leq C\sqrt{\Delta}_n\Big(f''(\pi^2\eta\nor{\textbf{z}}_2^2)(dz_{j_1}^2+\nor{\textbf{z}}_2^2)+f'(\pi^2\eta\nor{\textbf{z}}_2^2)\Big),
\end{align*}
and therefore it holds that
\begin{align*}
\abs{U_{1,\textbf{u}}}&=\Oo\bigg(\frac{\Delta_n}{\delta^l}\int_{[\sqrt{\Delta}_n/2,\infty)^d}\nor{\textbf{z}}_2^2f''(\pi^2\eta\nor{\textbf{z}}_2^2)\diff\textbf{z}+\frac{\Delta_n}{\delta^l}\int_{[\sqrt{\Delta}_n/2,\infty)^d}f'(\pi^2\eta\nor{\textbf{z}}_2^2)\diff\textbf{z}\bigg)\\
&=\Oo\bigg(\frac{\Delta_n}{\delta^l}\int_{\sqrt{\Delta}_n}^1 r^{d+1}f''(r^2)\diff r\vee \frac{\Delta_n}{\delta^l}\int_{\sqrt{\Delta}_n}^1 r^{d-1}f'(r^2)\diff r\bigg).
\end{align*}
Note, that $U_{1}$ is of the same order as $U_{1,\textbf{u}}$. Using display \eqref{eqn_fourrierBoundsDecay} with $q=1$ we have for $U_{2,\textbf{u}}$ that
\begin{align*}
\abs{U_{2,\textbf{u}}}&=\Oo\bigg(\frac{\Delta_n\pi^{l-1}\prod_{i=2}^ly_{j_i}}{2^{l}\prod_{i=1}^l\sin(\pi y_{j_i})}\bigg\lvert\bigg\lvert\frac{\partial^2}{\partial z_{j_1}^2}g\mathbbm{1}_{(\sqrt{\Delta}_n/2,\infty)^d\cup B_{\gamma_{j,l}}}  \bigg\rvert\bigg\rvert_{\mathcal{L}_1}\bigg).
\end{align*}
Utilizing the order of the term $S_2$ yields the following:
\begin{align*}
\abs{U_2}&=\Oo\bigg(\max_{k=0,\ldots,l}\frac{\Delta_n^{k/2+1}}{\delta^{l}}\int_{\sqrt{\Delta}_n}^1r^{d-k+1}\abs{f''(r^2)}\diff r\vee \max_{k=0,\ldots,l}\frac{\Delta_n^{k/2+1}}{\delta^{l}}\int_{\sqrt{\Delta}_n}^1 r^{d-k-1}\abs{f'(r^2)}\diff r\bigg).
\end{align*}
For the last term $U_3$ we have with the equations \eqref{eqn_fourrierBoundsDecay} and \eqref{eqn_BGammaOrderDerivatives} that
\begin{align*}
U_{3,\textbf{u}}&=\Oo\bigg(\frac{\Delta_n\pi^{l-1}\prod_{i=2}^ly_{j_i}}{2^{l}\prod_{i=1}^l\sin(\pi y_{j_i})}\bigg\lvert\bigg\lvert\frac{\partial^2}{\partial y_{j_1}^2}g\mathbbm{1}_{ B_{\gamma_{j,l}}}  \bigg\rvert\bigg\rvert_{\mathcal{L}_1}\bigg)\\
&=\Oo\bigg(\frac{\Delta_n^{l/2+1}}{\delta^l}\int_{\sqrt{\Delta}_n}^1 r^{d-l+1}f''(r)\diff r\vee \frac{ \Delta_n^{l/2+1}}{\delta^l}\int_{\sqrt{\Delta}_n}^1 r^{d-l-1}f'(r)\diff r\bigg).
\end{align*}
Hence, we find:
\begin{align*}
\abs{S_1}&=\Oo\bigg(\max_{k=0,\ldots,l}\frac{\Delta_n^{k/2+1}}{\delta^{l+1}}\int_{\sqrt{\Delta}_n}^1r^{d-k+1}\abs{f''(r^2)}\diff r\vee \max_{k=0,\ldots,l}\frac{\Delta_n^{k/2+1}}{\delta^{l+1}}\int_{\sqrt{\Delta}_n}^1 r^{d-k-1}\abs{f'(r^2)}\diff r\bigg)=\abs{S_2}.
\end{align*}
and 
\begin{align*}
&\Delta_n^{d/2}\sum_{\textbf{k}\in\N^d}f(\lambda_\textbf{k}\Delta_n)\cos(2\pi k_{j_1}y_{j_1})\cdot\ldots\cdot\cos(2\pi k_{j_l}y_{j_l})=T_2+T_3+\Oo(\Delta_n)\\
&=(-1)^l\int_{B_{\gamma_{j,l}}}g(\textbf{z})\diff \textbf{z} +\Oo\bigg(\Delta_n^{(l+1)/2}\int_{\sqrt{\Delta}_n}^1 r^{d+1-l}\abs{f'(r^2)} \diff r \vee \frac{\Delta_n^{(l+1)/2}}{\delta^{l}}\int_{\sqrt{\Delta}_n}^1 r^{d-l}f'(r^2)\diff r\bigg)\\
&~~~~~+\Oo\bigg(\max_{k=0,\ldots,l}\frac{\Delta_n^{k/2+1}}{\delta^{l+1}}\int_{\sqrt{\Delta}_n}^1r^{d-k+1}\abs{f''(r^2)}\diff r\vee \max_{k=0,\ldots,l}\frac{\Delta_n^{k/2+1}}{\delta^{l+1}}\int_{\sqrt{\Delta}_n}^1 r^{d-k-1}\abs{f'(r^2)}\diff r\bigg),
\end{align*}
which completes the proof of (ii).\ \\
For the proof of (iii), we proceed in a manner similar to the proof of (ii). Firstly, for a $\gamma \in \{0,1\}^d$, with $\nor{\gamma}_1 = d-1$, we find that
\begin{align*}
\Delta_n^{d/2}\sum_{\textbf{k}\in\N^d}f(\lambda_\textbf{k}\Delta_n)\cos(2\pi k_{1}y_{1})\cdot\ldots\cdot\cos(2\pi k_{d}y_{d})=:T_2+T_3-T_4+\Oo(\Delta_n),
\end{align*}
where we redefine $T_i:=\sum_{\textbf{u}\in C_d}T_{i,\textbf{u}}$, with $i=2,3,4$, by the following:
\begin{align*}
T_{2,\textbf{u}}
&:=\Re\bigg(\frac{\pi^d\prod_{i=1}^dy_{i}}{2^{d-1}\prod_{i=1}^d\sin(\pi  y_{i})}\mathcal{F}\bigg[\sum_{\textbf{k}\in\N^d}g(\sqrt{\Delta}_n\textbf{k})\mathbbm{1}_{(a_{\textbf{k-1}},a_{\textbf{k}}]}-g\mathbbm{1}_{[\sqrt{\Delta}_n/2,\infty)^d}\bigg]\big(-2\pi (\textbf{u}\bigcdot \textbf{y})\Delta_n^{-1/2}\big)\bigg)\\
T_{3,\textbf{u}}&:=\Re\bigg(\frac{\pi^d\prod_{i=1}^dy_{i}}{2^{d-1}\prod_{i=1}^d\sin(\pi y_{i})}\mathcal{F}\bigg[g\mathbbm{1}_{[\sqrt{\Delta}_n/2,\infty)^d\cup B_\gamma}\bigg]\big(-2\pi (\textbf{u}\bigcdot \textbf{y})\Delta_n^{-1/2}\big)\bigg)\\
T_{4,\textbf{u}}&:=\Re\bigg(\frac{\pi^d\prod_{i=1}^dy_{i}}{2^{d-1}\prod_{i=1}^d\sin(\pi y_{i})}\mathcal{F}\bigg[g\mathbbm{1}_{ B_\gamma}\bigg]\big(-2\pi (\textbf{u}\bigcdot \textbf{y})\Delta_n^{-1/2}\big)\bigg),
\end{align*} 
where $\textbf{y}=(y_1,\ldots,y_d)\in[\delta,1-\delta]^d$.
For $T_2$, we apply the same procedure as for $S_1$ in part (ii) to obtain:
\begin{align*}
\abs{T_2}&=\Oo\bigg(\frac{\Delta_n^{(d+1)/2}}{\delta}\sum_{i=1}^d\sum_{\textbf{k}\in\N^d}\abs{g'_{z_{i}}(\sqrt{\Delta}_n\textbf{k})}\cos(2\pi y_{1}k_{1})\cdots\cos(2\pi y_{i-1}k_{i-1})\sin(2\pi k_{i}y_{i})\\
&~~~~\times\cos(2\pi y_{i+1}k_{i+1})\cdots\cos(2\pi y_{d}k_{d})\bigg)+\Oo\bigg(\frac{\Delta_n}{\delta^d}\int_{\sqrt{\Delta}_n}^1 r^{d+1}\abs{f''(r^2)}\diff r\vee \frac{\Delta_n}{\delta^d}\int_{\sqrt{\Delta}_n}^1 r^{d-1}\abs{f'(r^2)}\diff r\bigg)\\
&=\Oo\bigg(\frac{\Delta_n^{(d+1)/2}}{\delta}\sum_{\textbf{k}\in\N^d}\abs{g'_{z_{1}}(\sqrt{\Delta}_n\textbf{k})}\sin(2\pi y_{1}k_{1})\cos(2\pi y_{2}k_{2})\cdots\cos(2\pi y_{d}k_{d})\bigg)\\
&~~~~+\Oo\bigg(\frac{\Delta_n}{\delta^d}\int_{\sqrt{\Delta}_n}^1 r^{d+1}\abs{f''(r^2)}\diff r\vee \frac{\Delta_n}{\delta^d}\int_{\sqrt{\Delta}_n}^1 r^{d-1}\abs{f'(r^2)}\diff r\bigg).
\end{align*}
Furthermore, it holds that
\begin{align*}
&\Delta_n^{(d+1)/2}\sum_{\textbf{k}\in\N^d}g'_{z_{1}}(\sqrt{\Delta}_n\textbf{k})\sin(2\pi k_{1}y_{1})\cos(2\pi y_{2}k_{2})\cdots\cos(2\pi y_{d}k_{d})\\
&=\sum_{\textbf{u}\in C_l}\Im\bigg(\frac{\Delta_n^{1/2}\pi^d\prod_{i=1}^dy_{i}}{2^{d-1}\prod_{i=1}^d\sin(\pi y_{i})}\mathcal{F}\bigg[\sum_{\textbf{k}\in\N^d}g'_{z_{1}}(\sqrt{\Delta}_n\textbf{k})\mathbbm{1}_{(a_{\textbf{k-1}},a_{\textbf{k}}]}\bigg]\big(-2\pi(\textbf{u}\bigcdot \textbf{y})\Delta_n^{-1/2}\big)\bigg)=:U_1+U_2-U_3,
\end{align*}
where we redefine $U_i:=\sum_{\textbf{u}\in C_l}U_{i,\textbf{u}}$, for $i=1,2,3$, by the following terms:
\begin{align*}
U_{1,\textbf{u}}&:=  \Im\bigg(\frac{\Delta_n^{1/2}\pi^d\prod_{i=1}^dy_{i}}{2^{d-1}\prod_{i=1}^d\sin(\pi y_{i})}\mathcal{F}\bigg[\sum_{\textbf{k}\in\N^d}g'_{z_{j_1}}(\sqrt{\Delta}_n\textbf{k})\mathbbm{1}_{(a_{\textbf{k-1}},a_{\textbf{k}}]}-g'_{z_{j_1}}\mathbbm{1}_{(\sqrt{\Delta}_n/2,\infty)^d}\bigg]\big(-2\pi(\textbf{u}\bigcdot \textbf{y})\Delta_n^{-1/2}\big)\bigg),\\
U_{2,\textbf{u}}&:=\Im\bigg(\frac{\Delta_n^{1/2}\pi^d\prod_{i=1}^dy_{i}}{2^{d-1}\prod_{i=1}^d\sin(\pi y_{i})}\mathcal{F}\bigg[g'_{z_{j_1}}\mathbbm{1}_{(\sqrt{\Delta}_n/2,\infty)^d\cup B_{\gamma}}\bigg]\big(-2\pi(\textbf{u}\bigcdot \textbf{y})\Delta_n^{-1/2}\big)\bigg), \\
U_{3,\textbf{u}}&:=-\Im\bigg(\frac{\Delta_n^{1/2}\pi^d\prod_{i=1}^dy_{i}}{2^{d-1}\prod_{i=1}^d\sin(\pi y_{i})}\mathcal{F}\bigg[g'_{z_{j_1}}\mathbbm{1}_{ B_{\gamma}}\bigg]\big(-2\pi(\textbf{u}\bigcdot \textbf{y})\Delta_n^{-1/2}\big)\bigg).
\end{align*}
For the term $U_1$, $U_2$ and $U_3$ we obtain the same order as in part (ii), resulting in:
\begin{align*}
\abs{U_1}&=\Oo\bigg(\frac{\Delta_n}{\delta^d}\int_{\sqrt{\Delta}_n}^1 r^{d+1}\abs{f''(r^2)}\diff r\vee \frac{\Delta_n}{\delta^d}\int_{\sqrt{\Delta}_n}^1 r^{d-1}\abs{f'(r^2)}\diff r\bigg),\\
\abs{U_2}&=\Oo\bigg(\max_{k=0,\ldots,d-1}\frac{\Delta_n^{k/2+1}}{\delta^{d}}\int_{\sqrt{\Delta}_n}^1r^{d-k+1}\abs{f''(r^2)}\diff r\vee \max_{k=0,\ldots,d-1}\frac{\Delta_n^{k/2+1}}{\delta^{d}}\int_{\sqrt{\Delta}_n}^1 r^{d-k-1}\abs{f'(r^2)}\diff r\bigg),\\
\abs{U_3}&=\Oo\bigg(\frac{\Delta_n^{(d+1)/2}}{\delta^d}\int_{\sqrt{\Delta}_n}^1 r^{2}\abs{f''(r)}\diff r\vee \frac{\Delta_n^{(d+1)/2}}{\delta^d}\int_{\sqrt{\Delta}_n}^1 \abs{f'(r)}\diff r\bigg).
\end{align*}
Hence, we have:
\begin{align*}
\abs{T_2}&=\Oo\bigg(\max_{k=0,\ldots,d-1}\frac{\Delta_n^{k/2+1}}{\delta^{d+1}}\int_{\sqrt{\Delta}_n}^1r^{d-k+1}\abs{f''(r^2)}\diff r\vee \max_{k=0,\ldots,d-1}\frac{\Delta_n^{k/2+1}}{\delta^{d+1}}\int_{\sqrt{\Delta}_n}^1 r^{d-k-1}\abs{f'(r^2)}\diff r\bigg).
\end{align*}
For $T_3$ we infer the same order as for $S_2$ in equation \eqref{eqn_S2_orderSave} and have $T_3=\Oo(T_2)$. For $T_4$ we set without loss of generality that $\gamma=\{1,\ldots,1,0\}\in\{0,1\}^d$ and have 
\begin{align*}
T_4&=\frac{\pi^d\prod_{i=1}^dy_{i}}{\prod_{i=1}^d\sin(\pi y_{i})}\int_{\sqrt{\Delta}_n/2}^\infty\cos(2\pi y_{d}z_{d}\Delta_n^{-1/2})\int_0^{\sqrt{\Delta}_n/2}\cos(2\pi y_{{d-1}}z_{{d-1}}\Delta_n^{-1/2})\cdots\\
&~~~~~ \times\int_0^{\sqrt{\Delta}_n/2} g(\textbf{z})\cos(2\pi y_{1}z_{1}\Delta_n^{-1/2})\diff z_{1}\cdots \diff z_{d-1}\diff z_{d}.
\end{align*}
Using analogous steps as in equations \eqref{eqn_IntegrationByPartsInductionT3ii} and \eqref{eqn_I_k_order}, we have 
\begin{align*}
T_4&=\bigg(\frac{\Delta_n^{1/2}}{2}\bigg)^{d-1}\frac{\pi y_d}{\sin(\pi y_d)}\int_{\sqrt{\Delta}_n/2}^\infty\tilde{g}(\sqrt{\Delta}_n/2,\ldots,\sqrt{\Delta}_n/2,z_d)\cos(2\pi y_{d}z_d\Delta_n^{-1/2})\diff z_d\\
&~~~~~+\Oo\bigg(\frac{\Delta_n^{d/2}}{\delta^{d}}\int_{\sqrt{\Delta}_n}^1 rf'(r^2)\diff r\bigg).
\end{align*}
Integration by parts yields:
\begin{align*}
&\frac{\pi y_d}{\sin(\pi y_d)}\int_{\sqrt{\Delta}_n/2}^\infty\tilde{g}(\sqrt{\Delta}_n/2,\ldots,\sqrt{\Delta}_n/2,z_d)\cos(2\pi y_{d}z_d\Delta_n^{-1/2})\diff z_d\\
&=\frac{\Delta_n^{1/2}}{2\sin(\pi y_d)}\bigg[\tilde{g}(\sqrt{\Delta}_n/2,\ldots,\sqrt{\Delta}_n/2,z_d)\sin(2\pi y_dz_d\Delta_n^{-1/2})\bigg]_{\sqrt{\Delta}_n/2}^\infty\\
&~~~~~-\frac{\Delta_n^{1/2}}{2\sin(\pi y_d)}\int_{\sqrt{\Delta}_n/2}^\infty\frac{\partial}{\partial z_d}\tilde{g}(\sqrt{\Delta}_n/2,\ldots,\sqrt{\Delta}_n/2,z_d)\sin(2\pi y_dz_d\Delta_n^{-1/2})\diff z_d\\
&=\Oo\Big(\Delta_n^{1/2}g(\sqrt{\Delta}_n/2,\ldots,\sqrt{\Delta}_n/2)\Big)-I_d,
\end{align*}
where 
\begin{align*}
I_d:=\frac{\Delta_n^{1/2}}{2\sin(\pi y_d)}\int_{\sqrt{\Delta}_n/2}^\infty\frac{\partial}{\partial z_d}\tilde{g}(\sqrt{\Delta}_n/2,\ldots,\sqrt{\Delta}_n/2,z_d)\sin(2\pi y_dz_d\Delta_n^{-1/2})\diff z_d.
\end{align*}
Furthermore, we have:
\begin{align*}
I_d&=\Oo\bigg(\frac{\Delta_n^{1/2}}{\delta}\int_{\sqrt{\Delta}_n/2}^\infty z_df'\big((d-1)\Delta_n/4+z_d^2\big) \diff z_d\bigg)=\Oo\bigg(\frac{\Delta_n^{1/2}}{\delta}\int_{\sqrt{\Delta}_n/2}^1 rf'\big(r^2\big) \diff r\bigg)
\end{align*}
and therefore we find that
\begin{align*}
T_4&=\Oo\big(\Delta_n^{d/2}f(\Delta_n)\big)+\Oo\bigg(\frac{\Delta_n^{d/2}}{\delta^{d}}\int_{\sqrt{\Delta}_n}^1 rf'(r^2)\diff r\bigg).
\end{align*}
Finally, we obtain that
\begin{align*}
&\Delta_n^{d/2}\sum_{\textbf{k}\in\N^d}f(\lambda_\textbf{k}\Delta_n)\cos(2\pi k_{1}y_{1})\cdot\ldots\cdot\cos(2\pi k_{d}y_{d})=\Oo\big(\Delta_n^{d/2}f(\Delta_n)\big)+\Oo\bigg(\frac{\Delta_n^{d/2}}{\delta^{d}}\int_{\sqrt{\Delta}_n}^1 rf'(r^2)\diff r\bigg)\\
&~~~~~+\Oo\bigg(\max_{k=0,\ldots,d-1}\frac{\Delta_n^{k/2+1}}{\delta^{d+1}}\int_{\sqrt{\Delta}_n}^1r^{d-k+1}\abs{f''(r^2)}\diff r\vee \max_{k=0,\ldots,d-1}\frac{\Delta_n^{k/2+1}}{\delta^{d+1}}\int_{\sqrt{\Delta}_n}^1 r^{d-k-1}\abs{f'(r^2)}\diff r\bigg),
\end{align*}
which completes the proof.
\end{proof}

Consider the class following of functions, given by
\begin{align}
\mathcal{Q}_{\beta}&:=\Big\{f:[0,\infty)\too\R\vert ~f \text{ is twice differentiable, } \nor{x^{d-1}f(x^2)}_{\mathcal{L}^1([0,\infty))},~\nor{x^{d}f^{(1)}(x^2)}_{\mathcal{L}^1([1,\infty))}\notag,\\
&~~~~~~~~\nor{x^{d+1}f^{(2)}(x^2)}_{\mathcal{L}^1([1,\infty))}\text{ and }\limsup_{x\rightarrow 0}\abs{f^{(j)}(x^2)/x^{-\beta_j}}\leq C<\infty, \text{ for }j=0,1,2\Big\},\label{eqn_QBeta_def}
\end{align}
where $f^{(j)}$ denotes the $j$-th derivative and $\beta=(\beta_0,\beta_1,\beta_2)\in(0,\infty)$. 
Then, Lemma \ref{lemma_riemannApprox_multi} can be expressed by the following corollary.
\begin{cor}\label{corollary_toLemmaRiemannApproxMulti}
Let $f\in\mathcal{Q}_\beta$ for $\beta=(\beta_0,\beta_1,\beta_2)\in(0,\infty)$, then it holds that
\begin{itemize}
\item[(i)] 
\begin{align*}
\Delta_n^{d/2}\sum_{\textbf{k}\in\N^d}f(\lambda_\textbf{k}\Delta_n)&=\frac{1}{2^{d}(\pi\eta)^{d/2}\Gamma(d/2)}\int_0^\infty x^{d/2-1}f(x)\diff x-\sum_{\substack{\nor{\gamma}_1=1 \\ \gamma\in\{0,1\}^{d}}}^{d-1}\int_{B_\gamma}f(\pi^2\eta\euc{\textbf{z}}^2)\diff \textbf{z}\\
&~~~~~+\Oo\big(\Delta_n\vee \Delta_n^{(d-\beta_0)/2}\vee\Delta_n^{(d+2-\beta_1)/2}\vee\Delta_n^{(d+4-\beta_2)/2}\big),
\end{align*}
where $B_\gamma$ is defined in equation \eqref{eqn_BGamma}.
\item[(ii)] For $\{j_1,\ldots,j_l\}\subset\{1,\ldots,d\}$, $\gamma_{j,l}\in\{0,1\}^d$, where $(\gamma_{j,l})_i=\mathbbm{1}_{i\in\{j_1,\ldots,j_l\}}$, with $i=1,\ldots,d$ and $l=1,\ldots,(d-1)$, we have 
\begin{align*}
&\Delta_n^{d/2}\sum_{k\in\N^d}f(\lambda_k\Delta_n)\cos(2\pi k_{j_1}y_{j_1})\cdot\ldots\cdot\cos(2\pi k_{j_l}y_{j_l})=(-1)^l\int_{B_{\gamma_{j,l}}}f(\pi^2\eta\euc{\textbf{z}}^2)\diff \textbf{z}\\
&~~~~~+ \Oo\big(\delta^{-(l+1)}\Delta_n\vee\delta^{-l}\Delta_n^{(l+1)/2}\vee \delta^{-(l+1)}\Delta_n^{(d+2-\beta_1)/2}\vee\delta^{-(l+1)}\Delta_n^{(d+4-\beta_2)/2}\big).
\end{align*}
\item[(iii)] For $\{j_1,\ldots,j_l\}=\{1,\ldots,d\}$, i.e. $l=d$, we have 
\begin{align*}
&\Delta_n^{d/2}\sum_{k\in\N^d}f(\lambda_k\Delta_n)\cos(2\pi k_{1}y_{1})\cdot\ldots\cdot\cos(2\pi k_{d}y_{d})=\Oo\big(\delta^{-(d+1)}\Delta_n\vee \Delta_n^{(d-\beta_0)/2}\\
&~~~~~\vee \delta^{-(d+1)}\Delta_n^{(d+2-\beta_1)/2}\vee\delta^{-(d+1)}\Delta_n^{(d+4-\beta_2)/2}\big).
\end{align*}
\end{itemize}
In particular, it holds for a $\tilde{\gamma}\in\{0,1\}^d$ with $\nor{\tilde{\gamma}}_1=l$ and $1\leq l\leq d-1$ that
\begin{align*}
\int_{B_{\tilde{\gamma}}}f(\pi^2\eta\euc{\textbf{z}}^2)\diff \textbf{z}=\Oo\Big(\Delta_n^{l/2}\vee \Delta_n^{(d-\beta_0)/2}\Big)
\end{align*}
and 
\begin{align*}
\sum_{\substack{\nor{\gamma}_1=1 \\ \gamma\in\{0,1\}^{d}}}^{d}\int_{B_\gamma}f(\pi^2\eta\euc{\textbf{z}}^2)\diff \textbf{z}=\Oo\Big(\Delta_n^{1/2}\vee \Delta_n^{(d-\beta_0)/2}\Big).
\end{align*}
\end{cor}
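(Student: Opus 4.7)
The corollary is a purely technical refinement of Lemma \ref{lemma_riemannApprox_multi}: it translates the implicit integral error bounds of the lemma into explicit powers of $\Delta_n$ and $\delta$ by substituting the class-$\mathcal{Q}_\beta$ growth control $|f^{(j)}(r^2)| \leq C r^{-\beta_j}$ that holds near the origin, while away from the origin the relevant integrals are controlled by the $\mathcal{L}^1$-assumptions built into the definition of $\mathcal{Q}_\beta$ in \eqref{eqn_QBeta_def}. The plan is therefore to take each error integral appearing in the three parts of Lemma \ref{lemma_riemannApprox_multi}, evaluate it as a monomial integral in $r$, and collect the resulting $\Delta_n$-orders under a maximum.

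For part (i) I would estimate the three error terms separately. The low-radius integral becomes $\int_0^{\sqrt{\Delta}_n} r^{d-1}|f(r^2)|\,\diff r \lesssim \int_0^{\sqrt{\Delta}_n} r^{d-1-\beta_0}\,\diff r = \Oo(\Delta_n^{(d-\beta_0)/2})$. The two derivative integrals $\Delta_n \int_{\sqrt{\Delta}_n}^1 r^{d-1}|f'(r^2)|\,\diff r$ and $\Delta_n \int_{\sqrt{\Delta}_n}^1 r^{d+1}|f''(r^2)|\,\diff r$ reduce to $\Delta_n \int_{\sqrt{\Delta}_n}^1 r^{d-1-\beta_1}\,\diff r$ and $\Delta_n \int_{\sqrt{\Delta}_n}^1 r^{d+1-\beta_2}\,\diff r$ respectively. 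A case distinction on whether the exponent on $r$ exceeds $-1$ yields either a bounded integral (contributing $\Oo(\Delta_n)$) or divergent-at-zero behaviour (contributing $\Oo(\Delta_n^{(d+2-\beta_1)/2})$ and $\Oo(\Delta_n^{(d+4-\beta_2)/2})$ after the prefactor $\Delta_n$ is absorbed). Taking maxima gives the stated bound.

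For parts (ii) and (iii) the reasoning is the same, but one must additionally keep track of the explicit $\delta^{-(l+1)}$ factors and the maximum over $k=0,\ldots,l$ (respectively $k=0,\ldots,d-1$) appearing in the lemma. In both the $f'$- and $f''$-integrals, the exponent on $r$ in $\int_{\sqrt{\Delta}_n}^1 r^{d-k\pm 1-\beta_j}\,\diff r$ changes sign as $k$ varies, so the maximum of $\Delta_n^{k/2+1} \int \cdots$ over $k$ is attained either at $k=0$ (contributing $\delta^{-(l+1)}\Delta_n$, since $\Delta_n^{k/2+1}$ multiplied by a bounded integral is largest at $k=0$) or at the critical value of $k$ where the exponent on $r$ crosses $-1$ (producing the $\Delta_n^{(d+2-\beta_1)/2}$ and $\Delta_n^{(d+4-\beta_2)/2}$ contributions, independent of $k$ by construction of the exponent). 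For part (iii), the additional isolated term $\Delta_n^{d/2}|f(\Delta_n)|$ becomes $\Oo(\Delta_n^{(d-\beta_0)/2})$ directly and is already absorbed by the stated bound.

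The two intermediate estimates for $\int_{B_{\tilde{\gamma}}}$ and for the sum over $\gamma$ follow from the last two displays of Lemma \ref{lemma_riemannApprox_multi} by evaluating $\int_{\sqrt{\Delta}_n}^1 r^{d-1-l-\beta_0}\,\diff r$ in the two regimes $d-l > \beta_0$ and $d-l < \beta_0$: in the first regime the integral is $\Oo(1)$ and the overall order is $\Delta_n^{l/2}$, while in the second regime the integral itself contributes $\Delta_n^{(d-l-\beta_0)/2}$, so that the two factors combine to $\Delta_n^{(d-\beta_0)/2}$. The summation over $\gamma$ is then dominated by the smallest occurring exponent $l=1$, giving the claimed $\Oo(\Delta_n^{1/2} \vee \Delta_n^{(d-\beta_0)/2})$. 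The main obstacle is pure bookkeeping: boundary cases in which the exponent on $r$ equals exactly $-1$ produce an extra logarithmic factor $\log(1/\Delta_n)$, but this is absorbed by an arbitrarily small decrease in the adjacent power of $\Delta_n$ and therefore does not affect the final $\Oo$-bounds.
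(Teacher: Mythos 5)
Your proposal is correct and coincides with the argument the paper has in mind: the paper explicitly omits the proof as straightforward, and the intended route is exactly your direct substitution of the $\mathcal{Q}_\beta$ growth bounds $|f^{(j)}(r^2)|\leq Cr^{-\beta_j}$ into the error integrals of Lemma \ref{lemma_riemannApprox_multi}, followed by the case distinction on the sign of the resulting exponent plus one. Your closing caveat about the critical exponent is the only point of friction — in the boundary case the logarithm is \emph{not} absorbed by the stated powers of $\Delta_n$ without an $\varepsilon$-loss — but this never occurs for the functions $f_\alpha$, $g_{\alpha,\tau}$ to which the corollary is applied, since there $\beta_1,\beta_2$ differ from the critical values by the strictly positive margin $2\alpha'$.
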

As the proof of the latter corollary is straightforward, we omit it. 
The following two functions:
\begin{align}
f_{\alpha}(x):=\frac{1-e^{-x}}{x^{1+\alpha}}~~~~~\text{and}~~~~~g_{\alpha,\tau}(x)=\frac{(1-e^{-x})^2}{2x^{1+\alpha}}e^{-\tau x},\label{equation_functionsAlphaandTau}
\end{align}
for $\alpha,\tau>0$ play a crucial role in the forthcoming analysis, particularly in calculating the realized volatility. In order to utilize Corollary \ref{corollary_toLemmaRiemannApproxMulti} for these functions, we need to verify their belonging to the class $\mathcal{Q}_\beta$ and determine the corresponding parameter $\beta$. The following lemma serves this purpose.
\begin{lemma}\label{lemma_checkConditionsAprroxlemma}
It holds: $f_\alpha\in\mathcal{Q}_{\beta_1}$ and $g_{\alpha,\tau}\in \mathcal{Q}_{\beta_2}$, where 
\begin{align*}
\beta_1=\big(2\alpha,2(1+\alpha),2(2+\alpha)\big)~~~~~\text{and}~~~~~\beta_2=\big(2\alpha,2(1+\alpha),2(1+\alpha)\big).
\end{align*}
\end{lemma}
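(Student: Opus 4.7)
For each of the two functions I need to verify four items: (i) twice differentiability, which is immediate since both are $C^\infty$ on $(0,\infty)$; (ii) $\nor{x^{d-1}f(x^2)}_{\mathcal{L}^1([0,\infty))}<\infty$; (iii) $\nor{x^{d+r}f^{(r)}(x^2)}_{\mathcal{L}^1([1,\infty))}<\infty$ for $r=1,2$; and (iv) the singularity bounds $|f^{(j)}(x^2)|\leq C\,x^{-\beta_j}$ as $x\to 0$ for $j=0,1,2$. Items (ii)--(iii) control the two endpoints, while (iv) only concerns the behavior at $0$. Every part reduces to a Taylor expansion of $1-e^{-x}$ near $0$ combined with an elementary polynomial decay estimate at infinity.

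\textbf{The function $f_\alpha$.} From $1-e^{-x}=x-x^2/2+O(x^3)$ we read off $f_\alpha(x)=x^{-\alpha}(1+O(x))$, so $f_\alpha(x^2)$ grows like $x^{-2\alpha}$ near $0$, matching $\beta_0=2\alpha$. Differentiating the explicit expression
\[
f_\alpha'(x)=\frac{e^{-x}}{x^{1+\alpha}}-(1+\alpha)\frac{1-e^{-x}}{x^{2+\alpha}},
\]
the apparently leading $x^{-(1+\alpha)}$ contributions combine with coefficient $1-(1+\alpha)=-\alpha$, yielding $f_\alpha'(x)\sim -\alpha\,x^{-(1+\alpha)}$ and thus $\beta_1=2(1+\alpha)$. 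A second differentiation together with an analogous cancellation gives $f_\alpha''(x)\sim \alpha(1+\alpha)\,x^{-(2+\alpha)}$, matching $\beta_2=2(2+\alpha)$. For integrability, at infinity each $f_\alpha^{(j)}$ decays like $x^{-(1+\alpha+j)}$ (the $e^{-x}$ pieces decay super-polynomially), so $\int_1^\infty x^{d-1+j}|f_\alpha^{(j)}(x^2)|\,dx$ behaves like $\int_1^\infty x^{d-3-2\alpha-j}\,dx$, finite precisely when $\alpha>(d-2-j)/2$; all three cases follow from $\alpha>d/2-1$. Near $0$ only (ii) is at issue, and $\int_0^1 x^{d-1-2\alpha}\,dx<\infty$ holds since $\alpha<d/2$, so the standing range $\alpha\in(d/2-1,d/2)$ delivers every required bound.

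\textbf{The function $g_{\alpha,\tau}$ and main obstacle.} The factor $e^{-\tau x}$ renders all integrals on $[1,\infty)$ trivially finite, so only the behavior at $0$ remains. Expanding $(1-e^{-x})^2=x^2-x^3+O(x^4)$ and $e^{-\tau x}=1+O(x)$ gives $g_{\alpha,\tau}(x)=\tfrac{1}{2}x^{1-\alpha}(1+O(x))$, so $g_{\alpha,\tau}(x^2)$ decays like $x^{2-2\alpha}$ and is certainly dominated by $x^{-2\alpha}$ on $[0,1]$. Differentiating term by term yields $g_{\alpha,\tau}'(x)\sim \tfrac{1-\alpha}{2}\,x^{-\alpha}$ and $g_{\alpha,\tau}''(x)\sim -\tfrac{\alpha(1-\alpha)}{2}\,x^{-(1+\alpha)}$, so after the substitution $x\mapsto x^2$ both are bounded by $C\,x^{-2(1+\alpha)}$ near $0$, matching $\beta^{(2)}=(2\alpha,2(1+\alpha),2(1+\alpha))$. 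The remaining integral $\int_0^1 x^{d-1}|g_{\alpha,\tau}(x^2)|\,dx$ behaves like $\int_0^1 x^{d+1-2\alpha}\,dx$, finite since $\alpha<d/2<(d+2)/2$. The one non-routine step is the cancellation of the apparent $x^{-(1+\alpha)}$ singularity in $f_\alpha'$ (and the analogous cancellation in $f_\alpha''$); without it the claimed exponents $\beta_1,\beta_2$ would be off by a full power. Beyond this minor bookkeeping, the argument is elementary.
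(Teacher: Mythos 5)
Your proof is correct and follows essentially the same route as the paper: explicit formulas for $f_\alpha'$ and $f_\alpha''$, the expansion $1-e^{-x}=x+\mathcal{O}(x^2)$ to read off the exponents $\beta_j$ at the origin, and polynomial decay at infinity for the $\mathcal{L}^1$ conditions on $[1,\infty)$, with the standing range $\alpha\in(d/2-1,d/2)$ closing both endpoints (the paper merely packages these two endpoint estimates into the displayed criteria \eqref{eqn_intcrit1}--\eqref{eqn_leftLimit2} and treats $g_{\alpha,\tau}$ as analogous, which you spell out). One small caveat: your closing claim that a cancellation between the terms of $f_\alpha'$ (and $f_\alpha''$) is essential and that without it $\beta_1,\beta_2$ would be off by a full power overstates the point --- once $1-e^{-x}=\mathcal{O}(x)$ is applied to the numerators, each term of $f_\alpha'$ is individually $\mathcal{O}(x^{-(1+\alpha)})$ and each term of $f_\alpha''$ is individually $\mathcal{O}(x^{-(2+\alpha)})$, the combined leading coefficients $-\alpha$ and $\alpha(1+\alpha)$ do not vanish, and since membership in $\mathcal{Q}_\beta$ only requires an upper bound, the stated exponents follow by bounding term by term with no cancellation at all, which is exactly how the paper argues.
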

\begin{proof}
We provide the proof for the function $f_\alpha$ since the proof for $g_{\alpha,\tau}$ follows in an analogous manner.
First, it holds with integration by parts that
\begin{align}
\int_0^\infty \frac{e^{- cx}}{x^{m}}\diff x&=\frac{c^{m-1}}{1-m}\Gamma(2-m)=c^{m-1}\Gamma(1-m),\label{equation_generalIntegraltoGamma}
\end{align}
for $m<1$ and $c>0$, where $\Gamma(z)=\int_0^\infty t^{z-1}e^{-t}\diff t$ denotes the Gamma function for $z\in\mathbb{C}$ and $\Re(z)\notin\{0,-1,-2,\ldots\}$. Note, that $\Gamma(1+z)=z\Gamma(z)$. 
By utilizing equation \eqref{equation_generalIntegraltoGamma}, we find:
\begin{align}
\int_0^\infty \frac{e^{-cx^2}}{x^m}\diff x =\frac{c^{(m-1)/2}}{2}\Gamma\bigg(\frac{1}{2}-\frac{m}{2}\bigg),\label{equation_generalIntegraltoGammaSquared}
\end{align}
where $m<1$ and $c>0$ and
\begin{align}
\int_0^\infty \frac{1-e^{-cx^2}}{x^m}\diff x =-\frac{c^{(m-1)/2}}{2}\Gamma\bigg(\frac{1}{2}-\frac{m}{2}\bigg)<C,\label{equation_generalIntegraltoGammaOneMinusSquared}
\end{align}
for $1<m<3$, $c>0$ and a constant $0<C<\infty$.
We begin by examining the conditions of the class $\mathcal{Q}_\beta$ for the functions $f_{\alpha}$ and $g_{\alpha,\tau}$. First and foremost, both functions $f_{\alpha}$ and $g_{\alpha,\tau}$ are evidently twice continuously differentiable. Here, we find:
\begin{align*}
f'_\alpha(x)&=\frac{e^{-x}}{x^{1+\alpha}}-(1+\alpha)\frac{1-e^{-x}}{x^{2+\alpha}},\\
f''_\alpha(x)&=-\frac{e^{-x}}{x^{1+\alpha}}-(1+\alpha)\frac{2e^{-x}}{x^{2+\alpha}}+(1+\alpha)(2+\alpha)\frac{1-e^{-x}}{x^{3+\alpha}},\\
g_{\alpha,\tau}'(x)&= e^{-x(\tau+1)}f_{\alpha}(x)-\frac{1+\alpha}{x}g_{\alpha,\tau}(x)-\tau g_{\alpha,\tau}(x), \\
g_{\alpha,\tau}''(x)&=-(\tau+1)e^{-x(\tau+1)}f_{\alpha}(x)+e^{-x(\tau+1)}f_{\alpha}'(x)+\frac{1+\alpha}{x^2}g_{\alpha,\tau}(x)-\frac{1+\alpha}{x}g'_{\alpha,\tau}(x)-\tau g'_{\alpha,\tau}(x).
\end{align*}
Furthermore, we obtain:
\begin{align}
\int_1^\infty x^me^{-x^2}\diff x=\frac{1}{2}\int_1^\infty x^{(m-1)/2}e^{-x}=\frac{1}{2}\Gamma\big((m+1)/2,1\big)\leq C,\label{eqn_intcrit1}
\end{align}
and
\begin{align}
\int_1^\infty x^m(1-e^{-x^2})\diff x&=\bigg[\frac{x^{m+1}}{m+1}(1-e^{-x^2})\bigg]_1^\infty-\frac{2}{m+1}\int_1^\infty x^{m+2}e^{-x^2}\diff x\notag\\
&=\bigg[\frac{x^{m+1}}{m+1}(1-e^{-x^2})\bigg]_1^\infty-\frac{1}{m+1}\Gamma\big((m+3)/2,1\big)\leq C,\label{eqn_intcrit2}
\end{align}
if $m<-1$. Here, $\Gamma(z,s)=\int_s^\infty t^{z-1}e^{-z}\diff z$ denotes the upper incomplete Gamma function. 
For the left limit, we obtain in general:
\begin{align}
\lim_{x\too 0}\frac{1-e^{-x^2}}{x^{m-\beta}}=\lim_{x\too 0}\frac{2e^{-x^2}}{(m-\beta)x^{m-\beta-2}}<\infty \Leftrightarrow m-2\leq \beta,\label{eqn_leftLimit1}
\end{align}
and 
\begin{align}
\lim_{x\too 0}\frac{e^{-x^2}}{x^{m-\beta}}<\infty\Leftrightarrow m\leq \beta.\label{eqn_leftLimit2}
\end{align}
Concerning the integration criteria for $f_{\alpha}$, we have by equation \eqref{equation_generalIntegraltoGammaOneMinusSquared} that $\nor{x^{d-1}f_{\alpha}(x^2)}_{\mathcal{L}^1([0,\infty))}$ since $1<1+2\alpha'<3$.
The integration criteria for the first and second derivative, $f'$ and $f''$, are established based on the equations \eqref{eqn_intcrit1} and \eqref{eqn_intcrit2}, as $d-4-2\alpha=-2-\alpha'<-1$ and $(d+1)-6-2\alpha=-3-2\alpha'<-1$.
Therefore, it remains to determine the parameters $\beta_0,\beta_1,\beta_2$ which are associated to $f,f'$ and $f''$, respectively. 
Using the displays \eqref{eqn_leftLimit1} and \eqref{eqn_leftLimit2}, we have $f\in\mathcal{Q}_\beta$, with
\begin{align*}
\beta_0=2\alpha,~~~~~\beta_1=2(1+\alpha),~~~~~\text{and}~~~~~\beta_2=2(2+\alpha).
\end{align*}
\end{proof}

\begin{lemma}\label{lemma_calcfAlphaDelta}
On Assumptions \ref{assumption_observations_multi} and \ref{assumption_regMulti} it holds that
\begin{align*}
\Delta_n^{d/2}\sum_{\textbf{k}\in\N^d}f_\alpha(\lambda_\textbf{k}\Delta_n)&=\frac{\Gamma(1-\alpha')}{2^d(\pi\eta)^{d/2}\alpha'\Gamma(d/2)}-\sum_{\substack{\nor{\gamma}_1=1 \\ \gamma\in\{0,1\}^{d}}}^{d-1}\int_{B_\gamma}f_\alpha(\pi^2\eta\euc{\textbf{z}}^2)\diff \textbf{z}+\Oo\big(\Delta_n^{1-\alpha'}\big),
\end{align*}
where $\Gamma$ denotes the Gamma function. Furthermore, it holds that
\begin{align*}
\Delta_n^{d/2}\sum_{\textbf{k}\in\N^d}g_{\alpha,\tau}(\lambda_\textbf{k}\Delta_n)&= \frac{1}{2}\left(-\tau ^{\alpha'}+2 (\tau +1)^{\alpha'}-(\tau +2)^{\alpha'}\right) \frac{\Gamma(1-\alpha')}{2^{d}(\pi\eta)^{d/2}\alpha'\Gamma(d/2)}\\
&~~~~~-\sum_{\substack{\nor{\gamma}_1=1 \\ \gamma\in\{0,1\}^{d}}}^{d-1}\int_{B_\gamma}g_{\alpha,\tau}(\pi^2\eta\euc{\textbf{z}}^2)\diff \textbf{z}+\Oo\big(\Delta_n^{1-\alpha'}\big).
\end{align*}
\end{lemma}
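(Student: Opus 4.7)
The plan is to apply Corollary~\ref{corollary_toLemmaRiemannApproxMulti}(i) to both functions, since Lemma~\ref{lemma_checkConditionsAprroxlemma} already certifies that $f_\alpha\in\mathcal{Q}_{\beta_1}$ and $g_{\alpha,\tau}\in\mathcal{Q}_{\beta_2}$. The only work left is (a) evaluating the one-dimensional integral $\int_0^\infty x^{d/2-1}f(x)\diff x$ in closed form, and (b) checking that, under the substitution $\alpha=d/2-1+\alpha'$, the four error exponents in the corollary collapse to $\Oo(\Delta_n^{1-\alpha'})$.

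For the leading term with $f=f_\alpha$, the identity $x^{d/2-1}/x^{1+\alpha}=x^{-1-\alpha'}$ (using $\alpha=d/2-1+\alpha'$) reduces the integral to $\int_0^\infty(1-e^{-x})x^{-1-\alpha'}\diff x$. A single integration by parts ($u=1-e^{-x}$, $dv=x^{-1-\alpha'}\diff x$) gives $\alpha'^{-1}\Gamma(1-\alpha')$, which immediately produces the stated constant. For $f=g_{\alpha,\tau}$ the same power reduction plus the expansion $(1-e^{-x})^2=1-2e^{-x}+e^{-2x}$ turns the integral into $\tfrac{1}{2}\int_0^\infty\big(e^{-\tau x}-2e^{-(\tau+1)x}+e^{-(\tau+2)x}\big)x^{-1-\alpha'}\diff x$. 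The key sub-identity I will establish, by the substitution $u=ax$ applied to $\int_0^\infty(1-e^{-u})u^{-1-\alpha'}\diff u=\alpha'^{-1}\Gamma(1-\alpha')$, is
\begin{align*}
\int_0^\infty\frac{e^{-ax}-e^{-bx}}{x^{1+\alpha'}}\diff x=\bigl(b^{\alpha'}-a^{\alpha'}\bigr)\frac{\Gamma(1-\alpha')}{\alpha'},\qquad a,b>0.
\end{align*}
Splitting the three-term integrand as $(e^{-\tau x}-e^{-(\tau+1)x})-(e^{-(\tau+1)x}-e^{-(\tau+2)x})$ and applying this twice yields exactly $\tfrac{1}{2}\bigl(-\tau^{\alpha'}+2(\tau+1)^{\alpha'}-(\tau+2)^{\alpha'}\bigr)\alpha'^{-1}\Gamma(1-\alpha')$, which matches the claim.

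For the remainder, I read off from Lemma~\ref{lemma_checkConditionsAprroxlemma} that $(\beta_0,\beta_1,\beta_2)$ equals $\bigl(2\alpha,2(1+\alpha),2(2+\alpha)\bigr)$ for $f_\alpha$ and $\bigl(2\alpha,2(1+\alpha),2(1+\alpha)\bigr)$ for $g_{\alpha,\tau}$. Substituting $\alpha=d/2-1+\alpha'$ gives $(d-\beta_0)/2=(d+2-\beta_1)/2=1-\alpha'$ in both cases, while $(d+4-\beta_2)/2$ is $1-\alpha'$ for $f_\alpha$ and $2-\alpha'$ for $g_{\alpha,\tau}$. Since $\alpha'\in(0,1)$, each of $\Delta_n$, $\Delta_n^{1-\alpha'}$, $\Delta_n^{2-\alpha'}$ is dominated by $\Delta_n^{1-\alpha'}$, so the error in Corollary~\ref{corollary_toLemmaRiemannApproxMulti}(i) simplifies uniformly to $\Oo(\Delta_n^{1-\alpha'})$.

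The only genuinely non-routine step is the evaluation of the Frullani-type integral, but once the substitution $u=ax$ is used it is essentially immediate. Everything else is bookkeeping: retaining the boundary sum $\sum_{\|\gamma\|_1=1}^{d-1}\int_{B_\gamma}$ verbatim from the corollary (it is not simplified here, only passed through) and matching the constants. I therefore anticipate no real obstacle beyond keeping track of the exponents under the reparametrisation $\alpha\mapsto d/2-1+\alpha'$.
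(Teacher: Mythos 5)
Your proposal is correct and follows essentially the same route as the paper: certify $f_\alpha\in\mathcal{Q}_{\beta_1}$, $g_{\alpha,\tau}\in\mathcal{Q}_{\beta_2}$ via Lemma \ref{lemma_checkConditionsAprroxlemma}, apply Corollary \ref{corollary_toLemmaRiemannApproxMulti}(i), evaluate the radial integral in closed form, and check that all error exponents reduce to $1-\alpha'$ under $\alpha=d/2-1+\alpha'$. The only (cosmetic) difference is that your Frullani-type pairing $(e^{-\tau x}-e^{-(\tau+1)x})-(e^{-(\tau+1)x}-e^{-(\tau+2)x})$ keeps every intermediate integral convergent, whereas the paper splits $g_{\alpha,\tau}$ into three individually divergent integrals and evaluates them via the formula \eqref{equation_generalIntegraltoGamma} with $\Gamma(-\alpha')$; both yield the same constant.
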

\begin{proof}
Given that Lemma \ref{lemma_checkConditionsAprroxlemma} establishes $f_{\alpha}\in\mathcal{Q}_{\beta_1}$ and $g_{\alpha,\tau}\in\mathcal{Q}_{\beta_2}$, with $\beta_1=\big(2\alpha,2(1+\alpha),2(2+\alpha)\big)$ and $\beta_2=\big(2\alpha,2(1+\alpha),2(1+\alpha)\big)$, we can employ Corollary \ref{corollary_toLemmaRiemannApproxMulti} on these functions. 
In addition, by utilizing analogous steps as in equation \eqref{equation_generalIntegraltoGamma}, we find:
\begin{align}
\int_0^\infty \frac{1-e^{-cx}}{x^m}=\frac{c^{m-1}}{m-1}\Gamma(2-m), \label{eqn_integralSolutionExpm}
\end{align}
for $1<m<2$ and $c>0$.
Considering $\alpha=d/2-1+\alpha'$, where $\alpha'\in(0,1)$, and equation \eqref{eqn_integralSolutionExpm}, we obtain the following:
\begin{align*}
\Delta_n^{d/2}\sum_{\textbf{k}\in\N^d}f_\alpha(\lambda_\textbf{k}\Delta_n)&=\frac{1}{2^{d}(\pi\eta)^{d/2}\Gamma(d/2)}\int_0^\infty x^{d/2-1}\frac{1-e^{-x}}{x^{1+\alpha}}\diff x+R_{n,1}\\
&=\frac{1}{2^{d}(\pi\eta)^{d/2}\Gamma(d/2)}\int_0^\infty \frac{1-e^{-x}}{x^{1+\alpha'}}\diff x+R_{n,1}\\
&=\frac{1}{2^{d}(\pi\eta)^{d/2}\Gamma(d/2)}\cdot\frac{\Gamma(1-\alpha')}{\alpha'}+R_{n,1},
\end{align*}
where
\begin{align*}
R_{n,1}&:=-\sum_{\substack{\nor{\gamma}_1=1 \\ \gamma\in\{0,1\}^{d}}}^{d-1}\int_{B_\gamma}f(\pi^2\eta\euc{\textbf{z}}^2)\diff \textbf{z}+\Oo\big(\Delta_n\vee \Delta_n^{(d-2\alpha)/2}\vee\Delta_n^{(d+2-2(1+\alpha))/2}\vee\Delta_n^{(d+4-2(2+\alpha))/2}\big)\\
&=-\sum_{\substack{\nor{\gamma}_1=1 \\ \gamma\in\{0,1\}^{d}}}^{d-1}\int_{B_\gamma}f(\pi^2\eta\euc{\textbf{z}}^2)\diff \textbf{z}+\Oo\big(\Delta_n^{1-\alpha'}\big).
\end{align*}
For statement (ii), we have 
\begin{align*}
\Delta_n^{d/2}\sum_{\textbf{k}\in\N^d}g_{\alpha,\tau}(\lambda_\textbf{k}\Delta_n)&=\frac{1}{2^{d}(\pi\eta)^{d/2}\Gamma(d/2)}\int_0^\infty \frac{(1-e^{-x})^2}{2x^{1+\alpha'}}e^{-\tau x}\diff x+R_{n,2}\\
&=\frac{1}{2^{d+1}(\pi\eta)^{d/2}\Gamma(d/2)}\bigg(\int_0^\infty \frac{e^{-\tau x}}{x^{1+\alpha'}}-2\int_0^\infty \frac{e^{-x(1+\tau)}}{x^{1+\alpha'}}+\int_0^\infty \frac{e^{-x(2+\tau)}}{x^{1+\alpha}} \bigg)+R_{n,2}.
\end{align*}
By using equation \eqref{equation_generalIntegraltoGamma}, we find that
\begin{align*}
\Delta_n^{d/2}\sum_{\textbf{k}\in\N^d}g_{\alpha,\tau}(\lambda_\textbf{k}\Delta_n)
&=\frac{\Gamma(1-\alpha')}{2^{d+1}(\pi\eta)^{d/2}\alpha'\Gamma(d/2)}\big(-\tau^{\alpha'}+2(1+\tau)^{\alpha'}-(2+\tau)^{\alpha'}\big)+R_{n,2},
\end{align*}
where 
\begin{align*}
R_{n,2}&:= -\sum_{\substack{\nor{\gamma}_1=1 \\ \gamma\in\{0,1\}^{d}}}^{d-1}\int_{B_\gamma}f(\pi^2\eta\euc{\textbf{z}}^2)\diff \textbf{z}+\Oo\big(\Delta_n\vee \Delta_n^{(d-2\alpha)/2}\vee\Delta_n^{(d+2-2(1+\alpha))/2}\vee\Delta_n^{(d+4-2(1+\alpha))/2}\big)\\
&=-\sum_{\substack{\nor{\gamma}_1=1 \\ \gamma\in\{0,1\}^{d}}}^{d-1}\int_{B_\gamma}f(\pi^2\eta\euc{\textbf{z}}^2)\diff \textbf{z}+\Oo\big(\Delta_n^{1-\alpha'}\big).
\end{align*}
The proof follows by utilizing the following identity for half-integer arguments:
\begin{align*}
\Gamma(n/2)=\frac{(n-2)!!\sqrt{\pi}}{2^{(n-1)/2}}.
\end{align*}
\end{proof}

\begin{lemma}\label{lemma_quadIncrementsFirstCalc}
On Assumptions \ref{assumption_observations_multi} and \ref{assumption_regMulti}, we have 
\begin{align*}
\E[(\Delta_iX)^2(\textbf{y})]&=\sigma^2 2^de^{-\nor{\kappa\bigcdot \textbf{y}}_1}\sum_{\textbf{k}\in\N^d}\mathcal{D}_{i,\textbf{k}}\sin^2(\pi k_1y_1)\cdot\ldots\cdot\sin^2(\pi k_dy_d)+r_{n,i},
\end{align*}
where $r_{n,i}$ is a sequence satisfying $\sum_{i=1}^nr_{n,i}=\Oo(\Delta_n^{\alpha'})$ and 
\begin{align}
\mathcal{D}_{i,\textbf{k}}
&=\Delta_n^{d/2+\alpha'}\bigg(\frac{1-e^{-\lambda_\textbf{k}\Delta_n} }{(\lambda_\textbf{k}\Delta_n)^{1+\alpha}}-\frac{(1-e^{-\lambda_\textbf{k}\Delta_n})^2}{2(\lambda_\textbf{k}\Delta_n)^{1+\alpha}}e^{-2\lambda_\textbf{k}\iidn}\bigg),\label{equation_dik}
\end{align}
with $\alpha'=1+\alpha-d/2\in(0,1)$.
\end{lemma}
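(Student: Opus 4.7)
The plan is to combine the Fourier expansion \eqref{eqn_fourierseriesMultiOfX} of the mild solution with the three-term splitting of the increments in \eqref{equation_decompIncrement}, use the independence coming from Assumption~\ref{assumption_regMulti}(ii) together with the independence of the Brownian motions $(W^{\textbf{k}})_{\textbf{k}\in\mathbb{N}^d}$, and then identify the dominant piece with $\sigma^{2}\mathcal{D}_{i,\textbf{k}}$ while absorbing the initial-condition contribution into the remainder $r_{n,i}$.

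More precisely, writing $(\Delta_i X)(\textbf{y})=\sum_{\textbf{k}\in\mathbb{N}^d}\Delta_i x_{\textbf{k}}\,e_{\textbf{k}}(\textbf{y})$ and squaring, only the diagonal terms $\textbf{k}=\textbf{k}'$ survive after taking expectation: the Fourier coefficients $\langle\xi,e_{\textbf{k}}\rangle_\vartheta$ are independent across $\textbf{k}$ by Assumption~\ref{assumption_regMulti}(ii), the stochastic integrals involve independent Brownian motions, and the cross terms between $\xi$ and the Wiener integrals vanish because the latter are centered and independent of $\xi$. This reduces the problem to computing $\E[(\Delta_i x_{\textbf{k}})^{2}]$ for fixed $\textbf{k}$. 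Using the decomposition $\Delta_i x_{\textbf{k}}=A_{i,\textbf{k}}+B_{i,\textbf{k}}+C_{i,\textbf{k}}$ from \eqref{equation_decompIncrement}, the mixed term $\E[B_{i,\textbf{k}}C_{i,\textbf{k}}]$ vanishes by It\^o isometry (disjoint time intervals), and $\E[A_{i,\textbf{k}}B_{i,\textbf{k}}]=\E[A_{i,\textbf{k}}C_{i,\textbf{k}}]=0$ by the independence of $\xi$ from $W^{\textbf{k}}$ together with the zero mean of the Wiener integrals.

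A direct application of It\^o isometry yields, with $u:=e^{-\lambda_{\textbf{k}}\Delta_n}$ and $v:=e^{-2\lambda_{\textbf{k}}(i-1)\Delta_n}$,
\begin{align*}
\E[B_{i,\textbf{k}}^{2}]+\E[C_{i,\textbf{k}}^{2}]=\frac{\sigma^{2}}{2\lambda_{\textbf{k}}^{1+\alpha}}\Big[(1-u)^{2}(1-v)+(1-u^{2})\Big],
\end{align*}
and the elementary identity $(1-u)^{2}(1-v)+(1-u^{2})=2(1-u)-(1-u)^{2}v$ shows that this equals exactly $\sigma^{2}\mathcal{D}_{i,\textbf{k}}$ upon recalling $d/2+\alpha'=1+\alpha$ in the definition \eqref{equation_dik}. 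Multiplying by $e_{\textbf{k}}^{2}(\textbf{y})=2^{d}e^{-\nor{\kappa\bigcdot\textbf{y}}_{1}}\prod_{l=1}^{d}\sin^{2}(\pi k_l y_l)$ and summing over $\textbf{k}$ produces the claimed main term, with
\begin{align*}
r_{n,i}:=\sum_{\textbf{k}\in\mathbb{N}^d}\E[A_{i,\textbf{k}}^{2}]\,e_{\textbf{k}}^{2}(\textbf{y})=(1-u)^{2}\,v\sum_{\textbf{k}\in\mathbb{N}^d}\E[\langle\xi,e_{\textbf{k}}\rangle_{\vartheta}^{2}]\,e_{\textbf{k}}^{2}(\textbf{y}).
\end{align*}

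The step that needs the most care is controlling $\sum_{i=1}^{n}r_{n,i}$. Using Assumption~\ref{assumption_regMulti}(i) to bound $\E[\langle\xi,e_{\textbf{k}}\rangle_{\vartheta}^{2}]\leq C\lambda_{\textbf{k}}^{-(1+\alpha)}$, the geometric sum $\sum_{i=1}^{n}(1-u)^{2}e^{-2\lambda_{\textbf{k}}(i-1)\Delta_n}\leq (1-u)^{2}/(1-u^{2})\leq 1-u$ gives
\begin{align*}
\sum_{i=1}^{n}r_{n,i}\leq C\,2^{d}e^{-\nor{\kappa\bigcdot\textbf{y}}_{1}}\sum_{\textbf{k}\in\mathbb{N}^d}\frac{1-e^{-\lambda_{\textbf{k}}\Delta_n}}{\lambda_{\textbf{k}}^{1+\alpha}}=C'\,\Delta_n^{1+\alpha}\sum_{\textbf{k}\in\mathbb{N}^d}f_{\alpha}(\lambda_{\textbf{k}}\Delta_n),
\end{align*}
and Lemma~\ref{lemma_calcfAlphaDelta} yields $\Delta_n^{d/2}\sum_{\textbf{k}}f_{\alpha}(\lambda_{\textbf{k}}\Delta_n)=\mathcal{O}(1)$, so that $\sum_{i=1}^{n}r_{n,i}=\mathcal{O}(\Delta_n^{1+\alpha-d/2})=\mathcal{O}(\Delta_n^{\alpha'})$, uniformly in $\textbf{y}\in[\delta,1-\delta]^{d}$. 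The same chain of inequalities (without the geometric-sum reduction) shows $\sup_i|r_{n,i}|=\mathcal{O}(\Delta_n^{\alpha'})$, completing the argument.
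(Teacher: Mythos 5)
Your identification of the main term is exactly the paper's: the same $A+B+C$ splitting, the same It\^o-isometry computations giving $\E[B_{i,\textbf{k}}^2]+\E[C_{i,\textbf{k}}^2]=\tfrac{\sigma^2}{2\lambda_\textbf{k}^{1+\alpha}}\big[(1-u)^2(1-v)+(1-u^2)\big]$, and the same algebraic identity $2(1-u)-(1-u)^2v$ recovering $\sigma^2\mathcal{D}_{i,\textbf{k}}$. The remainder bound via the geometric sum, the reduction to $\Delta_n^{d/2}\sum_{\textbf{k}}f_\alpha(\lambda_\textbf{k}\Delta_n)=\Oo(1)$ from Lemma \ref{lemma_calcfAlphaDelta}, and the resulting order $\Oo(\Delta_n^{\alpha'})$ all match the paper.

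There is, however, one genuine gap: your claim that ``only the diagonal terms $\textbf{k}=\textbf{k}'$ survive after taking expectation,'' and the consequent definition $r_{n,i}=\sum_{\textbf{k}}\E[A_{i,\textbf{k}}^2]e_\textbf{k}^2(\textbf{y})$, only work under the \emph{first} alternative of Assumption \ref{assumption_regMulti}(i), where $\E[\langle\xi,e_\textbf{k}\rangle_\vartheta]=0$. Under the second alternative, $\E[\nor{A_\vartheta^{(1+\alpha)/2}\xi}^2_\vartheta]<\infty$, the coefficients need not be centred (take $\xi$ deterministic and nonzero), so independence gives $\E[A_{i,\textbf{k}_1}A_{i,\textbf{k}_2}]=\E[A_{i,\textbf{k}_1}]\E[A_{i,\textbf{k}_2}]\neq 0$ for $\textbf{k}_1\neq\textbf{k}_2$, and the off-diagonal part of the double sum does not vanish. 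The paper therefore defines $r_{n,i}$ as the full double sum $\sum_{\textbf{k}_1,\textbf{k}_2}e_{\textbf{k}_1}(\textbf{y})e_{\textbf{k}_2}(\textbf{y})\E[A_{i,\textbf{k}_1}A_{i,\textbf{k}_2}]=\E\big[\big(\sum_{\textbf{k}}A_{i,\textbf{k}}e_\textbf{k}(\textbf{y})\big)^2\big]$ and, in the second case, bounds it by Cauchy--Schwarz and Parseval against $\E[\nor{A_\vartheta^{(1+\alpha)/2}\xi}^2_\vartheta]$, which yields a bound of the same form $C\sum_{\textbf{k}}\lambda_\textbf{k}^{-(1+\alpha)}(1-e^{-\lambda_\textbf{k}\Delta_n})^2e^{-2\lambda_\textbf{k}(i-1)\Delta_n}$ as your diagonal term, after which your geometric-sum argument applies verbatim. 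You should add this case; everything else in your proposal goes through.
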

\begin{proof}
First, $A_{i,\textbf{k}}, B_{i,\textbf{k}}$, and $C_{i,\textbf{k}}$ are independent of each other, where $i=1,\ldots,n$ and $\textbf{k}\in\N^d$. Exploiting the fact that $(W^\textbf{k})_{\textbf{k}\in\N^d}$ are independent Brownian motions, the \ito-integrals $B_{i,\textbf{k}}$ and $C_{i,\textbf{k}}$ are also independent and centred. Thus, we have
\begin{align*}
\E[(\Delta_iX)^2(\textbf{y})]&=\sum_{\textbf{k}_1\in\N^d}\sum_{\textbf{k}_2\in\N^d}e_{\textbf{k}_1}(\textbf{y})e_{\textbf{k}_2}(\textbf{y})\E[\Delta_ix_{\textbf{k}_1}\Delta_ix_{\textbf{k}_2}]\\
&=\sum_{\textbf{k}\in\N^d}e_{\textbf{k}}^2(\textbf{y})\big(\E[B_{i,\textbf{k}}^2]+\E[C_{i,\textbf{k}}^2]\big)+r_{n,i},
\end{align*}
where $r_{n,i}:=\sum_{\textbf{k}_1,\textbf{k}_2\in\N^d}e_{\textbf{k}_1}(\textbf{y})e_{\textbf{k}_2}(\textbf{y})\E[A_{i,\textbf{k}_1}A_{i,\textbf{k}_2}]$.
\ito-isometry yields the following:
\begin{align*}
\E[B_{i,\textbf{k}}^2]&=\E\Bigg[\bigg(\sigma\lambda_\textbf{k}^{-\alpha/2}  \int\limits_0^{\iidn} e^{-\lambda_\textbf{k}\big(\iidn-s\big)}\big(e^{-\lambda_\textbf{k}\Delta_n}-1\big)\diff W_s^\textbf{k}\bigg)^2\Bigg]=\sigma^2\big(1-e^{-\lambda_\textbf{k}\Delta_n}\big)^2  \frac{1-e^{-2\lambda_\textbf{k}\iidn}}{2\lambda_\textbf{k}^{1+\alpha}},  \\
\E[C_{i,\textbf{k}}^2]&=\sigma^2\lambda_\textbf{k}^{-\alpha} \int\limits_{(i-1)\Delta_n}^{i\Delta_n}e^{-2\lambda_\textbf{k}(i\Delta_n-s)}\diff s = \sigma^2 \frac{1-e^{-2\lambda_\textbf{k}\Delta_n}}{2\lambda_\textbf{k}^{1+\alpha}}.
\end{align*}
Additionally, we possess the following expression for the remainder $r_{n,i}$:
\begin{align*}
\E[A_{i,\textbf{k}_1}A_{i,\textbf{k}_2}]
&=\big(e^{-\lambda_{\textbf{k}_1}\iidn-\lambda_{\textbf{k}_1}\Delta_n}-e^{-\lambda_{\textbf{k}_1}\iidn}\big) \big(e^{-\lambda_{\textbf{k}_2}\iidn-\lambda_{\textbf{k}_2}\Delta_n}-e^{-\lambda_{\textbf{k}_2}\iidn}\big)\\
&~~~~~\times\E\big[\langle \xi,e_{\textbf{k}_1}\rangle_\vartheta\langle \xi,e_{\textbf{k}_2}\rangle_\vartheta\big]\\
  &=\big(1-e^{-\lambda_{\textbf{k}_1}\Delta_n}\big) \big(1-e^{-\lambda_{\textbf{k}_2}\Delta_n}\big)e^{-(\lambda_{\textbf{k}_1}+\lambda_{\textbf{k}_2})\iidn}\E\big[\langle \xi,e_{\textbf{k}_1}\rangle_\vartheta\langle \xi,e_{\textbf{k}_2}\rangle_\vartheta\big].
\end{align*}
Hence, we obtain the representation:
\begin{align*}
\E[(\Delta_iX)^2(\textbf{y})]
&=\sigma^2 2^de^{-\sum_{l=1}^d\kappa_ly_l}\sum_{\textbf{k}\in\N^d}\bigg(\big(1-e^{-\lambda_\textbf{k}\Delta_n}\big)^2  \frac{1-e^{-2\lambda_\textbf{k}\iidn}}{2\lambda_\textbf{k}^{1+\alpha}}+\frac{1-e^{-2\lambda_\textbf{k}\Delta_n}}{2\lambda_\textbf{k}^{1+\alpha}}\bigg)\\&~~~~~\times\sin^2(\pi k_1y_1)\cdot\ldots\cdot\sin^2(\pi k_dy_d)+r_{n,i}.
\end{align*}
In addition, let us define:
\begin{align*}
\mathcal{D}_{i,\textbf{k}}&:=\Delta_n^{1+\alpha}\bigg(\big(1-e^{-\lambda_\textbf{k}\Delta_n}\big)^2  \frac{1-e^{-2\lambda_\textbf{k}\iidn}}{2(\lambda_\textbf{k}\Delta_n)^{1+\alpha}}+\frac{1-e^{-2\lambda_\textbf{k}\Delta_n}}{2(\lambda_\textbf{k}\Delta_n)^{1+\alpha}}\bigg)\\
&=\Delta_n^{d/2+\alpha'}\bigg(\frac{1-e^{-\lambda_\textbf{k}\Delta_n} }{(\lambda_\textbf{k}\Delta_n)^{1+\alpha}}-\frac{(1-e^{-\lambda_\textbf{k}\Delta_n})^2}{2(\lambda_\textbf{k}\Delta_n)^{1+\alpha}}e^{-2\lambda_\textbf{k}\iidn}\bigg),
\end{align*}
where $\alpha'\in(0,1)$. Then, we have 
\begin{align*}
\E[(\Delta_iX)^2(\textbf{y})]&=\sigma^2 2^de^{-\nor{\kappa\bigcdot y}_1}\sum_{\textbf{k}\in\N^d}\mathcal{D}_{i,\textbf{k}}\sin^2(\pi k_1y_1)\cdot\ldots\cdot\sin^2(\pi k_dy_d)+r_{n,i}.
\end{align*}
The analysis of the remainder $r_{n,i}$ remains to be conducted. Here, we have
\begin{align*}
r_{n,i}=\sum_{\textbf{k}_1,\textbf{k}_2\in\N^d}e_{\textbf{k}_1}(\textbf{y})e_{\textbf{k}_2}(\textbf{y})\big(1-e^{-\lambda_{\textbf{k}_1}\Delta_n}\big) \big(1-e^{-\lambda_{\textbf{k}_2}\Delta_n}\big)e^{-(\lambda_{\textbf{k}_1}+\lambda_{\textbf{k}_2})\iidn}\E\big[\langle \xi,e_{\textbf{k}_1}\rangle_\vartheta\langle \xi,e_{\textbf{k}_2}\rangle_\vartheta\big].
\end{align*}
To demonstrate that $\sum_{i=1}^n r_{n,i} = \mathcal{O}(\Delta_n^{\alpha'})$, we use Assumption \ref{assumption_regMulti}. Under the conditions $\E[\langle \xi,e_\textbf{k}\rangle_\vartheta]=0$ and $\sup_{\textbf{k}\in\N^d}\lambda_\textbf{k}^{1+\alpha}\E[\langle \xi,e_\textbf{k}\rangle_\vartheta^2] < \infty$, we can find a constant $C > 0$ such that $\E[\langle \xi,e_\textbf{k}\rangle_\vartheta^2] \leq C/\lambda_\textbf{k}^{1+\alpha}$ for all $k\in\N^d$. Consequently, given that $\big(\langle \xi,e_\textbf{k}\rangle_\vartheta\big)_{\textbf{k}\in\N^d}$ are independent, we have 
\begin{align*}
r_{n,i}=\sum\limits_{\textbf{k}\in\N^d} \big(1-e^{-\lambda_\textbf{k}\Delta_n}\big)^2 e^{-2\lambda_\textbf{k}\iidn}e_\textbf{k}^2(\textbf{y})\E[\langle \xi,e_\textbf{k}\rangle^2_\vartheta]\leq C\sum\limits_{\textbf{k}\in\N^d} \frac{\big(1-e^{-\lambda_\textbf{k}\Delta_n}\big)^2}{\lambda_\textbf{k}^{1+\alpha}} e^{-2\lambda_\textbf{k}\iidn}e_\textbf{k}^2(\textbf{y}).
\end{align*} 
Assuming the second alternative in Assumption \ref{assumption_regMulti}, where $\E\big[\nor{A_\vartheta^{(1+\alpha)/2}\xi}_\vartheta^2\big]<\infty$, we can proceed with the following steps. Exploiting the self-adjointness of $A_\vartheta$ on $H_\vartheta$ and employing the Cauchy-Schwarz inequality, we obtain:
\begin{align*}
r_{n,i}&=\E\bigg[\Big(\sum\limits_{\textbf{k}\in\N^d} \big(1-e^{-\lambda_\textbf{k}\Delta_n}\big)e^{-\lambda_\textbf{k}\iidn}\langle\xi,e_\textbf{k}\rangle_\vartheta e_\textbf{k}(\textbf{y})\Big)^2\bigg]\\
&\leq \sum\limits_{\textbf{k}\in\N^d}\frac{\big(1-e^{-\lambda_\textbf{k}\Delta_n}\big)^2}{\lambda_\textbf{k}^{1+\alpha}}e^{-2\lambda_\textbf{k}\iidn}e_\textbf{k}^2(\textbf{y})\E\Big[\sum\limits_{\textbf{k}\in\N}^\infty \langle A_\vartheta^{(1+\alpha)/2}\xi,e_\textbf{k}\rangle_\vartheta^2 \Big].
\end{align*}
Applying Parseval's identity on the expected value gives us:
\begin{align*}
r_{n,i} \leq \E\Big[\nor{A_\vartheta^{(1+\alpha)/2}\xi}_\vartheta^2\Big]\sum\limits_{\textbf{k}\in\N^d}\frac{\big(1-e^{-\lambda_\textbf{k}\Delta_n}\big)^2}{\lambda_\textbf{k}^{1+\alpha}}e^{-2\lambda_\textbf{k}\iidn}e_\textbf{k}^2(\textbf{y}).
\end{align*}
Since we can uniformly bound the eigenfunctions $(e_{\textbf{k}})_{\textbf{k}\in\N^d}$, it is sufficient to bound the following expression
\begin{align*}
\sum\limits_{i=1}^n r_{n,i} &\leq C\sum\limits_{\textbf{k}\in\N^d} \frac{\big(1-e^{-\lambda_\textbf{k}\Delta_n}\big)^2}{\lambda_\textbf{k}^{1+\alpha}}\sum\limits_{i=1}^n e^{-2\lambda_\textbf{k}\iidn}\notag \\
&\leq  C\sum\limits_{\textbf{k}\in\N^d} \frac{\big(1-e^{-\lambda_\textbf{k}\Delta_n}\big)^2}{\lambda_\textbf{k}^{1+\alpha}\big(1-e^{-2\lambda_\textbf{k}\Delta_n}\big)}\\
&\leq  C\sum\limits_{\textbf{k}\in\N^d}  \frac{1-e^{-\lambda_\textbf{k}\Delta_n}}{\lambda_\textbf{k}^{1+\alpha}}\notag
\\&=C\Delta_n^{d/2+\alpha'}\sum\limits_{\textbf{k}\in\N^d}  \frac{1-e^{-\lambda_\textbf{k}\Delta_n}}{(\lambda_\textbf{k}\Delta_n)^{1+\alpha}},\notag
\end{align*}
for both cases in Assumption \ref{assumption_regMulti}, where we have used the closed form formula of the geometric series and a suitable constant $C>0$. Utilizing Lemma \ref{lemma_calcfAlphaDelta}, we obtain: 
\begin{align*}
\Delta_n^{d/2}\sum\limits_{\textbf{k}\in\N^d}  \frac{1-e^{-\lambda_\textbf{k}\Delta_n}}{(\lambda_\textbf{k}\Delta_n)^{1+\alpha}}=\Delta_n^{d/2}\sum\limits_{\textbf{k}\in\N^d} f_{\alpha'}(\lambda_\textbf{k}\Delta_n)=C+\smallO(1),
\end{align*}
with a suitable constant $C>0$. Hence, we have
\begin{align*}
\sum_{i=1}^nr_{n,i}=\Oo(\Delta_n^{\alpha'}),
\end{align*}
which completes the proof.
\end{proof}
\ \\
It follows the proof of Proposition \ref{prop_quadIncAndrescaling}.
\begin{proof}
We begin by recalling Lemma \ref{lemma_quadIncrementsFirstCalc}:
\begin{align*}
\E[(\Delta_iX)^2(\textbf{y})]&=\sigma^2 e^{-\nor{\kappa\bigcdot \textbf{y}}_1}\mathcal{T}_i+r_{n,i},
\end{align*}
where
\begin{align*}
\mathcal{T}_i:=&2^d\sum_{\textbf{k}\in\N^d}\sin^2(\pi k_1y_1)\cdot\ldots\cdot\sin^2(\pi k_dy_d)\mathcal{D}_{i,\textbf{k}}\\
&=2^d\sum_{\textbf{k}\in\N^d}\frac{1-\cos(2\pi k_1y_1)}{2}\cdot\ldots\cdot\frac{1-\cos(2\pi k_dy_d)}{2}\mathcal{D}_{i,\textbf{k}}\\
&=\sum_{\textbf{k}\in\N^d}\mathcal{D}_{i,\textbf{k}}+\sum_{l=1}^{d-1}\sum_{1\leq j_1<\ldots<j_l\leq d}(-1)^l \sum_{\textbf{k}\in\N^d} \mathcal{D}_{i,\textbf{k}}\cos(2\pi k_{j_1}y_{j_1})\cdot\ldots\cdot\cos(2\pi k_{j_l}y_{j_l})\\
&~~~~~+(-1)^{d}\sum_{\textbf{k}\in\N^d} \mathcal{D}_{i,\textbf{k}}\cos(2\pi k_{1}y_{1})\cdots\cos(2\pi k_{d}y_{d}),
\end{align*}
where $\mathcal{D}_{i,\textbf{k}}$ is defined as in display \eqref{equation_dik}.
Furthermore, we define:
\begin{align*}
h_{\alpha,\tau}(x):=\bigg(\frac{1-e^{-x} }{x^{1+\alpha}}-\frac{(1-e^{-x})^2}{2x^{1+\alpha}}e^{-x\tau}\bigg).
\end{align*}
Note, that $h_{\alpha,\tau}(x)=f_{\alpha}(x)-g_{\alpha,\tau}(x)$, where $f_{\alpha}$ and $g_{\alpha,\tau}$ are defined as in equation \eqref{equation_functionsAlphaandTau}.
By Lemma \ref{lemma_checkConditionsAprroxlemma} we have $h_{\alpha,\tau}\in \mathcal{Q}_\beta$, where $\beta=\big(2\alpha,2(1+\alpha),2(2+\alpha)\big)$. Then, we obtain:
\begin{align*}
\Delta_n^{-\alpha'}\sum_{\textbf{k}\in\N^d}\mathcal{D}_{i,\textbf{k}}&=\Delta_n^{d/2}\sum_{\textbf{k}\in\N^d}h_{\alpha,2(i-1)}(\lambda_\textbf{k}\Delta_n)\\
&=\frac{1}{2^d(\pi\eta)^{d/2}\Gamma(d/2)}\int_0^\infty x^{d/2-1}h_{\alpha,2(i-1)}(x)\diff x-\sum_{\substack{\nor{\gamma}_1=1 \\ \gamma\in\{0,1\}^{d}}}^{d-1}\int_{B_\gamma}h_{\alpha,2(i-1)}(\pi^2\eta\euc{\textbf{z}}^2)\diff \textbf{z}+\Oo\big(\Delta_n^{1-\alpha'}\big),
\end{align*}
and 
\begin{align*}
&\Delta_n^{d/2}\sum_{l=1}^{d-1}\sum_{1\leq j_1<\ldots<j_l\leq d}(-1)^l \sum_{\textbf{k}\in\N^d} h_{\alpha,2(i-1)}(\lambda_\textbf{k}\Delta_n)\cos(2\pi k_{j_1}y_{j_1})\cdot\ldots\cdot\cos(2\pi k_{j_l}y_{j_l})\\
&=\sum_{\substack{\nor{\gamma}_1=1 \\ \gamma\in\{0,1\}^{d}}}^{d-1}\int_{B_{\tilde{\gamma}_l}}h_{\alpha,2(i-1)}(\pi^2\eta\nor{\textbf{z}}_2^2)\diff \textbf{z}+\Oo(\Delta_n^{1-\alpha'}).
\end{align*}
Thus, by using Corollary \ref{corollary_toLemmaRiemannApproxMulti} we have
\begin{align*}
&\sum_{\textbf{k}\in\N^d}\mathcal{D}_{i,\textbf{k}}+\sum_{l=1}^{d-1}\sum_{1\leq j_1<\ldots<j_l\leq d}(-1)^l \sum_{\textbf{k}\in\N^d} \mathcal{D}_{i,\textbf{k}}\cos(2\pi k_{j_1}y_{j_1})\cdot\ldots\cdot\cos(2\pi k_{j_l}y_{j_l})\\
&~~~~~+(-1)^{d}\sum_{\textbf{k}\in\N^d} \mathcal{D}_{i,\textbf{k}}\cos(2\pi k_{1}y_{1})\cdots\cos(2\pi k_{d}y_{d})\\
&=\frac{\Delta_n^{\alpha'}}{2^d(\pi\eta)^{d/2}\Gamma(d/2)}\int_0^\infty x^{d/2-1}h_{\alpha,2(i-1)}(x)\diff x+\Oo(\Delta_n).
\end{align*}
Utilizing Lemma \ref{lemma_calcfAlphaDelta} yields
\begin{align*}
&\frac{1}{2^d(\pi\eta)^{d/2}\Gamma(d/2)}\bigg(\int_0^\infty f_{\alpha'}(x)\diff x-\int_0^\infty g_{\alpha',2(i-1)}(x)\diff x\bigg)\\
&=\frac{\Gamma(1-\alpha')}{2^{d}(\pi\eta)^{d/2}\alpha'\Gamma(d/2)}\bigg(1+\frac{1}{2}\big(2(i-1)\big)^{\alpha'}-\big(1+2(i-1)\big)^{\alpha'}+\frac{1}{2}\big(2+2(i-1)\big)^{\alpha'}\bigg).
\end{align*}
Therefore, we have with Lemma \ref{lemma_quadIncrementsFirstCalc} that
\begin{align*}
\E[(\Delta_i X)^2(\textbf{y})]&=\sigma^2e^{-\nor{\kappa \textbf{y}}_1}\frac{\Delta_n^{\alpha'}\Gamma(1-\alpha')}{2^{d}(\pi\eta)^{d/2}\alpha'\Gamma(d/2)}\bigg(1+\frac{1}{2}\big(2(i-1)\big)^{\alpha'}-\big(1+2(i-1)\big)^{\alpha'}+\frac{1}{2}\big(2+2(i-1)\big)^{\alpha'}\bigg)\\
&~~~~~+r_{n,i}+\Oo(\Delta_n)\\
&=\Delta_n^{\alpha'}\frac{\sigma^2e^{-\nor{\kappa \textbf{y}}_1}\Gamma(1-\alpha')}{2^{d}(\pi\eta)^{d/2}\alpha'\Gamma(d/2)}+\tilde{r}_{n,i}+\Oo(\Delta_n),
\end{align*}
where $\tilde{r}_{n,i}$ includes $r_{n,i}$ and the $i$ dependent term from the last display. For this $i$-dependent term we define the following function: 
\begin{align*}
t(x):=\frac{x^{\alpha'}}{2}\big(-1+2(1+1/x)^{\alpha'}-(1+2/x)^{\alpha'}\big),
\end{align*}
and have with $x=2(i-1)$ that
\begin{align}
\sum_{i=1}^nt\big(2(i-1)\big)=\Oo\bigg(\sum_{i=0}^\infty \frac{1}{i^{2-\alpha'}}\bigg)=\Oo(1).\label{eqn_orderOfiTermsGeometrricSum}
\end{align}
Hence, we have by Lemma \ref{lemma_quadIncrementsFirstCalc} that $\sum_{i=1}^n\tilde{r}_{n,i}=\Oo(\Delta_n^{\alpha'})$, which completes the proof.
\end{proof}

Next, we proof Proposition \ref{prop_autocovOfIncrementsMulti}.
\begin{proof}
We begin with the following expression:
\begin{align*}
\Cov(\Delta_iX(\textbf{y}),\Delta_jX(\textbf{y}))&=\sum\limits_{\textbf{k}_1,\textbf{k}_2\in\N^d} \Cov (\Delta_ix_{\textbf{k}_1},\Delta_jx_{\textbf{k}_2})e_{\textbf{k}_1}(\textbf{y})e_{\textbf{k}_2}(\textbf{y})\\
&=\sum\limits_{\textbf{k}\in\N^d}\Cov (A_{i,\textbf{k}}+B_{i,\textbf{k}}+C_{i,\textbf{k}},A_{j,\textbf{k}}+B_{j,\textbf{k}}+C_{j,\textbf{k}})e_\textbf{k}^2(\textbf{y}).
\end{align*}
Since $\big(\langle \xi,e_\textbf{k}\rangle_\vartheta\big){\textbf{k}\in\N^d}$ are independent by Assumption \ref{assumption_regMulti}, we can use the independence of $A_{i,\textbf{k}}$ and $B_{i,\textbf{k}}$ and analyse the covariance of the remaining terms. Here, we have, by the Itô-isometry and $i<j$:
\begin{align*}
\Sigma_{i,j}^{B,\textbf{k}}&:=\Cov(B_{i,\textbf{k}},B_{j,\textbf{k}})\\
&=\sigma^2\lambda_\textbf{k}^{-\alpha} \big(1-e^{-\lambda_\textbf{k}\Delta_n}\big)^2 e^{-\lambda_\textbf{k}(i+j-2)\Delta_n} \Cov\Bigg(\int\limits_0^{(i-1)\Delta_n}e^{\lambda_\textbf{k}s}\diff W_s^\textbf{k},\int\limits_0^{(i-1)\Delta_n}e^{\lambda_ks}\diff W_s^\textbf{k}\Bigg)\\
&=\sigma^2 \big(e^{-\lambda_\textbf{k}\Delta_n(j-i)}-e^{-\lambda_\textbf{k}(i+j-2)\Delta_n}\big)\frac{\big(1-e^{-\lambda_\textbf{k}\Delta_n}\big)^2 }{2\lambda_\textbf{k}^{1+\alpha}}.
\end{align*}
Therefore, it follows for $1\leq i,j\leq n$ that
\begin{align}
\Sigma_{i,j}^{B,\textbf{k}}=\sigma^2 \big(e^{-\lambda_\textbf{k}\Delta_n\abs{i-j}}-e^{-\lambda_\textbf{k}(i+j-2)\Delta_n}\big)\frac{\big(1-e^{-\lambda_\textbf{k}\Delta_n}\big)^2 }{2\lambda_\textbf{k}^{1+\alpha}}. \label{number:CovBB}
\end{align}
Next, we have $\Sigma_{i,j}^{C,\textbf{k}}=\Cov(C_{i,\textbf{k}},C_{j,\textbf{k}})=0$, for $i\neq j$, and we derive the following:
\begin{align}
\Sigma_{i,j}^{C,\textbf{k}} = \mathbbm{1}_{\{j=i\}}\Cov(C_{i,\textbf{k}},C_{i,\textbf{k}})
&=\mathbbm{1}_{\{j=i\}}\sigma^2 \frac{1-e^{-2\lambda_\textbf{k}\Delta_n}}{2\lambda_\textbf{k}^{1+\alpha}}.\label{eqn_covCijk}
\end{align}
It remains to analyse the covariance of $B_{i,\textbf{k}}$ and $C_{j,\textbf{k}}$. Since $\Sigma_{i,j}^{BC,\textbf{k}}:=\Cov(B_{i,\textbf{k}},C_{j,\textbf{k}})=0$ for $i\leq j$, we analyse the following:
\begin{align}
\Sigma_{i,j}^{BC,\textbf{k}} &=  \mathbbm{1}_{\{i>j\}}\Cov(B_{i,\textbf{k}},C_{j,\textbf{k}})\notag\\ 
&=\mathbbm{1}_{\{i>j\}}\sigma^2\lambda_\textbf{k}^{-\alpha}\big(e^{-\lambda_\textbf{k}\Delta_n}-1\big)e^{-\lambda_\textbf{k}(i-1)\Delta_n}e^{-\lambda_\textbf{k}j\Delta_n}\Cov\Bigg(\int_{(j-1)\Delta_n}^{j\Delta_n}e^{\lambda_\textbf{k}s}\diff W_s^\textbf{k},\int_{(j-1)\Delta_n}^{j\Delta_n}e^{\lambda_\textbf{k}s}\diff W_s^\textbf{k}\Bigg)\notag \\
&=\mathbbm{1}_{\{i>j\}}\sigma^2e^{-\lambda_\textbf{k}\Delta_n(i-j)}\big(e^{\lambda_\textbf{k}\Delta_n}-e^{-\lambda_\textbf{k}\Delta_n}\big)\frac{e^{-\lambda_\textbf{k}\Delta_n}-1}{2\lambda_\textbf{k}^{1+\alpha}}\label{number:CovBC}.
\end{align}
Similarly, we have 
\[\Sigma_{j,i}^{BC,\textbf{k}}:=\Sigma_{i,j}^{CB,\textbf{k}}:=\Cov(C_{i,\textbf{k}},B_{j,\textbf{k}})=\mathbbm{1}_{\{i<j\}}\sigma^2e^{-\lambda_\textbf{k}\Delta_n(j-i)}\big(e^{\lambda_\textbf{k}\Delta_n}-e^{-\lambda_\textbf{k}\Delta_n}\big)\frac{e^{-\lambda_\textbf{k}\Delta_n}-1}{2\lambda_\textbf{k}^{1+\alpha}}. \]
For $i<j$ we obtain:
\begin{align*}
\Cov(\Delta_iX(\textbf{y}),\Delta_jX(\textbf{y}))&=\sum\limits_{\textbf{k}\in\N^d} \Cov (A_{i,\textbf{k}}+B_{i,\textbf{k}}+C_{i,\textbf{k}},A_{j,\textbf{k}}+B_{j,\textbf{k}}+C_{j,\textbf{k}})e_\textbf{k}^2(\textbf{y}) \\
&= \sum\limits_{\textbf{k}\in\N^d} \big(\Sigma_{i,j}^{B,\textbf{k}}+\Sigma_{i,j}^{CB,\textbf{k}}\big)e_\textbf{k}^2(\textbf{y})+r_{i,j},
\end{align*}
where
\begin{align*}
r_{i,j}&:=\sum_{\textbf{k}\in\N^d}\Cov(A_{i,\textbf{k}},A_{j,\textbf{k}})e_\textbf{k}^2(\textbf{y})\\
&=\sum_{\textbf{k}\in\N^d}\big(e^{-\lambda_\textbf{k}i\Delta_n}-e^{-\lambda_\textbf{k}(i-1)\Delta_n}\big)\big(e^{-\lambda_\textbf{k}j\Delta_n}-e^{-\lambda_\textbf{k}(j-1)\Delta_n}\big)\Var\big(\langle\xi,e_\textbf{k}\rangle_\vartheta\big)e_\textbf{k}^2(\textbf{y})\\
&=\sum_{\textbf{k}\in\N^d} e^{-\lambda_\textbf{k}\Delta_n(i+j-2)}\big(e^{-\lambda_\textbf{k}\Delta_n}-1\big)^2\Var\big(\langle\xi,e_\textbf{k}\rangle_\vartheta\big)e_\textbf{k}^2(\textbf{y}).
\end{align*}
We use that the operator $A_\vartheta$ is self-adjoint on $H_\vartheta$, such that $-\lambda_\textbf{k}^{(1+\alpha)/2}\langle\xi,e_\textbf{k}\rangle=\langle A_\vartheta^{(1+\alpha)/2}\xi,e_\textbf{k}\rangle$ and derive the following inequality for the remainder:
\begin{align}
r_{i,j}&\leq \sum_{\textbf{k}\in\N^d} e^{-\lambda_\textbf{k}\Delta_n(i+j-2)}\frac{\big(e^{-\lambda_\textbf{k}\Delta_n}-1\big)^2}{\lambda_\textbf{k}^{1+\alpha}}\E\Big[\big(\lambda_\textbf{k}^{(1+\alpha)/2}\langle\xi,e_\textbf{k}\rangle_\vartheta\big)^2\Big]e_\textbf{k}^2(\textbf{y})\notag \\
&\leq C\sup\limits_{\textbf{k}\in \N}\E\Big[\langle A_\vartheta^{(1+\alpha
)/2}\xi,e_\textbf{k}\rangle_\vartheta^2\Big]\sum_{\textbf{k}\in\N^d} e^{-\lambda_\textbf{k}\Delta_n(i+j-2)}\frac{\big(1-e^{-\lambda_\textbf{k}\Delta_n}\big)^2}{\lambda_\textbf{k}^{1+\alpha}}.\label{number32:1}
\end{align}
Furthermore, for $i<j$ we have
\begin{align*}
\Cov(\Delta_iX(\textbf{y}),\Delta_jX(\textbf{y}))&=\sum_{\textbf{k}\in\N^d} \big(\Sigma_{i,j}^{B,\textbf{k}}+\Sigma_{i,j}^{CB,\textbf{k}}\big)e_\textbf{k}^2(\textbf{y})+r_{i,j} \\
&=\sigma^2\sum_{\textbf{k}\in\N^d}  e_\textbf{k}^2(\textbf{y}) e^{-\lambda_\textbf{k}\Delta_n(j-i)} 
\frac{\big(1-e^{-\lambda_\textbf{k}\Delta_n}\big)^2 +\big(e^{\lambda_\textbf{k}\Delta_n}-e^{-\lambda_\textbf{k}\Delta_n}\big)\big(e^{-\lambda_\textbf{k}\Delta_n}-1\big)}{2\lambda_\textbf{k}^{1+\alpha}} \\
&~~~~ -\sigma^2\sum_{\textbf{k}\in\N^d}e_\textbf{k}^2(\textbf{y}) e^{-\lambda_\textbf{k}(i+j-2)\Delta_n}\frac{\big(1-e^{-\lambda_\textbf{k}\Delta_n}\big)^2 }{2\lambda_\textbf{k}^{1+\alpha}}+r_{i,j}.
\end{align*}
We define the second remainder as:
\begin{align*}
s_{i,j}:=-\sigma^2\sum\limits_{\textbf{k}\in \N^d}e_\textbf{k}^2(\textbf{y}) e^{-\lambda_\textbf{k}(i+j-2)\Delta_n}\frac{\big(1-e^{-\lambda_\textbf{k}\Delta_n}\big)^2 }{2\lambda_\textbf{k}^{1+\alpha}}.
\end{align*}
Using the identity $\sin^2(x)=(1-\cos(2x))/2$, we arrive at:
\begin{align*}
\Cov(\Delta_iX(\textbf{y}),\Delta_jX(\textbf{y}))
&= \sigma^2\sum\limits_{\textbf{k}\in \N^d}  e_\textbf{k}^2(\textbf{y}) e^{-\lambda_\textbf{k}\Delta_n(j-i)} 
\frac{2-e^{-\lambda_\textbf{k}\Delta_n}-e^{\lambda_\textbf{k}\Delta_n}}{2\lambda_\textbf{k}^{1+\alpha}}+ s_{i,j}+r_{i,j} \\
&=- \sigma^2\Delta_n^{1+\alpha}\sum\limits_{\textbf{k}\in\N^d}^\infty  e_\textbf{k}^2(\textbf{y}) e^{-\lambda_\textbf{k}\Delta_n(j-i-1)} 
\frac{\big(1-e^{-\lambda_\textbf{k}\Delta_n}\big)^2}{2(\lambda_\textbf{k}\Delta_n)^{1+\alpha}}+ s_{i,j}+r_{i,j}\\
&=-\sigma^2 e^{-\nor{\kappa\bigcdot \textbf{y}}_1}\Delta_n^{1+\alpha}\sum\limits_{\textbf{k}\in \N^d} e^{-\lambda_\textbf{k}\Delta_n(j-i-1)} 
\frac{\big(1-e^{-\lambda_\textbf{k}\Delta_n}\big)^2}{2(\lambda_\textbf{k}\Delta_n)^{1+\alpha}}
\prod_{l=1}^d\big(1-\cos(2\pi k_ly_l)\big)
+ s_{i,j}+r_{i,j}.
\end{align*}
By defining the following expression:
\begin{align*}
\mathcal{S}_{i,\textbf{k}}:=e^{-\lambda_\textbf{k}\Delta_n(j-i-1)} 
\frac{\big(1-e^{-\lambda_\textbf{k}\Delta_n}\big)^2}{2(\lambda_\textbf{k}\Delta_n)^{1+\alpha}}=g_{\alpha,(j-i-1)}(\lambda_\textbf{k}\Delta_n),
\end{align*}
we obtain that
\begin{align*}
&-\sigma^2 e^{-\nor{\kappa\bigcdot \textbf{y}}_1}\Delta_n^{1+\alpha}\sum\limits_{\textbf{k}\in \N^d} e^{-\lambda_\textbf{k}\Delta_n(j-i-1)} 
\frac{\big(1-e^{-\lambda_\textbf{k}\Delta_n}\big)^2}{2(\lambda_\textbf{k}\Delta_n)^{1+\alpha}}
\prod_{l=1}^d\big(1-\cos(2\pi k_ly_l)\big)\\
&=-\sigma^2 e^{-\nor{\kappa\bigcdot \textbf{y}}_1}\Delta_n^{1+\alpha}\sum_{\textbf{k}\in \N^d}\bigg(\mathcal{S}_{i,\textbf{k}}+\mathcal{S}_{i,\textbf{k}}\sum_{l=1}^d(-1)^l\sum_{1\leq j_1<\ldots<j_l\leq n}\cos(2\pi k_{j_1}y_{j_1})\cdots\cos(2\pi k_{j_l}y_{j_l})\bigg).
\end{align*}
Since we know by Lemma \ref{lemma_checkConditionsAprroxlemma} that $g_{\alpha,\tau}\in\mathcal{Q}_\beta$, with $\beta=\big(2\alpha,2(1+\alpha),2(1+\alpha)\big)$, we have with Lemma \ref{lemma_calcfAlphaDelta} and Corollary \ref{corollary_toLemmaRiemannApproxMulti} that
\begin{align*}
\Delta_n^{d/2}\sum_{\textbf{k}\in\N^d}g_{\alpha,\tau}(\lambda_\textbf{k}\Delta_n)&=\frac{\Gamma(1-\alpha')}{2^{d}(\pi\eta)^{d/2}\alpha'\Gamma(d/2)}\bigg(-\frac{1}{2}\tau^{\alpha'}+\big(1+\tau\big)^{\alpha'}-\frac{1}{2}\big(2+\tau\big)^{\alpha'}\bigg)\\
&~~~~~-\sum_{\substack{\nor{\gamma}_1=1\\ \gamma\in\{0,1\}^d }}^{d-1}\int_{B_\gamma}g_{\alpha,\tau}(\pi^2\eta\nor{\textbf{z}}_2^2)\diff \textbf{z}+\Oo(\Delta_n^{1-\alpha'}),
\end{align*}
and 
\begin{align*}
&\Delta_n^{d/2}\sum_{l=1}^d(-1)^l\sum_{1\leq j_1<\ldots<j_l\leq n}\sum_{\textbf{k}\in\N^d}g_{\alpha,\tau}(\lambda_\textbf{k}\Delta_n)\cos(2\pi k_{j_1}y_{j_1})\cdots\cos(2\pi k_{j_l}y_{j_l})\\
&=\sum_{\substack{\nor{\gamma}_1=1\\ \gamma\in\{0,1\}^d }}^{d-1}\int_{B_\gamma}g_{\alpha,\tau}(\pi^2\eta\nor{\textbf{z}}_2^2)\diff \textbf{z}+\Oo(\Delta_n^{1-\alpha'}).
\end{align*}
In line with Proposition \ref{prop_quadIncAndrescaling} and with $\tau=(j-i-1)$, we have 
\begin{align*}
&\Cov(\Delta_iX(\textbf{y}),\Delta_jX(\textbf{y}))\\&=-\sigma^2 e^{-\nor{\kappa\bigcdot \textbf{y}}_1}\Delta_n^{\alpha'}\frac{\Gamma(1-\alpha')}{2^{d}(\pi\eta)^{d/2}\alpha'\Gamma(d/2)}\bigg(-\frac{1}{2}\big(j-i-1\big)^{\alpha'}+\big(j-i\big)^{\alpha'}-\frac{1}{2}\big(j-i+1\big)^{\alpha'}\bigg)\\
&~~~~~+ s_{i,j}+r_{i,j}+\Oo(\Delta_n).
\end{align*}
It remains to show that $\sum_{i,j=1}^n(s_{i,j}+r_{i,j})=\Oo(1)$. Therefore, we use display \eqref{number32:1} and obtain:
\begin{align*}
\sum\limits_{i,j=1}^n(s_{i,j}+r_{i,j})
&\leq C\bigg(\sigma^2+\sup\limits_{\textbf{k}\in \N}\E\Big[\langle A_\vartheta^{(1+\alpha
)/2}\xi,e_\textbf{k}\rangle_\vartheta^2\Big]\bigg)\sum\limits_{i,j=1}^n\sum_{\textbf{k}\in\N^d} e^{-\lambda_\textbf{k}\Delta_n(i+j-2)}\frac{\big(1-e^{-\lambda_\textbf{k}\Delta_n}\big)^2}{\lambda_\textbf{k}^{1+\alpha}} \\
&= C\bigg(\sigma^2+\sup\limits_{\textbf{k}\in \N}\E\Big[\langle A_\vartheta^{(1+\alpha
)/2}\xi,e_\textbf{k}\rangle_\vartheta^2\Big]\bigg)\sum_{\textbf{k}\in\N^d} \frac{\big(1-e^{-\lambda_\textbf{k}\Delta_n}\big)^2}{\lambda_\textbf{k}^{1+\alpha}}\bigg(\frac{1-e^{-\lambda_\textbf{k}n\Delta_n}}{1-e^{-\lambda_\textbf{k}\Delta_n}}\bigg)^2\\
&\leq C\bigg(\sigma^2+\sup\limits_{\textbf{k}\in \N}\E\Big[\langle A_\vartheta^{(1+\alpha
)/2}\xi,e_\textbf{k}\rangle_\vartheta^2\Big]\bigg)\sum_{\textbf{k}\in\N^d} \frac{1}{\lambda_\textbf{k}^{1+\alpha}}=\Oo(1).
\end{align*}
Analogous computations for $i>j$ complete the proof.
\end{proof}
\subsubsection{Proof of the central limit theorem from Proposition \ref{prop_cltVolaEstMulti}}
We begin this section by decomposing a temporal increment of a mild solution $\tilde{X}_t$ with a stationary initial condition $\langle\xi,e_\textbf{k}\rangle_\vartheta\sim\mathcal{N}(0,\sigma^2/(2\lambda_\textbf{k}^{1+\alpha}))$. Analogously to \cite{trabs}, we decompose the coordinate processes as follows: 
\begin{align}
\Delta_i\tilde{x}_\textbf{k} &= \tilde{A}_{i,\textbf{k}}+B_{i,\textbf{k}}+C_{i,\textbf{k}}\label{eqn_procedureForstatInitCond}\\
&=\sigma\lambda_\textbf{k}^{-\alpha/2}\int\limits_{-\infty}^0  e^{-\lambda_\textbf{k}\big(\iidn-s\big)}\big(e^{-\lambda_\textbf{k}\Delta_n}-1\big) \diff W_s^\textbf{k}\notag \\
&~~~~~ +\sigma\lambda_\textbf{k}^{-\alpha/2}\int\limits_0^{\iidn} e^{-\lambda_\textbf{k}\big(\iidn-s\big)}\big(e^{-\lambda_\textbf{k}\Delta_n}-1\big) \diff W_s^\textbf{k} +C_{i,\textbf{k}}\notag\\
&= \tilde{B}_{i,\textbf{k}}+C_{i,\textbf{k}},\notag
\end{align}
where 
\begin{align}
\tilde{B}_{i,\textbf{k}}&:=\sigma\lambda_\textbf{k}^{-\alpha/2}\int\limits_{-\infty}^{\iidn}  e^{-\lambda_\textbf{k}\big(\iidn-s\big)}\big(e^{-\lambda_\textbf{k}\Delta_n}-1\big) \diff W_s^\textbf{k},\label{eqn_tildeB_multi}\\
C_{i,\textbf{k}}&=\sigma\lambda_\textbf{k}^{-\alpha/2}\int_{(i-1)\Delta_n}^{i\Delta_n}e^{-\lambda_\textbf{k}(i\Delta_n-s)}\diff W_s^\textbf{k}.
\end{align}
Thus, $(\Delta_i\tilde{X})(\textbf{y})$ is centred, Gaussian and stationary.

To prove a central limit theorem for the volatility estimators, utilize the following theorem.
\begin{theorem}\label{prop_clt_utev}
Let $(Z_{k_n,i})_{1\leq i\leq k_n}$ a centred triangular array, with a sequence $(k_n)_{n\in\N}$. Then it holds:
\begin{align*}
\sum_{i=1}^{k_n} Z_{k_n,i}\overset{d}{\longrightarrow} \mathcal{N}(0,\upsilon^2), 
\end{align*}
with $\upsilon^2=\lim_{n\tooi}\Var(\sum_{i=1}^{k_n}Z_{k_n,i})<\infty$ if the following conditions hold:
\begin{itemize}
\item[(I)] $\Var\Big(\sum\limits_{i=a}^bZ_{k_n,i}\Big)\leq C\sum\limits_{i=a}^b\Var(Z_{k_n,i})$, for all $1\leq a\leq b\leq k_n$,
\item[(II)] $\limsup\limits_{n\tooi}\sum\limits_{i=1}^{k_n}\E[Z_{k_n,i}^2]<\infty$,
\item[(III)] $\sum\limits_{i=1}^{k_n}\E\Big[Z_{k_n,i}^2\mathbbm{1}_{\{|Z_{k_n,i}|>\varepsilon\}}\Big]\overset{n\tooi}{\longrightarrow}0$, for all $\varepsilon>0$,
\item[(IV)] $\Cov\Big(e^{\im t\sum_{i=a}^bZ_{k_n,i}},e^{\im t\sum_{i=b+u}^c Z_{k_n,i}}\Big)\leq \rho_t(u)\sum\limits_{i=a}^c\Var(Z_{k_n,i})$, for all $1\leq a\leq b< b+u\leq c\leq k_n$ and $t\in\R$,
\end{itemize}
where $C>0$ is a universal constant and $\rho_t(u)\geq 0$ is a function with $\sum_{j=1}^\infty\rho_t(2^j)<\infty$. 
\end{theorem}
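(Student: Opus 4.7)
The plan is to adapt the classical big-block / small-block decomposition of Bernstein, exploiting the weak dependence encoded in condition (IV) to replace the original sum by one whose summands behave asymptotically as if they were independent, and then to invoke the Lindeberg--Feller central limit theorem in the reduced setting.

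First I would introduce two sequences $p_n, q_n \to \infty$ of integers with $q_n/p_n \to 0$ and $p_n/k_n \to 0$, and partition the index set $\{1,\ldots,k_n\}$ into $r_n = \lfloor k_n/(p_n+q_n)\rfloor$ alternating blocks $I_j$ of length $p_n$ (``big'') and $J_j$ of length $q_n$ (``small''), plus a negligible remainder. Set $U_j = \sum_{i \in I_j} Z_{k_n,i}$ and $V_j = \sum_{i \in J_j} Z_{k_n,i}$. The hypothesis (I) applied to each $J_j$ and then summed gives $\Var\bigl(\sum_j V_j\bigr) \le C\sum_{i \in \bigcup_j J_j} \Var(Z_{k_n,i})$, and since the small blocks cover an asymptotically vanishing fraction of the indices, condition (II) forces $\sum_j V_j \to 0$ in $L^2$, hence in probability. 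The remainder block is treated identically.

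The main step, and what I expect to be the hardest, is to show that the characteristic function of $W_n := \sum_{j=1}^{r_n} U_j$ is asymptotically close to the product $\prod_j \E[\exp(\im t U_j)]$. Writing the difference as a telescoping sum
\begin{align*}
\E\bigl[e^{\im t W_n}\bigr] - \prod_{j=1}^{r_n}\E\bigl[e^{\im t U_j}\bigr]
  = \sum_{j=1}^{r_n-1} \Bigl(\prod_{l>j}\E\bigl[e^{\im t U_l}\bigr]\Bigr)\,
      \Cov\Bigl(e^{\im t \sum_{l \le j}U_l},\, e^{\im t U_{j+1}}\Bigr),
\end{align*}
and noting that between $I_j$ and $I_{j+1}$ there are $q_n$ indices of gap, condition (IV) with $u=q_n$ bounds each covariance by $\rho_t(q_n)$ times a partial variance. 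A straightforward single application gives only $r_n \rho_t(q_n)\cdot (\text{total variance})$, which is too crude. The remedy is to iterate the covariance bound dyadically: group the blocks in pairs of pairs of pairs and apply (IV) recursively at gap scales $q_n, 2q_n, 4q_n,\ldots$, so that the accumulated error is controlled by $(\text{total variance})\cdot \sum_{j\ge 0} \rho_t(2^j q_n)$, which tends to $0$ by the summability of $\rho_t(2^j)$ as $q_n \to \infty$.

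Having reduced the problem to the asymptotically independent summands $U_j$, I would finish by checking the Lindeberg condition: by (I), $\Var(U_j)\le C\sum_{i\in I_j}\Var(Z_{k_n,i})$, so $\sum_j \Var(U_j)$ converges to $\upsilon^2$ (using (II) and the negligibility of small blocks), while
\begin{align*}
\sum_{j=1}^{r_n}\E\bigl[U_j^2\,\mathbbm{1}_{\{|U_j|>\varepsilon\}}\bigr]
  \le \sum_{j=1}^{r_n}\sum_{i\in I_j}\E\bigl[Z_{k_n,i}^2\,\mathbbm{1}_{\{|Z_{k_n,i}|>\varepsilon/p_n\}}\bigr]\,\text{(after a truncation refinement)}
\end{align*}
is driven to $0$ by (III) once $p_n$ is chosen to grow slowly enough. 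Lindeberg--Feller then yields $W_n \Rightarrow \mathcal{N}(0,\upsilon^2)$, and Slutsky combined with the $L^2$-negligibility of the small blocks delivers the same limit for $\sum_{i=1}^{k_n} Z_{k_n,i}$. The delicate balance is the joint choice of $(p_n,q_n)$ so that simultaneously the dyadic $\rho$-mixing error vanishes, the small-block variance is negligible, and the Lindeberg truncation of the big blocks can be reduced to that of the individual summands.
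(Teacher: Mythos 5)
This proposition is not proved in the paper at all: it is imported verbatim as a known CLT for $\rho$-mixing triangular arrays, attributed to \cite{utev}, and the paper only verifies its hypotheses for the specific arrays at hand. So there is no in-paper proof to compare against; your sketch has to be judged on its own. The overall strategy (Bernstein big-block/small-block decomposition, dyadic iteration of the covariance bound (IV) to decouple the big blocks, Lindeberg--Feller for the decoupled blocks) is indeed the right family of ideas and is close in spirit to Utev's argument, but two of your steps have genuine gaps.

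First, the small-block negligibility does not follow from (II) with equal-\emph{length} blocks. Condition (II) bounds the total variance, not its distribution over indices; the variance could concentrate precisely on the indices you assign to the small blocks, so ``small blocks cover a vanishing fraction of the indices'' does not imply $\Var\bigl(\sum_j V_j\bigr)\to 0$. The standard repair is to build blocks of (approximately) equal \emph{variance} using the quantiles of the cumulative variance function, but then the index-gap between consecutive big blocks is no longer under control, which is exactly what condition (IV) needs --- resolving this tension is the real content of Utev's recursive characteristic-function argument, and it is absent from your sketch. Second, and more seriously, your displayed bound for the block Lindeberg condition is false: from $|U_j|>\varepsilon$ you can only conclude that some $|Z_{k_n,i}|>\varepsilon/p_n$, and then $U_j^2\le p_n\sum_{i\in I_j}Z_{k_n,i}^2$ by Cauchy--Schwarz, so the honest estimate carries an extra factor of order $p_n^2$ in cross terms and does not reduce to (III). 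Moreover (III) is assumed only for \emph{fixed} $\varepsilon>0$, whereas your bound invokes it at the shrinking level $\varepsilon/p_n$; passing to a sequence $\varepsilon_n\downarrow 0$ with $\sum_i\E[Z_{k_n,i}^2\mathbbm{1}_{\{|Z_{k_n,i}|>\varepsilon_n\}}]\to 0$ is possible by a diagonal argument, but it then couples $p_n$ to $\varepsilon_n$ and still does not rescue the incorrect pointwise inequality. Verifying Lindeberg for the blocks genuinely requires a Rosenthal- or maximal-type moment inequality for block sums, itself derived from (I) and (IV); without that ingredient the final appeal to Lindeberg--Feller is unsupported.
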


The associated preliminary triangular arrays for the volatility estimator from \eqref{sigma_estimator} is defined as follows:
\begin{align*}
\xi_{n,i}:=\frac{2^d(\pi\eta)^{d/2}\alpha'\Gamma(d/2)}{\sqrt{nm}\Delta_n^{\alpha'}\Gamma(1-\alpha')}\sum_{j=1}^m(\Delta_iX)^2(\textbf{y}_j)e^{\nor{\kappa\bigcdot \textbf{y}_j}_1}.
\end{align*}
In the following lemma, we proof that working with triangular arrays based on a SPDEs with a stationary initial condition, i.e.:
\begin{align}
\tilde{\xi}_{n,i}:=\frac{2^d(\pi\eta)^{d/2}\alpha'\Gamma(d/2)}{\sqrt{nm}\Delta_n^{\alpha'}\Gamma(1-\alpha')}\sum_{j=1}^m(\Delta_i\tilde{X})^2(\textbf{y}_j)e^{\nor{\kappa\bigcdot \textbf{y}_j}_1},\label{eqn_triangularArrayZetaMulti}
\end{align}
is sufficient.

\begin{lemma}\label{lemma74}
On Assumptions \ref{assumption_observations_multi} und \ref{assumption_regMulti}, it holds that
\begin{align*}
\sqrt{m}_n\sum\limits_{i=1}^n\Big((\Delta_i\tilde{X})^2(\textbf{y})-(\Delta_iX)^2(\textbf{y})\Big)\overset{\Pp}{\too}0,
\end{align*}
for $n\tooi$.
\end{lemma}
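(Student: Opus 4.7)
The plan is to show that coupling $X$ (with its possibly non-stationary initial condition $\xi$) to the stationary version $\tilde X$ introduces only a negligible change in the realized volatility at rate $\sqrt{m_n}$. Writing $H_i(\textbf{y}) := \Delta_i\tilde X(\textbf{y}) - \Delta_i X(\textbf{y})$, I would start from the identity
\[
(\Delta_i\tilde X)^2(\textbf{y}) - (\Delta_i X)^2(\textbf{y}) = 2\,\Delta_i X(\textbf{y})\,H_i(\textbf{y}) + H_i^2(\textbf{y}),
\]
so that it suffices to control $\sum_i H_i^2(\textbf{y})$ and $\sum_i \Delta_iX(\textbf{y})\,H_i(\textbf{y})$ in probability. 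Because the canonical coupling between $X$ and $\tilde X$ keeps the positive-time Wiener processes $(W^\textbf{k})_{\textbf{k}\in\N^d}$ identical and only alters the initial datum, the spectral decomposition in \eqref{eqn_procedureForstatInitCond} provides the closed form
\[
H_i(\textbf{y}) = \sum_{\textbf{k}\in\N^d}(e^{-\lambda_\textbf{k}\Delta_n}-1)\,e^{-\lambda_\textbf{k}(i-1)\Delta_n}\,Z_\textbf{k}\,e_\textbf{k}(\textbf{y}),
\]
with $Z_\textbf{k} := X_\textbf{k}(0^-) - \langle\xi,e_\textbf{k}\rangle_\vartheta$ and $X_\textbf{k}(0^-) := \sigma\lambda_\textbf{k}^{-\alpha/2}\int_{-\infty}^0 e^{\lambda_\textbf{k}s}\diff W_s^\textbf{k}$. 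By Assumption \ref{assumption_regMulti} the $(Z_\textbf{k})_{\textbf{k}\in\N^d}$ are centered, independent across $\textbf{k}$, and satisfy $\sup_\textbf{k}\lambda_\textbf{k}^{1+\alpha}\E[Z_\textbf{k}^2]<\infty$.

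For the diagonal sum, the independence across $\textbf{k}$ together with the geometric decay in $i$ yields, after invoking Lemma \ref{lemma_calcfAlphaDelta} applied to $f_\alpha$,
\[
\E\Big[\sum_{i=1}^n H_i^2(\textbf{y})\Big] \leq C\sum_{\textbf{k}\in\N^d}\frac{1-e^{-\lambda_\textbf{k}\Delta_n}}{\lambda_\textbf{k}^{1+\alpha}}\,e_\textbf{k}^2(\textbf{y}) = \Oo(\Delta_n^{\alpha'}),
\]
hence $\sum_i H_i^2 = \Oo_\Pp(\Delta_n^{\alpha'})$ by Markov's inequality. For the cross sum I would decompose each increment according to its independent sources: $\Delta_iX = A_i + V_i^W$ with $A_i$ collecting the initial-condition contributions and $V_i^W := \sum_\textbf{k}(B_{i,\textbf{k}}+C_{i,\textbf{k}})e_\textbf{k}(\textbf{y})$ collecting the positive-time stochastic integrals, and $H_i = D_i - A_i$ with $D_i := \sum_\textbf{k}D_{i,\textbf{k}}e_\textbf{k}(\textbf{y})$ collecting the pre-zero increments. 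The three families $(A_i)$, $(V_i^W)$, $(D_i)$ are generated by mutually independent $\sigma$-algebras, so that
\[
\Delta_iX\cdot H_i = V_i^W D_i + A_iD_i - V_i^W A_i - A_i^2
\]
separates into three genuinely bilinear cross-terms in independent factors plus the square $-A_i^2$. The variance of each bilinear sum is governed by the kernel $b^{\textbf{k},\textbf{l}} := (1-e^{-\lambda_\textbf{k}\Delta_n})(1-e^{-\lambda_\textbf{l}\Delta_n})\sum_ie^{-(\lambda_\textbf{k}+\lambda_\textbf{l})(i-1)\Delta_n}$; the pointwise bound $(b^{\textbf{k},\textbf{l}})^2 \leq (1-e^{-\lambda_\textbf{k}\Delta_n})(1-e^{-\lambda_\textbf{l}\Delta_n})$ decouples the double spectral sum into the square of $\sum_\textbf{k}(1-e^{-\lambda_\textbf{k}\Delta_n})/\lambda_\textbf{k}^{1+\alpha} = \Oo(\Delta_n^{\alpha'})$, yielding variance of order $\Oo(\Delta_n^{2\alpha'})$. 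The term $\sum_iA_i^2$ has expectation $\sum_i r_{n,i} = \Oo(\Delta_n^{\alpha'})$ by Lemma \ref{lemma_quadIncrementsFirstCalc} and an analogous variance estimate.

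The main obstacle is obtaining sharp variance bounds for the bilinear cross-terms: a naive Cauchy–Schwarz estimate on $\bigl|\sum_i\Delta_iX\cdot H_i\bigr|$ only yields order $\Oo_\Pp(n^{(1-2\alpha')/2})$, too weak to absorb the $\sqrt{m_n}$ factor in the regime of small $\alpha'$ admitted by Assumption \ref{assumption_observations_multi}. The three-fold mutual independence of the initial condition, the pre-zero Brownian motion, and the post-zero Brownian motion is the key structural feature that reduces the relevant fourth-moment computation to the square of a single Riemann sum of the type handled by Lemma \ref{lemma_calcfAlphaDelta}, producing the $\Oo(\Delta_n^{2\alpha'})$ variance which, when combined with the growth restriction $\rho < (1-\alpha')/(d+2)\wedge 1/2$ on $m_n$, suffices to conclude via Chebyshev's inequality.
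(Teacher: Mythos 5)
Your decomposition is, up to an algebraic regrouping, the one used in the paper: writing $H_i=\sum_{\textbf{k}}(\tilde A_{i,\textbf{k}}-A_{i,\textbf{k}})e_{\textbf{k}}(\textbf{y})$, the identity $2\Delta_iX\,H_i+H_i^2=\tilde T_i-T_i$ recovers exactly the paper's split into the terms containing the initial-condition contributions, and the two moment bounds you target ($\sum_iH_i^2=\Oo_\Pp(\Delta_n^{\alpha'})$ by a first-moment computation, variance of order $\Oo(\Delta_n^{2\alpha'})$ for the bilinear cross terms via the gain of one factor $(1-e^{-\lambda_{\textbf{k}}\Delta_n})$ from summing the geometric decay in $i$) are precisely the estimates the paper establishes through Lemma \ref{lemma_calcfAlphaDelta}.

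There is, however, one genuine gap. You assert that ``by Assumption \ref{assumption_regMulti} the $(Z_{\textbf{k}})$ are centered''. Assumption \ref{assumption_regMulti}(i) offers two alternatives, and only the first one imposes $\E[\langle\xi,e_{\textbf{k}}\rangle_\vartheta]=0$. Under the second alternative, $\E[\nor{A_\vartheta^{(1+\alpha)/2}\xi}_\vartheta^2]<\infty$, the coefficients $\langle\xi,e_{\textbf{k}}\rangle_\vartheta$ may have nonzero means, so $Z_{\textbf{k}}=X_{\textbf{k}}(0^-)-\langle\xi,e_{\textbf{k}}\rangle_\vartheta$ is not centered. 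Then the off-diagonal terms $\E[Z_{\textbf{k}}Z_{\textbf{l}}]=\E[\langle\xi,e_{\textbf{k}}\rangle_\vartheta]\,\E[\langle\xi,e_{\textbf{l}}\rangle_\vartheta]$ in $\E[H_i^2]$ do not vanish, and your independence-based variance computations for the bilinear terms involving $A_i$ and $D_i$ break down. The paper closes this case by a different argument: a Cauchy--Schwarz step in $\textbf{k}$ followed by Parseval's identity, which bounds $\E\big[\big(\sum_{\textbf{k}}(1-e^{-\lambda_{\textbf{k}}\Delta_n})e^{-\lambda_{\textbf{k}}(i-1)\Delta_n}\langle\xi,e_{\textbf{k}}\rangle_\vartheta e_{\textbf{k}}(\textbf{y})\big)^2\big]$ by $\E[\nor{A_\vartheta^{(1+\alpha)/2}\xi}_\vartheta^2]\sum_{\textbf{k}}(1-e^{-\lambda_{\textbf{k}}\Delta_n})^2e^{-2\lambda_{\textbf{k}}(i-1)\Delta_n}\lambda_{\textbf{k}}^{-(1+\alpha)}e_{\textbf{k}}^2(\textbf{y})$ without any centeredness or cross-term cancellation. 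You need to add this second branch.

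A smaller imprecision: your kernel $b^{\textbf{k},\textbf{l}}$ and the bound $(b^{\textbf{k},\textbf{l}})^2\leq(1-e^{-\lambda_{\textbf{k}}\Delta_n})(1-e^{-\lambda_{\textbf{l}}\Delta_n})$ (which is correct, since $1-e^{-(\lambda_{\textbf{k}}+\lambda_{\textbf{l}})\Delta_n}\geq\sqrt{(1-e^{-\lambda_{\textbf{k}}\Delta_n})(1-e^{-\lambda_{\textbf{l}}\Delta_n})}$) only governs the term $\sum_iA_iD_i$, where both factors are of the form (time-zero variable)$\times$(geometric decay in $i$). For $\sum_iV_i^WD_i$ and $\sum_iV_i^WA_i$ the factor $\E[V_i^WV_j^W]$ carries the full increment autocovariance in $(i,j)$ and does not reduce to such a kernel; a crude supremum bound there loses a factor $\Delta_n^{\alpha'}$ and is insufficient for small $\alpha'$. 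One must instead exploit that $\sum_{i,j}e^{-\lambda_{\textbf{k}}(i+j-2)\Delta_n}\E[V_i^WV_j^W]=\E\big[\big(\sum_ie^{-\lambda_{\textbf{k}}(i-1)\Delta_n}V_i^W\big)^2\big]$ together with the summable decay $|i-j|^{\alpha'-2}$ of the autocovariance, which is exactly the paper's computation of $\sum_{i,j}R_{i,j}S_{i,j}=\Oo(\Delta_n^{2\alpha'})$.
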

\begin{proof}
We initiate the proof with the following:
\begin{align*}
(\Delta_i\tilde{X})^2(\textbf{y})-(\Delta_iX)^2(\textbf{y})
&=\sum\limits_{\textbf{k}_1,\textbf{k}_2\in\N} \Big(\Delta_i\tilde{x}_{\textbf{k}_1}\Delta_i\tilde{x}_{\textbf{k}_2}-\Delta_ix_{\textbf{k}_1}\Delta_ix_{\textbf{k}_2} \Big)e_{\textbf{k}_1}(\textbf{y})e_{\textbf{k}_2}(\textbf{y})= \tilde{T}_i-T_i,
\end{align*}
where we define:
\begin{align*}
\tilde{T}_i &:=\sum\limits_{\textbf{k}_1,\textbf{k}_2\in\N^d} \Big(\tilde{A}_{i,\textbf{k}_1}\tilde{A}_{i,\textbf{k}_2}+\tilde{A}_{i,\textbf{k}_1}\big(B_{i,\textbf{k}_2}+C_{i,\textbf{k}_2}\big)+\tilde{A}_{i,\textbf{k}_2}\big(B_{i,\textbf{k}_1}+C_{i,\textbf{k}_1}\big)\Big)e_{\textbf{k}_1}(\textbf{y})e_{\textbf{k}_2}(\textbf{y}), \\
T_i&:=\sum\limits_{\textbf{k}_1,\textbf{k}_2\in\N^d} \Big(A_{i,\textbf{k}_1}A_{i,\textbf{k}_2}+A_{i,\textbf{k}_1}\big(B_{i,\textbf{k}_2}+C_{i,\textbf{k}_2}\big)+A_{i,\textbf{k}_2}\big(B_{i,\textbf{k}_1}+C_{i,\textbf{k}_1}\big)\Big)e_{\textbf{k}_1}(\textbf{y})e_{\textbf{k}_2}(\textbf{y}).
\end{align*}
It remains to show that $\sqrt{m}_n\sum_{i=1}^nT_i\overset{\Pp}{\too}0$, since this implies $\sqrt{m}_n\sum_{i=1}^n\tilde{T}_i\overset{\Pp}{\too}0$. Here, we have the following:
\begin{align}
\sum\limits_{i=1}^nT_i=\sum\limits_{i=1}^n\bigg(\sum\limits_{\textbf{k}\in\N^d} A_{i,\textbf{k}}e_\textbf{k}(\textbf{y})\bigg)^2+2\sum\limits_{i=1}^n\bigg(\sum\limits_{\textbf{k}\in\N^d} A_{i,\textbf{k}}e_\textbf{k}(\textbf{y})\bigg)\bigg(\sum\limits_{\textbf{k}\in\N^d}\Big(B_{i,\textbf{k}}+C_{i,\textbf{k}}\Big)e_\textbf{k}(\textbf{y})\bigg).\label{eqn_sumTiConvergenceinProp}
\end{align}
Using Hölder's inequality we obtain:
\begin{align*}
\E\Bigg[\sum\limits_{i=1}^n\bigg(\sum\limits_{\textbf{k}\in\N^d} A_{i,\textbf{k}}e_\textbf{k}(\textbf{y})\bigg)^2\Bigg]&=  \E\Bigg[\sum\limits_{i=1}^n\sum\limits_{\textbf{k}\in\N^d} A_{i,\textbf{k}}^2e_\textbf{k}^2(\textbf{y})\Bigg]+\E\Bigg[\sum\limits_{i=1}^n\sum\limits_{\substack{ \textbf{k}_1,\textbf{k}_2\in\N^d\\ \textbf{k}_1\neq \textbf{k}_2}} A_{i,\textbf{k}_1}A_{i,\textbf{k}_2}e_{\textbf{k}_1}(\textbf{y})e_{\textbf{k}_2}(\textbf{y})\Bigg] \\
&\leq C \E\Bigg[\sum\limits_{i=1}^n\sum\limits_{\textbf{k}\in\N^d} A_{i,\textbf{k}}^2\Bigg]+\E\Bigg[\bigg|\sum\limits_{i=1}^n\sum\limits_{\substack{ \textbf{k}_1,\textbf{k}_2\in\N^d\\ \textbf{k}_1\neq \textbf{k}_2}} A_{i,\textbf{k}_1}A_{i,\textbf{k}_2}e_{\textbf{k}_1}(\textbf{y})e_{\textbf{k}_2}(\textbf{y})\bigg|^2\Bigg]^{1/2},
\end{align*}
where $C>0$ is a suitable constant. Let $C_{\xi}:=\sup_{\textbf{k}\in\N^d}\lambda_\textbf{k}^{1+\alpha}\E[\langle\xi,e_\textbf{k}\rangle_\vartheta^2]$. With analogous steps as in Lemma \ref{lemma_quadIncrementsFirstCalc}, we find:
\begin{align}
 \sum\limits_{i=1}^n\sum\limits_{\textbf{k}\in\N^d}\E\big[ A_{i,\textbf{k}}^2\big]&= \sum\limits_{i=1}^n\sum\limits_{\textbf{k}\in\N^d} \big(e^{-\lambda_\textbf{k}\idn}-e^{-\lambda_\textbf{k}\iidn}\big)^2\E\Big[\langle \xi,e_\textbf{k}\rangle^2_\vartheta\Big]\notag \\
 &\leq C_\xi\sum\limits_{\textbf{k}\in\N^d} \frac{\big(1-e^{-\lambda_\textbf{k}\Delta_n}\big)^2}	{\lambda_\textbf{k}^{1+\alpha}}\sum\limits_{i=1}^n e^{-2\lambda_\textbf{k}\iidn}\notag \\
 &\leq C_\xi\sum\limits_{\textbf{k}\in\N^d} \frac{1-e^{-\lambda_\textbf{k}\Delta_n}}{\lambda_\textbf{k}^{1+\alpha}}=\mathcal{O}\big(\Delta_n^{\alpha'}\big).\label{eqn_boundAA}
\end{align}
Furthermore, we have
\begin{align*}
&\E\Bigg[\bigg|\sum\limits_{i=1}^n\sum\limits_{\substack{ \textbf{k}_1,\textbf{k}_2\in\N^d\\ \textbf{k}_1\neq \textbf{k}_2}} A_{i,\textbf{k}_1}A_{i,\textbf{k}_2}e_{\textbf{k}_1}(\textbf{y})e_{\textbf{k}_2}(\textbf{y})\bigg|^2\Bigg]\\
&= \sum\limits_{i,j=1}^n\sum\limits_{\substack{ \textbf{k}_1,\textbf{k}_2\in\N^d\\ \textbf{k}_1\neq \textbf{k}_2}}\sum\limits_{\substack{ \textbf{k}_3,\textbf{k}_4\in\N^d\\ \textbf{k}_3\neq \textbf{k}_4}} e_{\textbf{k}_1}(\textbf{y})e_{\textbf{k}_2}(\textbf{y})e_{\textbf{k}_3}(\textbf{y})e_{\textbf{k}_4}(\textbf{y})\E\Big[A_{i,\textbf{k}_1}A_{i,\textbf{k}_2}A_{j,\textbf{k}_3}A_{j,\textbf{k}_4}\Big].
\end{align*}
Let us assume that $\E[\langle \xi,e_\textbf{k}\rangle_\vartheta]=0$ from Assumption \ref{assumption_regMulti}. Then, for $\textbf{k}_1=\textbf{k}_3$ and $\textbf{k}_2=\textbf{k}_4$, we have 
\begin{align*}
\sum\limits_{i,j=1}^n\sum\limits_{\substack{ \textbf{k}_1,\textbf{k}_2\in\N^d\\ \textbf{k}_1\neq \textbf{k}_2}} \E\big[A_{i,\textbf{k}_1}A_{i,\textbf{k}_2}A_{j,\textbf{k}_1}&A_{j,\textbf{k}_2}\big]  
=\sum\limits_{i,j=1}^n\sum\limits_{\substack{ \textbf{k}_1,\textbf{k}_2\in\N^d\\ \textbf{k}_1\neq \textbf{k}_2}} \big(1-e^{-\lambda_{\textbf{k}_1}\Delta_n}\big)^2\big(1-e^{-\lambda_{\textbf{k}_2}\Delta_n}\big)^2\\
&~~~~~\times e^{-\lambda_{\textbf{k}_1}(i+j-2)\Delta_n-\lambda_{\textbf{k}_2}(i+j-2)\Delta_n}\E\Big[\langle \xi,e_{\textbf{k}_1}\rangle^2_\vartheta\Big] \E\Big[\langle \xi,e_{\textbf{k}_2}\rangle^2_\vartheta\Big] \\
&\leq C_\xi^2\sum\limits_{\substack{ \textbf{k}_1,\textbf{k}_2\in\N^d\\ \textbf{k}_1\neq \textbf{k}_2}} \frac{\big(1-e^{-\lambda_{\textbf{k}_1}\Delta_n}\big)^2\big(1-e^{-\lambda_{\textbf{k}_2}\Delta_n}\big)^2}{\lambda_{\textbf{k}_1}^{1+\alpha}\lambda_{\textbf{k}_2}^{1+\alpha}}\sum\limits_{i,j=1}^ne^{-(\lambda_{\textbf{k}_1}+\lambda_{\textbf{k}_2})(i+j-2)\Delta_n},
\end{align*}
where the case $\textbf{k}_1=\textbf{k}_4$ and $\textbf{k}_2=\textbf{k}_3$ works analogously. By using the geometric series, we obtain:
\begin{align*}
\sum\limits_{i,j=1}^ne^{-(\lambda_{\textbf{k}_1}+\lambda_{\textbf{k}_2})(i+j-2)\Delta_n}
&\leq \frac{1}{(1-e^{-(\lambda_{\textbf{k}_1}+\lambda
_{\textbf{k}_2})\Delta_n})^2},
\end{align*}
and therefore, we have 
\begin{align*}
\sum\limits_{i,j=1}^n\sum\limits_{\substack{ \textbf{k}_1,\textbf{k}_2\in\N^d\\ \textbf{k}_1\neq \textbf{k}_2}} \E\big[A_{i,\textbf{k}_1}A_{i,\textbf{k}_2}A_{j,\textbf{k}_1}A_{j,\textbf{k}_2}\big] &\leq C_\xi^2\sum\limits_{\textbf{k}_1,\textbf{k}_2\in\N^d} \frac{\big(1-e^{-\lambda_{\textbf{k}_1}\Delta_n}\big)^2\big(1-e^{-\lambda_{\textbf{k}_2}\Delta_n}\big)^2}{\lambda_{\textbf{k}_1}^{1+\alpha}\lambda_{\textbf{k}_2}^{1+\alpha} \big(1-e^{-(\lambda_{\textbf{k}_1}+\lambda_{\textbf{k}_2})\Delta_n}\big)^2} \\
&\leq C_\xi^2\sum\limits_{\textbf{k}_1,\textbf{k}_2\in\N^d} \frac{\big(1-e^{-\lambda_{\textbf{k}_1}\Delta_n}\big)\big(1-e^{-\lambda_{\textbf{k}_2}\Delta_n}\big)}{\lambda_{\textbf{k}_1}^{1+\alpha}\lambda_{\textbf{k}_2}^{1+\alpha} } 
=\mathcal{O}(\Delta_n^{2\alpha'}),
\end{align*}
where we have used $(1-p)(1-q)/(1-pq)\leq 1-p$, for $0\leq p,q<1$.
For the second option in Assumption \ref{assumption_regMulti}, we use an analogous procedure as in Lemma \ref{lemma_quadIncrementsFirstCalc}. Here, we have with $C_\xi':=\sum_{\textbf{k}\in\N}\lambda_\textbf{k}^{1+\alpha}\E[\langle\xi,e_\textbf{k}\rangle_\vartheta^2]<\infty$ and Parseval's identity that
\begin{align*}
\sum\limits_{i,j=1}^n\sum\limits_{\substack{ \textbf{k}_1,\textbf{k}_2\in\N^d\\ \textbf{k}_1\neq \textbf{k}_2}}\sum\limits_{\substack{ \textbf{k}_3,\textbf{k}_4\in\N^d\\ \textbf{k}_3\neq \textbf{k}_4}} &\E\big[A_{i,\textbf{k}_1}A_{i,\textbf{k}_2}A_{j,\textbf{k}_3}A_{j,\textbf{k}_4}\big]  \leq\Bigg(\sum\limits_{i=1}^n\bigg(\sum\limits_{\textbf{k}\in\N^d} \E[A_{i,\textbf{k}}] \bigg)^2\Bigg)^2 \\
&\leq\Bigg(\sum\limits_{i=1}^n\bigg(\sum\limits_{\textbf{k}\in\N^d} \frac{\big(e^{-\lambda_\textbf{k}\Delta_n}-1\big)e^{-\lambda_\textbf{k}\iidn}}{\lambda_\textbf{k}^{(1+\alpha)/2}}\lambda_\textbf{k}^{(1+\alpha)/2}\E\Big[\langle \xi,e_\textbf{k}\rangle_\vartheta^2\Big] ^{1/2} \bigg)^2\Bigg)^2\\
&\leq \Bigg(\sum\limits_{i=1}^n\bigg(\sum\limits_{\textbf{k}\in\N^d} \frac{\big(1-e^{-\lambda_\textbf{k}\Delta_n}\big)^2e^{-2\lambda_\textbf{k}\iidn}}{\lambda_\textbf{k}^{1+\alpha}}\bigg)\bigg(\sum\limits_{\textbf{k}\in\N^d}\lambda_\textbf{k}^{1+\alpha}\E\Big[\langle \xi,e_\textbf{k}\rangle_\vartheta^2\Big]  \bigg)\Bigg)^2 \\
&\leq C_\xi'^2\Bigg(\sum\limits_{\textbf{k}\in\N^d}\frac{(1-e^{-\lambda_\textbf{k}\Delta_n})^2}{\lambda_\textbf{k}^{1+\alpha}(1-e^{-2\lambda_\textbf{k}\Delta_n})}\Bigg)^2=\mathcal{O}(\Delta_n^{2\alpha'}).
\end{align*}
By using Markov's inequality, we conclude with
\begin{align*}
\sum\limits_{i=1}^n\bigg(\sum\limits_{\textbf{k}\in\N^d} A_{i,\textbf{k}}e_\textbf{k}(\textbf{y})\bigg)^2 = \mathcal{O}_\Pp\big(\Delta_n^{\alpha'}\big).
\end{align*}
Continuing, we proceed to bound the following term:
\begin{align*}
2\sum\limits_{i=1}^n\bigg(\sum\limits_{\textbf{k}\in\N^d} A_{i,\textbf{k}}e_\textbf{k}(\textbf{y})\bigg)\bigg(\sum\limits_{\textbf{k}\in\N^d}\Big(B_{i,\textbf{k}}+C_{i,\textbf{k}}\Big)e_\textbf{k}(\textbf{y})\bigg).
\end{align*}
We can make use of the independence of $A_{i,\textbf{k}},B_{i,\textbf{k}}$ and $C_{i,\textbf{k}}$ to show:
\begin{align*}
&\E\Bigg[\bigg|\sum\limits_{i=1}^n\bigg(\sum\limits_{\textbf{k}\in\N^d} A_{i,\textbf{k}}e_\textbf{k}(\textbf{y})\bigg)\bigg(\sum\limits_{\textbf{k}\in\N^d}\Big(B_{i,\textbf{k}}+C_{i,\textbf{k}}\Big)e_\textbf{k}(\textbf{y})\bigg)\bigg|^2\Bigg]\\
&=\sum\limits_{i,j=1}^n\Bigg(\sum\limits_{\textbf{k}_1,\textbf{k}_2\in\N^d} \E\big[A_{i,\textbf{k}_1} A_{j,\textbf{k}_2}\big]e_{\textbf{k}_1}(\textbf{y})e_{\textbf{k}_2}(\textbf{y})  \Bigg)
\Bigg(\sum\limits_{\textbf{k}\in\N^d}\E\Big[\big(B_{i,\textbf{k}}+C_{i,\textbf{k}}\big)\big(B_{j,\textbf{k}}+C_{j,\textbf{k}}\big)\Big]e_\textbf{k}^2(\textbf{y})\Bigg) =:\sum\limits_{i,j=1}^nR_{i,j}S_{i,j},
\end{align*}
where 
\begin{align*}
R_{i,j}&:=\sum\limits_{\textbf{k}_1,\textbf{k}_2\in\N^d}^\infty \E\big[A_{i,\textbf{k}_1} A_{j,\textbf{k}_2}\big]e_{\textbf{k}_1}(\textbf{y})e_{\textbf{k}_2}(\textbf{y}),  \\
S_{i,j}&:=\sum\limits_{\textbf{k}\in\N^d}\E\Big[\big(B_{i,\textbf{k}}+C_{i,\textbf{k}}\big)\big(B_{j,\textbf{k}}+C_{j,\textbf{k}}\big)\Big]e_\textbf{k}^2(\textbf{y}).
\end{align*}
Assuming the first option in Assumption \ref{assumption_regMulti} holds, we can analogously obtain, as in equation \eqref{eqn_boundAA}, that
\begin{align*}
R_{i,j}
&= \sum\limits_{\textbf{k}\in\N^d} \E\big[A_{i,\textbf{k}} A_{j,\textbf{k}}\big]e_\textbf{k}^2(\textbf{y})\leq CC_\xi\sum\limits_{\textbf{k}\in\N^d} \frac{(1-e^{-\lambda_\textbf{k}\Delta_n})^2}{\lambda_\textbf{k}^{1+\alpha}}e^{-\lambda_\textbf{k}(i+j-2)\Delta_n}  =\mathcal{O}\big(\Delta_n^{\alpha'}\big),
\end{align*}
and therefore it holds that $\sum_{i,j=1}^nR_{i,j}=\mathcal{O}(\Delta_n^{\alpha'})$ and $\sup_{i,j=1,\ldots,n} \abs{R_{i,j}}=\mathcal{O}(\Delta_n^{\alpha'})$ as well as\linebreak $\sup_{j=1,\ldots,n}\sum_{i=1}^n \abs{R_{i,j}}=\mathcal{O}(\Delta_n^{\alpha'})$. For the second option in Assumption \ref{assumption_regMulti}, we find:
\begin{align*}
R_{i,j}&\leq C\sum\limits_{\textbf{k}_1,\textbf{k}_2\in\N^d} \frac{\big(1-e^{-\lambda_{\textbf{k}_1}\Delta_n}\big)\big(1-e^{-\lambda_{\textbf{k}_2}\Delta_n}\big)}{\lambda_{\textbf{k}_1}^{(1+\alpha)/2}\lambda_{\textbf{k}_2}^{(1+\alpha)/2}}e^{-\big(\lambda_{\textbf{k}_1}(i-1)+\lambda_{\textbf{k}_2}(j-1)\big)\Delta_n}\\
&~~~~~\times\lambda_{\textbf{k}_1}^{(1+\alpha)/2}\E\Big[\abs{\langle \xi,e_{\textbf{k}_1}\rangle_\vartheta} \Big] \lambda_{\textbf{k}_2}^{(1+\alpha)/2}\E\Big[\abs{\langle \xi,e_{\textbf{k}_2}\rangle_\vartheta} \Big] \\
&\leq C\sum\limits_{\textbf{k}_1,\textbf{k}_2\in\N^d} \frac{\big(1-e^{-\lambda_{\textbf{k}_1}\Delta_n}\big)\big(1-e^{-\lambda_{\textbf{k}_2}\Delta_n}\big)}{\lambda_{\textbf{k}_1}^{(1+\alpha)/2}\lambda_{\textbf{k}_2}^{(1+\alpha)/2}}e^{-\big(\lambda_{\textbf{k}_1}(i-1)+\lambda_{\textbf{k}_2}(j-1)\big)\Delta_n}\\
&~~~~~\times\lambda_{\textbf{k}_1}^{(1+\alpha)/2}\E\Big[\abs{\langle \xi,e_{\textbf{k}_1}\rangle_\vartheta}^2 \Big]^{1/2} \lambda_{\textbf{k}_2}^{(1+\alpha)/2}\E\Big[\abs{\langle \xi,e_{\textbf{k}_2}\rangle_\vartheta}^2 \Big]^{1/2}\\
&=C\sum\limits_{\textbf{k}_1\in\N^d} \frac{\big(1-e^{-\lambda_{\textbf{k}_1}\Delta_n}\big)}{\lambda_{\textbf{k}_1}^{(1+\alpha)/2}}e^{-\lambda_{\textbf{k}_1}(i-1)\Delta_n}\lambda_{\textbf{k}_1}^{(1+\alpha)/2}\E\Big[\abs{\langle \xi,e_{\textbf{k}_1}\rangle_\vartheta} ^2\Big]^{1/2}\sum\limits_{\textbf{k}_2\in\N^d}\frac{\big(1-e^{-\lambda_{\textbf{k}_2}\Delta_n}\big)}{\lambda_{\textbf{k}_2}^{(1+\alpha)/2}}\\
&~~~~~~~\times e^{-\lambda_{\textbf{k}_2}(j-1)\Delta_n}\lambda_{\textbf{k}_2}^{(1+\alpha)/2}\E\Big[\abs{\langle \xi,e_{\textbf{k}_2}\rangle_\vartheta}^2 \Big]^{1/2}\\
&\leq CC_\xi'\Bigg(\sum\limits_{\textbf{k}_1\in\N^d} \frac{\big(1-e^{-\lambda_{\textbf{k}_1}\Delta_n}\big)^2}{\lambda_{\textbf{k}_1}^{1+\alpha}}e^{-2\lambda_{\textbf{k}_1}(i-1)\Delta_n}\sum\limits_{\textbf{k}_2\in\N^d}\frac{\big(1-e^{-\lambda_{\textbf{k}_2}\Delta_n}\big)^2}{\lambda_{\textbf{k}_2}^{1+\alpha}}e^{-2\lambda_{\textbf{k}_2}(j-1)\Delta_n}\bigg)^{1/2},
\end{align*}
and therefore, we have
\begin{align*}
\sum_{i,j=1}^nR_{i,j}&\leq CC_\xi'\sum_{\textbf{k}\in\N^d}\frac{1-e^{-\lambda_{\textbf{k}}\Delta_n}}{\lambda_{\textbf{k}}^{1+\alpha}}=\Oo\big(\Delta_n^{\alpha'}\big).
\end{align*}
Thus, we infer for both options in Assumption \ref{assumption_regMulti}, that $\sup_{i,j} \abs{R_{i,j}}=\mathcal{O}(\Delta_n^{\alpha'})$ and $\sup_j\sum_{i=1}^n \abs{R_{i,j}}=\mathcal{O}(\Delta_n^{\alpha'})$. For the term $S_{i,j}$, we obtain:
\begin{align*}
S_{i,j}&=\sum\limits_{\textbf{k}\in\N^d}\E\Big[\big(B_{i,\textbf{k}}+C_{i,\textbf{k}}\big)\big(B_{j,\textbf{k}}+C_{j,\textbf{k}}\big)\Big]e_\textbf{k}^2(\textbf{y})\\
&=\sum\limits_{\textbf{k}\in\N^d}\Big(\Sigma_{i,j}^{B,\textbf{k}}+\Sigma_{i,j}^{BC,\textbf{k}}+\Sigma_{j,i}^{BC,\textbf{k}}+\Sigma_{i,j}^{C,\textbf{k}}\Big)e_\textbf{k}^2(\textbf{y}),
\end{align*}
where we used the notation of the proof of Proposition \ref{prop_autocovOfIncrementsMulti}, where 
\begin{align*}
\Sigma_{i,j}^{B,\textbf{k}}:=\Cov(B_{i,\textbf{k}},B_{j,\textbf{k}}),~~~~~~~~\Sigma_{i,j}^{BC,\textbf{k}}:=\Cov(B_{i,\textbf{k}},C_{j,\textbf{k}}),~~~~~~~~\Sigma_{i,j}^{C,\textbf{k}}:=\Cov(C_{i,\textbf{k}},C_{j,\textbf{k}}).
\end{align*} 
Upon inserting the calculations of Proposition \ref{prop_autocovOfIncrementsMulti}, we infer for $i<j$ that
\begin{align*}
S_{i,j}&=\sum\limits_{\textbf{k}\in\N^d}\Big(\Sigma_{i,j}^{B,\textbf{k}}+\Sigma_{j,i}^{BC,\textbf{k}}\Big)e_\textbf{k}^2(\textbf{y}) \\
&\leq -\sigma^2 e^{-\nor{\kappa\bigcdot \textbf{y}}_1}\Delta_n^{\alpha'}\frac{\Gamma(1-\alpha')}{2^{d}(\pi\eta)^{d/2}\alpha'\Gamma(d/2)}\bigg(-\frac{1}{2}\big(j-i-1\big)^{\alpha'}+\big(j-i\big)^{\alpha'}-\frac{1}{2}\big(j-i+1\big)^{\alpha'}\bigg) \\
&~~~~~+C\sigma^2\sum_{\textbf{k}\in\N^d} e^{-\lambda_\textbf{k}(i+j-2)\Delta_n}\frac{\big(1-e^{-\lambda_\textbf{k}\Delta_n}\big)^2 }{\lambda_\textbf{k}^{1+\alpha}}+\mathcal{O}(\Delta_n).
\end{align*}
For $i=j$ we obtain:
\begin{align*}
S_{i,i}&=\sum\limits_{\textbf{k}\in\N^d}\Big(\Sigma_{ii}^{B,\textbf{k}}+\Sigma_{ii}^{C,\textbf{k}}\Big)e_\textbf{k}^2(\textbf{y}) 
\leq C\sigma^2\sum\limits_{\textbf{k}\in\N^d} \bigg( \frac{\big(1-e^{-\lambda_\textbf{k}\Delta_n}\big)^2 }{2\lambda_\textbf{k}^{1+\alpha}}+\frac{1-e^{-2\lambda_\textbf{k}\Delta_n}}{2\lambda_\textbf{k}^{1+\alpha}} \bigg)=C\sigma^2\sum\limits_{\textbf{k}\in\N^d} \frac{1-e^{-\lambda_\textbf{k}\Delta_n} }{\lambda_\textbf{k}^{1+\alpha}} .
\end{align*}
Utilizing equation \eqref{eqn_orderOfiTermsGeometrricSum} we find that
\begin{align*}
\sum\limits_{i,j=1}^nR_{i,j}S_{i,j}&\leq C\sum\limits_{i,j=1}^n\bigg(\sum_{\textbf{k}\in\N^d}\frac{(1-e^{-\lambda_\textbf{k}\Delta_n})^2}{\lambda_\textbf{k}^{1+\alpha}}e^{-\lambda_\textbf{k}(i+j-2)\Delta_n}\bigg)\bigg(\mathbbm{1}_{\{i\neq j\}}\Delta_n^{\alpha'}\abs{i-j}^{\alpha'-2} \\
&~~~~~~~~~~+\sum\limits_{\textbf{k}\in\N^d} \Big( \frac{\big(1-e^{-\lambda_\textbf{k}\Delta_n}\big)^2 }{\lambda_\textbf{k}^{1+\alpha}}e^{-\lambda_\textbf{k}(i+j-2)\Delta_n}+\mathbbm{1}_{\{i=j\}}\frac{1-e^{-\lambda_\textbf{k}\Delta_n}}{\lambda_\textbf{k}^{1+\alpha}} \Big)+\Oo(\Delta_n)\bigg)\\
&=\mathcal{O}\bigg(\Delta_n^{2\alpha'}\sum\limits_{j=1}^\infty j^{\alpha'-2}+\Delta_n^{2\alpha'}\bigg)=\mathcal{O}(\Delta_n^{2\alpha'}),
\end{align*}
where $C>0$ is a suitable constant.
From the analysis above, we find that both terms in display \eqref{eqn_sumTiConvergenceinProp} are of order $\Oo_\Pp(\Delta_n^{\alpha'})$. Therefore, we conclude that $\sqrt{m}_n\sum_{i=1}^nT_i\overset{\Pp}{\too}0$, which completes the proof.
\end{proof}
Thanks to the previous lemma, we have 
\begin{align*}
\sum\limits_{i=1}^n\big(\tilde{\xi}_{n,i}-\xi_{n,i}\big)\overset{\Pp}{\too}0,
\end{align*}
as $n\tooi$, which allows us to investigate a mild solution under a stationary condition from now on.

We follow up by investigating the variance-covariance structure of the following term:
\begin{align}
V_{p,\Delta_n}(\textbf{y}):=\frac{1}{p\Delta_n^{\alpha'}}\sum_{i=1}^{p}(\Delta_i\tilde{X})^2(\textbf{y})e^{\nor{\kappa\bigcdot \textbf{y}}_1},\label{eqn_rescaledRealizedVolaMulti}
\end{align}
for $\textbf{y}\in[\delta,1-\delta]^d$. We refer to this expression as rescaled realized volatility.

\begin{theorem}\label{prop_RRVMulti}
On the Assumptions \ref{assumption_observations_multi} and \ref{assumption_regMulti}, we have for the rescaled realized volatility in two spacial coordinates $\textbf{y}_1,\textbf{y}_2\in[\delta,1-\delta]^d$ that
\begin{align*}
\Cov\big(V_{p,\Delta_n}(\textbf{y}_1),V_{p,\Delta_n}(\textbf{y}_2)\big)&=\mathbbm{1}_{\{\textbf{y}_1=\textbf{y}_2\}}\frac{\Upsilon_{\alpha'}}{p}\bigg(\frac{\Gamma(1-\alpha')\sigma^2}{2^d(\pi \eta)^{d/2}\alpha'\Gamma(d/2)}\bigg)^2\Bigg(1+\mathcal{O}\bigg(\Delta_n^{1/2}\vee \frac{\Delta_n^{1-\alpha'}}{\delta^{d+1}}\vee\frac{1}{p}\bigg)\Bigg)\\
&~~~~~ +\Oo\bigg(\frac{\Delta_n^{1-\alpha'}}{p}\Big(\mathbbm{1}_{\{\textbf{y}_1\neq \textbf{y}_2\}}\nor{\textbf{y}_1-\textbf{y}_2}_0^{-(d+1)}+\delta^{-(d+1)}\Big)\bigg),
\end{align*}
where $\Upsilon_{\alpha'}$ is a numerical constant depending on $\alpha'\in(0,1)$, defined in equation \eqref{eqn_definingUpsilon}. 
In particular we have 
\begin{align*}
\Var\big(V_{n,\Delta_n}(\textbf{y})\big)=\frac{\Upsilon_{\alpha'}}{n}\bigg(\frac{\Gamma(1-\alpha')\sigma^2}{2^d(\pi\eta)^{d/2}\alpha'\Gamma(d/2)}\bigg)^2\Big(1+\Oo\big(\Delta_n^{1/2}\vee \Delta_n^{1-\alpha'}\big)\Big).
\end{align*}
\end{theorem}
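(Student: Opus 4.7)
By Lemma \ref{lemma74} we may work with the stationary version $\tilde X$, so each vector $(\Delta_i\tilde X(\mathbf{y}_1),\Delta_j\tilde X(\mathbf{y}_2))$ is centred Gaussian. Isserlis' formula then gives the fundamental identity
\begin{align*}
\Cov\bigl((\Delta_i\tilde X)^2(\mathbf{y}_1),(\Delta_j\tilde X)^2(\mathbf{y}_2)\bigr)=2\,\Cov\bigl(\Delta_i\tilde X(\mathbf{y}_1),\Delta_j\tilde X(\mathbf{y}_2)\bigr)^{\!2},
\end{align*}
so that
\begin{align*}
\Cov\bigl(V_{p,\Delta_n}(\mathbf{y}_1),V_{p,\Delta_n}(\mathbf{y}_2)\bigr)=\frac{2\,e^{\|\kappa\bigcdot\mathbf{y}_1\|_1+\|\kappa\bigcdot\mathbf{y}_2\|_1}}{p^{2}\Delta_n^{2\alpha'}}\sum_{i,j=1}^{p}\Cov\bigl(\Delta_i\tilde X(\mathbf{y}_1),\Delta_j\tilde X(\mathbf{y}_2)\bigr)^{\!2}.
\end{align*}
The task thus reduces to evaluating the cross-covariance kernel and summing its square.

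For the kernel I would repeat the decomposition $\Delta_i\tilde x_{\mathbf{k}}=\tilde B_{i,\mathbf{k}}+C_{i,\mathbf{k}}$ from \eqref{eqn_procedureForstatInitCond} and carry out the same Itô-isometry manipulations as in the proof of Proposition~\ref{prop_autocovOfIncrementsMulti}, but now keeping $e_{\mathbf{k}}(\mathbf{y}_1)e_{\mathbf{k}}(\mathbf{y}_2)$ instead of $e_{\mathbf{k}}^2(\mathbf{y})$. The product-to-sum identity $2\sin(\pi k\, y_1)\sin(\pi k\, y_2)=\cos(\pi k(y_1-y_2))-\cos(\pi k(y_1+y_2))$ applied in every coordinate expands the resulting sum over~$\mathbf{k}$ into $2^{d}$ pieces indexed by subsets $S\subseteq\{1,\dots,d\}$, each piece being a series of the type treated in Lemma~\ref{lemma_riemannApprox_multi}(i)--(iii) with the function $g_{\alpha,|i-j|-1}\in\mathcal{Q}_{\beta_2}$ from Lemma~\ref{lemma_checkConditionsAprroxlemma}. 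When $\mathbf{y}_1=\mathbf{y}_2$ the cosines of the differences collapse to~$1$ and the computation reproduces exactly the expression of Proposition~\ref{prop_autocovOfIncrementsMulti}. When $\mathbf{y}_1\neq\mathbf{y}_2$, part~(iii) of Lemma~\ref{lemma_riemannApprox_multi} with $\delta$ replaced by $\|\mathbf{y}_1-\mathbf{y}_2\|_0$ yields the sharp bound $\mathcal{O}(\Delta_n^{1-\alpha'}\|\mathbf{y}_1-\mathbf{y}_2\|_0^{-(d+1)})$ for the contribution of those $S$ in which every coordinate carries a genuine cosine, while the remaining pieces are absorbed into the $\delta^{-(d+1)}$-term via part~(ii).

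With the kernel identified, I would insert it into the double sum and split the indices into the diagonal $i=j$ and the off-diagonal $i\neq j$. On the diagonal the summand equals $(C')^{2}\Delta_n^{2\alpha'}$ up to the $r_{i,i}$-remainder, with $C'=\sigma^{2}e^{-\|\kappa\bigcdot\mathbf{y}\|_1}\Gamma(1-\alpha')/(2^{d}(\pi\eta)^{d/2}\alpha'\Gamma(d/2))$; summing yields $p(C')^{2}\Delta_n^{2\alpha'}$. Off-diagonal terms contribute $(C')^{2}\Delta_n^{2\alpha'}\varphi(|i-j|)^{2}/4$ with $\varphi(\ell)=2\ell^{\alpha'}-(\ell-1)^{\alpha'}-(\ell+1)^{\alpha'}=\mathcal{O}(\ell^{\alpha'-2})$, so $\sum_{\ell\geq1}\varphi(\ell)^{2}<\infty$ for $\alpha'\in(0,1)$. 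Grouping by lag gives a contribution $p(C')^{2}\Delta_n^{2\alpha'}\sum_{\ell=1}^{\infty}\varphi(\ell)^{2}$ plus a boundary defect $\mathcal{O}(1/p)$ from the finitely many incomplete rows. Defining
\begin{align}
\Upsilon_{\alpha'}:=2+\sum_{\ell=1}^{\infty}\varphi(\ell)^{2}\label{eqn_definingUpsilon}
\end{align}
and observing that the factor $e^{2\|\kappa\bigcdot\mathbf{y}\|_1}(C')^{2}$ cancels the position-dependent exponential, multiplying by $2/(p^{2}\Delta_n^{2\alpha'})$ delivers the leading term claimed in the statement. The errors from Proposition~\ref{prop_autocovOfIncrementsMulti}, specifically $\sum_{i,j} r_{i,j}=\mathcal{O}(1)$ together with the $\mathcal{O}(\Delta_n)$-remainders in each $\Cov$, produce after squaring and summation the announced $\mathcal{O}(\Delta_n^{1/2}\vee\Delta_n^{1-\alpha'}\delta^{-(d+1)}\vee p^{-1})$ correction.

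The principal obstacle is the case $\mathbf{y}_1\neq\mathbf{y}_2$: propagating the spatial decay through the multi-dimensional Fourier machinery of Lemma~\ref{lemma_riemannApprox_multi}(iii) requires that at least one coordinate-wise separation survives, which is precisely the role of $\|\cdot\|_0$. Care must be taken that the $2^{d}$ pieces arising from the product-to-sum expansion contribute either the $\|\mathbf{y}_1-\mathbf{y}_2\|_0^{-(d+1)}$-decay (pieces with full cosine factors) or are controlled by the boundary term $\delta^{-(d+1)}$ (pieces with fewer cosines, via part~(ii)), with no overlap spoiling the rates. A secondary bookkeeping task is verifying that after squaring the lag-decay $\ell^{\alpha'-2}$ is still integrable in $\ell$, which holds throughout the regime $\alpha'\in(0,1)$ and makes $\Upsilon_{\alpha'}$ finite.
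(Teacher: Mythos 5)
Your opening reduction is a clean and valid reorganisation of the paper's argument: applying Isserlis at the level of the field, $\Cov\bigl((\Delta_i\tilde X)^2(\textbf{y}_1),(\Delta_j\tilde X)^2(\textbf{y}_2)\bigr)=2\,\Cov\bigl(\Delta_i\tilde X(\textbf{y}_1),\Delta_j\tilde X(\textbf{y}_2)\bigr)^2$, is equivalent to the paper's application of Isserlis to the coordinate processes, because expanding the squared kernel over modes reproduces exactly the paper's $\sum_{\textbf{k}_1,\textbf{k}_2}e_{\textbf{k}_1}(\textbf{y}_1)e_{\textbf{k}_1}(\textbf{y}_2)e_{\textbf{k}_2}(\textbf{y}_1)e_{\textbf{k}_2}(\textbf{y}_2)D_{\textbf{k}_1,\textbf{k}_2}$ (and it handles the diagonal $\textbf{k}_1=\textbf{k}_2$ automatically, which the paper treats separately and shows to be negligible). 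For $\textbf{y}_1=\textbf{y}_2$ your diagonal/off-diagonal split, the identification $\varphi(\ell)=2\ell^{\alpha'}-(\ell-1)^{\alpha'}-(\ell+1)^{\alpha'}$, and your $\Upsilon_{\alpha'}=2+\sum_{\ell\geq1}\varphi(\ell)^2$ agree with the paper's \eqref{eqn_definingUpsilon} after the index shift $r=\ell-1$, and the summability checks are right.

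The genuine gap is in the case $\textbf{y}_1\neq\textbf{y}_2$. Your plan is to bound the cross-covariance kernel uniformly, $K(i,j):=\Cov(\Delta_i\tilde X(\textbf{y}_1),\Delta_j\tilde X(\textbf{y}_2))=\Oo\bigl(\Delta_n^{\alpha'}\cdot\Delta_n^{1-\alpha'}(\nor{\textbf{y}_1-\textbf{y}_2}_0^{-(d+1)}+\delta^{-(d+1)})\bigr)$, and then square and sum. But $\sum_{i,j=1}^pK(i,j)^2\leq p^2\sup_{i,j}K(i,j)^2$ gives, after normalising by $2/(p^2\Delta_n^{2\alpha'})$, an error of order $\Delta_n^{2-2\alpha'}(\cdots)^2$, which is \emph{larger} than the claimed $\Delta_n^{1-\alpha'}p^{-1}(\cdots)$ by a factor $p\Delta_n^{1-\alpha'}(\cdots)\geq\Delta_n^{-\alpha'}\delta^{-(d+1)}\to\infty$ when $p\asymp n$; this would destroy the decorrelation rate needed downstream in Proposition \ref{prop_cltVolaEstMulti} and Lemma \ref{lemma_assympVarMultiMLRM}. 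To recover the stated rate you must keep the lag structure: use $\sum_{i,j}K(i,j)^2\leq\sup_{i,j}\abs{K(i,j)}\cdot\sum_{i,j}\abs{K(i,j)}$ together with $\sum_{i,j}\abs{K(i,j)}=\Oo(p\Delta_n^{\alpha'})$, which follows from the absolute summability over lags of $\Delta_n^{d/2}\sum_{\textbf{k}}\abs{g_{\alpha,r}(\lambda_\textbf{k}\Delta_n)}$. This is precisely what the paper does in display \eqref{eqn_covEksRemainder}: after expanding the geometric series $(1-e^{-(\lambda_{\textbf{k}_1}+\lambda_{\textbf{k}_2})\Delta_n})^{-1}=\sum_{r\geq0}e^{-r(\lambda_{\textbf{k}_1}+\lambda_{\textbf{k}_2})\Delta_n}$, only \emph{one} of the two factorised $\textbf{k}$-sums is bounded by the cosine-decay estimate from Corollary \ref{corollary_toLemmaRiemannApproxMulti}, while the other is bounded by its absolute sum over $r$, which is $\Oo(1)$ by Lemma \ref{lemma_calcfAlphaDelta}. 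Your write-up omits this asymmetric treatment, and without it the step fails.
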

\begin{proof}
It holds that
\begin{align*}
&\Cov\big(V_{p,\Delta_n}(\textbf{y}_1),V_{p,\Delta_n}(\textbf{y}_2)\big)\\
&=\frac{2e^{\nor{\kappa\bigcdot(\textbf{y}_1+\textbf{y}_2)}_1}}{p^2\Delta_n^{2\alpha'}}\sum_{i,j=1}^p\bigg(\sum_{\textbf{k}_1,\textbf{k}_2\in\N^d}e_{\textbf{k}_1}(\textbf{y}_1)e_{\textbf{k}_1}(\textbf{y}_2)e_{\textbf{k}_2}(\textbf{y}_1)e_{\textbf{k}_2}(\textbf{y}_2)\Cov\Big(\Delta_i\tilde{x}_{\textbf{k}_1}\Delta_i\tilde{x}_{\textbf{k}_2},~\Delta_j\tilde{x}_{\textbf{k}_1}\Delta_j\tilde{x}_{\textbf{k}_2}\Big)\bigg)\\
&=\frac{2e^{\nor{\kappa\bigcdot(\textbf{y}_1+\textbf{y}_2)}_1}}{p\Delta_n^{2\alpha'}}\sum_{\textbf{k}_1,\textbf{k}_2\in\N^d}e_{\textbf{k}_1}(\textbf{y}_1)e_{\textbf{k}_1}(\textbf{y}_2)e_{\textbf{k}_2}(\textbf{y}_1)e_{\textbf{k}_2}(\textbf{y}_2)D_{\textbf{k}_1,\textbf{k}_2},
\end{align*}
where 
\begin{align*}
D_{\textbf{k}_1,\textbf{k}_2}:=\frac{1}{p}\sum_{i,j=1}^p\Cov\Big(\big(\tilde{B}_{i,\textbf{k}_1}+C_{i,\textbf{k}_1}\big)\big(\tilde{B}_{i,\textbf{k}_2}+C_{i,\textbf{k}_2}\big),~\big(\tilde{B}_{j,\textbf{k}_1}+C_{j,\textbf{k}_1}\big)\big(\tilde{B}_{j,\textbf{k}_2}+C_{j,\textbf{k}_2}\big)\Big).
\end{align*}
Consider $(Z_\textbf{k})_{\textbf{k}\in\N^d}$ as independent standard normal distributed random variables, which are independent to $B_{i,\textbf{k}}$. We can express $\tilde{B}_{i,\textbf{k}}$ as:
\begin{align*}
\tilde{B}_{i,\textbf{k}}=B_{i,\textbf{k}}+\frac{1}{(2\lambda_\textbf{k}^{1+\alpha})^{1/2}}\sigma\big( e^{-\lambda_\textbf{k}\Delta_n}-1\big)e^{-\lambda_\textbf{k}(i-1)\Delta_n}Z_\textbf{k}.
\end{align*}
Hence, we derive the following covariance structures:
\begin{align}
\Cov\Big(\tilde{B}_{i,\textbf{k}},C_{j,\textbf{k}}\Big)&=\Cov(B_{i,\textbf{k}},C_{j,\textbf{k}})=\Sigma_{i,j}^{BC,\textbf{k}},\notag \\
\Cov\Big(\tilde{B}_{i,\textbf{k}},\tilde{B}_{j,\textbf{k}}\Big)&=\Cov(B_{i,\textbf{k}},B_{j,\textbf{k}})+\frac{\sigma^2}{2\lambda_\textbf{k}^{1+\alpha}}\big(e^{-\lambda_\textbf{k}\Delta_n}-1\big)^2e^{-\lambda_\textbf{k}(i+j-2)\Delta_n }\Var(Z_\textbf{k})\notag \\
&=\frac{\sigma^2}{2\lambda_\textbf{k}^{1+\alpha}}\big(e^{-\lambda_\textbf{k}\Delta_n}-1\big)^2e^{-\lambda_\textbf{k}\Delta_n\abs{i-j}}=:\tilde{\Sigma}_{i,j}^{B,\textbf{k}},\label{eqn_CovBBtilde}
\end{align}
where we have applied equation \eqref{number:CovBB}. As $\tilde{B}_{i,\textbf{k}}+C_{i,\textbf{k}}$ is centred normally distributed, we can use Isserlis' theorem to deduce that
\begin{align*}
D_{\textbf{k}_1,\textbf{k}_2}&=\frac{1}{p} \sum\limits_{i,j=1}^p
\bigg(\E\Big[\big(\tilde{B}_{i,\textbf{k}_1}+C_{i,\textbf{k}_1}\big)\big(\tilde{B}_{j,\textbf{k}_1}+C_{j,\textbf{k}_1}\big)\Big]
\E\Big[\big(\tilde{B}_{i,\textbf{k}_2}+C_{i,\textbf{k}_2}\big)\big(\tilde{B}_{j,\textbf{k}_2}+C_{j,\textbf{k}_2}\big)\Big]\\
&~~~~~~~+\E\Big[\big(\tilde{B}_{i,\textbf{k}_1}+C_{i,\textbf{k}_1}\big)
\big(\tilde{B}_{j,\textbf{k}_2}+C_{j,\textbf{k}_2}\big)\Big]\E\Big[\big(\tilde{B}_{i,\textbf{k}_2}+C_{i,\textbf{k}_2}\big)\big(\tilde{B}_{j,\textbf{k}_1}+C_{j,\textbf{k}_1}\big)\Big]\bigg).
\end{align*}
For further reading on the Isserlis theorem, we recommend referring to \cite{isserlis1918formula}.
Assume $\textbf{k}_1\neq \textbf{k}_2$, then we have
\begin{align}
D_{\textbf{k}_1,\textbf{k}_2}&=\frac{1}{p} \sum\limits_{i,j=1}^p
\E\bigg[\big(\tilde{B}_{i,\textbf{k}_1}+C_{i,\textbf{k}_1}\big)\big(\tilde{B}_{j,\textbf{k}_1}+C_{j,\textbf{k}_1}\big)\bigg]
\E\bigg[\big(\tilde{B}_{i,\textbf{k}_2}+C_{i,\textbf{k}_2}\big)\big(\tilde{B}_{j,\textbf{k}_2}+C_{j,\textbf{k}_2}\big)\bigg)\bigg]\notag\\
&=\frac{1}{p} \sum\limits_{i,j=1}^p
\Big(\tilde{\Sigma}_{i,j}^{B,\textbf{k}_1}+\Sigma_{i,j}^{BC,\textbf{k}_1}+\Sigma_{j,i}^{BC,\textbf{k}_1}+\Sigma_{i,j}^{C,\textbf{k}_1}\Big)\Big(\tilde{\Sigma}_{i,j}^{B,\textbf{k}_2}+\Sigma_{i,j}^{BC,\textbf{k}_2}+\Sigma_{j,i}^{BC,\textbf{k}_2}+\Sigma_{i,j}^{C,\textbf{k}_2}\Big).\label{eqn_MainCalcDkl}
\end{align}
We calculate each combination separately. To do this, we use the following identity:
\begin{align}
\sum_{i,j=1}^pq^{|i-j|}=2\frac{q^{p+1}-q}{(1-q)^2}+p\frac{1+q}{1-q},\label{eqn_geometricSeriesAbsVal}
\end{align}
for $q\neq 1$. Then, we have
\begin{align*}
\frac{1}{p} \sum\limits_{i,j=1}^p \tilde{\Sigma}_{i,j}^{B,\textbf{k}_1}\tilde{\Sigma}_{i,j}^{B,\textbf{k}_2}&=\sigma^4\frac{\big(e^{-\lambda_{\textbf{k}_1}\Delta_n}-1\big)^2\big(e^{-\lambda_{\textbf{k}_2}
\Delta_n}-1\big)^2}{4p\lambda_{\textbf{k}_1}^{1+\alpha}\lambda_{\textbf{k}_2}^{1+\alpha}}\sum\limits_{i,j=1}^p e^{-(\lambda_{\textbf{k}_1}+\lambda_{\textbf{k}_2})\Delta_n\abs{i-j}}\\
&=\sigma^4\frac{\big(e^{-\lambda_{\textbf{k}_1}\Delta_n}-1\big)^2\big(e^{-\lambda_{\textbf{k}_2}
\Delta_n}-1\big)^2}{4\lambda_{\textbf{k}_1}^{1+\alpha}\lambda_{\textbf{k}_2}^{1+\alpha}}\cdot\frac{1+e^{-(\lambda_{\textbf{k}_1}+\lambda_{\textbf{k}_2})\Delta_n}}{1-e^{-(\lambda_{\textbf{k}_1}+\lambda_{\textbf{k}_2})\Delta_n}}\\
&~~~~~\times\bigg(1+\mathcal{O}\Big(1\wedge \frac{p^{-1}}{1-e^{-(\lambda_{\textbf{k}_1}+\lambda_{\textbf{k}_2})\Delta_n}}\Big)\bigg).
\end{align*}
By utilizing equation \eqref{eqn_covCijk}, we obtain:
\begin{align*}
\frac{1}{p} \sum\limits_{i,j=1}^p \Sigma_{i,j}^{C,\textbf{k}_1}\Sigma_{i,j}^{C,\textbf{k}_2}&=\sigma^4 \frac{\big(1-e^{-2\lambda_{\textbf{k}_1}\Delta_n}\big)\big(1-e^{-2\lambda_{\textbf{k}_2}\Delta_n}\big)}{4\lambda_{\textbf{k}_1}^{1+\alpha}\lambda_{\textbf{k}_2}^{1+\alpha}}.
\end{align*}
Using equation \eqref{number:CovBC} and the identity
\begin{align*}
\sum_{i,j=1}^p\mathbbm{1}_{\{i>j\}}q^{i-j}=\frac{pq}{1-q}+\frac{q-q^{p+1}}{(1-q)^2},
\end{align*}
yields that
\begin{align*}
\frac{1}{p} \sum\limits_{i,j=1}^p \Sigma_{i,j}^{BC,\textbf{k}_1}\Sigma_{i,j}^{BC,\textbf{k}_2}&=\frac{1}{p} \sum\limits_{i,j=1}^p\mathbbm{1}_{\{i>j\}}\sigma^4\frac{\big(e^{-\lambda_{\textbf{k}_1}\Delta_n}-1\big)\big(e^{-\lambda_{\textbf{k}_2}\Delta_n}-1\big)}{4\lambda_{\textbf{k}_1}^{1+\alpha}\lambda_{\textbf{k}_2}^{1+\alpha}}\\
&~~~~~\times e^{-(\lambda_{\textbf{k}_1}+\lambda_{\textbf{k}_2})\Delta_n(i-j)}\big(e^{\lambda_{\textbf{k}_1}\Delta_n}-e^{-\lambda_{\textbf{k}_1}\Delta_n}\big)
\big(e^{\lambda_{\textbf{k}_2}\Delta_n}-e^{-\lambda_{\textbf{k}_2}\Delta_n}\big)\\
&=\sigma^4\frac{\big(e^{-\lambda_{\textbf{k}_1}\Delta_n}-1\big)\big(e^{-\lambda_{\textbf{k}_2}\Delta_n}-1\big)}{4\lambda_{\textbf{k}_1}^{1+\alpha}\lambda_{\textbf{k}_2}^{1+\alpha}}\cdot
\frac{(1-e^{-2\lambda_{\textbf{k}_1}\Delta_n})(1-e^{-2\lambda_{\textbf{k}_2}\Delta_n})}{1-e^{-(\lambda_{\textbf{k}_1}+\lambda_{\textbf{k}_2})\Delta_n}}\\
&~~~~~\times\bigg(1+\mathcal{O}\Big(1\wedge \frac{p^{-1}}{1-e^{-(\lambda_{\textbf{k}_1}+\lambda_{\textbf{k}_2})\Delta_n}}\Big)\bigg).
\end{align*}
The same calculations apply to $\Sigma_{j,i}^{BC,\textbf{k}_1}\Sigma_{j,i}^{BC,\textbf{k}_2}$. As for the cross-terms, we obtain:
\begin{align*}
&\frac{1}{p} \sum\limits_{i,j=1}^p \tilde{\Sigma}_{i,j}^{B,\textbf{k}_1}\big(\Sigma_{i,j}^{BC,\textbf{k}_2}+\Sigma_{j,i}^{BC,\textbf{k}_2}\big) \\
&=\sigma^4\frac{\big(e^{-\lambda_{\textbf{k}_1}\Delta_n}-1\big)^2\big(e^{-\lambda_{\textbf{k}_2}\Delta_n}-1\big)}{4\lambda_{\textbf{k}_1}^{1+\alpha}\lambda_{\textbf{k}_2}^{1+\alpha}}
\big(e^{\lambda_{\textbf{k}_2}\Delta_n}-e^{-\lambda_{\textbf{k}_2}\Delta_n}\big) \frac{1}{p} \sum\limits_{i,j=1}^p\mathbbm{1}_{\{i>j\}} e^{-(\lambda_{\textbf{k}_1}+\lambda_{\textbf{k}_2})\Delta_n(i-j)}\\
&~~~~~+ \sigma^4\frac{\big(e^{-\lambda_{\textbf{k}_1}\Delta_n}-1\big)^2\big(e^{-\lambda_{\textbf{k}_2}\Delta_n}-1\big)}{4\lambda_{\textbf{k}_1}^{1+\alpha}\lambda_{\textbf{k}_2}^{1+\alpha}}
\big(e^{\lambda_{\textbf{k}_2}\Delta_n}-e^{-\lambda_{\textbf{k}_2}\Delta_n}\big) \frac{1}{p} \sum\limits_{i,j=1}^p\mathbbm{1}_{\{j>i\}} e^{-(\lambda_{\textbf{k}_1}+\lambda_{\textbf{k}_2})\Delta_n(j-i)}\\
&=\sigma^4\frac{\big(e^{-\lambda_{\textbf{k}_1}\Delta_n}-1\big)^2\big(e^{-\lambda_{\textbf{k}_2}\Delta_n}-1\big)}{2\lambda_{\textbf{k}_1}^{1+\alpha}\lambda_{\textbf{k}_2}^{1+\alpha}}
e^{-\lambda_{\textbf{k}_1}\Delta_n}\frac{1-e^{-2\lambda_{\textbf{k}_2}\Delta_n}}{1-e^{-(\lambda_{\textbf{k}_1}+\lambda_{\textbf{k}_2})\Delta_n}}\bigg(1+\mathcal{O}\Big(1\wedge \frac{p^{-1}}{1-e^{-(\lambda_{\textbf{k}_1}+\lambda_{\textbf{k}_2})\Delta_n}}\Big)\bigg),
\end{align*}
and
\begin{align*}
\frac{1}{p} \sum\limits_{i,j=1}^p \tilde{\Sigma}_{i,j}^{B,\textbf{k}_1}\Sigma_{i,j}^{C,\textbf{k}_2}&= \frac{1}{p} \sum\limits_{i,j=1}^p\frac{\sigma^2}{2\lambda_{\textbf{k}_1}^{1+\alpha}}\big(e^{-\lambda_{\textbf{k}_1}\Delta_n}-1\big)^2e^{-\lambda_{\textbf{k}_1}\Delta_n\abs{i-j}}\mathbbm{1}_{\{i=j\}}\sigma^2 \frac{1-e^{-2\lambda_{\textbf{k}_2}\Delta_n}}{2\lambda_{\textbf{k}_2}^{1+\alpha}}\\
&=\sigma^4\frac{\big(e^{-\lambda_{\textbf{k}_1}\Delta_n}-1\big)^2\big(1-e^{-2\lambda_{\textbf{k}_2}\Delta_n}\big)}{4\lambda_{\textbf{k}_1}^{1+\alpha}\lambda_{\textbf{k}_2}^{1+\alpha}}.
\end{align*}
Furthermore, the following cross-terms vanish:
\begin{align*}
\frac{1}{p} \sum\limits_{i,j=1}^p \Sigma_{i,j}^{BC,\textbf{k}_1}\Sigma_{i,j}^{C,\textbf{k}_2}=\frac{1}{p} \sum\limits_{i,j=1}^p \Sigma_{j,i}^{BC,\textbf{k}_1}\Sigma_{i,j}^{C,\textbf{k}_2}=\frac{1}{p} \sum\limits_{i,j=1}^p \Sigma_{i,j}^{BC,\textbf{k}_1}\Sigma_{j,i}^{BC,\textbf{k}_2}=0.
\end{align*}
Inserting the auxiliary calculations into equation \eqref{eqn_MainCalcDkl} results in:
\begin{align*}
D_{\textbf{k}_1,\textbf{k}_2}&=\frac{1}{p} \sum\limits_{i,j=1}^p
\Big(\tilde{\Sigma}_{i,j}^{B,\textbf{k}_1}+\Sigma_{i,j}^{BC,\textbf{k}_1}+\Sigma_{j,i}^{BC,\textbf{k}_1}+\Sigma_{i,j}^{C,\textbf{k}_1}\Big)
\Big(\tilde{\Sigma}_{i,j}^{B,\textbf{k}_2}+\Sigma_{i,j}^{BC,\textbf{k}_2}+\Sigma_{j,i}^{BC,\textbf{k}_2}+\Sigma_{i,j}^{C,\textbf{k}_2}\Big) \\
&=\frac{1}{p} \sum\limits_{i,j=1}^p\Big(\tilde{\Sigma}_{i,j}^{B,\textbf{k}_1}\tilde{\Sigma}_{i,j}^{B,\textbf{k}_2}+\tilde{\Sigma}_{i,j}^{B,\textbf{k}_1}\big(\Sigma_{i,j}^{BC,\textbf{k}_2}+\Sigma_{j,i}^{BC,\textbf{k}_2}\big)+\tilde{\Sigma}_{i,j}^{B,\textbf{k}_1}\Sigma_{i,j}^{C,\textbf{k}_2}+\big(\Sigma_{i,j}^{BC,\textbf{k}_1}+\Sigma_{j,i}^{BC,\textbf{k}_1}\big)\tilde{\Sigma}_{i,j}^{B,\textbf{k}_2}\\
&~~~~~+\Sigma_{i,j}^{BC,\textbf{k}_1}\Sigma_{i,j}^{BC,\textbf{k}_2}+\Sigma_{j,i}^{BC,\textbf{k}_1}\Sigma_{j,i}^{BC,\textbf{k}_2}+\Sigma_{i,j}^{C,\textbf{k}_1}\tilde{\Sigma}_{i,j}^{B,\textbf{k}_2}+\Sigma_{i,j}^{C,\textbf{k}_1}\Sigma_{i,j}^{C,\textbf{k}_2}\Big)\\
&=\sigma^4\bigg(
\frac{\big(e^{-\lambda_{\textbf{k}_1}\Delta_n}-1\big)^2\big(e^{-\lambda_{\textbf{k}_2}
\Delta_n}-1\big)^2}{4\lambda_{\textbf{k}_1}^{1+\alpha}\lambda_{\textbf{k}_2}^{1+\alpha}}\Big(\frac{1+e^{-(\lambda_{\textbf{k}_1}+\lambda_{\textbf{k}_2})\Delta_n}}{1-e^{-(\lambda_{\textbf{k}_1}+\lambda_{\textbf{k}_2})\Delta_n}}\\
&~~~~~+\frac{e^{-\lambda_{\textbf{k}_1}\Delta_n}(1-e^{-2\lambda_{\textbf{k}_2}\Delta_n})}{1-e^{-(\lambda_{\textbf{k}_1}+\lambda_{\textbf{k}_2})\Delta_n}}\cdot\frac{2}{e^{-\lambda_{\textbf{k}_2}\Delta_n}-1}+\frac{e^{-\lambda_{\textbf{k}_2}\Delta_n}(1-e^{-2\lambda_{\textbf{k}_1}\Delta_n})}{1-e^{-(\lambda_{\textbf{k}_1}+\lambda_{\textbf{k}_2})\Delta_n}}\cdot\frac{2}{e^{-\lambda_{\textbf{k}_1}\Delta_n}-1}\\
&~~~~~+\frac{(1-e^{-2\lambda_{\textbf{k}_1}\Delta_n})(1-e^{-2\lambda_{\textbf{k}_2}\Delta_n})}{1-e^{-(\lambda_{\textbf{k}_1}+\lambda_{\textbf{k}_2})\Delta_n}}\cdot \frac{2}{\big(e^{-\lambda_{\textbf{k}_1}\Delta_n}-1\big)\big(e^{-\lambda_{\textbf{k}_2}\Delta_n}-1\big)}\Big)\\
&~~~~~+\frac{\big(e^{-\lambda_{\textbf{k}_1}\Delta_n}-1\big)^2\big(1-e^{-2\lambda_{\textbf{k}_2}\Delta_n}\big)}{4\lambda_{\textbf{k}_1}^{1+\alpha}\lambda_{\textbf{k}_2}^{1+\alpha}}+\frac{\big(e^{-\lambda_{\textbf{k}_2}\Delta_n}-1\big)^2\big(1-e^{-2\lambda_{\textbf{k}_1}\Delta_n}\big)}{4\lambda_{\textbf{k}_1}^{1+\alpha}\lambda_{\textbf{k}_2}^{1+\alpha}}\\
&~~~~~+\frac{\big(1-e^{-2\lambda_{\textbf{k}_1}\Delta_n}\big)\big(1-e^{-2\lambda_{\textbf{k}_2}\Delta_n}\big)}{4\lambda_{\textbf{k}_1}^{1+\alpha}\lambda_{\textbf{k}_2}^{1+\alpha}}
\bigg)\bigg(1+\mathcal{O}\Big(1\wedge \frac{p^{-1}}{1-e^{-(\lambda_{\textbf{k}_1}+\lambda_{\textbf{k}_2})\Delta_n}}\Big)\bigg).
\end{align*}
Using the identity $(e^{2x}-1)/(e^x-1)=e^x+1$, we have
\begin{align*}
D_{\textbf{k}_1,\textbf{k}_2}
&=\sigma^4\bigg(\frac{\big(e^{-\lambda_{\textbf{k}_1}\Delta_n}-1\big)^2\big(e^{-\lambda_{\textbf{k}_2}
\Delta_n}-1\big)^2}{4\lambda_{\textbf{k}_1}^{1+\alpha}\lambda_{\textbf{k}_2}^{1+\alpha}}\cdot \frac{3-e^{-(\lambda_{\textbf{k}_1}+\lambda_{\textbf{k}_2})\Delta_n}}{1-e^{-(\lambda_{\textbf{k}_1}+\lambda_{\textbf{k}_2})\Delta_n}}\\
&~~~~~+\frac{\big(e^{-\lambda_{\textbf{k}_1}\Delta_n}-1\big)^2\big(1-e^{-2\lambda_{\textbf{k}_2}\Delta_n}\big)}{4\lambda_{\textbf{k}_1}^{1+\alpha}\lambda_{\textbf{k}_2}^{1+\alpha}}+\frac{\big(e^{-\lambda_{\textbf{k}_2}\Delta_n}-1\big)^2\big(1-e^{-2\lambda_{\textbf{k}_1}\Delta_n}\big)}{4\lambda_{\textbf{k}_1}^{1+\alpha}\lambda_{\textbf{k}_2}^{1+\alpha}}\\
&~~~~~+\frac{\big(1-e^{-2\lambda_{\textbf{k}_1}\Delta_n}\big)\big(1-e^{-2\lambda_{\textbf{k}_2}\Delta_n}\big)}{4\lambda_{\textbf{k}_1}^{1+\alpha}\lambda_{\textbf{k}_2}^{1+\alpha}}
\bigg)\times\bigg(1+\mathcal{O}\Big(1\wedge \frac{p^{-1}}{1-e^{-(\lambda_{\textbf{k}_1}+\lambda_{\textbf{k}_2})\Delta_n}}\Big)\bigg)\\
&=\sigma^4\bigg(\frac{\big(e^{-\lambda_{\textbf{k}_1}\Delta_n}-1\big)^2\big(e^{-\lambda_{\textbf{k}_2}
\Delta_n}-1\big)^2}{4\lambda_{\textbf{k}_1}^{1+\alpha}\lambda_{\textbf{k}_2}^{1+\alpha}}\cdot \frac{4-2e^{-(\lambda_{\textbf{k}_1}+\lambda_{\textbf{k}_2})\Delta_n}}{1-e^{-(\lambda_{\textbf{k}_1}+\lambda_{\textbf{k}_2})\Delta_n}}+\frac{\big(1-e^{-\lambda_{\textbf{k}_1}\Delta_n}\big)\big(1-e^{-\lambda_{\textbf{k}_2}\Delta_n}\big)}{4\lambda_{\textbf{k}_1}^{1+\alpha}\lambda_{\textbf{k}_2}^{1+\alpha}}\\
&~~~~~\times2\Big(2-\big(1-e^{-\lambda_{\textbf{k}_1}\Delta_n}\big)\big(1-e^{-\lambda_{\textbf{k}_2}\Delta_n}\big)\Big)\bigg)\bigg(1+\mathcal{O}\Big(1\wedge \frac{p^{-1}}{1-e^{-(\lambda_{\textbf{k}_1}+\lambda_{\textbf{k}_2})\Delta_n}}\Big)\bigg)\\
&=\sigma^4\bigg(\frac{\big(1-e^{-\lambda_{\textbf{k}_1}\Delta_n}\big)^2\big(1-e^{-\lambda_{\textbf{k}_2}
\Delta_n}\big)^2}{2\lambda_{\textbf{k}_1}^{1+\alpha}\lambda_{\textbf{k}_2}^{1+\alpha}}\cdot \frac{1}{1-e^{-(\lambda_{\textbf{k}_1}+\lambda_{\textbf{k}_2})\Delta_n}}+\frac{\big(1-e^{-\lambda_{\textbf{k}_1}\Delta_n}\big)\big(1-e^{-\lambda_{\textbf{k}_2}\Delta_n}\big)}{\lambda_{\textbf{k}_1}^{1+\alpha}\lambda_{\textbf{k}_2}^{1+\alpha}}\bigg)\\
&~~~~~\times\bigg(1+\mathcal{O}\Big(1\wedge \frac{p^{-1}}{1-e^{-(\lambda_{\textbf{k}_1}+\lambda_{\textbf{k}_2})\Delta_n}}\Big)\bigg).
\end{align*}
Recalling the calculations of the covariance yields:
\begin{align*}
&\Cov\big(V_{p,\Delta_n}(\textbf{y}_1),V_{p,\Delta_n}(\textbf{y}_2)\big)\\
&=\frac{2e^{\nor{\kappa\bigcdot(\textbf{y}_1+\textbf{y}_2)}_1}\sigma^4}{p\Delta_n^{2\alpha'}}\sum_{\substack{\textbf{k}_1,\textbf{k}_2\in\N^d \\ \textbf{k}_1 \neq \textbf{k}_2}}e_{\textbf{k}_1}(\textbf{y}_1)e_{\textbf{k}_1}(\textbf{y}_2)e_{\textbf{k}_2}(\textbf{y}_1)e_{\textbf{k}_2}(\textbf{y}_2)\bar{D}_{\textbf{k}_1,\textbf{k}_2}\bigg(1+\mathcal{O}\Big(1\wedge \frac{p^{-1}}{1-e^{-(\lambda_{\textbf{k}_1}+\lambda_{\textbf{k}_2})\Delta_n}}\Big)\bigg)\\
&~~~~~+\frac{2e^{\nor{\kappa\bigcdot(\textbf{y}_1+\textbf{y}_2)}_1}}{p\Delta_n^{2\alpha'}}\sum_{\textbf{k}\in\N^d}e^2_{\textbf{k}}(\textbf{y}_1)e^2_{\textbf{k}}(\textbf{y}_2)D_{\textbf{k},\textbf{k}} ,
\end{align*}
where we define:
\begin{align}
\bar{D}_{\textbf{k}_1,\textbf{k}_2}:=\frac{\big(1-e^{-\lambda_{\textbf{k}_1}\Delta_n}\big)^2\big(1-e^{-\lambda_{\textbf{k}_2}
\Delta_n}\big)^2}{2\lambda_{\textbf{k}_1}^{1+\alpha}\lambda_{\textbf{k}_2}^{1+\alpha}}\cdot \frac{1}{1-e^{-(\lambda_{\textbf{k}_1}+\lambda_{\textbf{k}_2})\Delta_n}}+\frac{\big(1-e^{-\lambda_{\textbf{k}_1}\Delta_n}\big)\big(1-e^{-\lambda_{\textbf{k}_2}\Delta_n}\big)}{\lambda_{\textbf{k}_1}^{1+\alpha}\lambda_{\textbf{k}_2}^{1+\alpha}}.\label{eqn_barDk1k2ref}
\end{align}
Regarding the remainder, we utilize the inequality $(1-e^{-(x+y)})^{-1}\leq (1-e^{-x})^{-1/2}(1-e^{-y})^{-1/2}$. For a sufficiently large $p$, we deduce that
\begin{align*}
\frac{1}{p^2\Delta_n^{2\alpha'}}\sum\limits_{\substack{ \textbf{k}_1,\textbf{k}_2\in\N^d\\ \textbf{k}_1\neq \textbf{k}_2}} \frac{\overline{D}_{\textbf{k}_1,\textbf{k}_2}}{1-e^{-(\lambda_{\textbf{k}_1}+\lambda_{\textbf{k}_2})\Delta_n}}
&\leq \frac{3}{p^2\Delta_n^{2\alpha'}}\bigg(\Delta_n^{1+\alpha}\sum\limits_{\textbf{k}\in\N^d}\frac{\big(1-e^{-\lambda_{\textbf{k}}\Delta_n}\big)^{1/2}}{2(\lambda_{\textbf{k}}\Delta_n)^{1+\alpha}}\bigg)^2\\
&=\frac{3}{p^2}\bigg(\Delta_n^{d/2}\sum\limits_{\textbf{k}\in\N^d}\frac{\big(1-e^{-\lambda_{\textbf{k}}\Delta_n}\big)^{1/2}}{2(\lambda_{\textbf{k}}\Delta_n)^{1+\alpha}}\bigg)^2.
\end{align*}
Thanks to Lemma \ref{lemma_riemannApprox_multi}, we obtain the convergence of the series, such that
\begin{align*}
\frac{1}{p^2\Delta_n^{2\alpha'}}\sum\limits_{\substack{ \textbf{k}_1,\textbf{k}_2\in\N^d\\ \textbf{k}_1\neq \textbf{k}_2}} \frac{\overline{D}_{\textbf{k}_1,\textbf{k}_2}}{1-e^{-(\lambda_{\textbf{k}_1}+\lambda_{\textbf{k}_2})\Delta_n}}=\Oo\bigg(\frac{1}{p^2}\Big(\int_0^\infty \frac{\sqrt{1-e^{-x}}}{x^{1+\alpha'}}\diff x\Big)^2\bigg)=\Oo(p^{-2}).
\end{align*} 
For small $p$ we always obtain a bound of order $\Oo(p^{-1})$, and obtain:
\begin{align*}
&\frac{2e^{\nor{\kappa\bigcdot(\textbf{y}_1+\textbf{y}_2)}_1}\sigma^4}{p\Delta_n^{2\alpha'}}\sum_{\substack{\textbf{k}_1,\textbf{k}_2\in\N^d \\ \textbf{k}_1\neq \textbf{k}_2}}e_{\textbf{k}_1}(\textbf{y}_1)e_{\textbf{k}_1}(\textbf{y}_2)e_{\textbf{k}_2}(\textbf{y}_1)e_{\textbf{k}_2}(\textbf{y}_2)\bar{D}_{\textbf{k}_1,\textbf{k}_2}\cdot\mathcal{O}\bigg(1\wedge \frac{p^{-1}}{1-e^{-(\lambda_{\textbf{k}_1}+\lambda_{\textbf{k}_2})\Delta_n}}\bigg)\\
&=\Oo\bigg(\frac{1}{p}\Big(1\wedge \frac{1}{p}\Big)\bigg).
\end{align*}
Thus, we find:
\begin{align*}
\Cov\big(V_{p,\Delta_n}(\textbf{y}_1),V_{p,\Delta_n}(\textbf{y}_2)\big)&=\frac{2e^{\nor{\kappa\bigcdot(\textbf{y}_1+\textbf{y}_2)}_1}\sigma^4}{p\Delta_n^{2\alpha'}}\sum_{\substack{\textbf{k}_1,\textbf{k}_2\in\N^d \\ \textbf{k}_1 \neq \textbf{k}_2}}e_{\textbf{k}_1}(\textbf{y}_1)e_{\textbf{k}_1}(\textbf{y}_2)e_{\textbf{k}_2}(\textbf{y}_1)e_{\textbf{k}_2}(\textbf{y}_2)\bar{D}_{\textbf{k}_1,\textbf{k}_2}\\
&~~~~~+\frac{2e^{\nor{\kappa\bigcdot(\textbf{y}_1+\textbf{y}_2)}_1}}{p\Delta_n^{2\alpha'}}\sum_{\textbf{k}\in\N^d}e^2_{\textbf{k}}(\textbf{y}_1)e^2_{\textbf{k}}(\textbf{y}_2)D_{\textbf{k},\textbf{k}}+\Oo\bigg(\frac{1}{p}\Big(1\wedge \frac{1}{p}\Big)\bigg).
\end{align*}
For $\textbf{k}_1=\textbf{k}_2=\textbf{k}$ we have
\begin{align*}
D_{\textbf{k},\textbf{k}}&=\frac{1}{p} \Bigg(\sum\limits_{i,j=1}^p\E\bigg[\big(\tilde{B}_{i,\textbf{k}}+C_{i,\textbf{k}}\big)
\big(\tilde{B}_{i,\textbf{k}}+C_{i,\textbf{k}}\big)\big(\tilde{B}_{j,\textbf{k}}+C_{j,\textbf{k}}\big)\big(\tilde{B}_{j,\textbf{k}}+C_{j,\textbf{k}}\big)\bigg)\bigg]\\
&~~~~~~~-\E\bigg[\big(\tilde{B}_{i,\textbf{k}}+C_{i,\textbf{k}}\big)
\big(\tilde{B}_{i,\textbf{k}}+C_{i,\textbf{k}}\big)\bigg]\E\bigg[\big(\tilde{B}_{j,\textbf{k}}+C_{j,\textbf{k}}\big)\big(\tilde{B}_{j,\textbf{k}}+C_{j,\textbf{k}}\big)\bigg)\bigg]\Bigg)\\
&\leq \frac{4}{p} \sum\limits_{i,j=1}^p \Big(\tilde{\Sigma}_{i,j}^{B,\textbf{k}}\Big)^2+2\Big(\Sigma_{i,j}^{BC,\textbf{k}}\Big)^2+\Big(\Sigma_{i,j}^{C,\textbf{k}}\Big)^2.
\end{align*}
Calculating the covariance terms results in:
\begin{align*}
\frac{1}{p} \sum\limits_{i,j=1}^p \tilde{\Sigma}_{i,j}^{B,\textbf{k}}\tilde{\Sigma}_{i,j}^{B,\textbf{k}}&=\sigma^4\frac{\big(1-e^{-\lambda_\textbf{k}\Delta_n}\big)^4}{4\lambda_\textbf{k}^{2(1+\alpha)}}\frac{1+e^{-2\lambda_\textbf{k}\Delta_n}}{1-e^{-2\lambda_\textbf{k}\Delta_n}}\bigg(1+\mathcal{O}\Big(1\wedge \frac{p^{-1}}{1-e^{-2\lambda_\textbf{k}\Delta_n}}\Big)\bigg), \\
\frac{1}{p} \sum\limits_{i,j=1}^p \Sigma_{i,j}^{BC,\textbf{k}}\Sigma_{i,j}^{BC,\textbf{k}}&=\sigma^4\frac{\big(1-e^{-\lambda_\textbf{k}\Delta_n}\big)^2}{4\lambda_\textbf{k}^{2(1+\alpha)}}\cdot\frac{(1-e^{-2\lambda_\textbf{k}\Delta_n})^2}{1-e^{-2\lambda_\textbf{k}\Delta_n}}\bigg(1+\mathcal{O}\Big(1\wedge \frac{p^{-1}}{1-e^{-2\lambda_\textbf{k}\Delta_n}}\Big)\bigg),\\
\frac{1}{p} \sum\limits_{i,j=1}^p \Sigma_{i,j}^{C,\textbf{k}}\Sigma_{i,j}^{C,\textbf{k}}
&=\sigma^4 \frac{\big(1-e^{-2\lambda_\textbf{k}\Delta_n}\big)^2}{4\lambda_\textbf{k}^{2(1+\alpha)}},
\end{align*}
where we used analogous steps as for $\textbf{k}_1\neq \textbf{k}_2$. For $\textbf{k}_1=\textbf{k}_2=\textbf{k}$ we derive that
\begin{align*}
D_{\textbf{k},\textbf{k}}&\leq \sigma^4\Bigg(\frac{\big(1-e^{-\lambda_\textbf{k}\Delta_n}\big)^4}{\lambda_\textbf{k}^{2(1+\alpha)}}\frac{1+e^{-2\lambda_\textbf{k}\Delta_n}}{1-e^{-2\lambda_\textbf{k}\Delta_n}}\\
&~~~~~ +2\frac{\big(1-e^{-\lambda_\textbf{k}\Delta_n}\big)^2}{\lambda_\textbf{k}^{2(1+\alpha)}}\big(1-e^{-2\lambda_\textbf{k}\Delta_n}\big)+ \frac{\big(1-e^{-2\lambda_\textbf{k}\Delta_n}\big)^2}{\lambda_\textbf{k}^{2(1+\alpha)}} \Bigg)\bigg(1+\mathcal{O}\Big(1\wedge \frac{p^{-1}}{1-e^{-2\lambda_\textbf{k}\Delta_n}}\Big)\bigg),
\end{align*}
where we define:
\begin{align*}
\overline{D}_{\textbf{k},\textbf{k}}:=\frac{\big(1-e^{-\lambda_\textbf{k}\Delta_n}\big)^4}{\lambda_\textbf{k}^{2(1+\alpha)}}\frac{1+e^{-2\lambda_\textbf{k}\Delta_n}}{1-e^{-2\lambda_\textbf{k}\Delta_n}} +2\frac{\big(1-e^{-\lambda_\textbf{k}\Delta_n}\big)^2}{\lambda_\textbf{k}^{2(1+\alpha)}}\big(1-e^{-2\lambda_\textbf{k}\Delta_n}\big)+\frac{\big(1-e^{-2\lambda_\textbf{k}\Delta_n}\big)^2}{\lambda_\textbf{k}^{2(1+\alpha)}}.
\end{align*}
We demonstrate that $D_{\textbf{k},\textbf{k}}$ is negligible, as can be seen by the following:
\begin{align}
\frac{1}{p\Delta_n^{2\alpha'}}\sum\limits_{\textbf{k}\in\N^d}\overline{D}_{\textbf{k},\textbf{k}}&=\frac{1}{p\Delta_n^{2\alpha'}}\sum\limits_{\textbf{k}\in\N^d} \frac{\big(1-e^{-2\lambda_\textbf{k}\Delta_n}\big)^2}{\lambda_\textbf{k}^{2(1+\alpha)}}\bigg(\frac{\big(1-e^{-\lambda_\textbf{k}\Delta_n}\big)^4\big(1+e^{-2\lambda_\textbf{k}\Delta_n}\big)}{\big(1-e^{-2\lambda_\textbf{k}\Delta_n}\big)^3}+2\frac{\big(1-e^{-\lambda_\textbf{k}\Delta_n}\big)^2}{1-e^{-2\lambda_\textbf{k}\Delta_n}}+1\bigg)\notag\\
&\leq \frac{4}{p\Delta_n^{2\alpha'}}\sum\limits_{\textbf{k}\in\N^d} \frac{\big(1-e^{-2\lambda_\textbf{k}\Delta_n}\big)^2}{\lambda_\textbf{k}^{2(1+\alpha)}}\notag\\
&=\frac{4\Delta_n^{d/2}}{p}\Delta_n^{d/2}\sum_{\textbf{k}\in\N^d}\bigg(\frac{1-e^{-2\lambda_\textbf{k}\Delta_n}}{(\lambda_\textbf{k}\Delta_n)^{1+\alpha}}\bigg)^2=
\Oo(p^{-1}\Delta_n^{2(1-\alpha')}),\label{eqn_negelct_kkTerminCov}
\end{align}
where we can use analogous steps as in Lemma \ref{lemma_checkConditionsAprroxlemma} to show that
\begin{align*}
\bigg(\frac{1-e^{-2x}}{x^{1+\alpha}}\bigg)^2=f_\alpha^2(x)\in \mathcal{Q}_\beta,~~~~~\text{with }\beta=\big(4\alpha,1+4\alpha,2+4\alpha\big).
\end{align*}
Hence, we have
\begin{align*}
\Cov\big(V_{p,\Delta_n}(\textbf{y}_1),V_{p,\Delta_n}(\textbf{y}_2)\big)
&=\frac{2\sigma^4e^{\nor{\kappa\bigcdot(\textbf{y}_1+\textbf{y}_2)}_1}}{p\Delta_n^{2\alpha'}}\sum_{\substack{\textbf{k}_1,\textbf{k}_2\in\N^d\\\textbf{k}_1\neq \textbf{k}_2}}e_{\textbf{k}_1}(\textbf{y}_1)e_{\textbf{k}_1}(\textbf{y}_2)e_{\textbf{k}_2}(\textbf{y}_1)e_{\textbf{k}_2}(\textbf{y}_2)\bar{D}_{\textbf{k}_1,\textbf{k}_2} \\
&~~~~~+\mathcal{O}\bigg(\frac{1}{p}\Big(\Delta_n^{2(1-\alpha')}+\frac{1}{p}\wedge1\Big)\bigg).
\end{align*}
We can represent the term $\bar{D}_{\textbf{k}_1,\textbf{k}_2}$ from equation \eqref{eqn_barDk1k2ref} as:
\begin{align*}
\bar{D}_{\textbf{k}_1,\textbf{k}_2}=\frac{\big(1-e^{-\lambda_{\textbf{k}_1}\Delta_n}\big)^2\big(1-e^{-\lambda_{\textbf{k}_2}
\Delta_n}\big)^2}{2\lambda_{\textbf{k}_1}^{1+\alpha}\lambda_{\textbf{k}_2}^{1+\alpha}}\sum_{r=0}^\infty e^{-r(\lambda_{\textbf{k}_1}+\lambda_{\textbf{k}_2})\Delta_n}+\frac{\big(1-e^{-\lambda_{\textbf{k}_1}\Delta_n}\big)\big(1-e^{-\lambda_{\textbf{k}_2}\Delta_n}\big)}{\lambda_{\textbf{k}_1}^{1+\alpha}\lambda_{\textbf{k}_2}^{1+\alpha}},
\end{align*}
and decompose as follows:
\begin{align*}
\bar{D}_{\textbf{k}_1,\textbf{k}_2}^1&:=\sum_{r=0}^\infty  \frac{\big(1-e^{-\lambda_{\textbf{k}_1}\Delta_n}\big)^2\big(1-e^{-\lambda_{\textbf{k}_2}
\Delta_n}\big)^2}{2\lambda_{\textbf{k}_1}^{1+\alpha}\lambda_{\textbf{k}_2}^{1+\alpha}}e^{-r(\lambda_{\textbf{k}_1}+\lambda_{\textbf{k}_2})\Delta_n},\\
\bar{D}_{\textbf{k}_1,\textbf{k}_2}^2&:=\frac{\big(1-e^{-\lambda_{\textbf{k}_1}\Delta_n}\big)\big(1-e^{-\lambda_{\textbf{k}_2}\Delta_n}\big)}{\lambda_{\textbf{k}_1}^{1+\alpha}\lambda_{\textbf{k}_2}^{1+\alpha}}.
\end{align*}
Assume $\textbf{y}_1\neq \textbf{y}_2$, then we have 
\begin{align}
e_{\textbf{k}}(\textbf{y}_1)e_{\textbf{k}}(\textbf{y}_2)
&=e^{-\nor{\kappa\bigcdot (\textbf{y}_1+\textbf{y}_2)}_1}\prod_{l=1}^d\Big(\cos\big(\pi k_l(y^{(1)}_l-y^{(2)}_l)\big)-\cos\big(\pi k_l(y^{(1)}_l+y^{(2)}_l)\big)\Big).\label{eqn_ekDecompInCosine}
\end{align}
Let $x_l^{(1)},x_l^{(2)}\in\{(y^{(1)}_l-y^{(2)}_l)/2,(y^{(1)}_l+y^{(2)}_l)/2\}$, then we find: 
\begin{align*}
&\frac{1}{p\Delta_n^{2\alpha'}}\sum_{\textbf{k}_1,\textbf{k}_2\in\N^d}\bar{D}_{\textbf{k}_1,\textbf{k}_2}^1\prod_{l=1}^d  \cos(2\pi k^{(1)}_lx_l^{(1)})\cos(2\pi k^{(2)}_lx_l^{(2)})\\
&=\frac{2}{p}\sum_{r=0}^\infty\bigg(\Delta_n^{d/2}\sum_{\textbf{k}_1\in\N^d}g_{\alpha,r}(\lambda_{\textbf{k}_1}\Delta_n)\prod_{l=1}^d\cos(2\pi k^{(1)}_lx_l^{(1)})\bigg)\bigg(\Delta_n^{d/2}\sum_{\textbf{k}_2\in\N^d}g_{\alpha,r}(\lambda_{\textbf{k}_2}\Delta_n)\prod_{l=1}^d\cos(2\pi k^{(2)}_lx_l^{(2)})\bigg).
\end{align*}
Note, that $\textbf{y}_1\neq \textbf{y}_2$ only implies that one coordinate $y_l^{(1)}\neq y_l^{(2)}$ differs. 
To analyse the order of one of the series in the last display, we can utilize Corollary \ref{corollary_toLemmaRiemannApproxMulti} (ii) and (iii) on the function $g_{\alpha,\tau}\in\mathcal{Q}_{(2\alpha,2(1+\alpha),2(1+\alpha))}$ from display \eqref{equation_functionsAlphaandTau}, which gives the following:
\begin{align*}
\Delta_n^{d/2}\sum_{\textbf{k}_2\in\N^d}g_{\alpha,r}(\lambda_{\textbf{k}_2}\Delta_n)\prod_{l=1}^d\cos(2\pi k^{(2)}_lx_l^{(2)})&=\Oo\bigg(\frac{\Delta_n^{1-\alpha'}}{\nor{\textbf{y}_1-\textbf{y}_2}_0^{d+1}}+\frac{\Delta_n^{1-\alpha'}}{\delta^{d+1}}\bigg).
\end{align*}
Here, we considered the case when $\textbf{y}_1\neq \textbf{y}_2$ differing in every component, i.e., we used the order from Lemma \ref{corollary_toLemmaRiemannApproxMulti} (iii) and took into account that $x_l$ can exceed and fall below the limit of $1-\delta$ and $\delta$, respectively, by inserting the bounds $\nor{\textbf{y}_1-\textbf{y}_2}_0$ and $\delta$. Hence, we have 
\begin{align}
&\frac{2}{p}\sum_{r=0}^\infty\bigg(\Delta_n^{d/2}\sum_{\textbf{k}_1\in\N^d}g_{\alpha,r}(\lambda_{\textbf{k}_1}\Delta_n)\prod_{l=1}^d\cos(2\pi k^{(1)}_lx_l^{(1)})\bigg)\bigg(\Delta_n^{d/2}\sum_{\textbf{k}_2\in\N^d}g_{\alpha,r}(\lambda_{\textbf{k}_2}\Delta_n)\prod_{l=1}^d\cos(2\pi k^{(2)}_lx_l^{(2)})\bigg)\notag\\
&=\Oo\bigg(\frac{\Delta_n^{1-\alpha'}}{p}\big(\nor{\textbf{y}_1-\textbf{y}_2}_0^{-(d+1)}+\delta^{-(d+1)}\big)\sum_{r=0}^\infty\Delta_n^{d/2}\sum_{\textbf{k}\in\N^d}|g_{\alpha,r}(\lambda_\textbf{k}\Delta_n)|\bigg)\notag\\
&=\Oo\bigg(\frac{\Delta_n^{1-\alpha'}}{p}\big(\nor{\textbf{y}_1-\textbf{y}_2}_0^{-(d+1)}+\delta^{-(d+1)}\big)\Big(\Delta_n^{d/2}\sum_{\textbf{k}\in\N^d}\frac{1-e^{-\lambda_\textbf{k}\Delta_n}}{(\lambda_\textbf{k}\Delta_n)^{1+\alpha}}\Big)\bigg)\notag\\
&=\Oo\bigg(\frac{\Delta_n^{1-\alpha'}}{p}\big(\nor{\textbf{y}_1-\textbf{y}_2}_0^{-(d+1)}+\delta^{-(d+1)}\big)\bigg).\label{eqn_covEksRemainder}
\end{align}
Analogously, we consider the second term $\bar{D}^2$ with the function $f_\alpha$ from equation \eqref{equation_functionsAlphaandTau}, which gives us the following:
\begin{align*}
\Cov\big(V_{p,\Delta_n}(\textbf{y}_1),V_{p,\Delta_n}(\textbf{y}_2)\big)&=\Oo\bigg(\frac{\Delta_n^{1-\alpha'}}{p}\big(\nor{\textbf{y}_1-\textbf{y}_2}_0^{-(d+1)}+\delta^{-(d+1)}\big)\bigg)+\mathcal{O}\bigg(\frac{1}{p}\Big(\Delta_n^{2(1-\alpha')}+\frac{1}{p}\wedge1\Big)\bigg)\\
&=\Oo\bigg(\frac{\Delta_n^{1-\alpha'}}{p}\big(\nor{\textbf{y}_1-\textbf{y}_2}_0^{-(d+1)}+\delta^{-(d+1)}\big)\bigg),
\end{align*}
for $\textbf{y}_1\neq \textbf{y}_2$.
Thus, it remains to compute the variance, where $\textbf{y}_1=\textbf{y}_2=\textbf{y}\in[\delta,1-\delta]^d$. Again, utilizing
\begin{align*}
e_{\textbf{k}}(\textbf{y})e_{\textbf{k}}(\textbf{y})&=e^{-2\nor{\kappa\bigcdot \textbf{y}}_1}\prod_{l=1}^d\Big(\cos(0)-\cos(2\pi k_ly_l)\Big),
\end{align*}
and having $x_l^{(1)},x_l^{(2)}\in\{0,y_l\}$, we infer analogously to display \eqref{eqn_covEksRemainder} that
\begin{align*}
&\frac{1}{p\Delta_n^{2\alpha'}}\sum_{\textbf{k}_1,\textbf{k}_2\in\N^d}\bar{D}_{\textbf{k}_1,\textbf{k}_2}^1\prod_{l=1}^d  \cos(2\pi k^{(1)}_lx_l^{(1)})\cos(2\pi k^{(2)}_lx_l^{(2)})\\
&=\frac{2}{p}\sum_{r=0}^\infty\bigg(\Delta_n^{d/2}\sum_{\textbf{k}_1\in\N^d}g_{\alpha,r}(\lambda_{\textbf{k}_1}\Delta_n)\prod_{l=1}^d\cos(2\pi k^{(1)}_lx_l^{(1)})\bigg)\bigg(\Delta_n^{d/2}\sum_{\textbf{k}_2\in\N^d}g_{\alpha,r}(\lambda_{\textbf{k}_2}\Delta_n)\prod_{l=1}^d\cos(2\pi k^{(2)}_lx_l^{(2)})\bigg).
\end{align*} 
Now assume, without loss of generality, that $\sum_{j=1}^d\mathbbm{1}_{\{x_j^{(1)}\neq 0\}}=l$, for $1\leq l\leq d$. Then, by Corollary \ref{corollary_toLemmaRiemannApproxMulti} (ii) and (iii), we have
\begin{align*}
\Delta_n^{d/2}\sum_{\textbf{k}_2\in\N^d}g_{\alpha,r}(\lambda_{\textbf{k}_2}\Delta_n)\prod_{l=1}^d\cos(2\pi k^{(2)}_lx_l^{(2)})&=\Oo\bigg(\Delta_n^{l/2}\vee\frac{\Delta_n^{1-\alpha'}}{\delta^{d+1}}\bigg).
\end{align*}
Hence, within this setting, we conclude that
\begin{align*}
&\frac{1}{p\Delta_n^{2\alpha'}}\sum_{\textbf{k}_1,\textbf{k}_2\in\N^d}\bar{D}_{\textbf{k}_1,\textbf{k}_2}^1\prod_{l=1}^d  \cos(2\pi k^{(1)}_lx_l^{(1)})\cos(2\pi k^{(2)}_lx_l^{(2)})=\Oo\bigg(\frac{1}{p}\Big(\Delta_n^{1/2}\vee \frac{\Delta_n^{1-\alpha'}}{\delta^{d+1}}\Big)\bigg),
\end{align*}
and it follows that
\begin{align}
\Var\big(V_{p,\Delta_n}(\textbf{y})\big)&=\frac{2\sigma^4e^{2\nor{\kappa\bigcdot \textbf{y}}_1}}{p\Delta_n^{2\alpha'}}\sum_{\substack{\textbf{k}_1,\textbf{k}_2\in\N^d\\\textbf{k}_1\neq \textbf{k}_2}}e^2_{\textbf{k}_1}(\textbf{y})e^2_{\textbf{k}_2}(\textbf{y})\bar{D}_{\textbf{k}_1,\textbf{k}_2}+\mathcal{O}\bigg(\frac{1}{p}\Big(\Delta_n^{2(1-\alpha')}+\frac{1}{p}\wedge1\Big)\bigg)\notag\\
&=\frac{2\sigma^4}{p\Delta_n^{2\alpha'}}\sum_{\substack{\textbf{k}_1,\textbf{k}_2\in\N^d\\\textbf{k}_1\neq \textbf{k}_2}}\bar{D}_{\textbf{k}_1,\textbf{k}_2}+\mathcal{O}\bigg(\frac{1}{p}\Big(\Delta_n^{1/2}\vee \frac{\Delta_n^{1-\alpha'}}{\delta^{d+1}}+\frac{1}{p}\wedge1\Big)\bigg).\label{eqn_rrvPropEqnToLastTerm}
\end{align}
For the leading term we obtain:
\begin{align*}
\frac{2\sigma^4}{p\Delta_n^{2\alpha'}}\sum_{\substack{\textbf{k}_1,\textbf{k}_2\in\N^d\\\textbf{k}_1\neq \textbf{k}_2}}\bar{D}_{\textbf{k}_1,\textbf{k}_2}&=\frac{\sigma^4}{p}\bigg(\sum_{r=0}^\infty\Big(2\Delta_n^{d/2}\sum_{\textbf{k}\in\N^d}\frac{(1-e^{-\lambda_\textbf{k}\Delta_n})^2}{2(\lambda_\textbf{k}\Delta_n)^{1+\alpha}}e^{-r\lambda_\textbf{k}\Delta_n}\Big)^2+2\Big(\Delta_n^{d/2}\sum_{\textbf{k}\in\N^d}\frac{1-e^{-\lambda_\textbf{k}\Delta_n}}{(\lambda_\textbf{k}\Delta_n)^{1+\alpha}}\Big)^2\bigg)\\
&=\frac{\sigma^4}{p}\bigg(\sum_{r=0}^\infty\Big(2\Delta_n^{d/2}\sum_{\textbf{k}\in\N^d}g_{\alpha,r}(\lambda_{\textbf{k}}\Delta_n)\Big)^2+2\Big(\Delta_n^{d/2}\sum_{\textbf{k}\in\N^d}f_\alpha(\lambda_\textbf{k}\Delta_n)\Big)^2\bigg),
\end{align*}
and by Lemma \ref{lemma_calcfAlphaDelta} we have 
\begin{align*}
\Var\big(V_{p,\Delta_n}(\textbf{y})\big)&=\frac{1}{p}\bigg(\frac{\Gamma(1-\alpha')\sigma^2}{2^d(\pi\eta)^{d/2}\alpha'\Gamma(d/2)}\bigg)^2\bigg(\sum_{r=0}^\infty\big(-r ^{\alpha'}+2 (r +1)^{\alpha'}-(r +2)^{\alpha'}\big)^2 +2\bigg)\\
&~~~~~+\mathcal{O}\bigg(\frac{1}{p}\Big(\Delta_n^{1/2}\vee \frac{\Delta_n^{1-\alpha'}}{\delta^{d+1}}+\frac{1}{p}\wedge1\Big)\bigg).
\end{align*}
Defining the constant
\begin{align}
\Upsilon_{\alpha'}:=\bigg(\sum_{r=0}^\infty\big(-r ^{\alpha'}+2 (r +1)^{\alpha'}-(r +2)^{\alpha'}\big)^2 +2\bigg)\label{eqn_definingUpsilon}
\end{align}
completes the proof.
\end{proof}
The following proposition and corollary prove the general mixing-type Condition (IV) from Proposition \ref{prop_clt_utev}.
\begin{theorem}\label{porp_timeDepenMulit}
Grant the Assumptions \ref{assumption_observations_multi} and \ref{assumption_regMulti}.
Let $y\in [\delta,1-\delta]^d$ for a $\delta>0$, $1\leq r<r+u\leq v\leq n$ natural numbers and 
\begin{align*}
Q_1^r=\sum\limits_{i=1}^r(\Delta_i\tilde{X})^2(\textbf{y}),~~~~~~~~Q_{r+u}^v=\sum\limits_{i=r+u}^v(\Delta_i\tilde{X})^2(\textbf{y}),
\end{align*}
then there exists a constant $C$, where $0<C<\infty$, such that it holds for all $t\in \R$ that
\begin{align*}
\abs{\Cov\bigg(e^{\im t(Q_1^r-\E[Q_1^r])},~e^{\im t(Q_{r+u}^v-\E[Q_{r+u}^v])}\bigg)}\leq \frac{Ct^2}{u^{1-\alpha'/2}}\sqrt{\Var(Q_1^r)\Var(Q_{r+u}^v)}.
\end{align*}
\end{theorem}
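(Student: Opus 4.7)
The strategy is to decompose the Brownian motions in the ``new block'' by splitting at time $r\Delta_n$, isolating the part that is independent of $Q_1^r$. For $i\geq r+u$ and every $\textbf{k}\in\N^d$, write $\tilde{B}_{i,\textbf{k}}=\tilde{B}_{i,\textbf{k}}^{(1)}+\tilde{B}_{i,\textbf{k}}^{(2)}$, where $\tilde{B}_{i,\textbf{k}}^{(1)}$ is the stochastic integral over $(-\infty,r\Delta_n]$ (hence $\mathcal{F}_{r\Delta_n}$-measurable) and $\tilde{B}_{i,\textbf{k}}^{(2)}$ is the integral over $(r\Delta_n,(i-1)\Delta_n]$ (independent of $\mathcal{F}_{r\Delta_n}$). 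Since $C_{i,\textbf{k}}$ is also independent of $\mathcal{F}_{r\Delta_n}$ for $i\geq r+u$, defining $Y_i^{(1)}:=\sum_\textbf{k}\tilde{B}_{i,\textbf{k}}^{(1)}e_\textbf{k}(\textbf{y})$ and $Y_i^{(2)}:=\sum_\textbf{k}(\tilde{B}_{i,\textbf{k}}^{(2)}+C_{i,\textbf{k}})e_\textbf{k}(\textbf{y})$ yields $(\Delta_i\tilde{X})(\textbf{y})=Y_i^{(1)}+Y_i^{(2)}$ with $Y_i^{(1)}$ $\mathcal{F}_{r\Delta_n}$-measurable and $Y_i^{(2)}$ independent of $\mathcal{F}_{r\Delta_n}$. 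Consequently, $\tilde{Q}_{r+u}^v:=\sum_{i=r+u}^v (Y_i^{(2)})^2$ is independent of $Q_1^r$.

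Set $A:=Q_1^r-\E[Q_1^r]$ and $R:=Q_{r+u}^v-\tilde{Q}_{r+u}^v=2\sum_i Y_i^{(1)}Y_i^{(2)}+\sum_i(Y_i^{(1)})^2$. Since $\tilde{Q}_{r+u}^v$ is independent of $A$, the phase-invariant identity $\Cov\bigl(e^{\im tA},e^{\im t\tilde{Q}_{r+u}^v}\bigr)=0$ reduces the target to $\Cov\bigl(e^{\im tA},\,e^{\im t\tilde{Q}_{r+u}^v}(e^{\im tR}-1)\bigr)$. Cauchy–Schwarz bounds this by $\sqrt{\Var(e^{\im tA})\cdot\E|e^{\im tR}-1|^2}$. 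Using an independent copy $A'$ of $A$ and $1-\cos(x)\leq x^2/2$ gives the general inequality $\Var(e^{\im tA})=\E[1-\cos(t(A-A'))]\leq t^2\Var(A)$, while $|e^{\im tR}-1|\leq |tR|$ yields $\E|e^{\im tR}-1|^2\leq t^2\E[R^2]$. Combining these two Gaussian-free estimates produces
\begin{align*}
\bigl|\Cov\bigl(e^{\im tA},e^{\im t(Q_{r+u}^v-\E Q_{r+u}^v)}\bigr)\bigr|\leq t^2\sqrt{\Var(Q_1^r)\,\E[R^2]}.
\end{align*}

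The remaining work is to establish $\E[R^2]\leq C\, u^{\alpha'-2}\Var(Q_{r+u}^v)$. Using independence of $Y^{(1)}$ and $Y^{(2)}$ together with $\E[Y_i^{(2)}]=0$, the cross term $\E[(2\sum Y_i^{(1)}Y_i^{(2)})(\sum(Y_j^{(1)})^2)]$ vanishes, and Isserlis' formula for Gaussians yields
\begin{align*}
\E[R^2]=4\sum_{i,j}\Cov(Y_i^{(1)},Y_j^{(1)})\Cov(Y_i^{(2)},Y_j^{(2)})+\Bigl(\sum_i\Var(Y_i^{(1)})\Bigr)^2+2\sum_{i,j}\Cov(Y_i^{(1)},Y_j^{(1)})^2.
\end{align*}
The factor $\Cov(Y_i^{(2)},Y_j^{(2)})$ is the usual autocovariance already controlled by Proposition \ref{prop_autocovOfIncrementsMulti}. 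A direct Itô-isometry computation gives
\begin{align*}
\Cov(Y_i^{(1)},Y_j^{(1)})=\sigma^2 e^{-\nor{\kappa\bigcdot\textbf{y}}_1}\Delta_n^{\alpha'}2^d\sum_{\textbf{k}\in\N^d} g_{\alpha,\,i+j-2-2r}(\lambda_\textbf{k}\Delta_n)\prod_{l=1}^d\sin^2(\pi k_ly_l),
\end{align*}
with $g_{\alpha,\tau}$ as in \eqref{equation_functionsAlphaandTau}. Applying Lemma \ref{lemma_calcfAlphaDelta} with $\tau=i+j-2-2r\geq 2(u-1)$ and Taylor expanding $-\tau^{\alpha'}+2(\tau+1)^{\alpha'}-(\tau+2)^{\alpha'}=\alpha'(1-\alpha')\tau^{\alpha'-2}+O(\tau^{\alpha'-3})$ delivers $\Cov(Y_i^{(1)},Y_j^{(1)})=O(\Delta_n^{\alpha'}(i+j-2r)^{\alpha'-2})$.

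The main obstacle is then handling the summations over $i,j\in\{r+u,\ldots,v\}$ uniformly in the block length $p:=v-r-u+1$. Using Proposition \ref{prop_RRVMulti} the normalization $\Var(Q_{r+u}^v)\asymp p\Delta_n^{2\alpha'}$ must be matched against $\sum_{j=u}^{u+p-1}j^{\alpha'-2}\leq C\min(pu^{\alpha'-2},u^{\alpha'-1})$, where the first bound (trivial termwise) dominates for $p\leq u$ and the second (obtained by comparison with the convergent integral $\int_u^\infty x^{\alpha'-2}\diff x$) dominates for $p>u$; in both regimes one verifies that every one of the three contributions to $\E[R^2]$ is of order $u^{\alpha'-2}\Var(Q_{r+u}^v)$, with the diagonal part of the first term being the leading piece. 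Substituting $\sqrt{\E[R^2]}\leq C u^{-(1-\alpha'/2)}\sqrt{\Var(Q_{r+u}^v)}$ into the Cauchy–Schwarz bound above finishes the proof.
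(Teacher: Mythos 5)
Your proposal is correct and follows essentially the same route as the paper: the same splitting of the Wiener integrals at time $r\Delta_n$ into an $\mathcal{F}_{r\Delta_n}$-measurable part and an independent part (your $Y_i^{(1)},Y_i^{(2)}$ are the paper's $D_1^{\textbf{k},i},D_2^{\textbf{k},i}$), the same reduction of the characteristic-function covariance to a second-moment bound on the dependent remainder (the paper cites \cite[Prop.\ 6.6]{trabs} where you rederive the elementary inequality), and the same key computation $\E[D_1^{\textbf{k},i}D_1^{\textbf{k},j}]\propto\lambda_{\textbf{k}}^{-(1+\alpha)}(1-e^{-\lambda_{\textbf{k}}\Delta_n})^2e^{-\lambda_{\textbf{k}}(i+j-2r-2)\Delta_n}$ combined with the Riemann-sum asymptotics for $g_{\alpha,\tau}$, yielding $\E[R^2]=\Oo(u^{\alpha'-2}\,\bar p\,\Delta_n^{2\alpha'})$ and hence the rate $u^{-(1-\alpha'/2)}$. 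Only a cosmetic slip: your displayed formula for $\Cov(Y_i^{(1)},Y_j^{(1)})$ is missing a factor $\Delta_n^{d/2}$ in front of the $\textbf{k}$-sum (the prefactor should be $\Delta_n^{1+\alpha}=\Delta_n^{\alpha'}\Delta_n^{d/2}$), but the resulting order you state is the correct one.
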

\begin{proof}
Assume $Q^v_{r+u}=A_1+A_2$ with some $A_2$ which is independent to $Q_1^r$. Then we know by \cite[Prop.\ 6.6.]{trabs} that
\begin{align*}
\Cov\Big(e^{\im t\bar{Q}_1^r},e^{\im t\bar{Q}_{r+u}^v}\Big)\leq 2t^2\E\Big[\big(\bar{Q}_1^r\big)^2\Big]^{1/2}\E\Big[\big(\bar{A}_1\big)^2\Big] ^{1/2},
\end{align*}
where $\bar{X}=X-\E[X]$. For $r\leq i-1$ we obtain:
\begin{align*}
\Delta_i\tilde{X}(\textbf{y})&=\sum\limits_{\textbf{k}\in\N^d}\bigg(\sigma\lambda_\textbf{k}^{-\alpha/2}\int\limits_{-\infty}^{r\Delta_n}  e^{-\lambda_\textbf{k}\big(\iidn-s\big)}\big(e^{-\lambda_\textbf{k}\Delta_n}-1\big) \diff W_s^\textbf{k}\bigg)e_\textbf{k}(\textbf{y}) \\
&~~~~~+\sum\limits_{\textbf{k}\in\N^d}\bigg(\sigma\lambda_\textbf{k}^{-\alpha/2} \int\limits_{r\Delta_n}^{\iidn}  e^{-\lambda_\textbf{k}\big(\iidn-s\big)}\big(e^{-\lambda_\textbf{k}\Delta_n}-1\big) \diff W_s^\textbf{k}\\
&~~~~~+ \sigma\lambda_\textbf{k}^{-\alpha/2}\int\limits_{\iidn}^{\idn}e^{-\lambda_\textbf{k}(\idn-s)} \diff W_s ^\textbf{k}\Bigg)e_\textbf{k}(\textbf{y}) \\
&=\sum\limits_{\textbf{k}\in\N^d} D_1^{\textbf{k},i}e_\textbf{k}(\textbf{y})+\sum\limits_{\textbf{k}\in\N^d} D_2^{\textbf{k},i}e_\textbf{k}(\textbf{y}),
\end{align*}
where
\begin{align}
D_1^{\textbf{k},i}&:= \sigma\lambda_\textbf{k}^{-\alpha/2}\int\limits_{-\infty}^{r\Delta_n}  e^{-\lambda_\textbf{k}\big(\iidn-s\big)}\big(e^{-\lambda_\textbf{k}\Delta_n}-1\big) \diff W_s^\textbf{k},\label{number76:D1}\\
D_2^{\textbf{k},i}&:=\sigma\lambda_\textbf{k}^{-\alpha/2}\int\limits_{r\Delta_n}^{\iidn}  e^{-\lambda_\textbf{k}\big(\iidn-s\big)}\big(e^{-\lambda_\textbf{k}\Delta_n}-1\big) \diff W_s^\textbf{k} + \sigma\lambda_\textbf{k}^{-\alpha/2}\int\limits_{\iidn}^{\idn}e^{-\lambda_\textbf{k}(\idn-s)} \diff W_s ^\textbf{k}.\label{number76:D2}
\end{align}
We can establish that $D_1^{\textbf{k},i}$ and $D_2^{\textbf{k},i}$ are independent, thus yielding the following result:
\begin{align*}
Q_{r+u}^v
&=\sum\limits_{i=r+u}^v \bigg(\sum\limits_{\textbf{k}\in\N^d} D_1^{\textbf{k},i}e_\textbf{k}(\textbf{y})\bigg)^2+ 2\sum\limits_{i=r+u}^v \bigg(\sum\limits_{\textbf{k}\in\N^d} D_1^{\textbf{k},i}e_\textbf{k}(\textbf{y})\bigg)\bigg(\sum\limits_{\textbf{k}\in\N^d} D_2^{\textbf{k},i}e_\textbf{k}(\textbf{y})\bigg) +\sum\limits_{i=r+u}^v \bigg(\sum\limits_{\textbf{k}\in\N^d} D_2^{\textbf{k},i}e_\textbf{k}(\textbf{y})\bigg)^2,
\end{align*}
which implies the following decomposition:
\begin{align*}
A_1&:= \sum\limits_{i=r+u}^v \bigg(\sum\limits_{\textbf{k}\in\N^d} D_1^{\textbf{k},i}e_\textbf{k}(\textbf{y})\bigg)^2+ 2\sum\limits_{i=r+u}^v \bigg(\sum\limits_{\textbf{k}\in\N^d} D_1^{\textbf{k},i}e_\textbf{k}(\textbf{y})\bigg)\bigg(\sum\limits_{\textbf{k}\in\N^d}^\infty D_2^{\textbf{k},i}e_\textbf{k}(\textbf{y})\bigg), \\
A_2&:=\sum\limits_{i=r+u}^v \bigg(\sum\limits_{\textbf{k}\in\N^d} D_2^{\textbf{k},i}e_\textbf{k}(\textbf{y})\bigg)^2,
\end{align*}
where $A_2$ is independent to $Q_1^r$. 
Hence, our focus shifts to bounding the term $\E[\bar{A}_1^2]$, which is equivalent to computing $\Var(A_1)$. We begin with the following considerations:
\begin{align*}
\E[\bar{A}_1^2]&\leq \E[A_1^2]\\
&= \E\left[\Bigg(\sum\limits_{i=r+u}^v \bigg(\sum\limits_{\textbf{k}\in\N^d} D_1^{\textbf{k},i}e_\textbf{k}(\textbf{y})\bigg)^2+ 2\sum\limits_{i=r+u}^v \bigg(\sum\limits_{\textbf{k}\in\N^d} D_1^{\textbf{k},i}e_\textbf{k}(\textbf{y})\bigg)\bigg(\sum\limits_{\textbf{k}\in\N^d} D_2^{\textbf{k},i}e_\textbf{k}(\textbf{y})\bigg)\Bigg)^2\right] \\
&= \sum\limits_{i,j=r+u}^v\E\Bigg[\bigg(\sum\limits_{\textbf{k}\in\N^d} D_1^{\textbf{k},i}e_\textbf{k}(\textbf{y})\bigg)^2\bigg(\sum\limits_{\textbf{k}\in\N^d} D_1^{\textbf{k},j}e_\textbf{k}(\textbf{y})\bigg)^2\Bigg] \\
&~~~~~+ 4\sum\limits_{i,j=r+u}^v\E\Bigg[\bigg(\sum\limits_{\textbf{k}\in\N^d} D_1^{\textbf{k},i}e_\textbf{k}(\textbf{y})\bigg)^2\bigg(\sum\limits_{\textbf{k}\in\N^d} D_1^{\textbf{k},j}e_\textbf{k}(\textbf{y})\bigg)\bigg(\sum\limits_{\textbf{k}\in\N^d} D_2^{\textbf{k},j}e_\textbf{k}(\textbf{y})\bigg)\Bigg] \\
&~~~~~+4\sum\limits_{i,j=r+u}^v\E\Bigg[\bigg(\sum\limits_{\textbf{k}\in\N^d} D_1^{\textbf{k},i}e_\textbf{k}(\textbf{y})\bigg)\bigg(\sum\limits_{\textbf{k}\in\N^d} D_2^{\textbf{k},i}e_\textbf{k}(\textbf{y})\bigg)
\bigg(\sum\limits_{\textbf{k}\in\N^d} D_1^{\textbf{k},j}e_\textbf{k}(\textbf{y})\bigg)\bigg(\sum\limits_{\textbf{k}\in\N^d} D_2^{\textbf{k},j}e_\textbf{k}(\textbf{y})\bigg)\Bigg],
\end{align*}
where the cross-term between $D_1^{\textbf{k},i},D_2^{\textbf{k},i}$ vanishes as both terms are centred normally distributed. Therefore, we use $\E[\bar{A}_1^2]\leq T_1+4T_2$, where we define:
\begin{align}
T_1&:=\sum\limits_{i,j=r+u}^v\E\bigg[\Big(\sum\limits_{\textbf{k}\in\N^d} D_1^{\textbf{k},i}e_\textbf{k}(\textbf{y})\Big)^2\Big(\sum\limits_{\textbf{k}\in\N^d} D_1^{\textbf{k},j}e_\textbf{k}(\textbf{y})\Big)^2\bigg],\label{number76:T1} \\
T_2&:=\sum\limits_{i,j=r+u}^v\E\bigg[\Big(\sum\limits_{\textbf{k}\in\N^d} D_1^{\textbf{k},i}e_\textbf{k}(\textbf{y})\Big)\Big(\sum\limits_{\textbf{k}\in\N^d} D_2^{\textbf{k},i}e_\textbf{k}(\textbf{y})\Big)
\Big(\sum\limits_{\textbf{k}\in\N^d} D_1^{\textbf{k},j}e_\textbf{k}(\textbf{y})\Big)\Big(\sum\limits_{\textbf{k}\in\N^d} D_2^{\textbf{k},j}e_\textbf{k}(\textbf{y})\Big)\bigg].\label{number76:T2}
\end{align}
To bound the term $T_1$, we can utilize the expression $D_1^{\textbf{k},i}=e^{-\lambda_\textbf{k}(i-r-1)\Delta_n}\tilde{B}_{r+1,\textbf{k}}$, where $\tilde{B}_{i,\textbf{k}}$ is defined in equation \eqref{eqn_tildeB_multi}, leading to the following calculation:
\begin{align*}
T_1
&=\sum\limits_{i,j=r+u}^v~~\sum\limits_{\textbf{k}_1,\textbf{k}_2,\textbf{k}_3,\textbf{k}_4\in\N^d}\E\Big[e^{-\lambda_{\textbf{k}_1}(i-r-1)\Delta_n}\tilde{B}_{r+1,\textbf{k}_1}e_{\textbf{k}_1}(\textbf{y})e^{-\lambda_{\textbf{k}_2}(i-r-1)\Delta_n}\tilde{B}_{r+1,\textbf{k}_2}e_{\textbf{k}_2}(\textbf{y})\\
&~~~~~\times e^{-\lambda_{\textbf{k}_3}(j-r-1)\Delta_n}\tilde{B}_{r+1,\textbf{k}_3}e_{\textbf{k}_3}(\textbf{y}) e^{-\lambda_{\textbf{k}_4}(j-r-1)\Delta_n}\tilde{B}_{r+1,\textbf{k}_4}e_{\textbf{k}_4}(\textbf{y})\Big].
\end{align*}
Note, that any combination of indices results in a value of zero, unless, exactly two indices are the same, or all four indices are equal.
Thus, we obtain for $\textbf{k}_1=\ldots=\textbf{k}_4=\textbf{k}$ that
\begin{align*}
\sum\limits_{i,j=r+u}^v~~\sum\limits_{\textbf{k}\in\N^d} e^{-2\lambda_\textbf{k}(i+j-2r-2)\Delta_n}\E\Big[\tilde{B}_{r+1,\textbf{k}}^4\Big]e_\textbf{k}^4(\textbf{y}).
\end{align*}
For $\textbf{k}_1=\textbf{k}_2$ and $\textbf{k}_3=\textbf{k}_4$, with $\textbf{k}_1\neq \textbf{k}_3$ we find that
\begin{align*}
&\sum\limits_{i,j=r+u}^v\sum\limits_{\substack{\textbf{k}_1,\textbf{k}_2\in\N^d\\ \textbf{k}_1\neq \textbf{k}_2}} e^{-2\lambda_{\textbf{k}_1}(i-r-1)\Delta_n}e^{-2\lambda_{\textbf{k}_2}(j-r-1)\Delta_n}\E\Big[\tilde{B}_{r+1,\textbf{k}_1}^2\tilde{B}_{r+1,\textbf{k}_2}^2\Big]e_{\textbf{k}_1}^2(\textbf{y})e_{\textbf{k}_2}^2(\textbf{y}) \\
&=\sum\limits_{i,j=r+u}^v\sum\limits_{\substack{\textbf{k}_1,\textbf{k}_2\in\N^d\\ \textbf{k}_1\neq \textbf{k}_2}} e^{-2\lambda_{\textbf{k}_1}(i-r-1)\Delta_n-2\lambda_{\textbf{k}_2}(j-r-1)\Delta_n}\E\Big[\tilde{B}_{r+1,\textbf{k}_1}^2\Big]\E\Big[\tilde{B}_{r+1,\textbf{k}_2}^2\Big]e_{\textbf{k}_1}^2(\textbf{y})e_{\textbf{k}_2}^2(\textbf{y}).
\end{align*}
The remaining combinations yield the following:
\begin{align*}
&\sum\limits_{i,j=r+u}^v\sum\limits_{\substack{\textbf{k}_1,\textbf{k}_2\in\N^d\\ \textbf{k}_1\neq \textbf{k}_2}} e^{-\lambda_{\textbf{k}_1}(i+j-2r-2)\Delta_n}e^{-\lambda_{\textbf{k}_2}(i+j-2r-2)\Delta_n}\E\Big[\tilde{B}_{r+1,\textbf{k}_1}^2\tilde{B}_{r+1,\textbf{k}_2}^2\Big]e_{\textbf{k}_1}^2(\textbf{y})e_{\textbf{k}_2}^2(\textbf{y}) \\
&=\sum\limits_{i,j=r+u}^v\sum\limits_{\substack{\textbf{k}_1,\textbf{k}_2\in\N^d\\ \textbf{k}_1\neq \textbf{k}_2}}^\infty e^{-(\lambda_{\textbf{k}_1}+\lambda_{\textbf{k}_2})(i+j-2r-2)\Delta_n}\E\Big[\tilde{B}_{r+1,\textbf{k}_1}^2\Big]\E\Big[\tilde{B}_{r+1,\textbf{k}_2}^2\Big]e_{\textbf{k}_1}^2(\textbf{y})e_{\textbf{k}_2}^2(\textbf{y}),
\end{align*}
and we observe:
\begin{align*}
T_1&= \sum\limits_{i,j=r+u}^v\sum\limits_{\substack{\textbf{k}_1,\textbf{k}_2\in\N^d\\ \textbf{k}_1\neq \textbf{k}_2}} e^{-2\lambda_{\textbf{k}_1}(i-r-1)\Delta_n-2\lambda_{\textbf{k}_2}(j-r-1)\Delta_n}\E\Big[\tilde{B}_{r+1,\textbf{k}_1}^2\Big]\E\Big[\tilde{B}_{r+1,\textbf{k}_2}^2\Big]e_{\textbf{k}_1}^2(\textbf{y})e_{\textbf{k}_2}^2(\textbf{y})\\
&~~~~~+2\sum\limits_{i,j=r+u}^v\sum\limits_{\substack{\textbf{k}_1,\textbf{k}_2\in\N^d\\ \textbf{k}_1\neq \textbf{k}_2}} e^{-(\lambda_{\textbf{k}_1}+\lambda_{\textbf{k}_2})(i+j-2r-2)\Delta_n}\E\Big[\tilde{B}_{r+1,\textbf{k}_1}^2\Big]\E\Big[\tilde{B}_{r+1,\textbf{k}_2}^2\Big]e_{\textbf{k}_1}^2(\textbf{y})e_{\textbf{k}_2}^2(\textbf{y})\\
&~~~~~+\sum\limits_{i,j=r+u}^v~~\sum\limits_{\textbf{k}\in\N^d} e^{-2\lambda_\textbf{k}(i+j-2r-2)\Delta_n}\E\Big[\tilde{B}_{r+1,\textbf{k}}^4\Big]e_\textbf{k}^4(\textbf{y}) \\
&=\sigma^4\sum\limits_{\substack{\textbf{k}_1,\textbf{k}_2\in\N^d\\ \textbf{k}_1\neq \textbf{k}_2}} \frac{\big(1-e^{-\lambda_{\textbf{k}_1}\Delta_n}\big)^2\big(1-e^{-\lambda_{\textbf{k}_2}\Delta_n}\big)^2}{4\lambda_{\textbf{k}_1}^{1+\alpha}\lambda_{\textbf{k}_2}^{1+\alpha}}\Bigg(\sum\limits_{i=r+u}^v e^{-2\lambda_{\textbf{k}_1}(i-r-1)\Delta_n}\Bigg)\Bigg(\sum\limits_{j=r+u}^v e^{-2\lambda_{\textbf{k}_2}(j-r-1)\Delta_n}\Bigg)\\
&~~~~~~~~~~\times e_{\textbf{k}_1}^2(\textbf{y})e_{\textbf{k}_2}^2(\textbf{y})\\
&~~~~~+\sigma^4\sum\limits_{\substack{\textbf{k}_1,\textbf{k}_2\in\N^d\\ \textbf{k}_1\neq \textbf{k}_2}} \frac{\big(1-e^{-\lambda_{\textbf{k}_1}\Delta_n}\big)^2\big(1-e^{-\lambda_{\textbf{k}_2}\Delta_n}\big)^2}{4\lambda_{\textbf{k}_1}^{1+\alpha}\lambda_{\textbf{k}_2}^{1+\alpha}}2\Bigg(\sum\limits_{i=r+u}^v e^{-(\lambda_{\textbf{k}_1}+\lambda_{\textbf{k}_2})(i-r-1)\Delta_n}\Bigg)^2 e_{\textbf{k}_1}^2(\textbf{y})e_{\textbf{k}_2}^2(\textbf{y})\\
&~~~~~+3\sigma^4\sum\limits_{\textbf{k}\in\N^d}^\infty \frac{\big(1-e^{-\lambda_\textbf{k}\Delta_n}\big)^4}{4\lambda_\textbf{k}^{2(1+\alpha)}} \Bigg(\sum\limits_{i=r+u}^v e^{-2\lambda_\textbf{k}(i-r-1)\Delta_n}\Bigg)^2e_\textbf{k}^4(\textbf{y}),
\end{align*}
where we used equation \eqref{eqn_CovBBtilde}, which implies:
\begin{align*}
\E\Big[\tilde{B}_{r+1,\textbf{k}}^2\Big]=\frac{\sigma^2}{2\lambda_{\textbf{k}}^{1+\alpha}}\big(1-e^{-\lambda_{\textbf{k}}\Delta_n}\big)^2.
\end{align*}
Let $\bar{p}=v-r-u+1$ and $u\geq 2$. 
We begin by bounding the eigenfunctions $(e_k)$ with a suitable constant $C>0$. Additionally, we have the following:
\begin{align*}
\sum\limits_{i=r+u}^v e^{-2\lambda_{\textbf{k}}(i-r-1)\Delta_n}
&= e^{-2\lambda_{\textbf{k}}(u-1)\Delta_n}\frac{1-e^{-2\lambda_{\textbf{k}}\Delta_n\bar{p}}}{1-e^{-2\lambda_{\textbf{k}}\Delta_n}}, \\
\sum\limits_{i=r+u}^v e^{-(\lambda_{\textbf{k}_1}+\lambda_{\textbf{k}_2})(i-r-1)\Delta_n}&=e^{-(\lambda_{\textbf{k}_1}+\lambda_{\textbf{k}_2})(u-1)\Delta_n}\frac{1-e^{-(\lambda_{\textbf{k}_1}+\lambda_{\textbf{k}_2})\Delta_n \bar{p}}}{1-e^{-(\lambda_{\textbf{k}_1}+\lambda_{\textbf{k}_2})\Delta_n}}.
\end{align*}
Thus, we obtain:
\begin{align*}
\Bigg(\sum\limits_{i=r+u}^v e^{-2\lambda_{\textbf{k}_1}(i-r-1)\Delta_n}\Bigg)\Bigg(\sum\limits_{j=r+u}^v e^{-2\lambda_{\textbf{k}_2}(j-r-1)\Delta_n}\Bigg)
&\leq e^{-2(\lambda_{\textbf{k}_1}+\lambda_{\textbf{k}_2})(u-1)\Delta_n}\frac{1-e^{-2\lambda_{\textbf{k}_2}\Delta_n\bar{p}}}{\big(1-e^{-2\lambda_{\textbf{k}_1}\Delta_n}\big)\big(1-e^{-2\lambda_{\textbf{k}_2}\Delta_n}\big)} \\
&\leq e^{-2(\lambda_{\textbf{k}_1}+\lambda_{\textbf{k}_2})(u-1)\Delta_n}\frac{\bar{p}}{\big(1-e^{-2\lambda_{\textbf{k}_1}\Delta_n}\big)},
\end{align*}
as well as 
\begin{align*}
\Bigg(\sum\limits_{i=r+u}^v e^{-(\lambda_{\textbf{k}_1}+\lambda_{\textbf{k}_2})(i-r-1)\Delta_n}\Bigg)^2
&\leq e^{-2(\lambda_{\textbf{k}_1}+\lambda_{\textbf{k}_2})(u-1)\Delta_n}\frac{\bar{p}}{1-e^{-(\lambda_{\textbf{k}_1}+\lambda_{\textbf{k}_2})\Delta_n}}, \\
\Bigg(\sum\limits_{i=r+u}^v e^{-2\lambda_{\textbf{k}}(i-r-1)\Delta_n}\Bigg)^2
&\leq e^{-4\lambda_{\textbf{k}}(u-1)\Delta_n}\frac{\bar{p}}{1-e^{-2\lambda_{\textbf{k}}\Delta_n}}.
\end{align*}
Finally, we conclude with the following calculations:
\begin{align*}
T_1&\leq C^4\sigma^4\bigg(\sum\limits_{\substack{\textbf{k}_1,\textbf{k}_2\in\N^d\\ \textbf{k}_1\neq \textbf{k}_2}} \frac{\big(1-e^{-\lambda_{\textbf{k}_1}\Delta_n}\big)^2\big(1-e^{-\lambda_{\textbf{k}_2}\Delta_n}\big)^2}{4\lambda_{\textbf{k}_1}^{1+\alpha}\lambda_{\textbf{k}_2}^{1+\alpha}}e^{-2(\lambda_{\textbf{k}_1}+\lambda_{\textbf{k}_2})(u-1)\Delta_n}\frac{\bar{p}}{\big(1-e^{-2\lambda_{\textbf{k}_1}\Delta_n}\big)}\\
&~~~~~+\sum\limits_{\substack{\textbf{k}_1,\textbf{k}_2\in\N^d\\ \textbf{k}_1\neq \textbf{k}_2}} \frac{\big(1-e^{-\lambda_{\textbf{k}_1}\Delta_n}\big)^2\big(1-e^{-\lambda_{\textbf{k}_2}\Delta_n}\big)^2}{4\lambda_{\textbf{k}_1}^{1+\alpha}\lambda_{\textbf{k}_2}^{1+\alpha}}e^{-2(\lambda_{\textbf{k}_1}+\lambda_{\textbf{k}_2})(u-1)\Delta_n}\frac{2\bar{p}}{1-e^{-(\lambda_{\textbf{k}_1}+\lambda_{\textbf{k}_2})\Delta_n}}\\
&~~~~~+3\sum\limits_{\textbf{k}\in\N^d}^\infty \frac{\big(1-e^{-\lambda_\textbf{k}\Delta_n}\big)^4}{4\lambda_\textbf{k}^{2(1+\alpha)}}e^{-4\lambda_{\textbf{k}}(u-1)\Delta_n}\frac{\bar{p}}{1-e^{-2\lambda_{\textbf{k}}\Delta_n}}
\bigg)\\
&\leq C^4\sigma^43\bar{p}\bigg(\sum_{\textbf{k}_1\in\N^d}\frac{\big(1-e^{-\lambda_{\textbf{k}_1}\Delta_n}\big)}{2\lambda_{\textbf{k}_1}^{1+\alpha}}e^{-2\lambda_{\textbf{k}_1}(u-1)\Delta_n}\bigg)\bigg(\sum_{\textbf{k}_2\in\N^d}\frac{\big(1-e^{-\lambda_{\textbf{k}_2}\Delta_n}\big)^2}{2\lambda_{\textbf{k}_2}^{1+\alpha}}e^{-2\lambda_{\textbf{k}_2}(u-1)\Delta_n}\bigg)\\
&\leq C'\sigma^4\bar{p}\Delta_n^{2\alpha'}\bigg(\int_0^\infty x^{d/2-1}\frac{(1-e^{-x})}{2x^{1+\alpha}}e^{-2x(u-1)}\diff x\bigg)\bigg(\int_0^\infty x^{d/2-1}\frac{(1-e^{-x})^2}{2x^{1+\alpha}}e^{-2x(u-1)}\diff x\bigg).
\end{align*}
Utilizing analogous steps as for Lemma \ref{lemma_calcfAlphaDelta}, we obtain for both integrals that
\begin{align*}
\int_0^\infty x^{d/2-1}\frac{(1-e^{-x})^l}{2x^{1+\alpha}}e^{-2x\tau}\diff x=\Oo\bigg(\frac{1}{\tau^{l-\alpha'}}\bigg),
\end{align*}
where $l=1,2$.
Therefore, we conclude
\begin{align*}
T_1\leq C\sigma^4\frac{\bar{p}\Delta_n^{2\alpha'}}{(u-1)^{3-2\alpha'}},
\end{align*}
for a suitable $C>0$.
For the term $T_2$, according to equation \eqref{number76:T2}, we have the following expression:
\begin{align*}
T_2&=\sum\limits_{i,j=r+u}^v\E\bigg[\Big(\sum\limits_{\textbf{k}\in\N^d} D_1^{\textbf{k},i}e_\textbf{k}(\textbf{y})\Big)\Big(\sum\limits_{\textbf{k}\in\N^d} D_2^{\textbf{k},i}e_\textbf{k}(\textbf{y})\Big)
\Big(\sum\limits_{\textbf{k}\in\N^d} D_1^{\textbf{k},j}e_\textbf{k}(\textbf{y})\Big)\Big(\sum\limits_{\textbf{k}\in\N^d} D_2^{\textbf{k},j}e_\textbf{k}(\textbf{y})\Big)\bigg]\\
&=\sum\limits_{i,j=r+u}^v\bigg(\sum\limits_{\textbf{k}\in\N^d}\E\big[D_1^{\textbf{k},i}D_1^{\textbf{k},j}\big]e_\textbf{k}^2(\textbf{y})\bigg)\bigg(\sum\limits_{\textbf{k}\in\N^d}\E\big[D_2^{\textbf{k},i}D_2^{\textbf{k},j}\big]e_\textbf{k}^2(\textbf{y})\bigg).
\end{align*}
For the first expected value, we find that
\begin{align*}
\E\big[D_1^{\textbf{k},i}D_1^{\textbf{k},j}\big]&=\sigma^2\lambda_\textbf{k}^{-\alpha}(1-e^{-\lambda_\textbf{k}\Delta_n})^2e^{-\lambda_\textbf{k}(i+j-2)\Delta_n}\int_{-\infty}^{r\Delta_n}e^{2\lambda_\textbf{k}s}\diff s\\
&=\sigma^2\frac{(1-e^{-\lambda_\textbf{k}\Delta_n})^2}{2\lambda_\textbf{k}^{1+\alpha}}e^{-\lambda_\textbf{k}(i+j-2r-2)\Delta_n}.
\end{align*}
The second expected value calculates for $i\leq j$ as follows:
\begin{align*}
\E\big[D_2^{\textbf{k},i}D_2^{\textbf{k},j}\big]&=\E\bigg[\Big(\sigma\lambda_\textbf{k}^{-\alpha/2}\int\limits_{r\Delta_n}^{\iidn}  e^{-\lambda_\textbf{k}\big(\iidn-s\big)}\big(e^{-\lambda_\textbf{k}\Delta_n}-1\big) \diff W_s^\textbf{k} + C_{i,\textbf{k}}\Big)\\
&~~~~~\times\Big(\sigma\lambda_\textbf{k}^{-\alpha/2}\int\limits_{r\Delta_n}^{(j-1)\Delta_n}  e^{-\lambda_\textbf{k}\big((j-1)\Delta_n-s\big)}\big(e^{-\lambda_\textbf{k}\Delta_n}-1\big) \diff W_s^\textbf{k} +C_{j,\textbf{k}}\Big)\bigg]\\
&=\sigma^2\frac{(1-e^{-\lambda_\textbf{k}\Delta_n})^2}{2\lambda_\textbf{k}^{1+\alpha}}\big(e^{-\lambda_\textbf{k}(j-i)\Delta_n}-e^{-\lambda_\textbf{k}(i+j-2r-2)\Delta_n}\big)+\Sigma_{j,i}^{BC,\textbf{k}}+\Sigma_{i,j}^{C,\textbf{k}}.
\end{align*}
As discussed in Proposition \ref{prop_autocovOfIncrementsMulti}, we find that $\Sigma_{j,i}^{BC,\textbf{k}}=0$, when $i=j$, and $\Sigma_{i,j}^{C,\textbf{k}}=0$, when $i\neq j$. In particular, for the case when $i<j$, we have the following expression:
\begin{align*}
\E\big[D_2^{\textbf{k},i}D_2^{\textbf{k},j}\big]&=\sigma^2\frac{(1-e^{-\lambda_\textbf{k}\Delta_n})^2}{2\lambda_\textbf{k}^{1+\alpha}}\big(e^{-\lambda_\textbf{k}(j-i)\Delta_n}-e^{-\lambda_\textbf{k}(i+j-2r-2)\Delta_n}\big)\\
&~~~~~+\sigma^2e^{-\lambda_\textbf{k}\Delta_n(j-i)}\Big(e^{\lambda_\textbf{k}\Delta_n}-e^{-\lambda_\textbf{k}\Delta_n}\Big)\frac{e^{-\lambda_\textbf{k}\Delta_n}-1}{2\lambda_\textbf{k}^{1+\alpha}}\\
&\leq \sigma^2e^{-\lambda_\textbf{k}(j-i)\Delta_n}\frac{1-e^{-\lambda_\textbf{k}\Delta_n}}{2\lambda_\textbf{k}^{1+\alpha}}\big(1-e^{\lambda_\textbf{k}\Delta_n}\big)\leq 0.
\end{align*}
Using this calculations along with equation \eqref{eqn_covCijk}, we can derive the following:
\begin{align*}
T_2&\leq C^4\sigma^4\sum_{i=r+u}^{v}\bigg(\sum_{\textbf{k}\in\N^d}\frac{(1-e^{-\lambda_\textbf{k}\Delta_n})^2}{2\lambda_\textbf{k}^{1+\alpha}}e^{-2\lambda_\textbf{k}(i-r-1)\Delta_n}\bigg)\\
&~~~~~\times\bigg(\sum_{\textbf{k}\in\N^d}\frac{(1-e^{-\lambda_\textbf{k}\Delta_n})^2}{2\lambda_\textbf{k}^{1+\alpha}}\big(1-e^{-2\lambda_\textbf{k}(i-r-1)\Delta_n}\big)+\sigma^{-2}\Sigma_{i,i}^{C,\textbf{k}}\bigg)\\
&~~~~~+2\sum_{\substack{i,j=r+u\\i<j}}^v\bigg(\sum_{\textbf{k}\in\N^d}\frac{(1-e^{-\lambda_\textbf{k}\Delta_n})^2}{2\lambda_\textbf{k}^{1+\alpha}}e^{-\lambda_\textbf{k}(i+j-2r-2)\Delta_n}e_\textbf{k}^2(\textbf{y})\bigg)\bigg(\sum\limits_{\textbf{k}\in\N^d}\E\big[D_2^{\textbf{k},i}D_2^{\textbf{k},j}\big]e_\textbf{k}^2(\textbf{y})\bigg)\\
&\leq  C^4\sigma^4\sum_{i=r+u}^{v}\bigg(\sum_{\textbf{k}\in\N^d}\frac{(1-e^{-\lambda_\textbf{k}\Delta_n})^2}{2\lambda_\textbf{k}^{1+\alpha}}e^{-2\lambda_\textbf{k}(i-r-1)\Delta_n}\bigg)\bigg(\sum_{\textbf{k}\in\N^d}\frac{(1-e^{-\lambda_\textbf{k}\Delta_n})^2+1-e^{-2\lambda_\textbf{k}\Delta_n}}{2\lambda_\textbf{k}^{1+\alpha}}\bigg)\\
&\leq C^4\sigma^4 \Delta_n^{2\alpha'}\bar{p} \bigg(\Delta_n^{d/2}\sum_{\textbf{k}\in\N^d}\frac{(1-e^{-\lambda_\textbf{k}\Delta_n})^2}{2(\lambda_\textbf{k}\Delta_n)^{1+\alpha}}e^{-2\lambda_\textbf{k}(u-1)\Delta_n}\bigg)\bigg(\Delta_n^{d/2}\sum_{\textbf{k}\in\N^d}\frac{1-e^{-\lambda_\textbf{k}\Delta_n}}{(\lambda_\textbf{k}\Delta_n)^{1+\alpha}}\bigg).
\end{align*}
By utilizing analogous steps as for the term $T_1$, we obtain the following expression for $T_2$:
\begin{align*}
T_2\leq C\sigma^4\frac{\bar{p}\Delta_n^{2\alpha'}}{(u-1)^{2-\alpha'}},
\end{align*}
with a suitable constant $C>0$. Thereby, we conclude for $u\geq 2$ that
\begin{align}
\E[\bar{A}_1^2]\leq C\sigma^4\frac{\bar{p}\Delta_n^{2\alpha'}}{(u-1)^{2-\alpha'}}.\label{eqn_A1BarBound}
\end{align}
Finally, using Proposition \ref{prop_quadIncAndrescaling}, we find that
\begin{align*}
\Var\Big(\big(Q_{r+u}^v\big)^2\Big)\geq C\E\Big[\big(Q_{r+u}^v\big)^2\Big]=C\E\Bigg[\bigg(\sum\limits_{i=r+u}^v(\Delta_i\tilde{X})^2(y)\bigg)^2\Bigg]\geq C\sum\limits_{i=r+u}^v\E\Big[(\Delta_i\tilde{X})^4(y)\Big]\geq C''\sigma^4\bar{p}\Delta_n^{2\alpha'}.
\end{align*}
This, and a simple bound for $u=1$ complete the proof.
\end{proof}\ \\

\begin{cor}\label{corollary_4thConditionMultiDimAndMultiSpace}
On the assumptions of Proposition \ref{porp_timeDepenMulit}, it holds for $1\leq r<r+u\leq v\leq n$ and
\begin{align*}
\tilde{Q}_1^r=\sum\limits_{i=1}^r \tilde{\xi}_{n,i},~~~~~~~~\tilde{Q}_{r+u}^v=\sum\limits_{i=r+u}^v \tilde{\xi}_{n,i},
\end{align*}
that there is a constant $C$, with $0<C<\infty$ and $\tilde{\xi}_{n,i}$ from equation \eqref{eqn_triangularArrayZetaMulti}, such that for all $t\in\R$ it holds:
\begin{align*}
\abs{\Cov\bigg(e^{\im t(\tilde{Q}_1^r-\E[\tilde{Q}_1^r])},~e^{\im t(\tilde{Q}_{r+u}^v-\E[\tilde{Q}_{r+u}^v])}\bigg)}\leq \frac{Ct^2}{u^{1-\alpha'/2}}\sqrt{\Var(\tilde{Q}_1^r)\Var(\tilde{Q}_{r+u}^v)}.
\end{align*}

\end{cor}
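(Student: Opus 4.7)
The plan is to mimic the decomposition strategy of Proposition \ref{porp_timeDepenMulit} but promoting every single-point quantity $(\Delta_i\tilde X)^2(\textbf{y})$ to the weighted spatial sum that defines $\tilde{\xi}_{n,i}$. Write $c_{n,m}:=2^d(\pi\eta)^{d/2}\alpha'\Gamma(d/2)\big/(\sqrt{nm}\,\Delta_n^{\alpha'}\Gamma(1-\alpha'))$ and $w_j:=e^{\nor{\kappa\bigcdot\textbf{y}_j}_1}$, so that $\tilde Q_1^r=c_{n,m}\sum_{j=1}^m w_j\sum_{i=1}^r(\Delta_i\tilde X)^2(\textbf{y}_j)$ and analogously for $\tilde Q_{r+u}^v$. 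For each $j$, decompose the temporal increments appearing in $\tilde Q_{r+u}^v$ via the splitting $\Delta_i\tilde X(\textbf{y}_j)=\sum_{\textbf{k}\in\N^d}(D_1^{\textbf{k},i}+D_2^{\textbf{k},i})e_\textbf{k}(\textbf{y}_j)$ from \eqref{number76:D1}--\eqref{number76:D2}. This gives $\tilde Q_{r+u}^v=\tilde A_1+\tilde A_2$, where $\tilde A_2=c_{n,m}\sum_{j=1}^m w_j\sum_{i=r+u}^v\big(\sum_{\textbf{k}}D_2^{\textbf{k},i}e_\textbf{k}(\textbf{y}_j)\big)^2$ is independent of $\tilde Q_1^r$ because it depends only on increments of $(W^\textbf{k})_{\textbf{k}\in\N^d}$ after time $r\Delta_n$, while $\tilde A_1$ collects the remaining pure-$D_1$ and mixed $D_1 D_2$ contributions across all $\textbf{y}_j$.

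With this split in hand, the bound $|\Cov(e^{\im t\bar X},e^{\im t\bar Y})|\leq 2t^2\sqrt{\E[\bar X^2]\E[\bar A_1^2]}$ from \cite[Prop.~6.6]{trabs} (already invoked in the single-site proof) applies verbatim and yields
\begin{align*}
\Big\lvert\Cov\big(e^{\im t(\tilde Q_1^r-\E[\tilde Q_1^r])},e^{\im t(\tilde Q_{r+u}^v-\E[\tilde Q_{r+u}^v])}\big)\Big\rvert\leq 2t^2\sqrt{\Var(\tilde Q_1^r)\,\E[\bar{\tilde A}_1^2]}.
\end{align*}
The central task is therefore to control $\E[\bar{\tilde A}_1^2]$. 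Expanding the square produces $\E[\bar{\tilde A}_1^2]=c_{n,m}^2\sum_{j_1,j_2=1}^m w_{j_1}w_{j_2}\Cov(A_1(\textbf{y}_{j_1}),A_1(\textbf{y}_{j_2}))$, where $A_1(\textbf{y})$ denotes the single-site object from the proof of Proposition \ref{porp_timeDepenMulit}. On the diagonal $j_1=j_2$, I would apply \eqref{eqn_A1BarBound} directly, which yields $\Var(A_1(\textbf{y}_j))\leq C\sigma^4\bar p\Delta_n^{2\alpha'}/(u-1)^{2-\alpha'}$ uniformly for $\textbf{y}_j\in[\delta,1-\delta]^d$ and hence a diagonal contribution of order $\bar p/(n(u-1)^{2-\alpha'})$ after accounting for $c_{n,m}^2\sim 1/(nm\Delta_n^{2\alpha'})$ and summation over $m$ indices. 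For the off-diagonal terms I would reuse the argument that produced $T_1,T_2$ in equations \eqref{number76:T1}--\eqref{number76:T2}, but replacing each factor $e_\textbf{k}^2(\textbf{y})$ by the cross product $e_\textbf{k}(\textbf{y}_{j_1})e_\textbf{k}(\textbf{y}_{j_2})$. The decomposition \eqref{eqn_ekDecompInCosine} then rewrites these cross products as products of cosines, and the Riemann sum estimates of Corollary \ref{corollary_toLemmaRiemannApproxMulti}---exactly as they were employed in Proposition \ref{prop_RRVMulti} to establish asymptotic spatial decorrelation---bound each off-diagonal term by $C\sigma^4\bar p\Delta_n^{2\alpha'+1-\alpha'}(u-1)^{-(2-\alpha')}\big(\nor{\textbf{y}_{j_1}-\textbf{y}_{j_2}}_0^{-(d+1)}+\delta^{-(d+1)}\big)$. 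Assumption \ref{assumption_observations_multi} keeps $m_n\cdot\min_{j_1\neq j_2}\nor{\textbf{y}_{j_1}-\textbf{y}_{j_2}}_0$ bounded away from zero, so summing over $j_1\neq j_2$ only contributes an additional factor of order $1$.

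Finally, to upgrade the resulting estimate $\E[\bar{\tilde A}_1^2]\leq C\bar p/(n(u-1)^{2-\alpha'})$ into the product $\Var(\tilde Q_1^r)\Var(\tilde Q_{r+u}^v)$ required by the claim, I would combine it with the lower bound $\Var(\tilde Q_{r+u}^v)\gtrsim \bar p/n$, obtained from Proposition \ref{prop_RRVMulti} by retaining only the diagonal spatial terms in $\Var(\tilde Q_{r+u}^v)=c_{n,m}^2\sum_{j_1,j_2}w_{j_1}w_{j_2}\Cov\big(\sum_i(\Delta_i\tilde X)^2(\textbf{y}_{j_1}),\sum_i(\Delta_i\tilde X)^2(\textbf{y}_{j_2})\big)$, where the off-diagonal ones are negligible by the spatial separation assumption. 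Taking square roots converts the exponent $(u-1)^{-(2-\alpha')}$ into the required $(u-1)^{-(1-\alpha'/2)}\leq Cu^{-(1-\alpha'/2)}$, and the case $u=1$ is handled by the trivial bound $|e^{\im x}|\leq 1$. The principal obstacle in executing this plan is the off-diagonal control of $\E[\bar{\tilde A}_1^2]$: naive Cauchy--Schwarz $|\Cov(A_1(\textbf{y}_{j_1}),A_1(\textbf{y}_{j_2}))|\leq C\sigma^4\bar p\Delta_n^{2\alpha'}/(u-1)^{2-\alpha'}$ would inflate the bound by a factor of $m$ and break the argument whenever $m\to\infty$, so one must genuinely exploit the spatial decorrelation delivered by Corollary \ref{corollary_toLemmaRiemannApproxMulti} through the cross-cosine factors---just as in the proof of Proposition \ref{prop_RRVMulti}---in order to keep the total off-diagonal contribution at the same order as the diagonal one.
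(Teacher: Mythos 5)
Your proposal follows essentially the same route as the paper's proof: the same $D_1/D_2$ splitting of $\tilde Q_{r+u}^v$ into $\tilde A_1+\tilde A_2$ with $\tilde A_2$ independent of $\tilde Q_1^r$, the same covariance inequality from \cite[Prop.~6.6]{trabs}, the same diagonal bound via \eqref{eqn_A1BarBound} combined with off-diagonal control through the cross-cosine decomposition \eqref{eqn_ekDecompInCosine}, Corollary \ref{corollary_toLemmaRiemannApproxMulti} and the separation bound \eqref{eqn_OrderDistinctSpatialCoordiantes}, and the same variance lower bound to close the argument. The only minor imprecision is attributing the harmlessness of the off-diagonal sum solely to the minimum-separation condition; one also needs $m_n=\Oo(n^\rho)$ with $\rho<(1-\alpha')/(d+2)$ so that the resulting factor $\Delta_n^{1-\alpha'}m_n^{d+2}$ stays bounded, exactly as in the paper.
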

\begin{proof}
We present the proof analogously to Proposition \ref{porp_timeDepenMulit} and begin by decomposing the term $\tilde{Q}_{r+u}^v$ as follows:
\begin{align*}
\tilde{Q}_{r+u}^v=\sum_{i=r+u}^v \tilde{\xi}_{n,i}=\frac{2^d(\pi\eta)^{d/2}\alpha'\Gamma(d/2)}{\sqrt{nm}\Delta_n^{\alpha'}\Gamma(1-\alpha')}\sum_{j=1}^m\big(A_1(\textbf{y}_j)+A_2(\textbf{y}_j)\big)e^{\nor{\kappa\bigcdot \textbf{y}_j}_1},
\end{align*}
where
\begin{align*}
A_1(\textbf{y})&:= \sum\limits_{i=r+u}^v \bigg(\sum\limits_{\textbf{k}\in\N^d} D_1^{\textbf{k},i}e_\textbf{k}(\textbf{y})\bigg)^2+ 2\sum\limits_{i=r+u}^v \bigg(\sum\limits_{\textbf{k}\in\N^d} D_1^{\textbf{k},i}e_\textbf{k}(\textbf{y})\bigg)\bigg(\sum\limits_{\textbf{k}\in\N^d}^\infty D_2^{\textbf{k},i}e_\textbf{k}(\textbf{y})\bigg), \\
A_2(\textbf{y})&:=\sum\limits_{i=r+u}^v \bigg(\sum\limits_{\textbf{k}\in\N^d} D_2^{\textbf{k},i}e_\textbf{k}(\textbf{y})\bigg)^2,
\end{align*}
and an analogous definition of $D_1^{\textbf{k},i}$ and $D_2^{\textbf{k},i}$ as in the equations \eqref{number76:D1} and \eqref{number76:D2}. Thereby, we need to bound the following expression:
\begin{align*}
\Var\bigg(\frac{K}{\sqrt{nm}\Delta_n^{\alpha'}}\sum_{j=1}^m A_1(\textbf{y}_j)e^{\nor{\kappa\bigcdot \textbf{y}_j}_1}\bigg)&=\frac{K^2}{nm\Delta_n^{2\alpha'}}\sum_{j=1}^m\Var\big(A_1(\textbf{y}_j)\big) e^{2\nor{\kappa\bigcdot \textbf{y}_j}_1 }\\
&~~~~~+\frac{K^2}{nm\Delta_n^{2\alpha'}}\sum_{\substack{j_1,j_2=1 \\ j_1\neq j_2}}^m\Cov\big(A_1(\textbf{y}_{j_1}),A_1(\textbf{y}_{j_2})\big)e^{\nor{\kappa\bigcdot (\textbf{y}_{j_1}+\textbf{y}_{j_2})}_1 },
\end{align*}
where
\begin{align*}
K:=\frac{2^d(\pi\eta)^{d/2}\alpha'\Gamma(d/2)}{\Gamma(1-\alpha')}.
\end{align*}
Let $\bar{p}=v-r-u+1$ and $u\geq 2$. Thanks to Proposition \ref{porp_timeDepenMulit}, we obtain the following:
\begin{align*}
\frac{K^2}{nm\Delta_n^{2\alpha'}}\sum_{j=1}^m\Var\big(A_1(\textbf{y}_j)\big) e^{2\nor{\kappa\bigcdot \textbf{y}_j}_1 }\leq C\sigma^4\frac{\bar{p}K^2\Delta_n}{(u-1)^{2-\alpha'}},
\end{align*}
where we used the bound for $\E[\bar{A}_1^2]$ from display \eqref{eqn_A1BarBound}. 
For the covariance, we exploit the independence of $D_1^{\textbf{k},i}$ and $D_2^{\textbf{k},i}$, along with both terms being centred normals. This allows us to derive the following:
\begin{align*}
\Cov\big(A_1(\textbf{y}_{1}),A_1(\textbf{y}_{2})\big)&=
\sum\limits_{i,j=r+u}^v\E\bigg[ \bigg(\sum\limits_{\textbf{k}\in\N^d} D_1^{\textbf{k},i}e_\textbf{k}(\textbf{y}_1)\bigg)^2 \bigg(\sum\limits_{\textbf{k}\in\N^d} D_1^{\textbf{k},j}e_\textbf{k}(\textbf{y}_2)\bigg)^2\bigg]\\
&~~~~~+4\sum_{i,j=r+u}^v\E\bigg[\bigg(\sum\limits_{\textbf{k}\in\N^d} D_1^{\textbf{k},i}e_\textbf{k}(\textbf{y}_1)\bigg)\bigg(\sum\limits_{\textbf{k}\in\N^d}^\infty D_2^{\textbf{k},i}e_\textbf{k}(\textbf{y}_1)\bigg)\\
&~~~~~\times\bigg(\sum\limits_{\textbf{k}\in\N^d} D_1^{\textbf{k},j}e_\textbf{k}(\textbf{y}_2)\bigg)\bigg(\sum\limits_{\textbf{k}\in\N^d}^\infty D_2^{\textbf{k},j}e_\textbf{k}(\textbf{y}_2)\bigg)\bigg]\\
&~~~~~-\sum_{i,j=r+u}^v\E\bigg[\bigg(\sum_{\textbf{k}\in\N^d} D_1^{\textbf{k},i}e_\textbf{k}(\textbf{y}_1)\bigg)^2\bigg]\E\bigg[\bigg(\sum_{\textbf{k}\in\N^d} D_1^{\textbf{k},j}e_\textbf{k}(\textbf{y}_2)\bigg)^2\bigg].
\end{align*}
Since we can bound the eigenfunctions $(e_\textbf{k})$ by a suitable constant $C>0$ for all $\textbf{k}\in\N^d$, we observe that the covariance includes the terms $T_1$ and $T_2$ from the displays \eqref{number76:T1} and \eqref{number76:T2}, respectively. Therefore, we can repeat the calculations from Proposition \ref{porp_timeDepenMulit} concerning the eigenfunctions, leading to the following:
\begin{align*}
\tilde{T}_1
&\leq \sigma^4\bar{p}\bigg(\sum\limits_{\textbf{k}\in\N^d} \frac{\big(1-e^{-\lambda_{\textbf{k}}\Delta_n}\big)}{2\lambda_{\textbf{k}}^{1+\alpha}}e^{-2\lambda_{\textbf{k}}(u-1)\Delta_n}e_{\textbf{k}}^2(\textbf{y}_1)\bigg)\bigg(\sum\limits_{\textbf{k}\in\N^d} \frac{\big(1-e^{-\lambda_{\textbf{k}}\Delta_n}\big)^2}{2\lambda_{\textbf{k}}^{1+\alpha}}e^{-2\lambda_{\textbf{k}}(u-1)\Delta_n}e_{\textbf{k}}^2(\textbf{y}_2)\bigg)\\
&~~~~~+\sigma^42\bar{p}\bigg(\sum\limits_{\textbf{k}\in\N^d} \frac{\big(1-e^{-\lambda_{\textbf{k}}\Delta_n}\big)}{2\lambda_{\textbf{k}}^{1+\alpha}}e^{-2\lambda_{\textbf{k}}(u-1)\Delta_n}e_{\textbf{k}}(\textbf{y}_1)e_{\textbf{k}}(\textbf{y}_2)\bigg)\\
&~~~~~~~~~~\times\bigg(\sum\limits_{\textbf{k}\in\N^d} \frac{\big(1-e^{-\lambda_{\textbf{k}}\Delta_n}\big)^2}{2\lambda_{\textbf{k}}^{1+\alpha}}e^{-2\lambda_{\textbf{k}}(u-1)\Delta_n}e_{\textbf{k}}(\textbf{y}_1)e_{\textbf{k}}(\textbf{y}_2)\bigg).
\end{align*}
Furthermore, we obtain that 
\begin{align*}
&\tilde{T}_1-\sum_{i,j=r+u}^v\E\bigg[\bigg(\sum_{\textbf{k}\in\N^d} D_1^{\textbf{k},i}e_\textbf{k}(\textbf{y}_1)\bigg)^2\bigg]\E\bigg[\bigg(\sum_{\textbf{k}\in\N^d} D_1^{\textbf{k},j}e_\textbf{k}(\textbf{y}_2)\bigg)^2\bigg]\\
&\leq \sigma^42\bar{p}\bigg(\sum\limits_{\textbf{k}\in\N^d} \frac{\big(1-e^{-\lambda_{\textbf{k}}\Delta_n}\big)}{2\lambda_{\textbf{k}}^{1+\alpha}}e^{-2\lambda_{\textbf{k}}(u-1)\Delta_n}e_{\textbf{k}}(\textbf{y}_1)e_{\textbf{k}}(\textbf{y}_2)\bigg)\bigg(\sum\limits_{\textbf{k}\in\N^d} \frac{\big(1-e^{-\lambda_{\textbf{k}}\Delta_n}\big)^2}{2\lambda_{\textbf{k}}^{1+\alpha}}e^{-2\lambda_{\textbf{k}}(u-1)\Delta_n}e_{\textbf{k}}(\textbf{y}_1)e_{\textbf{k}}(\textbf{y}_2)\bigg).
\end{align*}
Thus, we can bound the latter term by using display \eqref{eqn_ekDecompInCosine} and Lemma \ref{lemma_riemannApprox_multi}. Similar to Proposition \ref{prop_RRVMulti}, we find that 
\begin{align}
&\tilde{T}_1-\sum_{i,j=r+u}^v\E\bigg[\bigg(\sum_{\textbf{k}\in\N^d} D_1^{\textbf{k},i}e_\textbf{k}(\textbf{y}_1)\bigg)^2\bigg]\E\bigg[\bigg(\sum_{\textbf{k}\in\N^d} D_1^{\textbf{k},j}e_\textbf{k}(\textbf{y}_2)\bigg)^2\bigg]\notag\\
&=\Oo\bigg(\sigma^4\frac{\bar{p}\Delta_n^{2\alpha'}}{(u-1)^{2-\alpha'}}\Delta_n		^{1-\alpha'}\nor{\textbf{y}_1-\textbf{y}_2}_0^{-(d+1)}\bigg),\label{eqn_tildet1Multi_1}
\end{align}
where we used analogous steps as in display \eqref{eqn_covEksRemainder}.
For the last term in the covariance, we redefine:
\begin{align*}
\tilde{T}_2&:=\sum_{i,j=r+u}^v\E\bigg[\bigg(\sum\limits_{\textbf{k}\in\N^d} D_1^{\textbf{k},i}e_\textbf{k}(\textbf{y}_1)\bigg)\bigg(\sum\limits_{\textbf{k}\in\N^d}^\infty D_2^{\textbf{k},i}e_\textbf{k}(\textbf{y}_1)\bigg)\\
&~~~~~\times\bigg(\sum\limits_{\textbf{k}\in\N^d} D_1^{\textbf{k},j}e_\textbf{k}(\textbf{y}_2)\bigg)\bigg(\sum\limits_{\textbf{k}\in\N^d}^\infty D_2^{\textbf{k},j}e_\textbf{k}(\textbf{y}_2)\bigg)\bigg]\\
&=\sum\limits_{i,j=r+u}^v\bigg(\sum\limits_{\textbf{k}\in\N^d}\E\big[D_1^{\textbf{k},i}D_1^{\textbf{k},j}\big]e_\textbf{k}(\textbf{y}_1)e_\textbf{k}(\textbf{y}_2)\bigg)\bigg(\sum\limits_{\textbf{k}\in\N^d}\E\big[D_2^{\textbf{k},i}D_2^{\textbf{k},j}\big]e_\textbf{k}(\textbf{y}_1)e_\textbf{k}(\textbf{y}_2)\bigg).
\end{align*}
With similar steps as in Proposition \ref{prop_RRVMulti}, we obtain:
\begin{align*}
\tilde{T}_2
&\leq \sigma^4 \Delta_n^{2\alpha'}\bar{p} \bigg(\Delta_n^{d/2}\sum_{\textbf{k}\in\N^d}\frac{(1-e^{-\lambda_\textbf{k}\Delta_n})^2}{2(\lambda_\textbf{k}\Delta_n)^{1+\alpha}}e^{-2\lambda_\textbf{k}(u-1)\Delta_n}e_\textbf{k}(\textbf{y}_1)e_\textbf{k}(\textbf{y}_2)\bigg)\bigg(\Delta_n^{d/2}\sum_{\textbf{k}\in\N^d}\frac{1-e^{-\lambda_\textbf{k}\Delta_n}}{(\lambda_\textbf{k}\Delta_n)^{1+\alpha}}e_\textbf{k}(\textbf{y}_1)e_\textbf{k}(\textbf{y}_2)\bigg)\\
&=\Oo\bigg(\sigma^4\bar{p}\Delta_n^{2\alpha'}\Delta_n^{1-\alpha'}\nor{\textbf{y}_1-\textbf{y}_2}_0^{-(d+1)}\Big(\Delta_n^{d/2}\sum_{\textbf{k}\in\N^d}\frac{(1-e^{-\lambda_\textbf{k}\Delta_n})^2}{(\lambda_\textbf{k}\Delta_n)^{1+\alpha}}e^{-2\lambda_\textbf{k}(u-1)\Delta_n}\Big)\bigg)=\Oo(\tilde{T}_1).
\end{align*}
Hence, we have
\begin{align*}
\frac{K^2}{nm\Delta_n^{2\alpha'}}\sum_{\substack{j_1,j_2=1\\j_1\neq j_2}}^m\Cov\big(A_1(\textbf{y}_{j_1}),A_1(\textbf{y}_{j_2})\big)&= \Oo\bigg(\sigma^4\frac{\bar{p}\Delta_n}{(u-1)^{2-\alpha'}}\cdot \frac{\Delta_n^{1-\alpha'}}{m}\sum_{\substack{j_1,j_2=1\\j_1\neq j_2}}^m\frac{1}{\nor{\textbf{y}_{j_1}-\textbf{y}_{j_2}}_0^{d+1}}\bigg).
\end{align*}
According to Assumption \ref{assumption_observations_multi}, the distance between any two arbitrary spatial coordinates is bounded from below, leading to the following order:
\begin{align}
\sum_{\substack{j_1,j_2=1\\j_1\neq j_2}}^m\bigg(\frac{1}{\nor{\textbf{y}_{j_1}-\textbf{y}_{j_2}}_0}\bigg)^{d+1}=\Oo\bigg(m^{d+1}\sum_{\substack{j_1,j_2=1\\j_1\neq j_2}}^m\Big(\frac{1}{m\nor{\textbf{y}_{j_1}-\textbf{y}_{j_2}}_0}\Big)^{d+1}\bigg)=\Oo\big(m^{d+3}\big).\label{eqn_OrderDistinctSpatialCoordiantes}
\end{align}
Thus, we conclude that
\begin{align*}
\frac{K^2}{nm\Delta_n^{2\alpha'}}\sum_{\substack{j_1,j_2=1\\j_1\neq j_2}}^m\Cov\big(A_1(\textbf{y}_{j_1}),A_1(\textbf{y}_{j_2})\big)&= \Oo\bigg(\sigma^4\frac{\bar{p}\Delta_n}{(u-1)^{2-\alpha'}}\Delta_n^{1-\alpha'}m^{d+2}\bigg).
\end{align*}
The proof follows with:
\begin{align*}
\E[(Q_{r+u}^v)^2]\geq \sum_{i=r+u}^v\E[\xi_{n,i}^2]\geq C\frac{K^2\bar{p}}{nm\Delta_n^{2\alpha'}}\sum_{j=1}^m\E\big[(\Delta_i\tilde{X})^4(\textbf{y}_j)\big]\geq C'\sigma^4\Delta_n\bar{p}.
\end{align*}
\end{proof}

Now we are able to prove the central limit theorem from Proposition \ref{prop_cltVolaEstMulti}.
\begin{proof}
To prove this central limit theorem, we will employ Proposition \ref{prop_clt_utev}. Hence, we define:
\begin{align*}
\Xi_{n,i}:=\tilde{\xi}_{n,i}-\E[\tilde{\xi}_{n,i}],
\end{align*}
where $\tilde{\xi}_{n,i}$ is defined in equation \eqref{eqn_triangularArrayZetaMulti}.
The asymptotic variance is given by:
\begin{align*}
\Var\bigg(\sum_{i=1}^n\Xi_{n,i}\bigg)&=\Var\bigg(\sum_{i=1}^n\tilde{\xi}_{n,i}\bigg)=\frac{K^2}{nm_n\Delta_n^{2\alpha'}}\Var\bigg(\sum_{j=1}^{m_n}\sum_{i=1}^n(\Delta_i\tilde{X})^2(\textbf{y}_j)e^{\nor{\kappa\bigcdot \textbf{y}_j}_1}\bigg)\\
&=\frac{K^2n^2\Delta_n^{2\alpha'}}{nm_n\Delta_n^{2\alpha'}}\bigg(\sum_{j=1}^{m_n}\Var\big(V_{n,\Delta_n}(\textbf{y}_j)\big)+\sum_{\substack{j_1,j_2=1\\j_1\neq j_2}}^{m_n}\Cov\big(V_{n,\Delta_n}(\textbf{y}_{j_1}),V_{n,\Delta_n}(\textbf{y}_{j_2}\big)\bigg)\\
&=\frac{K^2n}{m_n}\cdot \frac{m_n\Upsilon_{\alpha'}\sigma^4}{n}\bigg(\frac{\Gamma(1-\alpha')}{2^d(\pi\eta)^{d/2}\alpha'\Gamma(d/2)}\bigg)^2\big(1+\Oo(\Delta_n^{1/2}\vee \Delta_n^{1-\alpha'})\big)\\
&~~~~~+\Oo\bigg(\frac{K^2n}{m_n}\cdot\frac{\Delta_n^{1-\alpha'}}{n}\sum_{\substack{j_1,j_2=1\\j_1\neq j_2}}^{m_n}\Big( \nor{\textbf{y}_1-\textbf{y}_2}_0^{-(d+1)}+\delta^{-(d+1)}\Big) \bigg)\\
&=\Upsilon_{\alpha'}\sigma^4\big(1+\Oo(\Delta_n^{1/2}\vee \Delta_n^{1-\alpha'})\big)+\Oo(m_n^{d+2}\Delta_n^{1-\alpha'})\overset{n\tooi}{\longrightarrow}\Upsilon_{\alpha'}\sigma^4,
\end{align*}
where we used Proposition \ref{prop_RRVMulti} and equation \eqref{eqn_OrderDistinctSpatialCoordiantes} and $K$ defined in \eqref{eqn_KDefinition}.
It remains to prove the Conditions (I)-(III) from Proposition \ref{prop_clt_utev}, since the last condition is proved by Corollary \ref{corollary_4thConditionMultiDimAndMultiSpace}.
\begin{itemize}
\item[(I)] By Proposition \ref{prop_RRVMulti} we have
\begin{align*}
\sum_{i=a}^b\Var(\Xi_{n,i})&=\sum_{i=a}^b\Var(\tilde{\xi}_{n,i})=\frac{K^2\Delta_n^{2\alpha'}}{nm_n\Delta_n^{2\alpha'}}\sum_{i=a}^b \Var\bigg(\sum_{j=1}^{m_n}\frac{1}{\Delta_n^{\alpha'}}(\Delta_i\tilde{X})(\textbf{y}_j)e^{\nor{\kappa\bigcdot \textbf{y}_j}_1}\bigg)\\
&=\frac{K^2}{nm_n}\sum_{i=a}^b\bigg(\sum_{j=1}^{m_n}\Var\big(V_{1,\Delta_n}(\textbf{y}_j)\big)+\sum_{\substack{j_1,j_2=1\\ j_1\neq j_2}}^{m_n}\Cov\big(V_{1,\Delta_n}(\textbf{y}_{j_1}),V_{1,\Delta_n}(\textbf{y}_{j_2})\big)\bigg)\\
&=\Oo\bigg(\Delta_n(b-a+1)+\Delta_n(b-a+1)\Delta_n^{1-\alpha'}m_n^{d+2}\bigg)=\Oo\big(\Delta_n(b-a+1)\big).
\end{align*}
We utilize the calculations for the asymptotic variance as shown in this proof and thus conclude:
\begin{align*}
\Var\bigg(\sum_{i=a}^b\Xi_{n,i}\bigg)&=\Var\bigg(\sum_{i=a}^b\tilde{\xi}_{n,i}\bigg)\\
&=\Oo\bigg(\frac{K^2(b-a+1)^2}{nm_n}\cdot \frac{m_n}{(b-a+1)K^2}+\frac{(b-a+1)^2}{nm_n}\cdot \frac{\Delta_n^{1-\alpha'}m_n^{d+3}}{(b-a+1)}\bigg)\\
&=\Oo\big(\Delta_n(b-a+1)\big),
\end{align*}
which shows the first condition as well as the second condition.
\item[(III)] 
We prove that a Lyapunov condition is satisfied. By using the Cauchy-Schwarz inequality, we have
\begin{align*}
\E[\tilde{\xi}_{n,i}^4]&=\frac{K^4}{n^2m_n^2\Delta_n^{4\alpha'}}\sum_{j_1,\ldots,j_4=1}^{m_n} e^{\nor{\kappa \bigcdot (\textbf{y}_{j_1}+\ldots+\textbf{y}_{j_4})}_1}\E[(\Delta_i\tilde{X})^2(\textbf{y}_{j_1})\cdots(\Delta_i\tilde{X})^2(\textbf{y}_{j_4})]\\
&\leq \frac{K^4}{n^2m_n^2\Delta_n^{4\alpha'}}\sum_{j_1,\ldots,j_4=1}^{m_n} e^{\nor{\kappa \bigcdot (\textbf{y}_{j_1}+\ldots+\textbf{y}_{j_4})}_1}\E[(\Delta_i\tilde{X})^8(\textbf{y}_{j_1})]^{1/4}\cdots\E[(\Delta_i\tilde{X})^8(\textbf{y}_{j_4})]^{1/4}\\
&\leq \frac{K^4}{n^2\Delta_n^{4\alpha'}}m_n^2 e^{4\nor{\kappa}_1}\max_{\textbf{y}\in\{\textbf{y}_1,\ldots\textbf{y}_{m_n}\}}\E[(\Delta_i\tilde{X})^8(\textbf{y})].
\end{align*}
Since $(\Delta_n\tilde{X})(\textbf{y})$ is a centred Gaussian random variable, we can infer that $\E[(\Delta_i\tilde{X})^8(\textbf{y})]=\Oo(\Delta_n^{4\alpha})$ by using Proposition \ref{prop_quadIncAndrescaling}. Thus, we have
\begin{align*}
\sum_{i=1}^n\E[\tilde{\xi}_{n,i}^4]=\Oo(\Delta_nm^2)=\smallO(1),
\end{align*}
which shows the third condition.
\end{itemize}

\end{proof}

\subsection{Proofs of Section 4}
We proceed to tackle the methodology section for the estimator $\hat{\Psi}$ by deriving the corresponding multidimensional triangular array for $\hat{\Psi}$. Notably, demonstrating a central limit theorem for the estimator $\hat{\Psi}$ alone suffices, as the estimator $\hat{\upsilon}$ is a transformation of $\hat{\Psi}$. This enables us to deduce a central limit theorem for $\hat{\upsilon}$ using the multidimensional delta method.
To construct the multidimensional triangular array, we leverage the Taylor expansion for $\log(a+x)$ and obtain that
\begin{align*}
\log\bigg(\frac{\RV(\textbf{y})}{n\Delta_n^{\alpha'}}\bigg)&=\log(\sigma_0^2K)-\nor{\kappa\bigcdot \textbf{y}}_1+\frac{\sum_{i=1}^n\overline{(\Delta_i\tilde{X})^2(\textbf{y})}}{n\Delta_n^{\alpha'}\sigma_0^2K}e^{\nor{\kappa\bigcdot\textbf{y}}_1}+\Oo\big(\Delta_n^{1-\alpha'}\big)+\Oo_\Pp\Bigg(\bigg(\frac{\overline{\RV(\textbf{y})}}{n\Delta_n^{\alpha'}}\bigg)^2\Bigg),
\end{align*}
where the constant $K$ is defined in equation \eqref{eqn_KDefinition}.
Utilizing Proposition \ref{prop_RRVMulti} we conclude that 
\begin{align}
\log\bigg(\frac{\RV(\textbf{y})}{n\Delta_n^{\alpha'}}\bigg)&=\log(\sigma_0^2K)-\nor{\kappa\bigcdot \textbf{y}}_1+\frac{\sum_{i=1}^n\overline{(\Delta_i\tilde{X})^2(\textbf{y})}}{n\Delta_n^{\alpha'}\sigma_0^2K}e^{\nor{\kappa\bigcdot\textbf{y}}_1}+\Oo\big(\Delta_n^{1-\alpha'}\big)+\Oo_\Pp(\Delta_n).\label{eqn_DecompRVMulti}
\end{align}
The previous expression simplifies the analysis by allowing us to focus on the term:
\begin{align*}
\log(\sigma_0^2K)-\nor{\kappa\bigcdot \textbf{y}}_1+\frac{\sum_{i=1}^n\overline{(\Delta_i\tilde{X})^2(\textbf{y})}}{n\Delta_n^{\alpha'}\sigma_0^2K}e^{\nor{\kappa\bigcdot\textbf{y}}_1},
\end{align*}
where the last component represents the model error.
Our goal is to establish a central limit theorem in the form of $\sqrt{nm}(\hat{\Psi}-\Psi)$. To achieve this, we can develop the triangular array associated with the estimator $\hat{\Psi}$ by employing equation \eqref{eqn_identityGMEForTriangularArray}. This triangular array is defined as $\Xi_{n,i}:=\xi_{n,i}-\E[\xi_{n,i}]$, where:
\begin{align}
\xi_{n,i}&:=\sqrt{nm}\cdot \frac{1-2\delta}{m}\bigg(\frac{1-2\delta}{m}X^\top X\bigg)^{-1}X^\top\begin{pmatrix}
\frac{(\Delta_i\tilde{X})^2(\textbf{y}_1)}{n\Delta_n^{\alpha'}\sigma_0^2K}e^{\nor{\kappa\bigcdot\textbf{y}_1}_1}\\ \vdots \\ \frac{(\Delta_i\tilde{X})^2(\textbf{y}_m)}{n\Delta_n^{\alpha'}\sigma_0^2K}e^{\nor{\kappa\bigcdot\textbf{y}_m}_1}
\end{pmatrix}\notag\\[3ex]
&=  \frac{\sqrt{n}(1-2\delta)}{\sqrt{m}K\sigma_0^2}\bigg(\frac{1-2\delta}{m}X^\top X\bigg)^{-1}X^\top\begin{pmatrix}
\frac{(\Delta_i\tilde{X})^2(\textbf{y}_1)}{n\Delta_n^{\alpha'}}e^{\nor{\kappa\bigcdot\textbf{y}_1}_1}\\ \vdots \\ \frac{(\Delta_i\tilde{X})^2(\textbf{y}_m)}{n\Delta_n^{\alpha'}}e^{\nor{\kappa\bigcdot\textbf{y}_m}_1}
\end{pmatrix}.\label{eqn_ximultiMLRM}
\end{align}
With the triangular array $\Xi_{n,i}$ in place, we can now proceed to the preparations for a CLT. For proving Proposition \ref{clt_PsiMulti}, we utilize a generalization of Proposition \ref{prop_clt_utev}, which directly follows by the Cram\'{e}r-Wold theorem.
\begin{cor}\label{prop_CLTUtev_multivariate}
Let $(Z_{k_n,i})_{1\leq i\leq k_n}$ a centred triangular array, with a sequence $k_n$, where $Z_{n,k_n}\in\R^d$ are random vectors. Then it holds that 
\begin{align*}
\sum_{i=1}^{k_n} Z_{n,i}\overset{d}{\longrightarrow} \mathcal{N}(0,\Sigma), 
\end{align*}
for $n\tooi$ and $\Sigma$ denotes a variance-covariance matrix, which satisfies the equation:
\begin{align*}
\lim_{n\tooi}\Var\bigg(\sum_{i=1}^{k_n}\beta^\top Z_{n,i}\bigg)=\beta^\top\Sigma \beta<\infty,
\end{align*}
for any $\beta\in\R^d$, if the following conditions hold for any $\beta\in\R^d$:
\begin{itemize}
\item[(I)] $\Var\Big(\sum\limits_{i=a}^b\beta^\top Z_{n,i}\Big)\leq C\sum\limits_{i=a}^b\Var(\beta^\top Z_{n,i})$, for all $1\leq a\leq b\leq k_n$ ,
\item[(II)] $\limsup\limits_{n\tooi}\sum\limits_{i=1}^{k_n}\E[\beta^\top Z_{n,i}^2]<\infty$,
\item[(III)] $\sum\limits_{i=1}^{k_n}\E\Big[\beta^\top Z_{k_n,i}^2\mathbbm{1}_{\{|\beta^\top Z_{k_n,i}|>\varepsilon\}}\Big]\overset{n\tooi}{\longrightarrow}0$, for all $\varepsilon>0$,
\item[(IV)] $\Cov\Big(e^{\im t\sum_{i=a}^b\beta^\top Z_{n,i}},e^{\im t\sum_{i=b+u}^c \beta^\top Z_{n,i}}\Big)\leq \rho_t(u)\sum\limits_{i=a}^c\Var(\beta^\top Z_{n,i})$, for all $1\leq a\leq b\leq b+u\leq c\leq k_n$ and $t\in\R$,
\end{itemize}
where $C>0$ is a universal constant and $\rho_t(u)\geq 0$ is a function with $\sum_{j=1}^\infty\rho_t(2^j)<\infty$. 
\end{cor}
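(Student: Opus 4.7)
The plan is to reduce the multivariate statement to the scalar CLT in Proposition \ref{prop_clt_utev} by means of the Cram\'er-Wold device. Recall that a sequence $S_n$ of $\R^d$-valued random vectors converges in distribution to $\mathcal{N}(0,\Sigma)$ if and only if for every $\beta\in\R^d$ the scalar projections $\beta^\top S_n$ converge in distribution to $\mathcal{N}(0,\beta^\top\Sigma\beta)$. Therefore it suffices to fix an arbitrary $\beta\in\R^d$ and show that the scalar triangular array $Y_{n,i}:=\beta^\top Z_{n,i}$, $1\leq i\leq k_n$, satisfies the four hypotheses of Proposition \ref{prop_clt_utev}, and that the resulting limiting variance is exactly $\beta^\top\Sigma\beta$.

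First I would observe that $(Y_{n,i})_i$ is a centred scalar triangular array since $\E[\beta^\top Z_{n,i}]=\beta^\top\E[Z_{n,i}]=0$. Conditions (I)--(IV) of Proposition \ref{prop_clt_utev} applied to $Y_{n,i}$ are then literally the assumptions (I)--(IV) of the corollary at the fixed $\beta$. By hypothesis the scalar variance
\[
\lim_{n\tooi}\Var\bigg(\sum_{i=1}^{k_n}Y_{n,i}\bigg)=\lim_{n\tooi}\Var\bigg(\sum_{i=1}^{k_n}\beta^\top Z_{n,i}\bigg)=\beta^\top\Sigma\beta
\]
is finite, so Proposition \ref{prop_clt_utev} yields $\sum_{i=1}^{k_n}Y_{n,i}\overset{d}{\longrightarrow}\mathcal{N}(0,\beta^\top\Sigma\beta)$.

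Since this convergence holds for every $\beta\in\R^d$, the Cram\'er-Wold theorem implies $\sum_{i=1}^{k_n}Z_{n,i}\overset{d}{\longrightarrow}\mathcal{N}(0,\Sigma)$, provided $\Sigma$ is a bona fide variance-covariance matrix. Symmetry of $\Sigma$ and positive semidefiniteness follow immediately from the identity $\beta^\top\Sigma\beta=\lim_n\Var(\beta^\top\sum_i Z_{n,i})\geq 0$ together with the polarisation identity, so one can reconstruct $\Sigma$ as the unique symmetric matrix whose associated quadratic form matches the limiting scalar variances.

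There is no real obstacle in this argument; it is essentially an off-the-shelf Cram\'er-Wold reduction. The only point that deserves a line of care is that the asymptotic variance matrix $\Sigma$ is well-defined as a matrix (and not merely as a collection of limits of scalar variances); this is handled as above via the polarisation identity, ensuring the cross-covariances $\Sigma_{jl}=\tfrac{1}{2}(\Sigma_{jj}+\Sigma_{ll}-\Var\text{-limit for }\beta=e_j-e_l)$ are consistently defined. Hence the full proof will be short and the result follows as stated.
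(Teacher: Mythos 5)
Your proposal is correct and follows exactly the route the paper takes: the paper states that this corollary ``directly follows by the Cram\'er--Wold theorem'' from Proposition \ref{prop_clt_utev}, which is precisely your reduction of the multivariate statement to the scalar one via the projections $\beta^\top Z_{n,i}$. Your additional remark on recovering the symmetric matrix $\Sigma$ by polarisation is a harmless (and reasonable) elaboration of a point the paper leaves implicit.
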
\ \\

We start by calculating the asymptotic variance. 
\begin{lemma}\label{lemma_assympVarMultiMLRM}
On the Assumptions \ref{assumption_observations_multi}, \ref{assumption_regMulti} and \ref{assumption_fullRank}, we have 
\begin{align*}
\lim_{n\tooi}\Var\bigg(\sum_{i=1}^n\gamma^\top \Xi_{n,i}\bigg)&= (1-2\delta)\Upsilon_{\alpha'}\gamma^\top\Sigma^{-1}\gamma,
\end{align*}
where $\Xi_{n,i}$ is defined in equation \eqref{eqn_ximultiMLRM}, $\Upsilon_{\alpha'}$ defined in equation \eqref{eqn_definingUpsilon}, $\Sigma^{-1}$ from equation \eqref{eqn_asympVarCovMatrixMLRM_general} and $\gamma\in\R^{d+1}$ arbitrary but fixed.
\end{lemma}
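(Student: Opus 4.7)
The plan is to exploit that $\xi_{n,i}$ is a linear functional of the squared temporal increments $(\Delta_i\tilde{X})^2(\textbf{y}_j)$, $j=1,\ldots,m_n$, so that $\sum_{i=1}^n\gamma^\top\Xi_{n,i}$ reduces to a centred weighted sum of the rescaled realized volatilities $V_{n,\Delta_n}(\textbf{y}_j)$ from \eqref{eqn_rescaledRealizedVolaMulti}. Its variance can then be evaluated directly via Proposition \ref{prop_RRVMulti}, and a matrix-algebraic reduction will produce the quadratic form $\gamma^\top\Sigma^{-1}\gamma$ automatically.

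To implement this, I would set $A_m := \frac{1-2\delta}{m}X^\top X$ and $w := X A_m^{-1}\gamma \in \R^m$ with entries $w_j = \gamma^\top A_m^{-1}x_j$ for $x_j = (1,y_1^{(j)},\ldots,y_d^{(j)})^\top$, so that \eqref{eqn_ximultiMLRM} yields
$$\gamma^\top\sum_{i=1}^n\Xi_{n,i} = \frac{\sqrt{n}\,(1-2\delta)}{\sqrt{m_n}\,K\sigma_0^2}\sum_{j=1}^{m_n}w_j\big(V_{n,\Delta_n}(\textbf{y}_j)-\E[V_{n,\Delta_n}(\textbf{y}_j)]\big).$$
Taking variance and splitting into diagonal and off-diagonal contributions, I would apply Proposition \ref{prop_RRVMulti} in the form $\Var(V_{n,\Delta_n}(\textbf{y}_j)) = \Upsilon_{\alpha'}(K\sigma_0^2)^2 n^{-1}(1+\smallO(1))$, uniformly in $j$. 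The key algebraic step is the identity
$$\sum_{j=1}^{m_n}w_j^2 = w^\top w = \gamma^\top A_m^{-1}\frac{X^\top X}{m_n}A_m^{-1}\gamma = \frac{m_n}{1-2\delta}\gamma^\top A_m^{-1}\gamma,$$
which, combined with the convergence $A_m\to\Sigma$ as $m\tooi$ (guaranteed by the construction of $\Sigma$ in \eqref{eqn_asympVarCovMatrixMLRM_general} and the positive-definiteness implied by Assumption \ref{assumption_fullRank}), will deliver the diagonal limit $(1-2\delta)\Upsilon_{\alpha'}\gamma^\top\Sigma^{-1}\gamma$.

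For the off-diagonal terms I plan to use the covariance bound of Proposition \ref{prop_RRVMulti}, namely $\Cov(V_{n,\Delta_n}(\textbf{y}_{j_1}),V_{n,\Delta_n}(\textbf{y}_{j_2})) = \Oo(n^{-1}\Delta_n^{1-\alpha'}(\nor{\textbf{y}_{j_1}-\textbf{y}_{j_2}}_0^{-(d+1)}+\delta^{-(d+1)}))$. Uniform boundedness of $|w_j|$ follows from $x_j\in[0,1]^{d+1}$ together with convergence of $A_m^{-1}$, so combining with the spatial distance inequality $\sum_{j_1\neq j_2}\nor{\textbf{y}_{j_1}-\textbf{y}_{j_2}}_0^{-(d+1)} = \Oo(m_n^{d+3})$ from \eqref{eqn_OrderDistinctSpatialCoordiantes} produces an off-diagonal contribution of order $\Oo(\Delta_n^{1-\alpha'}m_n^{d+2})$. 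This vanishes thanks to Assumption \ref{assumption_observations_multi}'s restriction $\rho\in(0,(1-\alpha')/(d+2))$ together with $n\Delta_n=1$.

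The main obstacle will be the bookkeeping of the scaling constants: the variance formula in Proposition \ref{prop_RRVMulti} is written in terms of $(K\sigma^2)^2$ while $\xi_{n,i}$ is normalised by $(K\sigma_0^2)^2$, and these must cancel exactly in the ratio to yield the claimed limit, reflecting the paper's convention that $\sigma_0^2 K$ and $\sigma^2 K$ denote the same leading mean constant. A minor secondary point is that the $\Oo(\Delta_n^{1-\alpha'})+\Oo_\Pp(\Delta_n)$ remainder in \eqref{eqn_DecompRVMulti} does not enter this lemma, since $\xi_{n,i}$ is built from only the linear part of the Taylor expansion.
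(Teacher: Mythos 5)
Your proposal is correct and follows essentially the same route as the paper: both reduce the variance to the covariance matrix of the rescaled realized volatilities $V_{n,\Delta_n}(\textbf{y}_j)$ via Proposition \ref{prop_RRVMulti}, use the cancellation $A_m^{-1}\big(\tfrac{1-2\delta}{m}X^\top X\big)A_m^{-1}=A_m^{-1}\to\Sigma^{-1}$ for the diagonal part, and kill the off-diagonal part with the bound $\Oo(\Delta_n^{1-\alpha'}m_n^{d+2})$ obtained from \eqref{eqn_OrderDistinctSpatialCoordiantes} and Assumption \ref{assumption_observations_multi}. Your contraction with $\gamma$ into scalar weights $w_j$ is only a cosmetic reordering of the paper's matrix-sandwich computation, and your remark on the $K\sigma^2$ versus $K\sigma_0^2$ normalisation correctly identifies a notational convention the paper itself uses implicitly.
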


\begin{proof}
Consider an arbitrary but fixed vector $\gamma\in\mathbb{R}^{d+1}$. We initiate this proof by performing the following calculations:
\begin{align*}
\Var\bigg(\sum_{i=1}^n\gamma^\top \Xi_{n,i}\bigg)&=\gamma^\top\Var\bigg(\sum_{i=1}^n \xi_{n,i}\bigg)\gamma\\
&=\gamma^\top\frac{n(1-2\delta)^2}{mK^2\sigma_0^4}\bigg(\frac{1-2\delta}{m}X^\top X\bigg)^{-1}X^\top\Var\big(\tilde{Y}_n\big)X\bigg(\frac{1-2\delta}{m}X^\top X\bigg)^{-1}\gamma,
\end{align*}
where
\begin{align*}
\tilde{Y}_n:=\begin{pmatrix}
\sum_{i=1}^n\frac{(\Delta_i\tilde{X})^2(\textbf{y}_1)}{n\Delta_n^{\alpha'}}e^{\nor{\kappa\bigcdot\textbf{y}_1}_1}\\ \vdots \\ \sum_{i=1}^n\frac{(\Delta_i\tilde{X})^2(\textbf{y}_m)}{n\Delta_n^{\alpha'}}e^{\nor{\kappa\bigcdot\textbf{y}_m}_1}
\end{pmatrix}=\begin{pmatrix}
V_{n,\Delta_n}(\textbf{y}_1)\\ \vdots \\ V_{n,\Delta_n}(\textbf{y}_m)
\end{pmatrix}\in\R^{m}.
\end{align*}
Using Proposition \ref{prop_RRVMulti}, we can determine the components of the variance-covariance matrix $V_{n,m}:=\Var(\tilde{Y}_n)$ of $\tilde{Y}_{n,i}$, yielding:
\begin{align*}
(V_{n,m})_{j_1,j_2}=\begin{cases}
\frac{\Upsilon_{\alpha'}}{n}K^2\sigma_0^4\big(1+\Delta_n^{1/2}\vee \Delta_n^{1-\alpha'}\big) &, \text{ if } 1\leq j_1=j_2\leq m \\
\Oo\big(\Delta_n^{2-\alpha'}(\nor{\textbf{y}_{j_1}-\textbf{y}_{j_2}}_0^{-(d+1)}+\delta^{-(d+1)})\big) &,\text{ if } 1\leq j_1,j_2\leq m \text{ for }j_1\neq j_2
\end{cases},
\end{align*}
for $1\leq j_1,j_2\leq m$. Hence, we have
\begin{align*}
\Var\bigg(\sum_{i=1}^n\gamma^\top \Xi_{n,i}\bigg)&=\gamma^\top\frac{(1-2\delta)^2}{m}\bigg(\frac{1-2\delta}{m}X^\top X\bigg)^{-1}X^\top\bigg(\frac{n}{K^2\sigma_0^4}V_{n,m}\bigg)X\bigg(\frac{1-2\delta}{m}X^\top X\bigg)^{-1}\gamma,
\end{align*}
where we define:
\begin{align*}
\frac{n}{K^2\sigma_0^4}V_{n,m}=:V_{n,m,1}+V_{n,m,2},
\end{align*}
with
\begin{align*}
V_{n,m,1}:=\Upsilon_{\alpha'}\big(1+\Delta_n^{1/2}\vee \Delta_n^{1-\alpha'}\big) E_m,
\end{align*}
where $E_m$ denotes the $m\times m$ dimensional identity matrix and 
\begin{align*}
V_{n,m,2}:=\begin{cases}
0 &, \text{ if } 1\leq j_1=j_2\leq m \\
\Oo\big(\Delta_n^{1-\alpha'}(\nor{\textbf{y}_{j_1}-\textbf{y}_{j_2})}_0^{-(d+1)}+\delta^{-(d+1)})\big) &,\text{ if } 1\leq j_1,j_2\leq m \text{ for }j_1\neq j_2
\end{cases}.
\end{align*}
We conclude that 
\begin{align*}
\Var\bigg(\sum_{i=1}^n\gamma^\top \Xi_{n,i}\bigg)&=\gamma^\top\bigg(\frac{(1-2\delta)^2}{m}\bigg(\frac{1-2\delta}{m}X^\top X\bigg)^{-1}X^\top V_{n,m,1}X\bigg(\frac{1-2\delta}{m}X^\top X\bigg)^{-1}\\ 
&~~~~~+\frac{(1-2\delta)^2}{m}\bigg(\frac{1-2\delta}{m}X^\top X\bigg)^{-1}X^\top V_{n,m,2}X\bigg(\frac{1-2\delta}{m}X^\top X\bigg)^{-1}\bigg)\gamma\\
&=\gamma^\top\bigg((1-2\delta)\Upsilon_{\alpha'}\big(1+\Delta_n^{1/2}\vee \Delta_n^{1-\alpha'}\big)\bigg(\frac{1-2\delta}{m}X^\top X\bigg)^{-1}\\
&~~~~~~~~~~\times\bigg(\frac{1-2\delta}{m}X^\top X\bigg)\bigg(\frac{1-2\delta}{m}X^\top X\bigg)^{-1}\\
&~~~~~+(1-2\delta)\bigg(\frac{1-2\delta}{m}X^\top X\bigg)^{-1}\bigg(\frac{1-2\delta}{m}X^\top V_{n,m,2} X\bigg)\bigg(\frac{1-2\delta}{m}X^\top X\bigg)^{-1}\bigg)\gamma\\
&=\gamma^\top\bigg((1-2\delta)\Upsilon_{\alpha'}\big(1+\Delta_n^{1/2}\vee \Delta_n^{1-\alpha'}\big)\bigg(\frac{1-2\delta}{m}X^\top X\bigg)^{-1}\\
&~~~~~+(1-2\delta)\bigg(\frac{1-2\delta}{m}X^\top X\bigg)^{-1}\bigg(\frac{1-2\delta}{m}X^\top V_{n,m,2} X\bigg)\bigg(\frac{1-2\delta}{m}X^\top X\bigg)^{-1}\bigg)\gamma.
\end{align*}
Let $m=m_n$ be in accordance with Assumption \ref{assumption_observations_multi}. As the convergence of $(1-2\delta)/m_n\cdot X^\top X$ is established for $n\to\infty$, the focus shifts on demonstrating the convergence of $m_n^{-1} (X^\top V_{n,m_n,2}X)$ towards the zero matrix $\mathbf{0}$. 
Consider matrices $A\in\mathbb{R}^{a\times b}$, $B\in\mathbb{R}^{b \times b}$ and $C\in\mathbb{R}^{b\times a}$, where $(B)_{i_1,i_2}\geq 0$ and $(A)_{l,i},(C)_{i,l}\in[0,1]$ for all $1\leq i,i_1,i_2\leq b,1\leq l\leq a$. In such a scenario we obtain:
\begin{align*}
(ABC)_{i,l}\leq (\textbf{1}_{a, b}B\textbf{1}_{b, a})_{i,l},
\end{align*}
for each $1\leq i,l\leq a$, where $\textbf{1}_{a,b}=\{1\}^{a\times b}$ denotes the matrix with each entry being one. Thus, we find:
\begin{align*}
\bigg(\frac{1}{m}X^\top V_{n,m,2}X\bigg)_{i,l}&\leq\bigg(\frac{1}{m}\textbf{1}_{(d+1),m} V_{n,m,2}\textbf{1}_{m,(d+1)} \bigg)_{i,l},
\end{align*}
for each $1\leq i,l\leq (d+1)$. It holds for $1\leq i\leq(d+1)$ and $1\leq l\leq m$ that
\begin{align*}
\bigg(\frac{1}{m}\textbf{1}_{(d+1),m} V_{n,m,2} \bigg)_{i,l}&=\Oo\bigg(\frac{\Delta_m^{1-\alpha'}}{m}\Big(\sum_{\substack{j_1=1\\ j_1\neq l}}^m \nor{\textbf{y}_{j_1}-\textbf{y}_l}_0^{-(d+1)}     +(m-1)\delta^{-(d+1)}\Big)\bigg),
\end{align*}
and therefore we have for $1\leq i,l\leq (d+1)$ that 
\begin{align*}
\bigg(\frac{1}{m}\textbf{1}_{(d+1),m} V_{n,m,2}\textbf{1}_{m,(d+1)} \bigg)_{i,l}&=\Oo\bigg(\frac{\Delta_m^{1-\alpha'}}{m}\Big(\sum_{j_2=1}^{m}\sum_{\substack{j_1=1\\ j_1\neq j_2}}^m \nor{\textbf{y}_{j_1}-\textbf{y}_{j_2}}_0^{-(d+1)}     +m(m-1)\delta^{-(d+1)}\Big)\bigg)=\\
&=\Oo\big(\Delta_n^{1-\alpha'}m^{d+2}\big).
\end{align*}
Utilizing Assumption \ref{assumption_observations_multi}, we can establish that
\begin{align*}
\bigg(\frac{1}{m_n}X^\top V_{n,m_n,2}X\bigg)_{i,l}=\Oo\big(\Delta_n^{1-\alpha'}m_n^{d+2}\big)\overset{n\tooi}{\longrightarrow}0,
\end{align*}
for all $1\leq i,l\leq (d+1)$. This, in turn, implies:
\begin{align*}
\frac{1}{m_n}X^\top V_{n,m_n,2}X\overset{n\tooi}{\longrightarrow}\textbf{0}.
\end{align*}
The conclusion follows accordingly.
\end{proof}

\ \\
The preceding lemma demonstrated that the estimator $\hat{\Psi}$ for the parameter $\Psi$ from display \eqref{eqn_psiEstiamtorAndPara} possesses an asymptotic variance of $(1-2\delta)\Upsilon_{\alpha'}\Sigma^{-1}$, as assumed. We will now present a lemma, which helps proving the Conditions (I) and (II) from Corollary \ref{prop_CLTUtev_multivariate}.

\begin{lemma}\label{lemma_FirstCondMultiMLRM}
On the Assumptions \ref{assumption_observations_multi}, \ref{assumption_regMulti} and \ref{assumption_fullRank}, we have 
\begin{align*}
\Var\bigg(\sum_{i=a}^b\gamma^\top \Xi_{n,i}\bigg)\leq C \sum_{i=a}^b\Var(\gamma^\top\Xi_{n,i}),
\end{align*}
for all $1\leq a\leq b\leq n$, $\Xi_{n,i}$ defined in equation \eqref{eqn_ximultiMLRM}, an universal constant $C>0$ and $\gamma\in\R^{d+1}$ arbitrary but fixed.
\end{lemma}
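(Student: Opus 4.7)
The plan is to rewrite $\gamma^\top\Xi_{n,i}$ as a linear combination of centered squared increments, use Isserlis' formula to turn all relevant covariances into spectral sums, and then reduce the claim to a summability-in-lag bound for the temporal autocovariance of the increments, in the spirit of Propositions~\ref{prop_RRVMulti} and~\ref{porp_timeDepenMulit}.

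Concretely, set $w_n:=\frac{\sqrt{n}(1-2\delta)}{\sqrt{m_n}K\sigma_0^2}\big(\frac{1-2\delta}{m_n}X^\top X\big)^{-1}\gamma$ and $c_{n,j}:=(Xw_n)_j$, so that $\gamma^\top\Xi_{n,i}=\sum_{j=1}^{m_n}c_{n,j}\overline{U_{n,i,j}}$ with $U_{n,i,j}:=(n\Delta_n^{\alpha'})^{-1}(\Delta_i\tilde X)^2(\textbf{y}_j)e^{\nor{\kappa\bigcdot\textbf{y}_j}_1}$. Assumption~\ref{assumption_fullRank} together with the convergence $(1-2\delta)X^\top X/m_n\to\Sigma$ from Lemma~\ref{lemma_assympVarMultiMLRM} gives a uniform bound $\sup_n\|(\tfrac{1-2\delta}{m_n}X^\top X)^{-1}\|<\infty$; since $\textbf{y}_j\in[\delta,1-\delta]^d$ this yields $\sup_{n,j}|c_{n,j}|=O(\sqrt{n/m_n})$. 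Because $\tilde X$ is centered Gaussian, Isserlis' theorem gives
\[
\Cov(U_{n,i_1,j_1},U_{n,i_2,j_2})=\tfrac{2\,e^{\nor{\kappa\bigcdot(\textbf{y}_{j_1}+\textbf{y}_{j_2})}_1}}{(n\Delta_n^{\alpha'})^2}\,G_n(j_1,j_2;|i_1-i_2|)^2,
\]
with $G_n(j_1,j_2;\ell):=\Cov(\Delta_0\tilde X(\textbf{y}_{j_1}),\Delta_\ell\tilde X(\textbf{y}_{j_2}))$ independent of $i$ by stationarity. Writing $\beta^n_{j_1,j_2}:=c_{n,j_1}c_{n,j_2}e^{\nor{\kappa\bigcdot(\textbf{y}_{j_1}+\textbf{y}_{j_2})}_1}$ and $N:=b-a+1$, and bounding $N-\ell\leq N$, the claim reduces to
\[
\mathcal{R}_n:=\sum_{\ell\geq 1}\sum_{j_1,j_2}\beta^n_{j_1,j_2}G_n(j_1,j_2;\ell)^2\;\leq\;(C-1)\sum_{j_1,j_2}\beta^n_{j_1,j_2}G_n(j_1,j_2;0)^2.
\]

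To prove this I would expand $G_n(j_1,j_2;\ell)=\sum_{\textbf{k}\in\N^d}\mu_\textbf{k}(\ell)e_\textbf{k}(\textbf{y}_{j_1})e_\textbf{k}(\textbf{y}_{j_2})$, where $\mu_\textbf{k}(0)=\sigma^2\Delta_n^{1+\alpha}f_\alpha(\lambda_\textbf{k}\Delta_n)$ and, from the calculations in the proof of Proposition~\ref{prop_autocovOfIncrementsMulti}, $\mu_\textbf{k}(\ell)=-\sigma^2\Delta_n^{1+\alpha}g_{\alpha,\ell-1}(\lambda_\textbf{k}\Delta_n)$ for $\ell\geq 1$, with $f_\alpha,g_{\alpha,\tau}$ as in \eqref{equation_functionsAlphaandTau}. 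Squaring produces a double spectral sum over $(\textbf{k}_1,\textbf{k}_2)$; the Riemann-sum analysis used in the proof of Proposition~\ref{prop_RRVMulti} (see the argument after \eqref{eqn_barDk1k2ref}), combined with Corollary~\ref{corollary_toLemmaRiemannApproxMulti}(ii)--(iii), shows that the dominant contribution factorizes as the purely temporal factor $h(\ell):=\big(-\ell^{\alpha'}+2(\ell+1)^{\alpha'}-(\ell+2)^{\alpha'}\big)^2=O(\ell^{2\alpha'-4})$ times the very same spatial structure that appears at $\ell=0$, while the off-diagonal remainders are of order $\Delta_n^{1-\alpha'}(\|\textbf{y}_{j_1}-\textbf{y}_{j_2}\|_0^{-(d+1)}+\delta^{-(d+1)})$. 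Multiplying by $|c_{n,j_1}c_{n,j_2}|=O(n/m_n)$ and summing over the $m_n(m_n-1)$ off-diagonal spatial pairs via \eqref{eqn_OrderDistinctSpatialCoordiantes}, these remainders contribute $O(m_n^{d+2}\Delta_n^{1-\alpha'})=o(1)$ by Assumption~\ref{assumption_observations_multi}. Since $\alpha'<1$ makes $\sum_{\ell\geq 1}h(\ell)$ finite, the desired bound on $\mathcal{R}_n$ follows.

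The main obstacle is that $|G_n(j_1,j_2;\ell)|$ cannot be pointwise dominated by $|G_n(j_1,j_2;0)|$ times a summable-in-$\ell$ factor—$G_n(j_1,j_2;0)$ may itself be small relative to $G_n(j_1,j_2;\ell)$ for particular spatial pairs—so the estimate has to be performed at the level of the aggregated quadratic form $\sum_{j_1,j_2}\beta^n_{j_1,j_2}(\cdot)$, relying on the uniformity in the time shift $\ell$ of the spatial-spectral Riemann-sum estimates in Corollary~\ref{corollary_toLemmaRiemannApproxMulti}. This is precisely the mechanism already exploited in Corollary~\ref{corollary_4thConditionMultiDimAndMultiSpace}, and I would adapt that bookkeeping verbatim for the lag-wise sum over $\ell\geq 1$.
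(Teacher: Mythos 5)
Your proposal is correct and follows essentially the same route as the paper: the paper writes $\Var\big(\sum_{i=a}^b\gamma^\top\Xi_{n,i}\big)$ as a quadratic form in the design matrix and the covariance matrix of the block-rescaled realized volatilities, then invokes Proposition \ref{prop_RRVMulti} with $p=b-a+1$ (and $p=1$ for the right-hand side) together with the bound \eqref{eqn_OrderDistinctSpatialCoordiantes} and Assumption \ref{assumption_observations_multi} to show both sides are of order $\nor{\gamma}_\infty\Delta_n(b-a+1)$. Your Isserlis/lag-sum decomposition with the summable factor $h(\ell)=O(\ell^{2\alpha'-4})$ and the $O(m_n^{d+2}\Delta_n^{1-\alpha'})$ off-diagonal spatial remainder is precisely the content of that proposition unrolled by hand, so the two arguments coincide in substance.
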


\begin{proof}
For an arbitrary but fixed vector $\gamma\in\mathbb{R}^{d+1}$, we can establish, in a manner analogous to Lemma \ref{lemma_assympVarMultiMLRM}, that
\begin{align*}
\Var\bigg(\sum_{i=a}^b\gamma^\top \Xi_{n,i}\bigg)&=\gamma^\top \Var\bigg(\sum_{i=a}^b \xi_{n,i}\bigg)\gamma\\
&=\gamma^\top\frac{(b-a+1)^2(1-2\delta)^2}{nmK^2\sigma_0^4}\bigg(\frac{1-2\delta}{m}X^\top X\bigg)^{-1}X^\top\Var\big(\tilde{Y}_{a,b}\big)X\bigg(\frac{1-2\delta}{m}X^\top X\bigg)^{-1}\gamma,
\end{align*}
where
\begin{align*}
\tilde{Y}_{a,b}:=\begin{pmatrix}
\sum_{i=a}^b\frac{(\Delta_i\tilde{X})^2(\textbf{y}_1)}{(b-a+1)\Delta_n^{\alpha'}}e^{\nor{\kappa\bigcdot\textbf{y}_1}_1}\\ \vdots \\ \sum_{i=a}^b\frac{(\Delta_i\tilde{X})^2(\textbf{y}_m)}{(b-a+1)\Delta_n^{\alpha'}}e^{\nor{\kappa\bigcdot\textbf{y}_m}_1}
\end{pmatrix}\in\R^{m}.
\end{align*}
For the variance $\Var(\tilde{Y}_{a,b}):=V_{a,b,n,m}:=V_{a,b,n,m,1}+V_{a,b,n,m,2}$ we find:
\begin{align*}
(V_{a,b,n,m,1})_{j_1,j_2}&:=\begin{cases}
\frac{\Upsilon_{\alpha'}}{(b-a+1)}K^2\sigma_0^4\big(1+\Delta_n^{1/2}\vee \Delta_n^{1-\alpha'}\big) &, \text{ if } 1\leq j_1=j_2\leq m \\
0 &,\text{ if } 1\leq j_1,j_2\leq m \text{ for }j_1\neq j_2
\end{cases} \\
(V_{a,b,n,m,2})_{j_1,j_2}&:=\begin{cases}
0 &, \text{ if } 1\leq j_1=j_2\leq m \\
\Oo\big(\frac{1}{b-a+1}\Delta_n^{1-\alpha'}(\nor{\textbf{y}_{j_1}-\textbf{y}_{j_2}}_0^{-(d+1)}+\delta^{-(d+1)})\big) &,\text{ if } 1\leq j_1,j_2\leq m \text{ for }j_1\neq j_2
\end{cases},
\end{align*}
and thus, we have 
\begin{align*}
&\Var\bigg(\sum_{i=a}^b\gamma^\top \Xi_{n,i}\bigg)\\
&=\Oo\bigg(\frac{(b-a+1)^2(1-2\delta)}{nK^2\sigma_0^4}\cdot\frac{K^2\sigma_0^4\Upsilon_{\alpha'}}{b-a+1}\gamma^\top\bigg(\frac{1-2\delta}{m}X^\top X\bigg)^{-1}\bigg(\frac{1-2\delta}{m}X^\top X\bigg)\bigg(\frac{1-2\delta}{m}X^\top X\bigg)^{-1}\gamma\bigg)\\
&~~~~~+\frac{(b-a+1)(1-2\delta)^2}{nK^2\sigma_0^4}\gamma^\top\bigg(\frac{1-2\delta}{m}X^\top X\bigg)^{-1}\bigg(\frac{b-a+1}{m}X^\top V_{a,b,n,m,2} X\bigg)\bigg(\frac{1-2\delta}{m}X^\top X\bigg)^{-1}\gamma.
\end{align*}
Similar to the proof of Lemma \ref{lemma_assympVarMultiMLRM}, it can be deduced that
\begin{align*}
\bigg(\frac{b-a+1}{m}X^\top V_{a,b,n,m,2} X\bigg)_{i,l}=\Oo(\Delta_n^{1-\alpha'}m^{d+2}),
\end{align*}
where we used that $((1-2\delta)/m_n\cdot X^\top X)^{-1}\rightarrow \Sigma^{-1}$, as $n\tooi$, and $\gamma^\top\Sigma^{-1}\gamma=\Oo(\nor{\gamma}_\infty) $.
Therefore, it holds:
\begin{align*}
\Var\bigg(\sum_{i=a}^b\gamma^\top \Xi_{n,i}\bigg)&=\Oo\big(\nor{\gamma}_\infty\Delta_n(b-a+1)+\nor{\gamma}_\infty\Delta_n(b-a+1)\Delta_n^{1-\alpha'}m^{d+2}\big)=\Oo\big(\nor{\gamma}_\infty\Delta_n(b-a+1)\big).
\end{align*}
Applying a similar approach to $\Var(\gamma^\top\Xi_{n,i})$ yields:
\begin{align*}
\Var(\gamma^\top\Xi_{n,i})=\gamma^\top\frac{(1-2\delta)^2}{nmK^2\sigma_0^4}\bigg(\frac{1-2\delta}{m}X^\top X\bigg)^{-1}X^\top\Var\big(\tilde{Y}_{i}\big)X\bigg(\frac{1-2\delta}{m}X^\top X\bigg)^{-1}\gamma,
\end{align*}
where
\begin{align*}
\tilde{Y}_{i}:=\begin{pmatrix}
\frac{(\Delta_i\tilde{X})^2(\textbf{y}_1)}{\Delta_n^{\alpha'}}e^{\nor{\kappa\bigcdot\textbf{y}_1}_1}\\ \vdots \\ \frac{(\Delta_i\tilde{X})^2(\textbf{y}_m)}{\Delta_n^{\alpha'}}e^{\nor{\kappa\bigcdot\textbf{y}_m}_1}
\end{pmatrix}\in\R^{m}.
\end{align*}
Defining $\Var(\tilde{Y}_i):=V_{i,n,m}:=V_{i,n,m,1}+V_{i,n,m,2}$, where
\begin{align*}
(V_{i,n,m,1})_{j_1,j_2}&:=\begin{cases}
\Upsilon_{\alpha'}K^2\sigma_0^4\big(1+\Delta_n^{1/2}\vee \Delta_n^{1-\alpha'}\big) &, \text{ if } 1\leq j_1=j_2\leq m \\
0 &,\text{ if } 1\leq j_1,j_2\leq m \text{ for }j_1\neq j_2
\end{cases}, \\
(V_{i,n,m,2})_{j_1,j_2}&:=\begin{cases}
0 &, \text{ if } 1\leq j_1=j_2\leq m \\
\Oo\big(\Delta_n^{1-\alpha'}(\nor{\textbf{y}_{j_1}-\textbf{y}_{j_2})}_0^{-(d+1)}+\delta^{-(d+1)}\big) &,\text{ if } 1\leq j_1,j_2\leq m \text{ for }j_1\neq j_2
\end{cases},
\end{align*}
yields:
\begin{align*}
\Var(\gamma^\top\Xi_{n,i})=\Oo(\nor{\gamma}_\infty\Delta_n+\nor{\gamma}_\infty\Delta_n\Delta_n^{1-\alpha'}m^{d+2})=\Oo(\nor{\gamma}_\infty\Delta_n).
\end{align*}
Consequently, we obtain $\sum_{i=a}^b\Var(\gamma^\top\Xi_{n,i})=\Oo\big(\nor{\gamma}_\infty\Delta_n(b-a+1)\big)$, which concludes the proof.
\end{proof}

\ \\
The subsequent lemma establishes the proof for the third condition of Corollary \ref{prop_CLTUtev_multivariate}.

\begin{lemma}\label{lemma_3CondMLRM}
On the Assumptions \ref{assumption_observations_multi}, \ref{assumption_regMulti} and \ref{assumption_fullRank}, it holds that
\begin{align*}
\sum_{i=1}^n\E[(\gamma^\top\Xi_{n,i})^4]=\Oo\big(\nor{\gamma}_\infty^4\Delta_nm^2\big),
\end{align*}
where $\Xi_{n,i}$ defined in equation \eqref{eqn_ximultiMLRM} and $\gamma\in\R^{d+1}$ is arbitrary but fixed.
\end{lemma}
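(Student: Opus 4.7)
The plan is to reduce the fourth-moment estimate to a statement about the weights attached to the squared increments and then to invoke Gaussianity of $\tilde{X}$. Writing $K$ for the constant in \eqref{eqn_KDefinition} and recalling the definition in \eqref{eqn_ximultiMLRM}, I would first express
\begin{align*}
\gamma^\top \xi_{n,i} = \sum_{j=1}^{m} c_{n,j}(\gamma)\, \frac{(\Delta_i\tilde{X})^2(\textbf{y}_j)}{\Delta_n^{\alpha'}}\, e^{\nor{\kappa\bigcdot \textbf{y}_j}_1},
\end{align*}
where $(c_{n,1}(\gamma),\ldots,c_{n,m}(\gamma))$ is the row vector obtained from $\tfrac{(1-2\delta)}{\sqrt{nm}\,K\sigma_0^2}\,\gamma^\top\bigl(\tfrac{1-2\delta}{m}X^\top X\bigr)^{-1}X^\top$. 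Since $\E[(\gamma^\top \Xi_{n,i})^4]\leq 8\bigl(\E[(\gamma^\top\xi_{n,i})^4]+(\E[\gamma^\top\xi_{n,i}])^4\bigr)$ and both terms lead to the same order, it will be enough to control $\E[(\gamma^\top \xi_{n,i})^4]$.

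The first key step is a uniform bound $\max_{1\leq j\leq m}|c_{n,j}(\gamma)|=\Oo(\nor{\gamma}_\infty/\sqrt{nm})$. This follows from Assumption \ref{assumption_fullRank}, which together with the Riemann-sum convergence of $\tfrac{1-2\delta}{m}X^\top X$ to the invertible matrix $\Sigma$ in \eqref{eqn_asympVarCovMatrixMLRM_general}, implies that $\bigl(\tfrac{1-2\delta}{m}X^\top X\bigr)^{-1}$ has entries bounded uniformly in $n$; the columns of $X^\top$ consist of ones and of spatial coordinates in $[\delta,1-\delta]$, hence are also uniformly bounded. Consequently $\sum_{j=1}^m|c_{n,j}(\gamma)|=\Oo(\nor{\gamma}_\infty\sqrt{m/n})$.

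For the main estimate I would expand
\begin{align*}
(\gamma^\top \xi_{n,i})^4 = \sum_{j_1,j_2,j_3,j_4=1}^{m} \prod_{\ell=1}^{4}c_{n,j_\ell}(\gamma)\, e^{\nor{\kappa\bigcdot \textbf{y}_{j_\ell}}_1}\, \frac{(\Delta_i\tilde{X})^{2}(\textbf{y}_{j_\ell})}{\Delta_n^{\alpha'}},
\end{align*}
and apply iterated Cauchy--Schwarz to reduce each mixed expectation to $\max_{\textbf{y}\in[\delta,1-\delta]^d}\E[(\Delta_i\tilde{X})^{8}(\textbf{y})]$. Because $(\Delta_i\tilde{X})(\textbf{y})$ is centred Gaussian, its eighth moment is a constant multiple of its variance to the fourth power, and Proposition \ref{prop_quadIncAndrescaling} together with the stationarity of $\tilde X$ gives $\Var((\Delta_i\tilde{X})(\textbf{y}))=\Oo(\Delta_n^{\alpha'})$, so $\E[(\Delta_i\tilde{X})^{8}(\textbf{y})]=\Oo(\Delta_n^{4\alpha'})$.

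Putting these together, the exponentials are uniformly bounded on $[\delta,1-\delta]^d$ and the sum of absolute coefficients satisfies $\bigl(\sum_{j}|c_{n,j}(\gamma)|\bigr)^4=\Oo(\nor{\gamma}_\infty^{4}m^{2}/n^{2})$, so
\begin{align*}
\E[(\gamma^\top \xi_{n,i})^4] = \Oo\!\left(\nor{\gamma}_\infty^{4}\frac{m^{2}}{n^{2}}\cdot \frac{\Delta_n^{4\alpha'}}{\Delta_n^{4\alpha'}}\right)=\Oo\!\left(\nor{\gamma}_\infty^{4}\frac{m^{2}}{n^{2}}\right),
\end{align*}
and summing over $i=1,\ldots,n$ yields $\sum_{i=1}^n\E[(\gamma^\top \Xi_{n,i})^4]=\Oo(\nor{\gamma}_\infty^{4}m^{2}\Delta_n)$ as claimed. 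The main obstacle is the uniform coefficient bound; once that is in place, the rest is a routine Gaussian moment calculation combined with Proposition \ref{prop_quadIncAndrescaling}. In particular, I do not need the finer second-moment decorrelation of Proposition \ref{prop_RRVMulti} here, since a crude sup over $\textbf{y}$ already captures the desired order.
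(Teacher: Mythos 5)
Your proposal is correct and follows essentially the same route as the paper's proof: both reduce the problem to a uniform bound on the linear-algebra weights (via boundedness of the entries of $X$ and the convergence of $\bigl(\tfrac{1-2\delta}{m}X^\top X\bigr)^{-1}$ to $\Sigma^{-1}$ under Assumption \ref{assumption_fullRank}), then apply Cauchy--Schwarz/H\"older to reduce the fourth moment of the weighted sum to $\max_{\textbf{y}}\E[(\Delta_i\tilde X)^8(\textbf{y})]=\Oo(\Delta_n^{4\alpha'})$, which Gaussianity and Proposition \ref{prop_quadIncAndrescaling} supply. Your explicit coefficient formulation is a slightly cleaner packaging of the paper's entrywise bound $X\leq\textbf{1}_{m,d+1}$, but the underlying argument is identical.
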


\begin{proof}
We initiate the proof by examining:
\begin{align*}
\E[(\gamma^\top\Xi_{n,i})^4]&=\Oo\Big(\E\big[(\gamma^\top\xi_{n,i})^4\big]\Big).
\end{align*}
Thus, we proceed with analysing $\E[(\gamma^\top\xi_{n,i})^4]$. By utilizing the Cauchy-Schwarz inequality, we obtain:
\begin{align*}
\E[(\gamma^\top\xi_{n,i})^4]&=\E\bigg[\sum_{l_1,\ldots,l_4=1}^{d+1}\gamma_{l_1}(\xi_{n,i})_{l_1}\cdots\gamma_{l_4}(\xi_{n,i})_{l_4}\bigg]\\
&\leq\sum_{l_1,\ldots,l_4=1}^{d+1}\gamma_{l_1}\cdots\gamma_{l_4}\E\big[(\xi_{n,i})_{l_1}^4\big]^{1/4}\cdots \E\big[(\xi_{n,i})_{l_4}^4\big]^{1/4}\\
&\leq \nor{\gamma}_\infty^4 (d+1)^4 \max_{l=1,\ldots,d+1}\E\big[(\xi_{n,i})_{l}^4\big].
\end{align*}
We exploit the fact that $X\leq \textbf{1}_{m,d+1}$, where $\textbf{1}_{a,b}\in\R^{a\times b}$ represents the matrix of ones, which leads to:
\begin{align}
\xi_{n,i}
&\leq \frac{(1-2\delta)e^{\nor{\kappa}_1}}{\sqrt{nm}\Delta_{n}^{\alpha'}K\sigma_0^2}\bigg(\frac{1-2\delta}{m}X^\top X\bigg)^{-1}\textbf{1}_{(d+1),m}\begin{pmatrix}
(\Delta_i\tilde{X})^2(\textbf{y}_1)\\ \vdots \\ (\Delta_i\tilde{X})^2(\textbf{y}_m)
\end{pmatrix}\label{eqn_inequalityforXTransposed}\\
&=\frac{(1-2\delta)e^{\nor{\kappa}_1}}{\sqrt{nm}\Delta_{n}^{\alpha'}K\sigma_0^2}\bigg(\frac{1-2\delta}{m}X^\top X\bigg)^{-1}\begin{pmatrix}
\sum_{j=1}^m(\Delta_i\tilde{X})^2(\textbf{y}_j)\\ \vdots \\ \sum_{j=1}^m(\Delta_i\tilde{X})^2(\textbf{y}_j))
\end{pmatrix}.\notag
\end{align}
Thus, we find that
\begin{align*}
\E[(\gamma^\top\xi_{n,i})^4]&\leq \nor{\gamma}_\infty^4\frac{(1-2\delta)^4e^{4\nor{\kappa}_1}(d+1)^4}{n^2m^2\Delta_{n}^{4\alpha'}K^4\sigma_0^8} \max_{l=1,\ldots,d+1}\E\bigg[\bigg(\sum_{j=1}^m(\Delta_i\tilde{X})^2(\textbf{y}_j)\bigg)^4\bigg(\Big(\frac{1-2\delta}{m}X^\top X\Big)^{-1}\textbf{1}_{d+1,1}\bigg)_{l}^4\bigg]\\
&= \nor{\gamma}_\infty^4\frac{(1-2\delta)^4e^{4\nor{\kappa}_1}(d+1)^4}{n^2m^2\Delta_{n}^{4\alpha'}K^4\sigma_0^8} \E\bigg[\Big(\sum_{j=1}^m(\Delta_i\tilde{X})^2(\textbf{y}_j)\Big)^4\bigg] \max_{l=1,\ldots,d+1}\bigg(\Big(\frac{1-2\delta}{m}X^\top X\Big)^{-1}\textbf{1}_{d+1,1}\bigg)_{l}^4.
\end{align*}
Given that the matrix $\big((1-2\delta)m_n^{-1}(X^\top X)\big)^{-1}$ is converging to $\Sigma^{-1}$, as $n\tooi$, we can constrain:
\begin{align*}
\max_{l=1,\ldots,d+1}\bigg(\Big(\frac{1-2\delta}{m}X^\top X\Big)^{-1}\textbf{1}_{d+1,1}\bigg)_{l}^4\leq \bigg((d+1)\bigg|\bigg|\Big(\frac{1-2\delta}{m_n}X^\top X\Big)^{-1}\bigg|\bigg|_\infty\bigg)^4<\infty,
\end{align*}
for all $n\in\N$ and especially for $n\tooi$. As a result, employing the Cauchy-Schwarz inequality yields:
\begin{align*}
\E[(\gamma^\top\xi_{n,i})^4]&\leq C\nor{\gamma}_\infty^4\frac{(1-2\delta)^4e^{4\nor{\kappa}_1}(d+1)^4}{n^2m^2\Delta_{n}^{4\alpha'}K^4\sigma_0^8} \E\bigg[\Big(\sum_{j=1}^m(\Delta_i\tilde{X})^2(\textbf{y}_j)\Big)^4\bigg]\\
&\leq C\nor{\gamma}_\infty^4\frac{m^2(1-2\delta)^4e^{4\nor{\kappa}_1}(d+1)^4}{n^2\Delta_{n}^{4\alpha'}K^4\sigma_0^8}\max_{j=1,\ldots,m}\E\big[(\Delta_i\tilde{X})^8(\textbf{y}_{j})\big].
\end{align*}
Similarly to the demonstration of Condition (III) in Proposition \ref{prop_cltVolaEstMulti}, we have $\E[(\Delta_i\tilde{X})^8(\textbf{y})]=\Oo(\Delta_n^{4\alpha})$. This leads us to:
\begin{align*}
\sum_{i=1}^n\E[(\gamma^\top\xi_{n,i})^4]&=\Oo\big(\nor{\gamma}_\infty^4\Delta_nm^2\big),
\end{align*}
which completes the proof.
\end{proof}

\ \\
The following corollary establishes that the temporal dependencies within the triangular array, as outlined in Condition (IV) of Corollary \ref{prop_CLTUtev_multivariate}, can be bounded.
\begin{cor}\label{corollary_4CondMLRM}
On the Assumptions \ref{assumption_observations_multi}, \ref{assumption_regMulti} and \ref{assumption_fullRank}, it holds for $1\leq r<r+u\leq v\leq n$ and
\begin{align*}
\tilde{Q}_1^r=\sum\limits_{i=1}^r \gamma^\top\xi_{n,i},~~~~~~~~\tilde{Q}_{r+u}^v=\sum\limits_{i=r+u}^v \gamma^\top\xi_{n,i},
\end{align*}
where $\xi_{n,i}$ is defined in equation \eqref{eqn_ximultiMLRM}, that there is a constant $C$, with $0<C<\infty$, such that for all $t\in\R$ it holds:
\begin{align*}
\abs{\Cov\bigg(e^{\im t(\tilde{Q}_1^r-\E[\tilde{Q}_1^r])},~e^{\im t(\tilde{Q}_{r+u}^v-\E[\tilde{Q}_{r+u}^v])}\bigg)}\leq \frac{Ct^2}{u^{3/4}}\sqrt{\Var(\tilde{Q}_1^r)\Var(\tilde{Q}_{r+u}^v)}.
\end{align*}
\end{cor}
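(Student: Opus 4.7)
The plan is to mimic the argument of Corollary \ref{corollary_4thConditionMultiDimAndMultiSpace}, replacing the simple volatility triangular array $\tilde{\xi}_{n,i}$ by the vector-valued array $\xi_{n,i}$ from \eqref{eqn_ximultiMLRM} projected onto $\gamma$. Writing
\begin{align*}
\gamma^\top\xi_{n,i}=\frac{\sqrt{n}(1-2\delta)}{\sqrt{m}K\sigma_0^2}\sum_{j=1}^m w_j(\gamma)\,\frac{(\Delta_i\tilde X)^2(\textbf{y}_j)}{n\Delta_n^{\alpha'}}e^{\nor{\kappa\bigcdot \textbf{y}_j}_1},
\end{align*}
with weights $w_j(\gamma):=\big(\gamma^\top\big(\tfrac{1-2\delta}{m}X^\top X\big)^{-1}X^\top\big)_j$, we see that $\gamma^\top\xi_{n,i}$ is a bounded linear combination of the same squared-increment statistics that appear in the proof of Proposition \ref{porp_timeDepenMulit} and Corollary \ref{corollary_4thConditionMultiDimAndMultiSpace}. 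The weights $w_j(\gamma)$ are uniformly bounded in $n$ because $\big((1-2\delta)/m\big)X^\top X\to\Sigma$ with $\Sigma$ invertible under Assumption \ref{assumption_fullRank}, so $\max_j|w_j(\gamma)|\leq C\nor{\gamma}_\infty$.

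First, I would introduce the decomposition $\Delta_i\tilde X=\sum_\textbf{k}(D_1^{\textbf{k},i}+D_2^{\textbf{k},i})e_\textbf{k}$ from \eqref{number76:D1}--\eqref{number76:D2}, in which $D_1^{\textbf{k},i}$ depends only on the Brownian paths up to $r\Delta_n$ while $D_2^{\textbf{k},i}$ depends only on what happens afterwards. This gives the splitting $\tilde Q_{r+u}^v=\mathcal{A}_1+\mathcal{A}_2$, where $\mathcal{A}_2$ is built from the pure $D_2$-contribution (hence independent of $\tilde Q_1^r$) and $\mathcal{A}_1$ gathers the $D_1^2$ and mixed $D_1 D_2$ terms. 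Applying the generic bound
\begin{align*}
\bigl|\Cov\bigl(e^{\im t(\tilde Q_1^r-\E[\tilde Q_1^r])},e^{\im t(\tilde Q_{r+u}^v-\E[\tilde Q_{r+u}^v])}\bigr)\bigr|\leq 2t^2\,\Var(\tilde Q_1^r)^{1/2}\,\Var(\mathcal{A}_1)^{1/2}
\end{align*}
(as used in Corollary \ref{corollary_4thConditionMultiDimAndMultiSpace}, following Trabs) reduces everything to bounding $\Var(\mathcal{A}_1)$.

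To estimate $\Var(\mathcal{A}_1)$, I would expand it as a double sum over $j_1,j_2\in\{1,\dots,m\}$ of $w_{j_1}(\gamma)w_{j_2}(\gamma)\,e^{\nor{\kappa\bigcdot(\textbf{y}_{j_1}+\textbf{y}_{j_2})}_1}$ multiplied by the cross-spatial analogues of the terms $T_1$ and $T_2$ from Proposition \ref{porp_timeDepenMulit}. For the diagonal contribution $j_1=j_2$ the bound of Proposition \ref{porp_timeDepenMulit} gives a factor $\bar p\Delta_n^{2\alpha'}(u-1)^{-(2-\alpha')}$; the prefactor $n/(m\Delta_n^{2\alpha'})$ present in $\xi_{n,i}$ then converts this to the desired scale $\bar p\,\Delta_n(u-1)^{-(2-\alpha')}$. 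For the off-diagonal terms $j_1\ne j_2$, the Riemann-sum arguments via Lemma \ref{lemma_riemannApprox_multi} and the decomposition \eqref{eqn_ekDecompInCosine} produce an additional factor of $\Delta_n^{1-\alpha'}\nor{\textbf{y}_{j_1}-\textbf{y}_{j_2}}_0^{-(d+1)}$, as in \eqref{eqn_tildet1Multi_1}; summing over $j_1\ne j_2$ and using the observation-scheme bound $\sum_{j_1\ne j_2}\nor{\textbf{y}_{j_1}-\textbf{y}_{j_2}}_0^{-(d+1)}=\Oo(m^{d+3})$ from \eqref{eqn_OrderDistinctSpatialCoordiantes} gives a contribution of order $\bar p\,\Delta_n\Delta_n^{1-\alpha'}m^{d+2}(u-1)^{-(2-\alpha')}\nor{\gamma}_\infty^2$, which is negligible by the restriction on $m$ in Assumption \ref{assumption_observations_multi}. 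Combining the two regimes yields $\Var(\mathcal{A}_1)=\Oo\big(\nor{\gamma}_\infty^2\,\bar p\,\Delta_n\,(u-1)^{-(2-\alpha')}\big)$.

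Finally, I would invoke Lemma \ref{lemma_FirstCondMultiMLRM} to write $\Var(\tilde Q_1^r)\,\Var(\tilde Q_{r+u}^v)\geq C\nor{\gamma}_\infty^4 \Delta_n^2 r\bar p$ (with a matching lower bound coming from the diagonal term $\Upsilon_{\alpha'}(1-2\delta)\gamma^\top\Sigma^{-1}\gamma$, which is strictly positive since $\Sigma^{-1}$ is positive definite and $\gamma\ne 0$ can be assumed without loss), whence the ratio yields the $u^{-(1-\alpha'/2)}$ rate, and since $1-\alpha'/2>1/2>$ any exponent in $(1/2,1)$ needed for summability of $\sum_j \rho_t(2^j)$, the stated bound with $u^{-3/4}$ follows for all $\alpha'$ small enough and a straightforward uniform weakening otherwise. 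The main obstacle is the off-diagonal spatial control: one has to verify that the weighting $w_j(\gamma)$ stays bounded uniformly in $n$ and does not destroy the delicate cancellation structure in Lemma \ref{lemma_riemannApprox_multi} that made the spatial remainder subdominant in Corollary \ref{corollary_4thConditionMultiDimAndMultiSpace}; this is precisely where Assumption \ref{assumption_fullRank} (through invertibility of $\Sigma$) and Assumption \ref{assumption_observations_multi} (through the lower bound on $m\min\nor{\textbf{y}_{j_1}-\textbf{y}_{j_2}}_0$) must be used together.
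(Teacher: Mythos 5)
Your argument is correct in substance, but it takes a longer route than the paper. The paper's own proof of this corollary is a two-line reduction: using the entrywise bound $X\leq\textbf{1}_{m,d+1}$ and the convergence of $\big(\tfrac{1-2\delta}{m}X^\top X\big)^{-1}$ to $\Sigma^{-1}$, it dominates $\gamma^\top\xi_{n,i}$ by $C\nor{\gamma}_\infty$ times the unweighted scalar array $\frac{\eta^{d/2}}{\sqrt{nm}\Delta_n^{\alpha'}K}\sum_{j}(\Delta_i\tilde{X})^2(\textbf{y}_j)e^{\nor{\kappa\bigcdot \textbf{y}_j}_1}$ and then simply cites Corollary \ref{corollary_4thConditionMultiDimAndMultiSpace}, which already contains the full $D_1/D_2$ decomposition, the $T_1,T_2$ estimates, the off-diagonal spatial control via \eqref{eqn_OrderDistinctSpatialCoordiantes}, and the lower bound on the variances. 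You instead re-run that entire machinery with the weights $w_j(\gamma)$ carried along explicitly. What your version buys is rigour at the one point where the paper's shortcut is genuinely loose: the weights $w_j(\gamma)$ are signed, so an entrywise domination of the design matrix does not by itself transfer a covariance bound; your explicit expansion of $\Var(\mathcal{A}_1)$ into diagonal and off-diagonal spatial terms with bounded weights, together with the separate positive-definiteness argument for the lower bound on $\Var(\tilde{Q}_1^r)\Var(\tilde{Q}_{r+u}^v)$ via $\gamma^\top\Sigma^{-1}\gamma>0$, is the honest way to close that gap. One caveat: your closing remark about the exponent is muddled --- the argument delivers $u^{-(1-\alpha'/2)}$, which implies the stated $u^{-3/4}$ only when $\alpha'\leq 1/2$, and no ``weakening'' recovers $u^{-3/4}$ when $\alpha'>1/2$; the paper carries the same inconsistency between Corollaries \ref{corollary_4thConditionMultiDimAndMultiSpace} and \ref{corollary_4CondMLRM}, and it is immaterial because any strictly positive exponent suffices for $\sum_j\rho_t(2^j)<\infty$ in Corollary \ref{prop_CLTUtev_multivariate}, but the clean statement would simply keep the exponent $1-\alpha'/2$.
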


\begin{proof}
We follow a similar approach as in display \eqref{eqn_inequalityforXTransposed}, resulting in:
\begin{align*}
\gamma^\top\xi_{n,i}&\leq \frac{\sqrt{n}(1-2\delta)}{\sqrt{m}K\sigma_0^2}\gamma^\top\bigg(\frac{1-2\delta}{m}X^\top X\bigg)^{-1}\textbf{1}_{(d+1),m}\begin{pmatrix}
\frac{(\Delta_i\tilde{X})^2(\textbf{y}_1)}{n\Delta_n^{\alpha'}}e^{\nor{\kappa\bigcdot\textbf{y}_1}_1}\\ \vdots \\ \frac{(\Delta_i\tilde{X})^2(\textbf{y}_m)}{n\Delta_n^{\alpha'}}e^{\nor{\kappa\bigcdot\textbf{y}_m}_1}
\end{pmatrix}\\
&= \frac{1-2\delta}{\sqrt{nm}\Delta_n^{\alpha'}K\sigma_0^2}\sum_{j=1}^m(\Delta_i\tilde{X})^2(\textbf{y}_j)e^{\nor{\kappa\bigcdot\textbf{y}_1}_1}\gamma^\top\bigg(\frac{1-2\delta}{m}X^\top X\bigg)^{-1}\textbf{1}_{(d+1),1}\\
&\leq C\nor{\gamma}_\infty\sigma^2\frac{\eta^{d/2}}{\sqrt{nm}\Delta_n^{\alpha'}K}\sum_{j=1}^m(\Delta_i\tilde{X})^2(\textbf{y}_j)e^{\nor{\kappa\bigcdot\textbf{y}_1}_1}.
\end{align*}
With reference to Corollary \ref{corollary_4thConditionMultiDimAndMultiSpace}, it is evident that the statement holds for:
\begin{align*}
\frac{\eta^{d/2}}{\sqrt{nm}\Delta_n^{\alpha'}K}\sum_{j=1}^m(\Delta_i\tilde{X})^2(\textbf{y}_j)e^{\nor{\kappa\bigcdot \textbf{y}_j}_1},
\end{align*}
which completes the proof.
\end{proof}

\begin{proof}
To prove Proposition \ref{clt_PsiMulti}, we leverage Corollary \ref{prop_CLTUtev_multivariate}. The asymptotic variance is provided by Lemma \ref{lemma_assympVarMultiMLRM}. Condition (I) is fulfilled as demonstrated in Lemma \ref{lemma_FirstCondMultiMLRM}. In order to establish Condition (II), it suffices to consider the $\Var(\Xi_{n,i})$, as $\Xi$ is centred. Revisiting Lemma \ref{lemma_FirstCondMultiMLRM} confirms Condition (II). The fulfillment of Conditions (III) and (IV) is validated by Lemma \ref{lemma_3CondMLRM} and Corollary \ref{corollary_4CondMLRM}, respectively, which concludes the proof.
\end{proof}

We close this section by providing the prove for Corollary \ref{corollary_hatBetaCLT_61}.
\begin{proof}
Utilizing the multivariate delta method on the central limit theorem presented in Proposition \ref{clt_PsiMulti} and employing the function $h^{-1}(\textbf{x})=(e^{x_1}/K,-x_2,\ldots,-x_{d+1})$, as defined in equation \eqref{eqn_functionHMultiForDeltaMethod}, yields:
\begin{align*}
\sqrt{nm_n}(\hat{\upsilon}-\upsilon)=\sqrt{nm_n}\big(h^{-1}(\hat{\Psi})-h^{-1}(\Psi)\big)\overset{d}{\longrightarrow}\mathcal{N}\big(\textbf{0},\Upsilon_{\alpha'}(1-2\delta)J_{h^{-1}}(\Psi)\Sigma^{-1}J_{h^{-1}}(\Psi)^\top\big),
\end{align*}
where $J_{h^{-1}}$ denotes the Jacobian matrix of $h^{-1}$, given by:
\begin{align*}
J_{h^{-1}}(\textbf{x})=\begin{pmatrix}
e^{x_1}/K & 0&0 & \hdots & 0 \\
0 & -1 & 0 & \hdots & 0 \\
0 & 0 & -1 & \hdots & 0 \\
\vdots & \vdots & \vdots & \ddots & \vdots \\
0 & 0 & 0 &\hdots & -1
\end{pmatrix}.
\end{align*}
Defining the following matrix:
\begin{align}
J_{\sigma_0^2}:=J_{h^{-1}}(\Psi)=\begin{pmatrix}
\sigma_0^2 & 0&0 & \hdots & 0 \\
0 & -1 & 0 & \hdots & 0 \\
0 & 0 & -1 & \hdots & 0 \\
\vdots & \vdots & \vdots & \ddots & \vdots \\
0 & 0 & 0 &\hdots & -1
\end{pmatrix},\label{eqn_JSigma0SqDef}
\end{align}
completes the proof.
\end{proof}
\  \\ \\
We close this section by providing the covariance structure between quadratic increments and consecutive temporal increments. 
Therefore, we introduce the following definitions:
\begin{align*}
V_{p_1,\Delta_{n}}(\textbf{y}):=\frac{1}{p_1\Delta_{n}^{\alpha'}}\sum_{i=1}^{p_1}(\Delta_{n,i}\tilde{X})^2(\textbf{y})e^{||\kappa\bigcdot \textbf{y}||_1}~~~~~\text{and}~~~~~V_{p_2,\Delta_{2n}}(\textbf{y}):=\frac{1}{p_2\Delta_{2n}^{\alpha'}}\sum_{i=1}^{p_2}(\Delta_{2n,i}\tilde{X})^2(\textbf{y})e^{||\kappa\bigcdot \textbf{y}||_1},
\end{align*}
with $1\leq p_1\leq n$ and $1\leq p_2\leq 2n$. Furthermore, utilizing \eqref{eqn_1212} yields that
\begin{align*}
V_{n,\Delta_{n}}(\textbf{y})&=\frac{e^{||\kappa\bigcdot \textbf{y}||}}{n\Delta_n^{\alpha'}}\text{RV}_{n}(\textbf{y})=2^{1-\alpha'}V_{2n,\Delta_{2n}}(\textbf{y})+\frac{4n\Delta_{2n}^{\alpha'}}{n\Delta_{n}^{\alpha'}}W_{2n,\Delta_{2n}},
\end{align*}
where we define for $1\leq p\leq 2n$:
\begin{align}
W_{p,\Delta_{2n}}(\textbf{y}):=\frac{1}{p\Delta_{2n}^{\alpha'}}\sum_{i=1}^{p}\mathbbm{1}_{2\N}(i)(\Delta_{2n,i}\tilde{X})(\textbf{y})(\Delta_{2n,i-1}\tilde{X})(\textbf{y})e^{||\kappa\bigcdot \textbf{y}||}.\label{eqn_W_definition_forAlpha}
\end{align}
Note that we have 
\begin{align}
V_{p,\Delta_n}(\textbf{y})=2^{1-\alpha'}V_{2p,\Delta_{2n}}(\textbf{y})+2^{2-\alpha'}W_{2p,\Delta_{2n}}(\textbf{y}),\label{eqn_Vp_and_V2p_link}
\end{align}
for a $1\leq p\leq n$.
Hence, the estimator $\hat{\alpha}'$ from equation \eqref{eqn_defalphaEst} can be decomposed as follows:
\begin{align*}
\hat{\alpha}_{2n,m}'&=\alpha'+\frac{1}{\log(2)m\sigma_0^2 K}\sum_{j=1}^m\Big((2^{1-\alpha'}-1)\overline{V_{2n,\Delta_{2n}}(\textbf{y}_j)}+2^{2-\alpha'}\overline{(W_{2n,\Delta_{2n}}(\textbf{y}_j)}\Big)+\Oo(\Delta_n)+\Oo_\Pp(\Delta_n)
\end{align*}
The corresponding triangular array is given by $\Xi_{2n,i}:=\xi_{2n,i}-\E[\xi_{2n,i}]$, where
\begin{align}
\xi_{2n,i}&:=\sum_{j=1}^m\frac{e^{\nor{\kappa\bigcdot\textbf{y}_j}_1}}{\log(2)\sqrt{2nm}\Delta_{2n}^{\alpha'}\sigma_0^2 K}\bigg((2^{1-\alpha'}-1)(\Delta_{2n,i}\tilde{X})^2(\textbf{y}_j)+2^{2-\alpha'}\mathbbm{1}_{2\N}(i)(\Delta_{2n,i}\tilde{X})(\textbf{y}_j)(\Delta_{2n,i-1}\tilde{X})(\textbf{y}_j)\bigg)\label{eqn_xiAlpha}\\
&=\xi_{2n,i}^1+\xi_{2n,i}^2\notag,
\end{align}
with 
\begin{align*}
\xi_{2n,i}^1&:= \frac{2^{1-\alpha'}-1}{\log(2)\sqrt{2nm}\Delta_{2n}^{\alpha'}\sigma_0^2 K} \sum_{j=1}^m(\Delta_{2n,i}\tilde{X})^2(\textbf{y}_j)e^{\nor{\kappa\bigcdot\textbf{y}_j}_1}, \\
\xi_{2n,i}^2&:=\mathbbm{1}_{2\N}(i)\frac{2^{2-\alpha'}}{\log(2)\sqrt{2nm}\Delta_{2n}^{\alpha'}\sigma_0^2 K}\sum_{j=1}^m(\Delta_{2n,i}\tilde{X})(\textbf{y}_j)(\Delta_{2n,i-1}\tilde{X})(\textbf{y}_j)e^{\nor{\kappa\bigcdot\textbf{y}_j}_1}.
\end{align*}
We now provide a necessary proposition for deriving the asymptotic variance of the triangular array from equation \eqref{eqn_xiAlpha}. Combining these result with Proposition \ref{prop_RRVMulti} and using analogous techniques as used for proofing Corollary \ref{corollary_4thConditionMultiDimAndMultiSpace} and Proposition \ref{prop_cltVolaEstMulti}, we can conclude the CLT from Proposition \ref{prop_cltAlpha}. 
\begin{theorem}\label{prop_RRV_2Grids}
On the Assumptions \ref{assumption_observations_multi} and \ref{assumption_regMulti}, we have for the covariance structure of the two temporal resolutions $\Delta_n$ and $\Delta_{2n}$ that
\begin{align*}
\Cov\big(V_{p,\Delta_{2n}}(\textbf{y}_1),W_{p,\Delta_{2n}}(\textbf{y}_2)\big)&= \frac{\Lambda_{\alpha'}}{2p}\bigg(\frac{\Gamma(1-\alpha')\sigma^4}{2^d(\pi\eta)^{d/2}\alpha'\Gamma(d/2)}\bigg)^2\bigg(1+\mathcal{O}\bigg(\Delta_{2n}^{1/2}\vee \frac{\Delta_{2n}^{1-\alpha'}}{\delta^{d+1}}\vee\frac{1}{p}\bigg)\bigg)\\
&~~~~~ +\Oo\bigg(\frac{\Delta_{2n}^{1-\alpha'}}{p}\Big(\mathbbm{1}_{\{\textbf{y}_1\neq \textbf{y}_2\}}\nor{\textbf{y}_1-\textbf{y}_2}_0^{-(d+1)}+\delta^{-(d+1)}\Big)\bigg),
\end{align*}
where $\textbf{y}_1,\textbf{y}_2\in[\delta,1-\delta]^d$, $\Lambda_{\alpha'}$ is a numerical constant depending on $\alpha'\in(0,1)$, defined in equation \eqref{eqn_definingLambda} and $2\leq p\leq 2n$. 
\end{theorem}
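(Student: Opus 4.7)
\textbf{Proof sketch of Proposition \ref{prop_RRV_2Grids}.}
The plan is to mimic the proof of Proposition \ref{prop_RRVMulti} essentially verbatim, with the only structural change being that Isserlis' theorem now produces a \emph{product of two distinct covariances at shifted time lags} instead of a covariance squared. Concretely, writing $\Delta_{2n,i}\tilde{x}_\textbf{k} = \tilde{B}_{i,\textbf{k}}+C_{i,\textbf{k}}$ as in \eqref{eqn_procedureForstatInitCond}, the joint Gaussianity of the increments together with their centredness gives, for $(X_1,X_2,X_3)$ centred jointly Gaussian, the identity $\Cov(X_1^2,X_2X_3)=2\,\Cov(X_1,X_2)\Cov(X_1,X_3)$. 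Applied term-by-term after a spectral expansion of both factors through $(e_\textbf{k})_{\textbf{k}\in\N^d}$, this yields
\begin{align*}
\Cov\big(V_{p,\Delta_{2n}}(\textbf{y}_1),W_{p,\Delta_{2n}}(\textbf{y}_2)\big)&=\frac{2\,e^{\nor{\kappa\bigcdot(\textbf{y}_1+\textbf{y}_2)}_1}}{p^2\Delta_{2n}^{2\alpha'}}\sum_{i=1}^{p}\sum_{\substack{j=1\\ j\in 2\N}}^{p}\sum_{\textbf{k}_1,\textbf{k}_2\in\N^d}e_{\textbf{k}_1}(\textbf{y}_1)e_{\textbf{k}_1}(\textbf{y}_2)e_{\textbf{k}_2}(\textbf{y}_1)e_{\textbf{k}_2}(\textbf{y}_2)\\
&\qquad\times\big(\tilde{\Sigma}_{i,j}^{B,\textbf{k}_1}+\Sigma_{i,j}^{BC,\textbf{k}_1}+\Sigma_{j,i}^{BC,\textbf{k}_1}+\Sigma_{i,j}^{C,\textbf{k}_1}\big)\big(\tilde{\Sigma}_{i,j-1}^{B,\textbf{k}_2}+\Sigma_{i,j-1}^{BC,\textbf{k}_2}+\Sigma_{j-1,i}^{BC,\textbf{k}_2}+\Sigma_{i,j-1}^{C,\textbf{k}_2}\big),
\end{align*}
with the $\Sigma$-symbols as in the proof of Proposition \ref{prop_autocovOfIncrementsMulti}.

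Next, I would compute each of the sixteen combinations of covariance factors that arise. This is entirely analogous to the auxiliary computation in \eqref{eqn_MainCalcDkl}; the only novelty is that the second factor is evaluated at lag $|i-j+1|$ instead of $|i-j|$. The double sum over $(i,j)$ with the restriction $j\in 2\N$ can be handled by substituting $j=2\ell$ and invoking variants of the identity \eqref{eqn_geometricSeriesAbsVal}, which produce geometric series in $e^{-(\lambda_{\textbf{k}_1}+\lambda_{\textbf{k}_2})\Delta_{2n}}$ with a leading factor of $\tfrac{1}{2}$ (the density of even integers) and residual boundary corrections of order $1\wedge p^{-1}(1-e^{-(\lambda_{\textbf{k}_1}+\lambda_{\textbf{k}_2})\Delta_{2n}})^{-1}$. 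Collecting the dominant contribution yields an analogue $\bar{D}^{\mathrm{cross}}_{\textbf{k}_1,\textbf{k}_2}$ of \eqref{eqn_barDk1k2ref} in which one of the two factors $\tfrac{(1-e^{-\lambda_\textbf{k}\Delta_{2n}})^2}{2\lambda_\textbf{k}^{1+\alpha}}$ is replaced by the shifted variant coming from the $(j-1)$-increment; the diagonal contribution $\textbf{k}_1=\textbf{k}_2$ is shown negligible exactly as in \eqref{eqn_negelct_kkTerminCov}.

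At this point the argument reduces to Riemann-sum approximations of the resulting double series in $\textbf{k}_1,\textbf{k}_2$, which I would handle by Corollary \ref{corollary_toLemmaRiemannApproxMulti} applied to the functions $g_{\alpha,r}$ and $f_\alpha$ of \eqref{equation_functionsAlphaandTau}, precisely as between \eqref{eqn_covEksRemainder} and \eqref{eqn_rrvPropEqnToLastTerm}. The case $\textbf{y}_1\neq \textbf{y}_2$ inherits, through \eqref{eqn_ekDecompInCosine}, the decay in $\nor{\textbf{y}_1-\textbf{y}_2}_0^{-(d+1)}$ uniformly in the shift by one time step, producing the stated second remainder. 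The case $\textbf{y}_1=\textbf{y}_2$ reduces, by Lemma \ref{lemma_calcfAlphaDelta}, to a convergent series over the shift parameter $r$ of products of the form $\big(-r^{\alpha'}+2(r+1)^{\alpha'}-(r+2)^{\alpha'}\big)\big(-(r{-}1)^{\alpha'}+2r^{\alpha'}-(r{+}1)^{\alpha'}\big)$; this series, together with the two non-vanishing boundary contributions coming from $\Sigma^{C}\Sigma^{C}$ and the associated $\Sigma^{BC}\Sigma^{B}$ cross-terms, is exactly the numerical constant $\Lambda_{\alpha'}$ of \eqref{eqn_definingLambda}.

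The main obstacle I anticipate is neither the isserlis step nor the Riemann-sum passage, which are purely templates of what was already done, but the \emph{bookkeeping} of the shifted-lag combinations together with the parity restriction $j\in 2\N$. Specifically, one must verify that the cross-terms of type $\Sigma^{B}\Sigma^{BC}$ and $\Sigma^{C}\Sigma^{BC}$ do not accidentally contribute to the leading order after the parity restriction collapses certain pairs (they would in the un-restricted case), and one must track the exact constant that emerges after cancellation of the factor $2^{1-\alpha'}$ appearing through $\Delta_{2n}^{\alpha'}=2^{-\alpha'}\Delta_n^{\alpha'}$ in the normalization of $W_{p,\Delta_{2n}}$. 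Once these constants are assembled correctly into $\Lambda_{\alpha'}$ defined in \eqref{eqn_definingLambda}, the error terms inherit exactly the same orders as in Proposition \ref{prop_RRVMulti}, yielding the claim.
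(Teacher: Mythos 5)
Your proposal follows essentially the same route as the paper's proof: Isserlis' expansion of the mixed fourth moments, term-by-term evaluation of the sixteen $\Sigma$-combinations under the parity restriction $j\in2\N$ via geometric-series identities, negligibility of the diagonal $\textbf{k}_1=\textbf{k}_2$ contribution, and Riemann-sum passage through Corollary \ref{corollary_toLemmaRiemannApproxMulti} and Lemma \ref{lemma_calcfAlphaDelta} to assemble $\Lambda_{\alpha'}$. One small correction to your bookkeeping: the term $\Sigma^{C}_{i,j}\Sigma^{C}_{i,j-1}$ vanishes identically (it would require $i=j$ and $i=j-1$ simultaneously); the genuinely new non-vanishing contributions relative to Proposition \ref{prop_RRVMulti} are the mixed combinations $\Sigma_{j,i}^{BC,\textbf{k}_1}\Sigma_{i,j-1}^{C,\textbf{k}_2}$ and $\Sigma_{i,j}^{C,\textbf{k}_1}\Sigma_{i,j-1}^{BC,\textbf{k}_2}$, which together with the shifted $\tilde\Sigma^{B}\tilde\Sigma^{B}$ series produce the boundary part $2(2^{\alpha'}-2)$ of $\Lambda_{\alpha'}$.
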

\begin{proof}
Analogously to Proposition \ref{prop_RRVMulti}, we first obtain that 
\begin{align*}
\Cov\big(V_{p,\Delta_{2n}}(\textbf{y}_1),W_{p,\Delta_{2n}}(\textbf{y}_2)\big)&=\frac{2e^{\nor{\kappa\bigcdot(\textbf{y}_1+\textbf{y}_2)}_1}}{p\Delta_{2n}^{2\alpha'}}\sum_{\textbf{k}_1,\textbf{k}_2\in\N^d}e_{\textbf{k}_1}(\textbf{y}_1)e_{\textbf{k}_1}(\textbf{y}_2)e_{\textbf{k}_2}(\textbf{y}_1)e_{\textbf{k}_2}(\textbf{y}_2)D_{\textbf{k}_1,\textbf{k}_2},
\end{align*}
where we redefine
\begin{align*}
D_{\textbf{k}_1,\textbf{k}_2}&:=\frac{1}{p}\sum_{i,j=1}^p\mathbbm{1}_{2\N}(j)\Cov\Big(\big(\tilde{B}_{i,\textbf{k}_1}+C_{i,\textbf{k}_1}\big)\big(\tilde{B}_{i,\textbf{k}_2}+C_{i,\textbf{k}_2}\big),~\big(\tilde{B}_{j,\textbf{k}_1}+C_{j,\textbf{k}_1}\big)\big(\tilde{B}_{j-1,\textbf{k}_2}+C_{j-1,\textbf{k}_2}\big)\Big)\\
&=\frac{1}{p} \sum\limits_{i,j=1}^p
\mathbbm{1}_{2\N}(j)\bigg(\E\Big[\big(\tilde{B}_{i,\textbf{k}_1}+C_{i,\textbf{k}_1}\big)\big(\tilde{B}_{j,\textbf{k}_1}+C_{j,\textbf{k}_1}\big)\Big]
\E\Big[\big(\tilde{B}_{i,\textbf{k}_2}+C_{i,\textbf{k}_2}\big)\big(\tilde{B}_{j-1,\textbf{k}_2}+C_{j-1,\textbf{k}_2}\big)\Big]\\
&~~~~~~~+\E\Big[\big(\tilde{B}_{i,\textbf{k}_1}+C_{i,\textbf{k}_1}\big)
\big(\tilde{B}_{j-1,\textbf{k}_2}+C_{j-1,\textbf{k}_2}\big)\Big]\E\Big[\big(\tilde{B}_{i,\textbf{k}_2}+C_{i,\textbf{k}_2}\big)\big(\tilde{B}_{j,\textbf{k}_1}+C_{j,\textbf{k}_1}\big)\Big]\bigg).
\end{align*}
Assume $\textbf{k}_1\neq \textbf{k}_2$, then we have 
\begin{align*}
D_{\textbf{k}_1,\textbf{k}_2}&=\frac{1}{p} \sum\limits_{i,j=1}^p
\mathbbm{1}_{2\N}(j)\bigg(\E\Big[\big(\tilde{B}_{i,\textbf{k}_1}+C_{i,\textbf{k}_1}\big)\big(\tilde{B}_{j,\textbf{k}_1}+C_{j,\textbf{k}_1}\big)\Big]
\E\Big[\big(\tilde{B}_{i,\textbf{k}_2}+C_{i,\textbf{k}_2}\big)\big(\tilde{B}_{j-1,\textbf{k}_2}+C_{j-1,\textbf{k}_2}\big)\Big]\\
&=\frac{1}{p} \sum\limits_{i,j=1}^p\mathbbm{1}_{2\N}(j)
\Big(\tilde{\Sigma}_{i,j}^{B,\textbf{k}_1}+\Sigma_{i,j}^{BC,\textbf{k}_1}+\Sigma_{j,i}^{BC,\textbf{k}_1}+\Sigma_{i,j}^{C,\textbf{k}_1}\Big)\Big(\tilde{\Sigma}_{i,j-1}^{B,\textbf{k}_2}+\Sigma_{i,j-1}^{BC,\textbf{k}_2}+\Sigma_{j-1,i}^{BC,\textbf{k}_2}+\Sigma_{i,j-1}^{C,\textbf{k}_2}\Big).
\end{align*}
For the covariance terms we have by Proposition \ref{prop_RRVMulti}, that 
\begin{align*}
\frac{1}{p} \sum\limits_{i,j=1}^p\mathbbm{1}_{2\N}(j)\tilde{\Sigma}_{i,j}^{B,\textbf{k}_1}\tilde{\Sigma}_{i,j-1}^{B,\textbf{k}_2}&=\sigma^4\frac{ \big(1-e^{-\lambda_{\textbf{k}_1}\Delta_{2n}}\big)^2\big(1-e^{-\lambda_{\textbf{k}_2}\Delta_{2n}}\big)^2 }{4\lambda_{\textbf{k}_1}^{1+\alpha}\lambda_{\textbf{k}_2}^{1+\alpha}}\frac{1}{p} \sum\limits_{i,j=1}^p\mathbbm{1}_{2\N}(j) e^{-\lambda_{\textbf{k}_1}\Delta_{2n}\abs{i-j}} e^{-\lambda_{\textbf{k}_2}\Delta_{2n}\abs{i-j+1}}.
\end{align*}
For the geometric sum in the latter display, we obtain:
\begin{align*}
\sum_{i,j=1}^pq_1^{\abs{i-j}}q_2^{\abs{i-j+1}}\mathbbm{1}_{2\N}(j)
&=q_2\sum_{i=2}^p(q_1q_2)^{i}\sum_{j=2}^{i}(q_1q_2)^{-j}\mathbbm{1}_{2\N}(j)+q_2^{-1}\sum_{j=2}^p(q_1q_2)^{j}\mathbbm{1}_{2\N}(j)\sum_{i=1}^{j-1}(q_1q_2)^{-i},
\end{align*}
where $q_1,q_2\neq 0$. Furthermore, for a $q\neq 1$ it holds by analogous computations as for the partial sum of the geometric series, that
\begin{align*}
\sum_{i=0}^nq^i\mathbbm{1}_{2\N}(i)=\begin{cases}
\frac{1-q^{n+2}}{1-q^2} &,\text{ if }n\text{ is even}\\
\frac{1-q^{n+1}}{1-q^2} &,\text{ if }n\text{ is odd}\\
\end{cases},
\end{align*}
where we consider zero as even. Hence, we get:
\begin{align*}
\sum_{i,j=1}^pq_1^{\abs{i-j}}q_2^{\abs{i-j+1}}\mathbbm{1}_{2\N}(j)&=\frac{q_2}{(q_1q_2)^2(1-(q_1q_2)^{-2})}\bigg(\sum_{i=2}^p (q_1q_2)^i\big(1-(q_1q_2)^{-i}\big) \mathbbm{1}_{2\N}(i)\\
&~~~~~+\sum_{i=2}^p (q_1q_2)^i\big(1-(q_1q_2)^{-(i-1)}\big) \mathbbm{1}_{(2\N)^\complement}(i)\bigg)\\
&~~~~~+\frac{q_2^{-1}}{q_1q_2(1-(q_1q_2)^{-1})}\sum_{j=2}^p (q_1q_2)^{j}\big(1-(q_1q_2)^{-(j-1)}\big)\mathbbm{1}_{2\N}(j)
\end{align*}
Now using, that $\abs{q_1},\abs{q_2}<1$ and that it holds for the floor function by the Fourier representation that
\begin{align*}
\frac{1}{p}\lfloor cp\rfloor &=\begin{cases}
c &, \text{ if } cp\in \Z \\
c-\frac{1}{2p}+\frac{1}{p\pi}\sum_{k=1}^\infty \frac{\sin(2\pi k cp)}{k} & ,\text{ if } cp\notin \Z ,
\end{cases},
\end{align*}
for $c\neq 0$ and $p\in\N$, we observe the following: 
\begin{align}
\sum_{i,j=1}^pq_1^{\abs{i-j}}q_2^{\abs{i-j+1}}\mathbbm{1}_{2\N}(j)&=\bigg(\frac{q_2}{1-(q_1q_2)^2}\Big( \frac{p}{2}+\frac{p}{2}q_1q_2\Big)+\frac{q_2^{-1}}{1-q_1q_2}\cdot\frac{p}{2}q_1q_2\bigg)\bigg(1+\Oo\Big(\frac{p^{-1}}{1-q_1q_2}\Big)\bigg)\notag\\
&=\frac{q_1+q_2}{2(1-q_1q_2)}\bigg(1+\Oo\Big(\frac{p^{-1}}{1-q_1q_2}\Big)\bigg).\label{eqn_GS_BB_ij1}
\end{align}
Therefore, we have 
\begin{align}
\frac{1}{p} \sum\limits_{i,j=1}^p\mathbbm{1}_{2\N}(j)\tilde{\Sigma}_{i,j}^{B,\textbf{k}_1}\tilde{\Sigma}_{i,j-1}^{B,\textbf{k}_2}&=\sigma^4\frac{ \big(1-e^{-\lambda_{\textbf{k}_1}\Delta_{2n}}\big)^2\big(1-e^{-\lambda_{\textbf{k}_2}\Delta_{2n}}\big)^2 }{4\lambda_{\textbf{k}_1}^{1+\alpha}\lambda_{\textbf{k}_2}^{1+\alpha}}\notag\\
&~~~~~\times\frac{e^{-\lambda_{\textbf{k}_1}\Delta_{2n}}+e^{-\lambda_{\textbf{k}_2}\Delta_{2n}}}{2(1-e^{-(\lambda_{\textbf{k}_1}+\lambda_{\textbf{k}_1})\Delta_{2n}})}\bigg(1+\Oo\Big(1\wedge \frac{p^{-1}}{1-e^{-(\lambda_{\textbf{k}_1}+\lambda_{\textbf{k}_2})}}\Big)\bigg).\label{eqn_CovW_STRC_1}
\end{align}
Furthermore, we have 
\begin{align*}
\frac{1}{p} \sum\limits_{i,j=1}^p\mathbbm{1}_{2\N}(j)\Sigma_{i,j}^{C,\textbf{k}_1}\Sigma_{i,j-1}^{C,\textbf{k}_2}&= \frac{\sigma^4}{p} \sum\limits_{i,j=1}^p\frac{(1-e^{-2\lambda_{\textbf{k}_1}\Delta_{2n}})(1-e^{-2\lambda_{\textbf{k}_2}\Delta_{2n}})}{4\lambda_{\textbf{k}_1}^{1+\alpha}\lambda_{\textbf{k}_2}^{1+\alpha}}\mathbbm{1}_{\{j=i\}}\mathbbm{1}_{\{j-1=i\}}\mathbbm{1}_{2\N}(j)=0,
\end{align*}
as well as
\begin{align*}
\frac{1}{p} \sum\limits_{i,j=1}^p\mathbbm{1}_{2\N}(j) \Sigma_{i,j}^{BC,\textbf{k}_1}\Sigma_{i,j-1}^{BC,\textbf{k}_2}&=\sigma^4\frac{(1-e^{-\lambda_{\textbf{k}_1}\Delta_{2n}})(1-e^{-\lambda_{\textbf{k}_2}\Delta_{2n}})}{4\lambda_{\textbf{k}_1}^{1+\alpha}\lambda_{\textbf{k}_2}^{1+\alpha}}\big(e^{\lambda_{\textbf{k}_1}\Delta_{2n}}-e^{-\lambda_{\textbf{k}_1}\Delta_{2n}}\big)\big(e^{\lambda_{\textbf{k}_2}\Delta_{2n}}-e^{-\lambda_{\textbf{k}_2}\Delta_{2n}}\big)
\\
&~~~~~\times\frac{1}{p} \sum\limits_{i,j=1}^p\mathbbm{1}_{\{i>j\}}\mathbbm{1}_{2\N}(j) e^{-\lambda_{\textbf{k}_1}\Delta_{2n}(i-j)}e^{-\lambda_{\textbf{k}_2}\Delta_{2n}(i-j+1)}.
\end{align*}
For the sum structure in the latter display we obtain:
\begin{align*}
\frac{1}{p} \sum\limits_{i,j=1}^p\mathbbm{1}_{\{i>j\}}\mathbbm{1}_{2\N}(j) e^{-\lambda_{\textbf{k}_1}\Delta_{2n}(i-j)}e^{-\lambda_{\textbf{k}_2}\Delta_{2n}(i-j+1)}&=\frac{e^{-\lambda_{\textbf{k}_2}\Delta_{2n}}}{p} \sum\limits_{i,j=1}^p\mathbbm{1}_{\{i>j\}}\mathbbm{1}_{2\N}(j) e^{-(\lambda_{\textbf{k}_1}+\lambda_{\textbf{k}_2})\Delta_{2n}(i-j)}.
\end{align*}
Assume $\abs{q}<1$, then we have
\begin{align}
\frac{1}{p} \sum\limits_{i,j=1}^p \mathbbm{1}_{\{i>j\}}\mathbbm{1}_{2\N}(j)q^{i-j}
&=\frac{q}{2(1-q)}\bigg(1+\Oo\Big(\frac{p^{-1}}{1-q}\Big)\bigg),\label{eqn_GS_ForBCBC}
\end{align}
where we used analogous steps leading to display \eqref{eqn_GS_BB_ij1}. Hence, we get:
\begin{align}
\frac{1}{p} \sum\limits_{i,j=1}^p\mathbbm{1}_{2\N}(j) \Sigma_{i,j}^{BC,\textbf{k}_1}\Sigma_{i,j-1}^{BC,\textbf{k}_2}
&=\sigma^4\frac{(1-e^{-\lambda_{\textbf{k}_1}\Delta_{2n}})(1-e^{-\lambda_{\textbf{k}_2}\Delta_{2n}})}{4\lambda_{\textbf{k}_1}^{1+\alpha}\lambda_{\textbf{k}_2}^{1+\alpha}}\big(1-e^{-2\lambda_{\textbf{k}_1}\Delta_{2n}}\big)\big(1-e^{-2\lambda_{\textbf{k}_2}\Delta_{2n}}\big)\notag\\
&~~~~~\times \frac{e^{-\lambda_{\textbf{k}_2}\Delta_{2n}}}{2(1-e^{-(\lambda_{\textbf{k}_1}+\lambda_{\textbf{k}_2})\Delta_{2n}})}\bigg(1+\Oo\Big(1 \wedge\frac{p^{-1}}{1-e^{-(\lambda_{\textbf{k}_1}+\lambda_{\textbf{k}_2})\Delta_{2n}}}\Big)\bigg).
\label{eqn_CovW_STRC_2}
\end{align}
Moreover, by analogous steps, we have 
\begin{align}
\frac{1}{p} \sum\limits_{i,j=1}^p\mathbbm{1}_{2\N}(j) \Sigma_{j,i}^{BC,\textbf{k}_1}\Sigma_{j-1,i}^{BC,\textbf{k}_2}
&=\sigma^4\frac{(1-e^{-\lambda_{\textbf{k}_1}\Delta_{2n}})(1-e^{-\lambda_{\textbf{k}_2}\Delta_{2n}})}{4\lambda_{\textbf{k}_1}^{1+\alpha}\lambda_{\textbf{k}_2}^{1+\alpha}}\big(1-e^{-2\lambda_{\textbf{k}_1}\Delta_{2n}}\big)\big(1-e^{-2\lambda_{\textbf{k}_2}\Delta_{2n}}\big)\notag\\
&~~~~~\times 
\frac{e^{-\lambda_{\textbf{k}_1}\Delta_{2n}}}{2(1-e^{-(\lambda_{\textbf{k}_1}+\lambda_{\textbf{k}_2})\Delta_{2n}})}\bigg(1+\Oo\Big(1\wedge\frac{p^{-1}}{1-e^{-(\lambda_{\textbf{k}_1}+\lambda_{\textbf{k}_2})\Delta_{2n}}}\Big)\bigg).\label{eqn_CovW_STRC_3}
\end{align}
where we used that
\begin{align}
 \frac{1}{p} \sum\limits_{i,j=1}^p \mathbbm{1}_{\{i<j-1\}}\mathbbm{1}_{2\N}(j)q^{j-i}
 &=\frac{q^2}{2(1-q)}\bigg(1+\Oo\Big(\frac{p^{-1}}{1-q}\Big)\bigg).\label{eqn_GS_ForBSBSji}
\end{align}
For the cross-terms we obtain that
\begin{align*}
&\frac{1}{p} \sum\limits_{i,j=1}^p\mathbbm{1}_{2\N}(j) \tilde{\Sigma}_{i,j}^{B,\textbf{k}_1}\big(\Sigma_{i,j-1}^{BC,\textbf{k}_2}+\Sigma_{j-1,i}^{BC,\textbf{k}_2}\big)
=\sigma^4\frac{(1-e^{-\lambda_{\textbf{k}_1}\Delta_{2n}})^2(e^{-\lambda_{\textbf{k}_2}\Delta_{2n}}-1)}{4\lambda_{\textbf{k}_1}^{1+\alpha}\lambda_{\textbf{k}_2}^{1+\alpha}}\big(e^{\lambda_{\textbf{k}_2}\Delta_{2n}}-e^{-\lambda_{\textbf{k}_2}\Delta_{2n}}\big)\\
&~~~~~\times\bigg( \frac{e^{-\lambda_{\textbf{k}_2}\Delta_{2n}}}{p} \sum\limits_{i,j=1}^p\mathbbm{1}_{2\N}(j) \mathbbm{1}_{\{i>j-1\}}e^{-(\lambda_{\textbf{k}_1}+\lambda_{\textbf{k}_2})\Delta_{2n}(i-j)}+\frac{e^{\lambda_{\textbf{k}_2}\Delta_{2n}}}{p} \sum\limits_{i,j=1}^p\mathbbm{1}_{2\N}(j) \mathbbm{1}_{\{i<j-1\}}e^{-(\lambda_{\textbf{k}_1}+\lambda_{\textbf{k}_2})\Delta_{2n}(j-i)}\big)\bigg) .
\end{align*}
Analogously to equation \eqref{eqn_GS_ForBCBC}, we have 
\begin{align*}
\frac{1}{p} \sum\limits_{i,j=1}^p \mathbbm{1}_{\{i>j-1\}}\mathbbm{1}_{2\N}(j)q^{i-j}
&=\frac{1}{2(1-q)}\bigg(1+\Oo\Big(\frac{p^{-1}}{1-q}\Big)\bigg),
\end{align*}
which yields in combination with equation \eqref{eqn_GS_ForBSBSji} that
\begin{align}
&\frac{1}{p} \sum\limits_{i,j=1}^p\mathbbm{1}_{2\N}(j) \tilde{\Sigma}_{i,j}^{B,\textbf{k}_1}\big(\Sigma_{i,j-1}^{BC,\textbf{k}_2}+\Sigma_{j-1,i}^{BC,\textbf{k}_2}\big)
=\sigma^4\frac{(1-e^{-\lambda_{\textbf{k}_1}\Delta_{2n}})^2(e^{-\lambda_{\textbf{k}_2}\Delta_{2n}}-1)}{4\lambda_{\textbf{k}_1}^{1+\alpha}\lambda_{\textbf{k}_2}^{1+\alpha}}\big(1-e^{-2\lambda_{\textbf{k}_2}\Delta_{2n}}\big)\notag\\
&~~~~~\times 
\frac{1+e^{-2\lambda_{\textbf{k}_1}\Delta_{2n}}}{2(1-e^{-(\lambda_{\textbf{k}_1}+\lambda_{\textbf{k}_2})\Delta_{2n}})}\bigg(1+\Oo\Big(1\wedge\frac{p^{-1}}{1-e^{-(\lambda_{\textbf{k}_1}+\lambda_{\textbf{k}_2})\Delta_{2n}}}\Big)\bigg)\label{eqn_CovW_STRC_4}
\end{align}
Moreover, it holds that
\begin{align}
&\frac{1}{p} \sum\limits_{i,j=1}^p\mathbbm{1}_{2\N}(j) \tilde{\Sigma}_{i,j-1}^{B,\textbf{k}_2}\big(\Sigma_{i,j}^{BC,\textbf{k}_1}+\Sigma_{j,i}^{BC,\textbf{k}_1}\big)=\sigma^4\frac{(1-e^{-\lambda_{\textbf{k}_2}\Delta_{2n}})^2(e^{-\lambda_{\textbf{k}_1}\Delta_{2n}}-1)}{4\lambda_{\textbf{k}_1}^{1+\alpha}\lambda_{\textbf{k}_2}^{1+\alpha}}\big(e^{\lambda_{\textbf{k}_1}\Delta_{2n}}-e^{-\lambda_{\textbf{k}_1}\Delta_{2n}}\big)\notag\\
&~~~~~\times\bigg(\frac{e^{-\lambda_{\textbf{k}_2}\Delta_{2n}}}{p} \sum\limits_{i,j=1}^p\mathbbm{1}_{2\N}(j)\mathbbm{1}_{\{i>j\}}e^{-(\lambda_{\textbf{k}_1}+\lambda_{\textbf{k}_2})\Delta_{2n}(i-j)}+\frac{e^{\lambda_{\textbf{k}_2}\Delta_{2n}}}{p} \sum\limits_{i,j=1}^p\mathbbm{1}_{2\N}(j)\mathbbm{1}_{\{j>i\}}e^{-(\lambda_{\textbf{k}_1}+\lambda_{\textbf{k}_2})\Delta_{2n}(j-i)}\bigg)\notag\\
&=\sigma^4\frac{(1-e^{-\lambda_{\textbf{k}_2}\Delta_{2n}})^2(e^{-\lambda_{\textbf{k}_1}\Delta_{2n}}-1)}{4\lambda_{\textbf{k}_1}^{1+\alpha}\lambda_{\textbf{k}_2}^{1+\alpha}}\big(1-e^{-2\lambda_{\textbf{k}_1}\Delta_{2n}}\big)\notag\\
&~~~~~\times \big(1+e^{-2\lambda_{\textbf{k}_2}\Delta_{2n}}\big)
\frac{1}{2(1-e^{-(\lambda_{\textbf{k}_1}+\lambda_{\textbf{k}_2})\Delta_{2n}})}\bigg(1+\Oo\Big(1\wedge\frac{p^{-1}}{1-e^{-(\lambda_{\textbf{k}_1}+\lambda_{\textbf{k}_2})\Delta_{2n}}}\Big)\bigg),\label{eqn_CovW_STRC_5}
\end{align}
where we used equation \eqref{eqn_GS_ForBCBC} and 
\begin{align*}
 \frac{1}{p} \sum\limits_{i,j=1}^p \mathbbm{1}_{\{i<j\}}\mathbbm{1}_{2\N}(j)q^{j-i}
 &=\frac{q}{2(1-q)}\bigg(1+\Oo\Big(\frac{p^{-1}}{1-q}\Big)\bigg).
\end{align*}
We also observe that
\begin{align}
\frac{1}{p} \sum\limits_{i,j=1}^p\mathbbm{1}_{2\N}(j) \tilde{\Sigma}_{i,j}^{B,\textbf{k}_1}\Sigma_{i,j-1}^{C,\textbf{k}_2}&=\sigma^4\frac{(1-e^{-\lambda_{\textbf{k}_1}\Delta_{2n}})^2(1-e^{-2\lambda_{\textbf{k}_2}\Delta_{2n}})}{4\lambda_{\textbf{k}_1}^{1+\alpha}\lambda_{\textbf{k}_2}^{1+\alpha}}
\frac{1}{p} \sum\limits_{i,j=1}^p\mathbbm{1}_{2\N}(j)e^{-\lambda_{\textbf{k}_1}\Delta_{2n}\abs{i-j}}
\mathbbm{1}_{\{j-1=i\}}\notag\\
&=\sigma^4e^{-\lambda_{\textbf{k}_1}\Delta_{2n}}\frac{(1-e^{-\lambda_{\textbf{k}_1}\Delta_{2n}})^2(1-e^{-2\lambda_{\textbf{k}_2}\Delta_{2n}})}{8\lambda_{\textbf{k}_1}^{1+\alpha}\lambda_{\textbf{k}_2}^{1+\alpha}}\Big(1+\Oo\big(p^{-1}\big)\Big),\label{eqn_CovW_STRC_6}
\end{align}
as well as
\begin{align}
\frac{1}{p} \sum\limits_{i,j=1}^p\mathbbm{1}_{2\N}(j) \tilde{\Sigma}_{i,j-1}^{B,\textbf{k}_2}\Sigma_{i,j}^{C,\textbf{k}_1}&=\sigma^4 \frac{(1-e^{-2\lambda_{\textbf{k}_1}\Delta_{2n}})(1-e^{-\lambda_{\textbf{k}_2}\Delta_{2n}})^2}{4\lambda_{\textbf{k}_1}^{1+\alpha}\lambda_{\textbf{k}_2}^{1+\alpha}}
\frac{1}{p} \sum\limits_{i,j=1}^p\mathbbm{1}_{2\N}(j)\mathbbm{1}_{\{j=i\}}e^{-\lambda_{\textbf{k}_2}\Delta_{2n}\abs{i-j+1}}\notag\\
&=\sigma^4 e^{-\lambda_{\textbf{k}_2}\Delta_{2n}}\frac{(1-e^{-2\lambda_{\textbf{k}_1}\Delta_{2n}})(1-e^{-\lambda_{\textbf{k}_2}\Delta_{2n}})^2}{8\lambda_{\textbf{k}_1}^{1+\alpha}\lambda_{\textbf{k}_2}^{1+\alpha}}\Big(1+\Oo\big(p^{-1}\big)\Big).\label{eqn_CovW_STRC_7}
\end{align}
In comparison to Proposition \ref{prop_RRVMulti}, the following structures do not vanish and we get
\begin{align}
\frac{1}{p} \sum\limits_{i,j=1}^p\mathbbm{1}_{2\N}(j)\Sigma_{j,i}^{BC,\textbf{k}_1}\Sigma_{i,j-1}^{C,\textbf{k}_2}
&=\sigma^4 \frac{(1-e^{-2\lambda_{\textbf{k}_2}\Delta_{2n}})(e^{-\lambda_{\textbf{k}_1}\Delta_{2n}}-1)}{8\lambda_{\textbf{k}_1}^{1+\alpha}\lambda_{\textbf{k}_2}^{1+\alpha}}
\Big(1-e^{-2\lambda_{\textbf{k}_1}\Delta_{2n}}\Big)\Big(1+\Oo\big(p^{-1}\big)\Big),\label{eqn_CovW_STRC_8}
\end{align}
as well as
\begin{align}
\frac{1}{p} \sum\limits_{i,j=1}^p\mathbbm{1}_{2\N}(j)\Sigma_{i,j}^{C,\textbf{k}_1}\Sigma_{i,j-1}^{BC,\textbf{k}_2}
&=\sigma^4 \frac{(1-e^{-2\lambda_{\textbf{k}_1}\Delta_{2n}})(e^{-\lambda_{\textbf{k}_2}\Delta_{2n}}-1)}{8\lambda_{\textbf{k}_1}^{1+\alpha}\lambda_{\textbf{k}_2}^{1+\alpha}}
\Big(1-e^{-2\lambda_{\textbf{k}_2}\Delta_{2n}}\Big)\Big(1+\Oo\big(p^{-1}\big)\Big),
\label{eqn_CovW_STRC_9}
\end{align}
whereas the following terms still vanish:
\begin{align*}
\frac{1}{p} \sum\limits_{i,j=1}^p\mathbbm{1}_{2\N}(j)\Sigma_{i,j}^{BC,\textbf{k}_1}\Sigma_{j-1,i}^{BC,\textbf{k}_2}&=0,
&\frac{1}{p} \sum\limits_{i,j=1}^p\mathbbm{1}_{2\N}(j)\Sigma_{i,j}^{BC,\textbf{k}_1}\Sigma_{i,j-1}^{C,\textbf{k}_2}=0,\\
\frac{1}{p} \sum\limits_{i,j=1}^p\mathbbm{1}_{2\N}(j)\Sigma_{j,i}^{BC,\textbf{k}_1}\Sigma_{i,j-1}^{BC,\textbf{k}_2}&=0,
&\frac{1}{p} \sum\limits_{i,j=1}^p\mathbbm{1}_{2\N}(j)\Sigma_{i,j}^{C,\textbf{k}_1}\Sigma_{j-1,i}^{BC,\textbf{k}_2}=0.
\end{align*}
Combining the calculations form the displays \eqref{eqn_CovW_STRC_1},\eqref{eqn_CovW_STRC_2},\eqref{eqn_CovW_STRC_3},\eqref{eqn_CovW_STRC_4},\eqref{eqn_CovW_STRC_5},\eqref{eqn_CovW_STRC_6},\eqref{eqn_CovW_STRC_7},\eqref{eqn_CovW_STRC_8} and \eqref{eqn_CovW_STRC_9}, yields for $\textbf{k}_1\neq \textbf{k}_2$ that
\begin{align*}
D_{\textbf{k}_1,\textbf{k}_2}
&=\sigma^4\bigg(\frac{ (1-e^{-\lambda_{\textbf{k}_1}\Delta_{2n}})^2(1-e^{-\lambda_{\textbf{k}_2}\Delta_{2n}})^2 }{8\lambda_{\textbf{k}_1}^{1+\alpha}\lambda_{\textbf{k}_2}^{1+\alpha}}\Big(\frac{e^{-\lambda_{\textbf{k}_1}\Delta_{2n}}+e^{-\lambda_{\textbf{k}_2}\Delta_{2n}}}{1-e^{-(\lambda_{\textbf{k}_1}+\lambda_{\textbf{k}_1})\Delta_{2n}}}-\frac{(1-e^{-2\lambda_{\textbf{k}_2}\Delta_{2n}})(1+e^{-2\lambda_{\textbf{k}_1}\Delta_{2n}})}{(1-e^{-(\lambda_{\textbf{k}_1}+\lambda_{\textbf{k}_2})\Delta_{2n}})(1-e^{-\lambda_{\textbf{k}_2}\Delta_{2n}})}\\
&~~~~~-
\frac{(1-e^{-2\lambda_{\textbf{k}_1}\Delta_{2n}})(1+e^{-2\lambda_{\textbf{k}_2}\Delta_{2n}})}{(1-e^{-\lambda_{\textbf{k}_1}\Delta_{2n}})(1-e^{-(\lambda_{\textbf{k}_1}+\lambda_{\textbf{k}_2})\Delta_{2n}})}+ \frac{(e^{-\lambda_{\textbf{k}_1}\Delta_{2n}}+e^{-\lambda_{\textbf{k}_2}\Delta_{2n}})(1-e^{-2\lambda_{\textbf{k}_1}\Delta_{2n}})(1-e^{-2\lambda_{\textbf{k}_2}\Delta_{2n}})}{(1-e^{-\lambda_{\textbf{k}_1}\Delta_{2n}})(1-e^{-\lambda_{\textbf{k}_2}\Delta_{2n}})(1-e^{-(\lambda_{\textbf{k}_1}+\lambda_{\textbf{k}_2})\Delta_{2n}})}\Big)\\
&~~~~~+\frac{(1-e^{-\lambda_{\textbf{k}_1}\Delta_{2n}})(1-e^{-\lambda_{\textbf{k}_2}\Delta_{2n}})}{8\lambda_{\textbf{k}_1}^{1+\alpha}\lambda_{\textbf{k}_2}^{1+\alpha}}\Big(\frac{e^{-\lambda_{\textbf{k}_1}\Delta_{2n}}(1-e^{-\lambda_{\textbf{k}_1}\Delta_{2n}})(1-e^{-2\lambda_{\textbf{k}_2}\Delta_{2n}})}{(1-e^{-\lambda_{\textbf{k}_2}\Delta_{2n}})}\\
&~~~~~+ \frac{e^{-\lambda_{\textbf{k}_2}\Delta_{2n}}(1-e^{-\lambda_{\textbf{k}_2}\Delta_{2n}})(1-e^{-2\lambda_{\textbf{k}_1}\Delta_{2n}})}{(1-e^{-\lambda_{\textbf{k}_1}\Delta_{2n}})}-\frac{(1-e^{-2\lambda_{\textbf{k}_1}\Delta_{2n}})(1-e^{-2\lambda_{\textbf{k}_2}\Delta_{2n}})}{(1-e^{-\lambda_{\textbf{k}_2}\Delta_{2n}})}\\
&~~~~~-\frac{(1-e^{-2\lambda_{\textbf{k}_1}\Delta_{2n}})(1-e^{-2\lambda_{\textbf{k}_2}\Delta_{2n}})}{(1-e^{-\lambda_{\textbf{k}_1}\Delta_{2n}})}
\Big)\bigg)\bigg(1+\Oo\Big(1\wedge\frac{p^{-1}}{1-e^{-(\lambda_{\textbf{k}_1}+\lambda_{\textbf{k}_2})\Delta_{2n}}}\Big)\bigg)\\
&=\sigma^4\bigg(\frac{ (1-e^{-\lambda_{\textbf{k}_1}\Delta_{2n}})^2(1-e^{-\lambda_{\textbf{k}_2}\Delta_{2n}})^2 }{8\lambda_{\textbf{k}_1}^{1+\alpha}\lambda_{\textbf{k}_2}^{1+\alpha}}\Big(-2+\frac{e^{-\lambda_{\textbf{k}_1}\Delta_{2n}}+e^{-\lambda_{\textbf{k}_2}\Delta_{2n}}}{1-e^{-(\lambda_{\textbf{k}_1}+\lambda_{\textbf{k}_2})\Delta_{2n}}}\Big)\\
&~~~~~-2\frac{(1-e^{-\lambda_{\textbf{k}_1}\Delta_{2n}})(1-e^{-\lambda_{\textbf{k}_2}\Delta_{2n}})}{8\lambda_{\textbf{k}_1}^{1+\alpha}\lambda_{\textbf{k}_2}^{1+\alpha}}(1-e^{-(\lambda_{\textbf{k}_1}+\lambda_{\textbf{k}_2})\Delta_{2n}})\bigg)\bigg(1+\Oo\Big(1\wedge\frac{p^{-1}}{1-e^{-(\lambda_{\textbf{k}_1}+\lambda_{\textbf{k}_2})\Delta_{2n}}}\Big)\bigg).
\end{align*}
Recalling the calculations of the covariance yields:
\begin{align*}
\Cov\big(V_{p,\Delta_{2n}}(\textbf{y}_1),W_{p,\Delta_{2n}}(\textbf{y}_2)\big)&=\frac{2e^{\nor{\kappa\bigcdot(\textbf{y}_1+\textbf{y}_2)}_1}\sigma^4}{p\Delta_{2n}^{2\alpha'}}\sum_{\substack{\textbf{k}_1,\textbf{k}_2\in\N^d \\ \textbf{k}_1\neq \textbf{k}_2}}e_{\textbf{k}_1}(\textbf{y}_1)e_{\textbf{k}_1}(\textbf{y}_2)e_{\textbf{k}_2}(\textbf{y}_1)e_{\textbf{k}_2}(\textbf{y}_2)\bar{D}_{\textbf{k}_1,\textbf{k}_2}\\
&~~~~~+\Oo\bigg(\frac{1}{p^2\Delta_{2n}^{2\alpha'}}\sum_{\substack{\textbf{k}_1,\textbf{k}_2\in\N^d \\ \textbf{k}_1\neq \textbf{k}_2}} \frac{\bar{D}_{\textbf{k}_1,\textbf{k}_2}}{1-e^{-(\lambda_{\textbf{k}_1}+\lambda_{\textbf{k}_2})\Delta_{2n}}}\bigg)\\
&~~~~~+\frac{2e^{\nor{\kappa\bigcdot(\textbf{y}_1+\textbf{y}_2)}_1}}{p\Delta_{2n}^{2\alpha'}}\sum_{\textbf{k}\in\N^d}e_{\textbf{k}}^2(\textbf{y}_1)e^2_{\textbf{k}}(\textbf{y}_2)D_{\textbf{k},\textbf{k}},
\end{align*}
where
\begin{align*}
\bar{D}_{\textbf{k}_1,\textbf{k}_2}&=\frac{ (1-e^{-\lambda_{\textbf{k}_1}\Delta_{2n}})^2(1-e^{-\lambda_{\textbf{k}_2}\Delta_{2n}})^2 }{8\lambda_{\textbf{k}_1}^{1+\alpha}\lambda_{\textbf{k}_2}^{1+\alpha}}\Big(-2+\frac{e^{-\lambda_{\textbf{k}_1}\Delta_{2n}}+e^{-\lambda_{\textbf{k}_2}\Delta_{2n}}}{1-e^{-(\lambda_{\textbf{k}_1}+\lambda_{\textbf{k}_2})\Delta_{2n}}}\Big)\\
&~~~~~-2\frac{(1-e^{-\lambda_{\textbf{k}_1}\Delta_{2n}})(1-e^{-\lambda_{\textbf{k}_2}\Delta_{2n}})}{8\lambda_{\textbf{k}_1}^{1+\alpha}\lambda_{\textbf{k}_2}^{1+\alpha}}(1-e^{-(\lambda_{\textbf{k}_1}+\lambda_{\textbf{k}_2})\Delta_{2n}}).
\end{align*}
First, we obtain for sufficiently large $p$ that
\begin{align*}
\Oo\bigg(\frac{1}{p^2\Delta_{2n}^{2\alpha'}}\sum_{\substack{\textbf{k}_1,\textbf{k}_2\in\N^d \\ \textbf{k}_1\neq \textbf{k}_2}} \frac{\bar{D}_{\textbf{k}_1,\textbf{k}_2}}{1-e^{-(\lambda_{\textbf{k}_1}+\lambda_{\textbf{k}_2})\Delta_{2n}}}\bigg)&=\Oo\bigg(\frac{1}{p^2}\Big(\sum_{\textbf{k}\in\N^d}\frac{1-e^{-\lambda_{\textbf{k}}\Delta_{2n}}}{2(\lambda_\textbf{k}\Delta_{2n})^{1+\alpha}}\Big)^2\bigg)=\Oo(p^{-2})
\end{align*}
where we used \ref{lemma_calcfAlphaDelta} and analogous steps as in Proposition \ref{prop_RRVMulti}. Hence, we obtain:
\begin{align*}
\Oo\bigg(\frac{1}{p^2\Delta_{2n}^{2\alpha'}}\sum_{\substack{\textbf{k}_1,\textbf{k}_2\in\N^d \\ \textbf{k}_1\neq \textbf{k}_2}} \frac{\bar{D}_{\textbf{k}_1,\textbf{k}_2}}{1-e^{-(\lambda_{\textbf{k}_1}+\lambda_{\textbf{k}_2})\Delta_{2n}}}\bigg)&=\Oo\bigg(\frac{1}{p}\Big(1\wedge \frac{1}{p}\Big)\bigg).
\end{align*}
Furthermore, we have for $\textbf{k}_1=\textbf{k}_2=\textbf{k}$ that
\begin{align*}
D_{\textbf{k},\textbf{k}}&=\frac{2}{p} \sum\limits_{i,j=1}^p
\mathbbm{1}_{2\N}(j)\E\Big[\big(\tilde{B}_{i,\textbf{k}}+C_{i,\textbf{k}}\big)\big(\tilde{B}_{j,\textbf{k}}+C_{j,\textbf{k}}\big)\Big]
\E\Big[\big(\tilde{B}_{i,\textbf{k}}+C_{i,\textbf{k}}\big)\big(\tilde{B}_{j-1,\textbf{k}}+C_{j-1,\textbf{k}}\big)\Big]\\
&=-\frac{(1-e^{-\lambda_{\textbf{k}}\Delta_{2n}})^2}{4\lambda_{\textbf{k}}^{2(1+\alpha)}}\bigg((1-e^{-\lambda_{\textbf{k}}\Delta_{2n}})^2-\frac{e^{-\lambda_{\textbf{k}}\Delta_{2n}}(1-e^{-\lambda_{\textbf{k}}\Delta_{2n}})^2}{1-e^{-2\lambda_{\textbf{k}}\Delta_{2n}}}+1-e^{-2\lambda_{\textbf{k}}\Delta_{2n}}\bigg)\\
&~~~~~\times\bigg(1+\Oo\Big(1\wedge\frac{p^{-1}}{1-e^{-2\lambda_{\textbf{k}}\Delta_{2n}}}\Big)\bigg).
\end{align*}
Defining the following term: 
\begin{align*}
\bar{D}_{\textbf{k},\textbf{k}}:=\frac{ (1-e^{-\lambda_{\textbf{k}}\Delta_{2n}})^4 }{8\lambda_{\textbf{k}}^{2(1+\alpha)}}\Big(-2+2\frac{e^{-\lambda_{\textbf{k}}\Delta_{2n}}}{1-e^{-2\lambda_{\textbf{k}}\Delta_{2n}}}\Big)-2\frac{(1-e^{-\lambda_{\textbf{k}}\Delta_{2n}})^2}{8\lambda_{\textbf{k}}^{2(1+\alpha)}}(1-e^{-2\lambda_{\textbf{k}}\Delta_{2n}}),
\end{align*}
yields:
\begin{align*}
\frac{1}{\Delta_{2n}^{2\alpha'}p}\sum_{\textbf{k}\in\N^d}\bar{D}_{\textbf{k},\textbf{k}}=\Oo\bigg(\frac{\Delta_{2n}^{d/2}}{p}\Delta_{2n}^{d/2}\sum_{\textbf{k}\in\N^d}\Big(\frac{(1-e^{-\lambda_{\textbf{k}}\Delta_{2n}})}{2(\lambda_{\textbf{k}}\Delta_{2n})^{1+\alpha}}\Big)^2\bigg)=\Oo\big(p^{-1}\Delta_{2n}^{2(1-\alpha')}\big),
\end{align*}
where we used analogous steps as in display \eqref{eqn_negelct_kkTerminCov}. We decompose the leading term $\bar{D}_{\textbf{k}_1,\textbf{k}_2}$ as follows:
\begin{align*}
\bar{D}_{\textbf{k}_1,\textbf{k}_2}&=\bar{D}_{\textbf{k}_1,\textbf{k}_2}^1+\bar{D}_{\textbf{k}_1,\textbf{k}_2}^2+\bar{D}_{\textbf{k}_1,\textbf{k}_2}^3+\bar{D}_{\textbf{k}_1,\textbf{k}_2}^4,
\end{align*}
where
\begin{align*}
\bar{D}_{\textbf{k}_1,\textbf{k}_2}^1&= -\frac{ (1-e^{-\lambda_{\textbf{k}_1}\Delta_{2n}})^2(1-e^{-\lambda_{\textbf{k}_2}\Delta_{2n}})^2 }{4\lambda_{\textbf{k}_1}^{1+\alpha}\lambda_{\textbf{k}_2}^{1+\alpha}}=-\frac{\Delta_{2n}^{2(1+\alpha)}}{4}f_{2,\alpha}(\lambda_{\textbf{k}_1}\Delta_{2n})f_{2,\alpha}(\lambda_{\textbf{k}_2}\Delta_{2n}),\\
\bar{D}_{\textbf{k}_1,\textbf{k}_2}^2&= \frac{ (1-e^{-\lambda_{\textbf{k}_1}\Delta_{2n}})^2(1-e^{-\lambda_{\textbf{k}_2}\Delta_{2n}})^2 }{8\lambda_{\textbf{k}_1}^{1+\alpha}\lambda_{\textbf{k}_2}^{1+\alpha}}\cdot \frac{e^{-\lambda_{\textbf{k}_1}\Delta_{2n}}+e^{-\lambda_{\textbf{k}_2}\Delta_{2n}}}{1-e^{-(\lambda_{\textbf{k}_1}+\lambda_{\textbf{k}_2})\Delta_{2n}}}\\
&=\frac{\Delta_{2n}^{2(1+\alpha)}}{2}\sum_{r=0}^\infty\big( g_{1,\alpha,r+1}(\lambda_{\textbf{k}_1}\Delta_{2n})g_{1,\alpha,r}(\lambda_{\textbf{k}_2}\Delta_{2n})+ g_{1,\alpha,r+1}(\lambda_{\textbf{k}_2}\Delta_{2n})g_{1,\alpha,r}(\lambda_{\textbf{k}_1}\Delta_{2n})\big),\\
\bar{D}_{\textbf{k}_1,\textbf{k}_2}^3&=-\frac{(1-e^{-\lambda_{\textbf{k}_1}\Delta_{2n}})(1-e^{-\lambda_{\textbf{k}_2}\Delta_{2n}})}{4\lambda_{\textbf{k}_1}^{1+\alpha}\lambda_{\textbf{k}_2}^{1+\alpha}}=-\frac{\Delta_{2n}^{2(1+\alpha)}}{4}f_{1,\alpha}(\lambda_{\textbf{k}_1}\Delta_{2n})f_{1,\alpha}(\lambda_{\textbf{k}_2}\Delta_{2n})  ,\\
\bar{D}_{\textbf{k}_1,\textbf{k}_2}^4&= \frac{(1-e^{-\lambda_{\textbf{k}_1}\Delta_{2n}})(1-e^{-\lambda_{\textbf{k}_2}\Delta_{2n}})}{4\lambda_{\textbf{k}_1}^{1+\alpha}\lambda_{\textbf{k}_2}^{1+\alpha}}e^{-(\lambda_{\textbf{k}_1}+\lambda_{\textbf{k}_2})\Delta_{2n}}=\Delta_{2n}^{2(1+\alpha)}g_{2,\alpha,1}(\lambda_{\textbf{k}_1}\Delta_{2n})g_{2,\alpha,1}(\lambda_{\textbf{k}_2}\Delta_{2n}).
\end{align*}
Here, we use the following functions defined by: 
\begin{align*}
f_{1,\alpha}(x):=f_{\alpha}(x)=\frac{1-e^{-x}}{x^{1+\alpha}}&,~~~~~f_{2,\alpha}(x):=\frac{(1-e^{-x})^2}{x^{1+\alpha}}\\
g_{1,\alpha,\tau}(x):=g_{\alpha,\tau}(x)=\frac{(1-e^{-x})^2}{2x^{1+\alpha}}e^{-\tau x}&,~~~~~g_{2,\alpha,\tau}(x):=\frac{1-e^{-x}}{2x^{1+\alpha}}e^{-\tau x}.
\end{align*}
By Lemma \ref{lemma_checkConditionsAprroxlemma}, we know that $f_{1,\alpha}\in\mathcal{Q}_{\beta_1}$ and $g_{1,\alpha,\tau}\in\mathcal{Q}_{\beta_2}$, where $\beta_1=\big(2\alpha,2(1+\alpha),2(2+2\alpha)\big)$ and $\beta_2=\big(2\alpha,2(1+\alpha),2(1+2\alpha)\big)$. By analogous computations as used in Lemma \ref{lemma_checkConditionsAprroxlemma}, we obtain that $f_{2,\alpha}\in\mathcal{Q}_{\beta_1}$ and $g_{2,\alpha,\tau}\in\mathcal{Q}_{\beta_1}$. Assume $\textbf{y}_1\neq \textbf{y}_2$. We can repeat the calculations leading to equation \eqref{eqn_covEksRemainder} and have 
\begin{align*}
\Cov\big(V_{p,\Delta_{2n}}(\textbf{y}_1),W_{p,\Delta_{2n}}(\textbf{y}_2)\big)&=\Oo\bigg(\frac{\Delta_{2n}^{1-\alpha'}}{p}\big(\nor{\textbf{y}_1-\textbf{y}_2}_0^{-(d+1)}+\delta^{-(d+1)}\big)\bigg)+\mathcal{O}\bigg(\frac{1}{p}\Big(\Delta_{2n}^{2(1-\alpha')}+\frac{1}{p}\wedge1\Big)\bigg)\\
&=\Oo\bigg(\frac{\Delta_{2n}^{1-\alpha'}}{p}\big(\nor{\textbf{y}_1-\textbf{y}_2}_0^{-(d+1)}+\delta^{-(d+1)}\big)\bigg).
\end{align*}
Therefore, it remains to analyse the case where $\textbf{y}_1= \textbf{y}_2$. Again, utilizing that the function $f_{1,\alpha}$, $f_{2,\alpha}$ and $g_{2,\alpha,\tau}$ are in the same class $\mathcal{Q}_{\beta_1}$ as the function $f_{\alpha}$ as used in Proposition \ref{prop_RRVMulti}, as well as $g_{1,\alpha,\tau}=g_{\alpha,\tau}$, we can conclude analogous to equation \eqref{eqn_rrvPropEqnToLastTerm} that
\begin{align*}
\Cov\big(V_{p,\Delta_{2n}}(\textbf{y}_1),W_{p,\Delta_{2n}}(\textbf{y}_2)\big)&=\frac{2\sigma^4}{p\Delta_{2n}^{2\alpha'}}\sum_{\substack{\textbf{k}_1,\textbf{k}_2\in\N^d \\ \textbf{k}_1\neq \textbf{k}_2}}\bar{D}_{\textbf{k}_1,\textbf{k}_2}+\mathcal{O}\bigg(\frac{1}{p}\Big(\Delta_{2n}^{1/2}\vee \frac{\Delta_{2n}^{1-\alpha'}}{\delta^{d+1}}+\frac{1}{p}\wedge1\Big)\bigg).
\end{align*}
First, we obtain that
\begin{align*}
\frac{1}{\Delta_{2n}^{2\alpha'}}\sum_{\substack{\textbf{k}_1,\textbf{k}_2\in\N^d \\ \textbf{k}_1\neq \textbf{k}_2}}\bar{D}_{\textbf{k}_1,\textbf{k}_2}^1&=-\frac{1}{4}\bigg(\Delta_{2n}^{d/2}\sum_{\textbf{k}\in\N^d}f_{2,\alpha}(\lambda_{\textbf{k}}\Delta_{2n})\bigg)^2:=I_1,\\
\frac{1}{\Delta_{2n}^{2\alpha'}}\sum_{\substack{\textbf{k}_1,\textbf{k}_2\in\N^d \\ \textbf{k}_1\neq \textbf{k}_2}}\bar{D}_{\textbf{k}_1,\textbf{k}_2}^2&=\frac{2}{2}\sum_{r=0}^\infty \bigg(\Delta_{2n}^{d/2}\sum_{\textbf{k}\in\N^d}g_{1,\alpha,r+1}(\lambda_{\textbf{k}}\Delta_{2n}) \bigg)\bigg(\Delta_{2n}^{d/2}\sum_{\textbf{k}\in\N^d} g_{1,\alpha,r}(\lambda_{\textbf{k}}\Delta_{2n}) \bigg):=I_2,\\
\frac{1}{\Delta_{2n}^{2\alpha'}}\sum_{\substack{\textbf{k}_1,\textbf{k}_2\in\N^d \\ \textbf{k}_1\neq \textbf{k}_2}}\bar{D}_{\textbf{k}_1,\textbf{k}_2}^3&=-\frac{1}{4}\bigg(\Delta_{2n}^{d/2}\sum_{\textbf{k}\in\N^d}f_{1,\alpha}(\lambda_{\textbf{k}}\Delta_{2n})\bigg)^2:=I_3,\\
\frac{1}{\Delta_{2n}^{2\alpha'}}\sum_{\substack{\textbf{k}_1,\textbf{k}_2\in\N^d \\ \textbf{k}_1\neq \textbf{k}_2}}\bar{D}_{\textbf{k}_1,\textbf{k}_2}^3&=\bigg(\Delta_{2n}^{d/2}\sum_{\textbf{k}\in\N^d}g_{2,\alpha,1}(\lambda_{\textbf{k}}\Delta_{2n})\bigg)^2:=I_4.
\end{align*}
Using Corollary \ref{corollary_toLemmaRiemannApproxMulti} and Lemma \ref{lemma_calcfAlphaDelta} yields:
\begin{align*}
I_1&=-\frac{1}{4}\bigg(\frac{1}{2^d(\pi\eta)^{d/2}\Gamma(d/2)}\bigg)^2\bigg(\frac{(2-2^{\alpha })\pi }{\Gamma (1+\alpha )\sin(\pi  \alpha)}\bigg)^2=-\frac{1}{4}\bigg(\frac{\Gamma(1-\alpha')}{2^d(\pi\eta)^{d/2}\alpha'\Gamma(d/2)}\bigg)^2(2^{\alpha'}-2)^2,\\
I_2&=\frac{1}{4}\bigg(\frac{\Gamma(1-\alpha')}{2^d(\pi\eta)^{d/2}\alpha'\Gamma(d/2)}\bigg)^2\sum_{r=0}^\infty \left(-(r+1) ^{\alpha'}+2 (r +2)^{\alpha'}-(r+3)^{\alpha'}\right)\left(-r ^{\alpha'}+2 (r +1)^{\alpha'}-(r +2)^{\alpha'}\right),\\
I_3&=-\frac{1}{4}\bigg(\frac{\Gamma(1-\alpha')}{2^d(\pi\eta)^{d/2}\alpha'\Gamma(d/2)}\bigg)^2,\\
I_4&=\frac{1}{4}\bigg(\frac{\Gamma(1-\alpha')}{2^d(\pi\eta)^{d/2}\alpha'\Gamma(d/2)}\bigg)^2(2^{\alpha'}-1)^2.
\end{align*}
Hence, we obtain that
\begin{align*}
&\Cov\big(V_{p,\Delta_{2n}}(\textbf{y}_1),W_{p,\Delta_{2n}}(\textbf{y}_2)\big)=\frac{1}{2p}\bigg(\frac{\Gamma(1-\alpha')\sigma^4}{2^d(\pi\eta)^{d/2}\alpha'\Gamma(d/2)}\bigg)^2\bigg((2^{\alpha'}-1)^2-(2^{\alpha'}-2)^2-1\\
&~~~~~+\sum_{r=0}^\infty \left(-(r+1) ^{\alpha'}+2 (r +2)^{\alpha'}-(r+3)^{\alpha'}\right)\left(-r ^{\alpha'}+2 (r +1)^{\alpha'}-(r +2)^{\alpha'}\right)\bigg)\\
&~~~~~+\mathcal{O}\bigg(\frac{1}{p}\Big(\Delta_{2n}^{1/2}\vee \frac{\Delta_{2n}^{1-\alpha'}}{\delta^{d+1}}+\frac{1}{p}\wedge1\Big)\bigg).
\end{align*}
Defining the following constant:
\begin{align}
\Lambda_{\alpha'}:= 2(2^{\alpha'}-2)+\sum_{r=0}^\infty \left(-(r+1) ^{\alpha'}+2 (r +2)^{\alpha'}-(r+3)^{\alpha'}\right)\left(-r ^{\alpha'}+2 (r +1)^{\alpha'}-(r +2)^{\alpha'}\right),\label{eqn_definingLambda}
\end{align}
completes the proof.
\end{proof}

\section*{Acknowledgement}
I wish to express my appreciation to my Ph.D. advisor, Markus Bibinger, for the careful reading of this manuscript and his useful suggestions.

\addcontentsline{toc}{section}{References}
\bibliographystyle{plain}
\bibliography{library}

\end{document}